\documentclass[11pt]{article}
\usepackage{enumerate}
\usepackage[OT1]{fontenc}
\usepackage[usenames]{color}
\usepackage{smile}
\usepackage[colorlinks,
linkcolor=red,
anchorcolor=blue,
citecolor=blue
]{hyperref}
\usepackage{mathrsfs}
\usepackage{fullpage}
\usepackage{hyperref}
\usepackage[protrusion=true, expansion=true]{microtype}
\usepackage{float}
\usepackage{subfigure}
\usepackage{amsfonts,amsmath,amssymb,amsthm,url,xspace}
\usepackage{tikz}
\usepackage{verbatim}
\usetikzlibrary{arrows,shapes}
\usepackage{mathtools}
\usepackage{authblk}
\usepackage[bottom]{footmisc}
\usepackage{enumitem}
\usepackage{mathtools}
\usepackage{lscape}
\usepackage{caption}

\usepackage{algorithm}
\usepackage{algorithmicx,algpseudocode}

\ifx\counterwithout\undefined\usepackage{chngcntr}\fi
\counterwithout{equation}{section}

\allowdisplaybreaks[1]

\newcommand{\myred}{\color{black}}

\newcommand{\mR}{\mathbb{R}}

\newcommand{\mE}{\mathbb{E}}

\newcommand{\mL}{\mathcal{L}}

\newcommand{\mF}{\mathcal{F}}

\def\I{\mathcal{I}}

\def\P{{\mathcal P}}

\def\O{{\mathcal O}}

\def\L{{\mathcal L}}

\def\0{{\boldsymbol 0}}

\def\F{{\mathcal F}}
\def\A{{\mathcal A}}
\def\B{{\mathcal B}}
\def\C{{\mathcal C}}

\def\bv{{\boldsymbol{v}}}

\def\b1{{\boldsymbol{1}}}
\def\bx{{\boldsymbol{x}}}
\def\bu{{\boldsymbol{u}}}
\def\bd{{\boldsymbol{d}}}

\def\blambda{{\boldsymbol{\lambda}}}

\def\barH{\bar{H}}
\def\bartau{\bar{\tau}}

\def \b0{{\boldsymbol{0}}}
\def\barg{{\bar{g}}}
\def\barf{{\bar{f}}}
\def\barH{{\bar{H}}}
\def\bnabla{{\bar{\nabla}}}
\def\bargamma{{\bar{\gamma}}}

\def\barblambda{{\bar{\boldsymbol{\lambda}}}}
\def\barmu{{\bar{\mu}}}

\def\bartau{{\bar{\tau}}}

\def\barbzeta{\bar{\boldsymbol{\zeta}}}
\def\bzeta{\boldsymbol{\zeta}}

\mathtoolsset{showonlyrefs=true}

\usepackage{xargs}
\usepackage[colorinlistoftodos,prependcaption,textsize=tiny]{todonotes}
\newcommandx{\unsure}[2][1=]{\todo[linecolor=red,backgroundcolor=red!25,bordercolor=red,#1]{#2}}
\newcommandx{\change}[2][1=]{\todo[linecolor=blue,backgroundcolor=blue!25,bordercolor=blue,#1]{#2}}
\newcommandx{\info}[2][1=]{\todo[linecolor=OliveGreen,backgroundcolor=OliveGreen!25,bordercolor=OliveGreen,#1]{#2}}
\newcommandx{\improvement}[2][1=]{\todo[linecolor=Plum,backgroundcolor=Plum!25,bordercolor=Plum,#1]{#2}}

\usepackage{alphalph}

\allowdisplaybreaks


\brokenpenalty=1000
\emergencystretch=2em
\hyphenpenalty=0
\exhyphenpenalty=0 
\spaceskip=0.33em plus 0.01em minus 0.2em
\setlength{\parskip}{1.5pt}%

\begin{document}

\title{High Probability Complexity Bounds of Trust-Region Stochastic Sequential Quadratic Programming with Heavy-Tailed Noise}

\author[1]{Yuchen Fang}
\author[2]{Javad Lavaei}
\author[3]{Sen Na}
\affil[1]{Department of Mathematics, University of California, Berkeley}
\affil[2]{Department of Industrial Engineering and Operations Research, University of California, Berkeley}
\affil[3]{H. Milton School of Industrial and Systems Engineering, Georgia Institute of Technology}

\date{}

\maketitle

\begin{abstract}

In this paper, we consider nonlinear optimization problems with a stochastic objective and deterministic equality constraints. We propose a Trust-Region Stochastic Sequential Quadratic Programming (TR-SSQP) method and establish its high-probability iteration complexity bounds for identifying \textit{first- and second-order} $\epsilon$-stationary points. In our algorithm, we assume that exact objective values, gradients, and Hessians are not directly accessible but can be estimated via zeroth-, first-, and second-order probabilistic oracles. Compared to existing complexity studies of SSQP methods that rely on a zeroth-order oracle with sub-exponential tail noise (i.e., light-tailed) and focus mostly on first-order stationarity, our analysis accommodates biased (also referred to as irreducible in the literature) and heavy-tailed noise in the zeroth-order oracle, and significantly extends the analysis to second-order stationarity. 
We show that under heavy-tailed noise conditions, our SSQP method achieves the same high-probability first-order iteration complexity bounds as in the light-tailed noise setting, while further exhibiting promising second-order iteration complexity bounds. 
Specifically, the method identifies a first-order $\epsilon$-stationary point in $\mathcal{O}(\epsilon^{-2})$ iterations and a second-order $\epsilon$-stationary point in $\mathcal{O}(\epsilon^{-3})$ iterations with high probability, provided that $\epsilon$ is lower bounded by a constant determined by the bias magnitude (i.e., the irreducible noise) in the estimation. We validate our theoretical findings and evaluate practical performance of our method on CUTEst benchmark test set.

\end{abstract}

\section{Introduction}\label{sec:1}

In this paper, we consider stochastic optimization problems with deterministic equality constraints of the following form:
\begin{equation}\label{Intro_StoProb}
\min_{\bx\in\mR^d}\;f(\bx),\quad\;\;\;\text{s.t.}\quad c(\bx)=\b0,
\end{equation}
where $f:\mR^d\rightarrow \mR$ is the stochastic objective and $c:\mR^d\rightarrow\mR^m$ are deterministic equality constraints. For Problem \eqref{Intro_StoProb}, we assume that the exact objective value $f(\bx)$, together with its gradient $\nabla f(\bx)$ and Hessian $\nabla^2 f(\bx)$, cannot be evaluated accurately but can be estimated via stochastic probabilistic oracles. For the sake of generality, we do not specify the underlying true sampling distribution of $f$, since the oracles may have different sampling distributions, leading to intrinsically biased estimates. Constrained stochastic optimization problems of form \eqref{Intro_StoProb} appear ubiquitously in scientific and engineering areas, such as optimal control \citep{Betts2010Practical}, portfolio optimization \citep{Cakmak2005Portfolio}, multi-stage optimization \citep{Shapiro2021Lectures}, supply chain network design \citep{Santoso2005stochastic}, and statistical machine learning \citep{Cuomo2022Scientific}.

Deterministic constrained optimization has been extensively studied for decades. While numerous methods have been developed under favorable problem settings, Sequential Quadratic Programming (SQP) has proven to be robust and effective, especially for highly nonlinear problems \citep{Bertsekas1982Constrained, Boggs1995Sequential, Nocedal2006Numerical}. Recent studies on optimization problems where the objective function and constraints are evaluated subject to bounded noise further highlight the advantages of SQP methods \citep{Sun2023trust, Lou2024Noise, Oztoprak2023Constrained, Sun2024Trust}. Inspired by the success of SQP, a series of Stochastic SQP (SSQP) methods has been proposed to solve Problem \eqref{Intro_StoProb}, where the exact computations of objective quantities are replaced by the corresponding estimates obtained through sampling. The quality of the estimates is assumed to satisfy specific conditions that depend on different sampling frameworks.

The first widely used framework is the \textit{fully stochastic} setup, where at each iteration a single sample or a fixed batch of samples is drawn to estimate the objective gradient, and a prespecified sequence is utilized to assist the stepsize selection. Gradient estimates are typically assumed to be unbiased, with variance either bounded or subject to a growth condition. 
Under this setup, \citet{Berahas2021Sequential} introduced the first line-search SSQP method, while \citet{Fang2024Fully} introduced the first trust-region SSQP method. Building on these designs, subsequent extensions in algorithm design and analysis have been reported \citep[see, e.g.,][]{Berahas2023Stochastic, Berahas2023Accelerating, Curtis2024Sequential, Curtis2024Stochastic, Beiser2023Adaptive, Bollapragada2023adaptive,  Berahas2025Sequential, Curtis2025Almost, Curtis2025Stochastic, Curtis2026Single, Kuang2025Online}.
The non-asymptotic iteration complexity has also been investigated under this setup. Based on the algorithm in \citet{Berahas2021Sequential}, \citet{Curtis2023Worst} established a worst-case iteration complexity of $\mathcal{O}(\epsilon^{-4})$ for first-order $\epsilon$-stationarity with high probability, assuming that the trial step is solved exactly in each iteration and the input prespecified stepsize-related sequence is constant. Meanwhile, \citet{Na2025Statistical} established the local convergence guarantee of the SSQP method with decaying stepsize, demonstrating the same $\mathcal{O}(\epsilon^{-4})$ iteration complexity for first-order $\epsilon$-stationarity while allowing the trial step to be solved inexactly in each iteration. In addition, \citet{Lu2024Variance} measured the $\epsilon$-stationarity by expected optimality residual and deterministic feasibility residual (i.e., constraint violation), and established $\tilde{\O}(\epsilon^{-3})$ sample complexity and $\tilde{\O}(\epsilon^{-4})$ iteration complexity for first-order stationarity, where $\tilde{\O}(\cdot )$ hides the logarithmic term.

In contrast to the fully stochastic setup, methods in the \textit{random model} setup do not require unbiased gradient estimates, and the stepsizes are also selected adaptively. This framework leverages zeroth-, first-, and second-order probabilistic oracles to estimate objective values, gradients, and Hessians, respectively, ensuring that the estimates satisfy proper, adaptive accuracy conditions with a high but fixed probability. The random model setup has a relatively long history in the literature on unconstrained problems, starting from the STORM method and its subsequent designs \citep{Chen2017Stochastic, Ghanbari2017Black, Blanchet2019Convergence, Jin2025Sample}. Under this setup, \cite{Na2022adaptive, Fang2024Trust} recently introduced some of the early SSQP methods for constrained problems. The studies have been further generalized in \cite{Na2023Inequality, Berahas2022Adaptive, Fang2026Trust,Qiu2023sequential} to accommodate inequality constraints, adaptive sampling mechanisms, and robust designs. However, the above studies in the random model setup share two key limitations: (1) they assume the estimation noise vanishes with fixed probability, and (2) they lack a non-asymptotic, high-probability iteration complexity analysis.

Recently, \citet{Berahas2025Sequential} addressed the two limitations in constrained optimization by analyzing a line-search (also referred to as step-search) method that allows for an biased, irreducible noise in the estimation. The authors conducted a non-asymptotic complexity analysis and established an iteration complexity of $\mathcal{O}(\epsilon^{-2})$ for first-order $\epsilon$-stationarity with high probability, provided that $\epsilon$ exceeds a threshold determined by the irreducible noise level. However, their zeroth-order oracle assumes the noise follows a sub-exponential distribution, hence excluding all heavy-tailed noise. Moreover, their algorithm does not utilize second-order Hessian information, so that theoretical analysis offered in that work does not cover second-order convergence property. Identifying only first-order stationary points can be sometimes unsatisfactory in nonlinear optimization, as such points may correspond to saddle points or even local maxima, leading to highly sensitive solutions \citep{Dauphin2014Identifying, Choromanska2015Loss, Jain2017Global}. To the best of our knowledge, in the random model setup, high-probability iteration complexity for second-order stationarity has only been explored in the context of unconstrained optimization. In this setting, \citet{Cao2023First} performed a non-asymptotic analysis for a trust-region method, demonstrating an iteration complexity of $\mathcal{O}(\epsilon^{-2})$ for first-order $\epsilon$-stationarity and $\mathcal{O}(\epsilon^{-3})$ for second-order $\epsilon$-stationarity. However, their zeroth-order probabilistic oracle still requires the noise to follow a sub-exponential distribution.

Assuming the noise in the zeroth-order oracle following a sub-exponential distribution leads to a significant gap between non-asymptotic and asymptotic analyses, as such an oracle condition is not required in asymptotic analysis \citep{Na2022adaptive, Na2023Inequality, Fang2024Trust}. The sub-exponential noise condition can be restrictive, as it implies the existence of infinite-order moments, thereby excluding noise from any heavy-tailed distribution where only a finite number of moments are bounded. More importantly, there is no clear justification for the discrepancy between the noise conditions in the zeroth-order oracle and those in the first- and second-order oracles; the latter oracles do allow for heavy-tailed noise (e.g., only a finite second moment). This discrepancy highlights the desire to relax light-tailed noise in the zeroth-order oracle to heavy-tailed noise when analyzing the complexity of the methods.

In this paper, we make progress toward that goal. We introduce a trust-region SSQP method for constrained optimization in the random model setup that accommodates \textit{irreducible and heavy-tailed noise}, and establish its high-probability iteration complexity bounds for achieving both \textit{first- and second-order $\epsilon$-stationarities}. Our key contributions are summarized as follows.
\begin{enumerate}[label=\textbf{(\alph*)},topsep=2pt, wide,labelindent=0pt]
\setlength\itemsep{0.2em}
\item Compared to \cite{Na2022adaptive, Na2023Inequality, Fang2024Trust}, we allow irreducible noise in the zeroth-, first-, and second-order oracles, while their framework requires the noise to diminish with fixed probability. In addition, we extend the second-order asymptotic convergence guarantee from \cite{Fang2024Trust} to establish the first second-order non-asymptotic guarantee for constrained problems.$\hskip1cm$

\item 
Compared to the step-search SSQP method in \cite{Berahas2025Sequential}, we study a trust-region SSQP method and provide a complexity analysis for second-order stationarity. As a classical complement to the line-search scheme, the trust-region scheme computes the search direction and selects the stepsize jointly. One significant difference from the line-search scheme is that the trust-region SQP subproblems remain well-defined without any Hessian regularizations, which are often necessary in the line-search scheme to ensure the search direction to be a descent direction.

\item Compared to \cite{Cao2023First, Berahas2025Sequential} for unconstrained or constrained first-order complexity analysis, we refine the definition of the stopping time (Definition \ref{def:stopping_time}) of the iteration by using the deterministic KKT residual for first-order stationarity, and the maximum of deterministic KKT residual and deterministic negative curvature of the reduced Hessian for second-order stationarity. \cite{Cao2023First} adopted a stopping time definition that depends on the Lipschitz constant and the irreducible noise, while \cite{Berahas2025Sequential} defined the stopping time based on the local model reduction of the merit function. Although different stopping time definitions lead to the same complexity bounds, our definition seems more natural while necessitating additional efforts in the analysis. See the discussion after Definition \ref{def:stopping_time} for details.

\item Compared to the complexity analysis in \cite{Cao2023First, Berahas2025Sequential}, we significantly relax the parametric sub-exponential distribution assumption on the zeroth-order oracle noise. Specifically, instead of requiring infinite-order moments, we assume the noise has a bounded $1 + \delta$ moment for any (small) $\delta > 0$. The key technical tools we leverage are the \textit{Burkholder-type inequality} \citep{Burkholder1973Distribution,Chen2020Rosenthal} and the \textit{martingale Fuk-Nagaev inequality} (Lemma \ref{Fuk–Nagaev}) \citep{Fuk1973Certain,Nagaev1979Large,Fan2017Deviation}, which help us quantify the tail behavior of the accumulated oracle noise. For any $\delta > 0$, our complexity results suggest that the method can find a (first-order or second-order) stationary point within finite iterations almost surely. This matches the result in \cite{Berahas2025Sequential}, where infinite-order moments are imposed. We highlight that our relaxation of the noise condition is achieved purely through sharper analysis and is independent of various line-search or trust-region designs, suggesting that the results in the aforementioned literature should also hold under heavy-tailed noise condition utilizing our analytical techniques. 
Our technique has been used in the follow-up work \citep{Scheinberg2025Stochastic}, which unifies the complexity analysis of unconstrained line-search and trust-region methods for achieving first-order stationarity, under a zeroth-order oracle noise with at least bounded second moments (i.e., $\delta \geq 1$).$\quad$

\end{enumerate}

Under irreducible and heavy-tailed oracle noise conditions, we establish \textit{first- and second-order} high-probability iteration complexity bounds for our trust-region SSQP method.
In particular, we establish that the method achieves a first-order $\epsilon$-stationary point in $\mathcal{O}(\epsilon^{-2})$ iterations and a second-order $\epsilon$-stationary point in $\mathcal{O}(\epsilon^{-3})$ iterations with high probability, provided that $\epsilon$ is chosen above a threshold determined by the irreducible noise levels. We summarize the results, along with the dependence of $\epsilon$ on irreducible noises in Table \ref{tab:complexity_result}. These results align with existing arguments for irreducible and light-tailed noise. Specifically, our results corroborate the first-order high-probability complexity bound established by \cite{Berahas2025Sequential} for a step-search SSQP method in constrained stochastic optimization, as well as the first- and second-order high-probability complexity bounds derived by \cite{Cao2023First} for a trust-region method in unconstrained stochastic optimization. We implement our method on problems from the CUTEst test set. The actual performance validates our theoretical analysis.

\renewcommand{\arraystretch}{1.5}
\begin{table}[t!]
\centering
\begin{tabular}{c c c c}
\hline
Stationarity  & Iteration Complexity & Sample Complexity* & Conditions of $\epsilon$ \\
\hline
First-order  & $\mathcal{O}(\epsilon^{-2})$ & $\mathcal{O}(\epsilon^{-6})$ & $\epsilon \geq \mathcal{O}(\sqrt{\epsilon_f} + \epsilon_g)$ \\
\hline
Second-order  & $\mathcal{O}(\epsilon^{-3})$ & $\mathcal{O}(\epsilon^{-9})$ & $\epsilon \geq \mathcal{O}(\sqrt[3]{\epsilon_f} + \sqrt{\epsilon_g} + \epsilon_h)$\\
\hline
\end{tabular}

* Sample complexity refers to the number of samples for estimating the objective value, which dominates the number of samples for estimating the gradient and Hessian. See Remark \ref{rem:4.21}. $\quad$\vskip2pt
\caption{Summary of complexity results. Each row corresponds a type of stationarity. $\epsilon_f,\epsilon_g,\epsilon_h$ denote the irreducible noises in objective value estimation, gradient estimation, and Hessian estimation.}
\label{tab:complexity_result}
\end{table}

\subsection{Notation}

We use $\|\cdot\|$ to denote the $\ell_2$ norm for vectors and the operator norm for matrices. $I$ denotes the identity matrix and $\b0$ denotes the zero vector/matrix, whose dimensions are clear from the context. For the constraints $c(\bx): \mR^d\rightarrow\mR^m$, we let $G(\bx) \coloneqq\nabla c(\bx) \in\mR^{m\times d}$ denote its Jacobian matrix and let $c^i(\bx)$ denote the $i$-th constraint for $1\leq i\leq m$ (the subscript indexes the iteration). Define $P(\bx)=I-G(\bx)^T[G(\bx)G(\bx)^T]^{-1}G(\bx)$ to be the projection matrix onto the null space of $G(\bx)$. Then, we let $Z(\bx)\in\mR^{d\times (d-m)}$ form an orthonormal basis of $\text{ker}(G(\bx))$ such that $Z(\bx)^TZ(\bx)=I$ and $Z(\bx)Z(\bx)^T=P(\bx)$.  Throughout the paper, we use an overline to denote a stochastic estimate of a quantity. For example, $\barf(\bx)$ denotes an estimate of $f(\bx)$.

\vspace{-0.15cm}
\subsection{Structure of the paper}

We introduce the computation of trial steps and the algorithm design in Section \ref{sec:2_short}. Probabilistic oracles that allow irreducible and heavy-tailed noise are introduced in Section \ref{sec:3}. In Section \ref{sec:4}, we establish first- and second-order high-probability complexity bounds. Numerical experiments are presented in Section \ref{sec:5}, and conclusions are presented in Section \ref{sec:6}. To make the main paper concise, we defer some proofs to the appendix.

\section{A Trust-Region SSQP Method }\label{sec:2_short}

Let $\L(\bx,\blambda)=f(\bx)+\blambda^Tc(\bx)$ be the Lagrangian function of Problem \eqref{Intro_StoProb}, where $\blambda\in\mR^m$ represents the Lagrangian multipliers. For any $\epsilon>0$, we call $(\bx^*,\blambda^*)$ a first-order $\epsilon$-stationary point of \eqref{Intro_StoProb} if
\begin{equation}\label{1st_order_point}
\| \nabla \L(\bx^*,\blambda^*) \| = \left\lVert \begin{pmatrix}
\nabla_\bx \L(\bx^*,\blambda^*)\\
\nabla_{\blambda} \L(\bx^*,\blambda^*)
\end{pmatrix} \right\rVert =
\left\lVert \begin{pmatrix}
\nabla f(\bx^*)+G(\bx^*)^T\blambda^*\\
c(\bx^*)
\end{pmatrix} \right\rVert \leq \epsilon.
\end{equation}
We call $(\bx^*,\blambda^*)$ a second-order $\epsilon$-stationary point if, in addition to \eqref{1st_order_point}, 
\begin{equation*}
\tau(\bx^*,\blambda^*)\geq -\epsilon,
\end{equation*}
where $\tau(\bx^*,\blambda^*)$ denotes the smallest eigenvalue of the reduced Lagrangian Hessian $Z(\bx^*)^T\nabla^2_{\bx}\L(\bx^*,\blambda^*)Z(\bx^*)$. Here, we define $\nabla^2_{\bx}\L(\bx,\blambda)=\nabla^2 f(\bx)+\sum_{i=1}^{m}\blambda^i\nabla^2 c^{i}(\bx)$ as the Lagrangian Hessian with respect to the primal variable $\bx$.
We denote the optimality residual by $\|\nabla_\bx \L(\bx,\blambda)\|$, the feasibility residual by $\|\nabla_\blambda \L(\bx,\blambda)\|$ (i.e., $\|c(\bx)\|$), and the KKT residual by $\|\nabla \L(\bx,\blambda)\|$. Given the $k$-th iterate $(\bx_k,\blambda_k)$, we  denote $g_k= \nabla f(\bx_k)$, $\nabla^2 f_k= \nabla^2 f(\bx_k)$, and $\barg_k$, $\bar{\nabla}^2 f_k$ as their estimates, which are constructed via probabilistic oracles in Section \ref{sec:3}. Similarly, we denote $c_k,G_k, \{\nabla^2 c_k^i\}_{i=1}^m$. We define the \textit{estimated} Lagrangian gradient as $\bar{\nabla} \L_k = (\bar{\nabla}_{\bx}\L_k,c_k)$ with $\bar{\nabla}_{\bx}\L_k=\barg_k+G_k^T\blambda_k$, and the \textit{estimated} Lagrangian Hessian (with respect to $\bx$) as $\bar{\nabla}^2_{\bx}\L_k= \bar{\nabla}^2 f_k+\sum_{i=1}^{m}\blambda_k^i\nabla^2 c^{i}_k$. Throughout the paper, $\bar{H}_k$ is used to denote the approximation of the Lagrangian Hessian $\nabla_{\bx}^2 \mathcal{L}_k$. When only first-order stationarity is desired, $\bar{H}_k$ can be constructed using various methods, such as setting it to the identity matrix, employing the SR1 update, defining it as $\bar{H}_k \coloneqq \bar{\nabla}_{\bx}^2 \mathcal{L}_k$, or taking the average of $\bar{\nabla}_{\bx}^2 \mathcal{L}_k$. However, to ensure second-order stationarity, we specifically set $\bar{H}_k$ to $\bar{H}_k \coloneqq \bar{\nabla}_{\bx}^2 \mathcal{L}_k$, since we will leverage the curvature information of $\bar{\nabla}_{\bx}^2 \mathcal{L}_k$ to explore its eigen directions. Further details on this construction are in Sections \ref{sec:2.2} and \ref{sec:5}.

In the remainder of this section, we first introduce step computation, a key component of our algorithm, and then describe the algorithm.

\subsection{Step computation}\label{sec:2.1}

Given the iterate $\bx_k$ and the trust-region radius $\Delta_k$ in the $k$-th iteration, we formulate the trust-region SSQP subproblem to solve for the trial step $\Delta\bx_k$ by using a quadratic approximation of the objective and a linear approximation of the constraints in \eqref{Intro_StoProb}, with an additional trust-region constraint:
\begin{align}\label{def:SQPsubproblem}
\min_{\Delta\bx\in\mR^d}&\quad \frac{1}{2}\Delta\bx^T\barH_k\Delta\bx+\barg_k^T\Delta\bx,\quad\quad\text{s.t.}\quad c_k+G_k\Delta\bx=\b0,\;\;\;\|\Delta\bx\|\leq\Delta_k.
\end{align} 
Due to the presence of the trust-region constraint, we need to deal with a potential infeasibility issue:
\begin{equation*}
\{\Delta\bx\in\mR^d:c_k+G_k\Delta\bx=\b0\}\cap\{\Delta\bx\in\mR^d:\|\Delta\bx\|\leq\Delta_k\}=\emptyset.
\end{equation*}
To address this issue, we follow the existing literature \citep{Vardi1985Trust, Byrd1987Trust, Omojokun1989Trust, Fang2024Fully, Fang2024Trust} by relaxing the linear constraint in \eqref{def:SQPsubproblem} via the trial step decomposition. In particular, we decompose the trial step $\Delta\bx_k$ into two orthogonal components, $\Delta\bx_k = \bw_k + \bt_k$, which are computed separately. The normal step $\bw_k \in \text{im}(G_k^T)$ accounts for feasibility, while the tangential step $\bt_k \in \ker(G_k)$ accounts for optimality. To satisfy the trust-region constraint, we further decompose the trust-region radius to control the lengths of the normal and tangential steps, respectively. Depending on whether we aim to achieve first- or second-order convergence, the trial step $\Delta\bx_k$ is computed using either a gradient step or an eigen step. The gradient step reduces the KKT residual to achieve first-order convergence, whereas the eigen step explores the negative curvature of the reduced Lagrangian Hessian to achieve second-order convergence. However, when searching for second-order stationary points, the Maratos effect \citep{Conn2000Trust} can cause iterates to stagnate at saddle points. To mitigate this special issue in constrained optimization, we compute a second-order correction (SOC) step and append it to the trial step whenever the Maratos effect is detected.

\subsubsection{Gradient steps}\label{subsec:2.1.1}

When computing the gradient step, we decompose the trust-region radius as 
\begin{equation*}
\breve{\Delta}_k=\frac{\|c_k^{RS}\|}{\|\bar{\nabla}\L_k^{RS}\|}\cdot\Delta_k\quad\quad\quad\text{ and  }\quad\quad\quad\tilde{\Delta}_k=\frac{\|\bar{\nabla}_{\bx}\L_k^{RS}\|}{\|\bar{\nabla}\L_k^{RS}\|}\cdot\Delta_k
\end{equation*}
to control the lengths of $ \bw_k$ and $\bt_k$, respectively, and have $\Delta\bx_k = \bw_k + \bt_k$. Here, $c^{RS}_k\coloneqq c_k/\|G_k\|$, $\bar{\nabla}_{\bx}\L_k^{RS} \coloneqq \bar{\nabla}_{\bx}\L_k/\|\barH_k\|$, $\bar{\nabla}\L_k^{RS} \coloneqq (\bar{\nabla}_{\bx}\L_k^{RS},c^{RS}_k)$ denote \textit{rescaled} feasibility, optimality, and KKT residual vectors, respectively. Decomposing the radius based on the rescaled residuals preserves the scale invariance property, so that if the objective and/or constraints are scaled by a (positive) scalar, the radius decomposition and further step computation remain unchanged. Such a desirable property cannot be achieved if using original residuals $\|c_k\|$ and $\|\bnabla_\bx\mL_k\|$ in the above radius decomposition. \;\;

Suppose $G_k$ has full row rank (as assumed in Assumption \ref{assump:4-1}), we define 
\begin{equation*}
\bv_k \coloneqq -G_k^T[G_kG_k^T]^{-1}c_k.
\end{equation*}
When $G_k$ is rank deficient, the normal step is defined either as the least-squares solution $\bv_k = \arg\min_{\bv\in\text{im}(G_k^T)}\|c_k + G_k\boldsymbol{v}\|^2$ or as the trust-region solution $\bv_k = \arg\min_{\bv\in\text{im}(G_k^T)}  \|c_k+ G_k\boldsymbol{v}\|^2$ s.t. $\|\boldsymbol{v}\|\leq w \|G_k^Tc_k\|$ for some $w>0$. However, in this rank-deficient case, the method may not aim to converge to a stationarity point but rather to minimize the infeasibility $\|c(\bx)\|^2$. We point to \cite{Berahas2023Stochastic} for further details, while restrict our attention here to the full-row-rank case.
Without the trust-region constraint, $c_k+G_k\Delta\bx_k=\b0$ would imply $\bw_k = \bv_k$ since $G_k\bt_k=\b0$. However, the trust-region constraint enforces us to set the normal step $\bw_k$ by shrinking $\bv_k$ as
\begin{equation}\label{eq:normal_step}
\bw_k = \bargamma_k\bv_k \quad\text{with}\quad \bargamma_k \coloneqq \min\{ \breve{\Delta}_k/\|\bv_k\|,\; 1 \}.
\end{equation}
The tangential step $\bt_k$ is solved from
\begin{equation}\label{eq:Sto_tangential_step1}
{\min_{\bt\in\mR^{d}}\; m(\bt) \coloneqq\frac{1}{2}\bt^T\barH_k\bt+(\barg_k+\barH_k\bw_k)^T\bt,\quad  \text{s.t.}\quad G_k\bt=\b0,\quad \|\bt\|\leq\tilde{\Delta}_k.}
\end{equation}
Since any feasible solution $\bt_k$ of \eqref{eq:Sto_tangential_step1} can be expressed as $\bt_k=Z_k\bu_k$, where $Z_k\in \mR^{d\times (d-m)}$ forms an orthonormal basis of $\ker(G_k)$ and $\bu_k\in\mR^{d-m}$.
We can rewrite \eqref{eq:Sto_tangential_step1} as
\begin{equation}\label{eq:Sto_tangential_step}
\min_{\bu\in\mR^{d-m}}\quad \tilde{m}(\bu) \coloneqq\frac{1}{2}\bu^TZ_k^T\barH_kZ_k\bu+(\barg_k+\barH_k\bw_k)^TZ_k\bu,\quad\quad \text{s.t.}\quad\|\bu\|\leq\tilde{\Delta}_k,
\end{equation}
which is a trust-region problem for unconstrained optimization.  For the trust-region constraint in \eqref{eq:Sto_tangential_step} we use the observation that $\|\bt\|^2=\bu^TZ_k^TZ_k\bu=\|\bu\|^2$.

We only require $\bt_k$ to achieve a fixed fraction $\kappa_{fcd}\in(0,1]$ of the  Cauchy reduction. In particular, let $\bt_k^C = Z_k\bu_k^C$ denote the Cauchy point  \cite[see][(4.11) for the formula]{Nocedal2006Numerical}, then we require \cite[see][Lemma 4.3 for the last equality]{Nocedal2006Numerical}:
\begin{multline}\label{eq:cauchy1}
m(\bt_k)-m(\b0) \leq \kappa_{fcd}(m(\bt_k^C)-m(\b0)) = \kappa_{fcd}(\tilde{m}(\bu_k^C)-\tilde{m}(\b0))
\\
=-\frac{\kappa_{fcd}}{2}\|Z_k^T(\barg_k+\barH_k\bw_k)\|\min\left\{\tilde{\Delta}_k,\frac{\|Z_k^T(\barg_k+\barH_k\bw_k)\|}{\|Z_k^T\barH_kZ_k\|}\right\}.
\end{multline}
Condition \eqref{eq:cauchy1} is standard in the trust-region literature, which is achievable by computing the Cauchy point of \eqref{eq:Sto_tangential_step} or applying various approaches to solve \eqref{eq:Sto_tangential_step}, including the two-dimensional subspace minimization method, the dogleg method, and the Steihaug's algorithm \citep[see][Chapter 4 for more details]{Nocedal2006Numerical}.

\subsubsection{Eigen steps}\label{subsec:2.1.2}

We define $\bartau_k$ to be the smallest eigenvalue of $Z_k^T\barH_kZ_k$ and $\bartau_k^+\coloneqq |\min\{\bartau_k,0\}|$. Parallel to the gradient step $\Delta\bx_k = \bw_k + \bt_k$, the normal step $\bw_k$ is still computed as in \eqref{eq:normal_step} but the radius decomposition is replaced by
\begin{equation*}
\breve{\Delta}_k=\frac{\|c_k^{RS}\|}{\|(c_k^{RS},\bartau_k^{RS+})\|}\cdot\Delta_k\quad\quad\quad\text{ and  }\quad\quad\quad\tilde{\Delta}_k=\frac{\bartau_k^{RS+}}{\|(c_k^{RS},\bartau_k^{RS+})\|}\cdot\Delta_k,
\end{equation*}
where $\bartau_k^{RS+}\coloneqq \bartau_k^+/\|\barH_k\|$ is the rescaled negative curvature and $(c_k^{RS},\bartau_k^{RS+})\in\mR^{m+1}$ denotes the vertical concatenation of $c_k^{RS}$ and $\bartau_k^{RS+}$. The tangential step is still $\bt_k=Z_k\bu_k$ with $\bu_k$ (inexactly) solved from \eqref{eq:Sto_tangential_step}, but the reduction condition is adjusted to
\begin{equation}\label{eq:reduction_eigen_step}
(\barg_k+\barH_k\bw_k)^TZ_k\bu_k\leq 0,\quad\quad \|\bu_k\| \leq \tilde{\Delta}_k,\quad\quad \bu_k^TZ_k^T\barH_kZ_k\bu_k \leq-\kappa_{fcd}\cdot\bar{\tau}_k^+\tilde{\Delta}_k^2.
\end{equation}
When the above conditions are satisfied, $\bt_k=Z_k\bu_k$ achieves the curvature reduction:$\quad$
\begin{equation}\label{nequ:2}
{m(\bt_k)} - m(\b0) =  \tilde{m}(\bu_k) - \tilde{m}(\b0) \leq -\frac{\kappa_{fcd}}{2}\bar{\tau}_k^+\tilde{\Delta}_k^2<0.
\end{equation}
where we use $\kappa_{fcd}\in(0,1]$ to denote the fraction in both gradient steps and eigen steps for simplicity.

Conditions in \eqref{eq:reduction_eigen_step} are standard in the literature \citep[e.g., see Chapter 7.5 in][]{Conn2000Trust} and can be satisfied by computing $\bu_k$ in different approaches. For example, we can compute $\barbzeta_k$, the eigenvector of $Z_k^T\barH_kZ_k$ corresponding to the eigenvalue $\bartau_k$, and then rescale it as $\barbzeta_k^{RS}\coloneqq \pm \barbzeta_k\cdot \tilde{\Delta}_k/\|\barbzeta_k\|$. The rescaled vector $\barbzeta_k^{RS}$ satisfies \eqref{eq:reduction_eigen_step} with $\kappa_{fcd}=1$, provided that $\pm$ is properly chosen. In practice, it suffices to compute a good approximation of $\barbzeta_k$, which can be achieved by employing methods such as truncated conjugate gradient and truncated Lanczos methods.

\subsubsection{Second-order correction steps}\label{subsec:2.1.3}

It is observed by \cite{Byrd1987Trust} that when $\bx_k$ is close to a saddle point, the (gradient or eigen) step $\Delta\bx_k$ may increase $f(\bx)$ and $\|c(\bx)\|$ simultaneously. Since no reduction is made by $\Delta\bx_k$, the current iterate is not updated. Even worse, this issue cannot be resolved by reducing the stepsize, implying that the iterate is trapped at the saddle point. This phenomenon, known as the \textit{(second-order) Maratos effect}, arises from the inaccurate linear approximation of constraints.
To address this, the second-order correction (SOC) step $\bd_k$ is computed to better capture the curvature of the constraints:
\begin{equation}\label{def:correctional_step}
\bd_k=-G_k^T[G_kG_k^T]^{-1}\cbr{c(\bx_k+\Delta\bx_k)-c_k-G_k\Delta\bx_k}.
\end{equation}
We will not compute SOC steps in every iteration, but only when the Maratos effect is likely to occur, see Section \ref{sec:2.2} Case 2 below for details. 

\begin{remark}
In this remark, we briefly discuss the per-iteration computational cost of trust-region methods. We first consider the gradient step for achieving first-order stationarity. Without the trust-region constraint, the subproblem in line-search methods is equivalent to solving a linear system of size $d+m$. For a standard linear system solver such as conjugate gradient (CG), the computational cost arises from matrix-vector products, and is $\mathcal{O}((d+m)^2)$ flops per CG iteration with at most $d+m$ CG iterations in total (it often terminates earlier).
As a comparison, trust-region methods involve computing a normal step \eqref{eq:normal_step} and a tangential step \eqref{eq:Sto_tangential_step1}, with the latter dominating the computational cost. In particular, the tangential step requires solving a trust-region-constrained subproblem \eqref{eq:Sto_tangential_step1}, which can be handled by a projected CG solver (as used in Knitro). 
As preparation for the projected CG method, we form $G_k G_k^T$ ($\mathcal{O}(m^2 d)$) and compute its Cholesky factorization ($\mathcal{O}(m^3)$), leading to a \textit{one-time} cost of $\mathcal{O}(m^2 d + m^3)$. Each projected CG iteration then involves matrix-vector products ($\mathcal{O}(d^2)$) and projections onto $\ker(G_k)$ via triangular solves ($\mathcal{O}(dm + m^2)$), leading to $\mathcal{O}((d+m)^2)$ flops per projected CG iteration. Thus, the total cost of solving the tangential step is
\begin{equation*}
\mathcal{O}\big(m^2 d + m^3 + (d+m)^2 \times \text{\# of projected CG iterations}\big).
\end{equation*}
Note that the number of projected CG iterations is typically much smaller than the worst-case bound of $d-m$ (which corresponds to solving the subproblem exactly). In our setting, we only require achieving a fraction $\kappa_{fcd} \in (0,1]$ of the Cauchy reduction and never solve the tangential subproblem exactly.

We now turn to the eigen step for achieving second-order stationarity. We focus on trust-region methods, as line-search methods with second-order convergence are less explored in the literature. The eigen step has to additionally compute the smallest eigenvalue of the reduced Lagrangian Hessian, and the normal and tangential steps then follow the same computation as discussed above. The smallest eigenvalue (and corresponding eigenvector) can be derived using a matrix-free Lanczos procedure. Similar to projected CG method, it requires forming $G_kG_k^T$ ($\mathcal{O}(m^2d)$), performing a Cholesky factorization ($\mathcal{O}(m^3)$), and each Lanczos iteration costs $\mathcal{O}((d+m)^2)$ flops in the general dense setting. Again, the total number of Lanczos iterations depends on the spectral gap and the desired accuracy, and is often smaller than $d-m$ in practice. 

\end{remark}

\subsection{Trust-region SSQP design}\label{sec:2.2}

We now present the design of our trust-region SSQP method and summarize it in Algorithm \ref{Alg:STORM}. Each iteration of the method comprises four components: (i) estimating the objective value, gradient, and Hessian using the probabilistic oracles introduced later in Section \ref{sec:3}; (ii) computing a trial step and/or an SOC step as described in Section \ref{sec:2.1}; (iii) updating the parameter of the merit function; and (iv) updating the iterate and the trust-region radius.

We use $\alpha$ to indicate the final goal of stationarity, with $\alpha=0$ for first-order stationarity and $\alpha=1$ for second-order stationarity. In the $k$-th iteration, given the iterate $\bx_k$, the trust-region radius $\Delta_k$, and the merit parameter $\barmu_k$, we first obtain the gradient estimate $\barg_k$. Then, we compute the Lagrangian multiplier $\barblambda_k=-[G_kG_k^T]^{-1}G_k\barg_k$ and the Lagrangian gradient $\bar{\nabla}\L_k = \barg_k + G_k^T  \barblambda_k$. For the Hessian estimate, 

\noindent $\bullet$ \textbf{if $\alpha=0$:} we generate any matrix $\barH_k$ to approximate the Lagrangian Hessian $\nabla_{\bx}^2\L_k$ and set $\bartau_k^+=0$.

\vskip0.1cm
\noindent $\bullet$ \textbf{if $\alpha=1$:} we obtain the Hessian estimate $\bar{\nabla}^2 f_k$, compute $\barH_k=\bar{\nabla}^2 f_k+ \sum_{i=1}^{m}\barblambda_k^i\nabla^2 c_k^i$, and set $\bartau_k$ as the smallest eigenvalue of $Z_k^T\barH_kZ_k$ and $\bartau_k^+=|\min\{\bartau_k,0\}|$.

\vskip0.1cm

Then, we decide whether to perform a gradient step or an eigen step by checking
\begin{equation}\label{eq:cri_step_comput}
\|\bar{\nabla}\L_k\|\min\left\{\Delta_k,\frac{\|\bar{\nabla}\L_k\|}{\|\barH_k\|}\right\}\geq  \bartau_k^+\Delta_k\left(\Delta_k+\|c_k\|\right).
\end{equation}
If \eqref{eq:cri_step_comput} holds, we compute the gradient step $\Delta\bx_k$ as in Section \ref{subsec:2.1.1}; otherwise, we compute the eigen step $\Delta\bx_k$ as in Section \ref{subsec:2.1.2}.

Next, we define the predicted (local) model reduction of an (estimated) $\ell_2$ merit function $\L_{\mu}(\bx)=f(\bx)+\mu\|c(\bx)\|$ as
\begin{equation}\label{def:Pred_k}
\text{Pred}_k=\barg_k^T\Delta\bx_k+\frac{1}{2}\Delta\bx_k^T\barH_k\Delta\bx_k+\barmu_k(\|c_k+G_k\Delta\bx_k\|-\|c_k\|),
\end{equation}
and update the merit parameter $\barmu_k\leftarrow\rho\barmu_k$ until
\begin{equation}\label{eq:threshold_Predk}
\text{Pred}_k\leq -\frac{\kappa_{fcd}}{2} \max\left\{\|\bar{\nabla}\L_k\|\min\left\{\Delta_k,\frac{\|\bar{\nabla}\L_k\|}{\|\barH_k\|}\right\},\bartau_k^+\Delta_k\left(\Delta_k+\|c_k\|\right)\right\}.
\end{equation}
With the updated merit parameter, we compute the (estimated) actual reduction of the  merit function $\L_{\barmu_k}(\bx)$. Specifically, we set $\bx_{s_k}=\bx_k+\Delta\bx_k$, obtain the function estimates $\barf_k$ and $\barf_{s_k}$, and compute the (estimated) actual reduction as
\begin{equation}\label{def:Ared_k}
\text{Ared}_k= \bar{\L}_{\barmu_k}(\bx_{s_k})-\bar{\L}_{\barmu_k}(\bx_k) =\barf_{s_k}-\barf_k+\barmu_k(\|c_{s_k}\|-\|c_k\|).
\end{equation}
Finally, we check the following conditions to update $\bx_k$ and $\Delta_k$:
\begin{equation}\label{nequ:3}
\text{(a):}\;\;\; \frac{\text{Ared}_k-\vartheta_\alpha}{\text{Pred}_k}\geq\eta\hskip0.8cm \text{and}\hskip0.8cm \text{(b):} \;\;\; \max\left\{\frac{\|\bnabla\L_k\|}{\max\{1,\|\barH_k\|\}},\bartau_k^+\right\}\geq \eta\Delta_k, 
\end{equation}
where $\vartheta_\alpha = 2\epsilon_f$ if $\alpha=0$ and $\vartheta_\alpha =2 \epsilon_f+\epsilon_g^{3/2}$ if $\alpha=1$. ($\epsilon_f,\epsilon_g>0$ are irreducible noises in the objective value and gradient estimates, respectively. See Section \ref{sec:3} for details).$\quad\;\;$

\vskip0.2cm

\noindent $\bullet$ \textbf{Case 1: (\ref{nequ:3}a) holds.} We update the iterate as $\bx_{k+1}=\bx_{s_k}$. Furthermore, if (\ref{nequ:3}b) holds, we increase the trust-region radius by $\Delta_{k+1}=\min\{\gamma\Delta_k,\Delta_{\max}\}$. Otherwise, we decrease the trust-region radius by $\Delta_{k+1}=\Delta_k/\gamma$.

\vskip0.2cm

\noindent $\bullet$ \textbf{Case 2: (\ref{nequ:3}a) does not hold and $\alpha = 1$.} In this case, we decide whether to perform a SOC step to recheck (\ref{nequ:3}a). If $\|c_k\|\leq r$, we compute a SOC step $\bd_k$ (cf. Section \ref{subsec:2.1.3}), set $\bx_{s_k}=\bx_k + \Delta\bx_k+\bd_k$ and re-estimate $\barf_{s_k}$. Then, we recompute $\text{Ared}_k$ as in \eqref{def:Ared_k} and recheck (\ref{nequ:3}a). If (\ref{nequ:3}a) holds, we go to \textbf{Case 1} above; if  (\ref{nequ:3}a) does not hold, we go to \textbf{Case 3} below. Otherwise, if $\|c_k\|>r$, the SOC step is not triggered and we directly go to \textbf{Case 3} below.

\vskip0.2cm

\noindent $\bullet$ \textbf{Case 3: (\ref{nequ:3}a) does not hold and $\alpha = 0$.} 
We do not update the current iterate by setting $\bx_{k+1}=\bx_k$, and decrease the trust-region radius as $\Delta_{k+1}=\Delta_k/\gamma$.

When $\vartheta_\alpha=0$, (\ref{nequ:3}a) reduces to the standard condition in both deterministic and stochastic trust-region methods \citep[see][]{Powell1990trust, Byrd1987Trust, Omojokun1989Trust, Heinkenschloss2014Matrix,Bandeira2014Convergence,Fang2024Trust}. Inspired by \cite{Cao2023First}, we introduce a relaxation term $\vartheta_\alpha$ in (\ref{nequ:3}a) to accommodate the irreducible noise in the probabilistic oracles. Since both $\text{Ared}_k$ and $\text{Pred}_k$ are negative, subtracting a $\vartheta_\alpha>0$ in the numerator increases the value on the left-hand side of (\ref{nequ:3}a), making it more likely for the iterate to be updated. When $\bx_k$ is updated, we determine whether to increase the trust-region radius based on (\ref{nequ:3}b). 
In principle, (\ref{nequ:3}b) quantifies how confident we are about the trust-region approximation at the given stationarity.
When (\ref{nequ:3}b) is satisfied, the trust-region radius is increased to allow for a larger trial step in the next iteration, potentially enabling greater progress. Conversely, if (\ref{nequ:3}b) is not satisfied, the trust-region radius is reduced to ensure more cautious movement in the subsequent iteration.

\begin{algorithm}[t]
\caption{A Trust-Region SSQP Method}\label{Alg:STORM}
\begin{algorithmic}[1]
\State \textbf{Input:} Initial iterate $\bx_0$ and radius $\Delta_0=\Delta_{\max}$, and parameters $\eta\in(0,1)$, $\kappa_{fcd}\in(0,1]$, $\barmu_0, r, \epsilon_f,\epsilon_g>0$, $\rho,\gamma>1$. 
\State Set $\alpha=0$ for first-order stationarity and $\alpha=1$ for second-order stationarity.
\For {$k=0,1,\cdots$}
\State Obtain $\barg_k$ and compute $\barblambda_k$ and $\bar{\nabla}\L_k$. 
\State If $\alpha=1$, obtain $\bar{\nabla}^2 f_k$, compute $\barH_k$ and the smallest eigenvalue $\bartau_{k}$ of $Z_k^T\barH_kZ_k$, and set $\bartau_{k}^+=|\min\{\bartau_{k},0\}|$. Otherwise, let $\barH_{k}$ be certain approximation of $\nabla^2\mL_k$ and set $\bartau_{k}^+ = 0$. 
\State If \eqref{eq:cri_step_comput} holds, compute $\Delta\bx_k$ as a gradient step; otherwise, compute $\Delta\bx_k$ as an eigen step.
\State Perform $\barmu_k \leftarrow \rho\barmu_k$ until $\text{Pred}_k$ satisfies \eqref{eq:threshold_Predk}.  
\State Set $\bx_{s_k}=\bx_k+\Delta\bx_k$, obtain $\barf_k, \barf_{s_k}$, and compute $\text{Ared}_k$ as in \eqref{def:Ared_k}.
\If {(\ref{nequ:3}a) holds}\Comment{\textbf{(Case 1)}}
\State Set $\bx_{k+1}=\bx_{s_k}$. 
\If{(\ref{nequ:3}b) holds} 
\State Set $\Delta_{k+1}=\min\{\gamma\Delta_k,\Delta_{\max}\}$. 
\Else
\State Set $\Delta_{k+1}=\Delta_k/\gamma$.
\EndIf 
\ElsIf{$\alpha=1$ and $\|c_k\|\leq r$} \Comment{\textbf{(Case 2)}}
\State Compute SOC step $\bd_k$, set $\bx_{s_k}=\bx_k+\Delta\bx_k+\bd_k$, re-estimate $\barf_{s_k}$, and recompute $\text{Ared}_k$. 
\State If (\ref{nequ:3}a) holds, perform Lines 10-15; otherwise, perform Line 20.
\Else \Comment{\textbf{(Case 3)}}
\State Set $\bx_{k+1}=\bx_k$, $\Delta_{k+1}=\Delta_k/\gamma$.
\EndIf
\State Set $\barmu_{k+1}=\barmu_k$.
\EndFor
\end{algorithmic}
\end{algorithm}

\vspace{-0.1cm}

\section{Probabilistic Oracles with Irreducible and Heavy-Tailed Noise}\label{sec:3}

In this section, we introduce probabilistic oracles that accommodate irreducible and heavy-tailed noise. In each iteration, the zeroth-, first-, and second-order probabilistic oracles generate estimates of the objective value $\bar{f}(\bx,\xi)$, the objective gradient $\bar{g}(\bx,\xi)$, and the objective Hessian $\bar{\nabla}^2 f(\bx,\xi)$, respectively, where $\xi$ denotes a random variable defined on some probability space. These estimates are required to satisfy certain adaptive accuracy conditions with a high but fixed probability. Our oracle conditions do not prespecify a specific method for generating estimates and allow for estimates to have irreducible noise ($\epsilon_f, \epsilon_g, \epsilon_h$ below), leading to biased estimates. This is a significant extension of \cite{Bandeira2014Convergence, Blanchet2019Convergence, Fang2024Trust, Na2022adaptive, Na2023Inequality}, which required the estimation noise to diminish with a fixed probability.$\hskip1.5cm$

Furthermore, we remove all parametric assumptions about the estimation noise. In particular, \cite{Cao2023First, Berahas2025Sequential} assumed that the noise in the zeroth-order oracle follows a sub-exponential distribution in each iteration (see \eqref{eq4} below). Such a condition essentially assumes the existence of infinite-order moments, thereby excluding all distributions with heavy tails and limiting the scope of problems their analysis can cover. Additionally, the light-tailed noise condition in the zeroth-order oracle contrasts with the noise conditions in the first- and second-order oracles, which indeed allow for heavy-tailed noise. To resolve this discrepancy, as shown in \eqref{eq3}, we relax the noise condition to require only the existence of a $(1+\delta)$-order moment for any small $\delta > 0$, thereby covering noise from a broad class of heavy-tailed distributions.

Before introducing oracle conditions, we first formalize the randomness of the method. Define $\F_{-1}\subseteq\F_0\subseteq\F_1\cdots$ as a filtration of $\sigma$-algebras, where $\mF_{k-1} = \sigma(\{\bx_i\}_{i=0}^{k})$, $\forall k\geq 0$ contains all the randomness before performing the $k$-th iteration. Let $\F_{k-0.5} = \sigma\big(\{\bx_i\}_{i=0}^{k}$ $\cup \{\barg_k, \bnabla^2 f_k\}\big)$; we find for $k\geq 0$, $\sigma(\bx_k,\Delta_k)\subseteq \F_{k-1}$ and $\sigma(\Delta\bx_k,\barblambda_k,\barmu_k,\bd_k)\subseteq\F_{k-0.5}$.$\quad$

\subsection{Probabilistic oracle conditions}\label{sec:3.1}

Let $\epsilon_h, \epsilon_g, \epsilon_f>0$, $\tilde{\epsilon}_f\in(0, \epsilon_f]$, $\kappa_h, \kappa_g, \kappa_f \geq 0$, and $p_h,p_g,p_f\in(0,1)$ be user-specified parameters, $\Delta_k$ denote the trust-region radius, and recall that $\alpha=0$ corresponds to finding first-order stationary points and $\alpha=1$ corresponds to finding second-order stationary points.

We first introduce the probabilistic second-order oracle defined at the current iterate $\bx_k$. The oracle indicates that the noise of the objective Hessian estimate satisfies an accuracy condition of $\epsilon_h+\O(\Delta_k)$ with probability at least $1-p_h$; that is, it allows for irreducible noise no less than $\epsilon_h$ even if $\Delta_k\rightarrow 0$ \cite[cf.][Corollary 4.14]{Fang2024Trust}.

\begin{definition}[Probabilistic second-order oracle]\label{second-order oracle}
When $\alpha=1$, given $\bx_k$, the oracle computes $\bar{\nabla}^2f_k \coloneqq   \bar{\nabla}^2f(\bx_k,\xi_k^h)$, an estimate of the objective Hessian $\nabla^2 f_k $, such that $\quad\quad$
\begin{equation}\label{def:Ak}
\A_k=\left\{\|\bar{\nabla}^2f_k-\nabla^2 f_k\|\leq\epsilon_h + \kappa_h\Delta_k\right\} \quad\quad \text{satisfies}\quad\quad P(\A_k \mid \F_{k-1})\geq 1- p_h.
\end{equation}
\end{definition}

Recall from Section \ref{sec:2.2} that for first-order convergence ($\alpha=0$), we do not have to estimate the objective Hessian. Instead, we only require an arbitrary matrix $\barH_k$ to superficially approximate the Lagrangian Hessian, as long as its norm is bounded with a fixed probability. Specifically, when $\alpha=0$, we impose \eqref{def:Ak} but re-define $\A_k=\left\{\|\barH_k\|\leq \kappa_B \right\}$ for some constant $\kappa_B \geq 1$.

Next, we introduce the probabilistic first-order oracle defined at $\bx_k$. It indicates that the noise of the gradient estimate satisfies an accuracy condition of $\epsilon_g+\O(\Delta_k^{\alpha+1})$ with probability at least $1-p_g$; thus, it allows for irreducible noise with a level at least $\epsilon_g$.$\quad\quad$

\begin{definition}[Probabilistic first-order oracle]\label{first-order oracle}
Given $\bx_k$, the oracle computes $\barg_k\coloneqq \barg(\bx_k,\xi_k^g)$, an estimate of the objective gradient $g_k$, such that
\begin{equation}\label{def:Bk}
\B_k=\{\|\barg_k-g_k\|\leq\epsilon_g + \kappa_g\Delta_k^{\alpha+1}\} \quad\quad \text{satisfies}\quad\quad P(\B_k \mid \F_{k-1})\geq 1 - p_g.
\end{equation}
\end{definition}

Finally, we introduce the zeroth-order probabilistic oracle, which is defined at both the current iterate $\bx_k$ and the trial iterate $\bx_{s_k}$ (since we need to evaluate the objective value for both points). Here, $\bx_{s_k}= \bx_k+\Delta\bx_k$ if the SOC step is not performed and $\bx_{s_k}= \bx_k+ \Delta\bx_k+ \bd_k$ if the SOC step is performed.

\begin{definition}[Probabilistic zeroth-order oracle]\label{def:heavy-tailed oracle}
Given $\bx_k$ and $\bx_{s_k}$, the oracle computes $\barf(\bx_k,\xi_k^f)$ and $\barf(\bx_{s_k},\xi_{s_k}^f)$, which are estimates of the objective function values $f(\bx_k)$ and $f(\bx_{s_k})$. Let $e(\bx,\xi) = |\barf(\bx,\xi)-f(\bx)|$ be the absolute error with $e_k\coloneqq e(\bx_k,\xi_k^f)$ and $e_{s_k}\coloneqq e(\bx_{s_k},\xi_{s_k}^f)$. The zeroth-order oracle is assumed to satisfy the following three conditions.

\noindent$\bullet$\textbf{(i)} The absolute errors $e_k$ and $e_{s_k}$ are sufficiently small with a fixed probability:
\begin{equation}\label{def:Ck}
\C_k=\left\{\max\left(e_k,e_{s_k} \right)\leq \epsilon_f + \kappa_f\Delta_k^{\alpha+2}\right\} \quad\quad \text{satisfies}\quad\quad P(\C_k\mid \F_{k-1/2})\geq 1 - p_f.
\end{equation}
\noindent$\bullet$\textbf{(ii)} The mean absolute errors are sufficiently small:
\begin{equation}\label{eq1}
\max\left\{\mE[ e_k \mid \F_{k-1}],\mE[e_{s_k} \mid \F_{k-1/2}]\right\}  \leq \tilde{\epsilon}_f.
\end{equation}	
\noindent$\bullet$\textbf{(iii)} One of the following tail conditions is satisfied:  
\vskip4pt 
\noindent \textbf{(iii.1) Heavy-tailed condition.} $e_k$ and $e_{s_k}$ have bounded $1+\delta$ moment for some $\delta > 0$. In other words, for some constant $\Upsilon_f>0$,  we have
\begin{equation}\label{eq3}
\max \left\{\mE \left[ \left|e_k-\mE\left[e_k\mid \F_{k-1}\right] \right|^{1+\delta}\mid \F_{k-1}\right],\mE \left[ \left|e_{s_k}-\mE\left[e_{s_k}\mid \F_{k-1/2}\right] \right|^{1+\delta}\mid \F_{k-1/2}\right] \right\} \leq \Upsilon_f.
\end{equation}
\noindent\textbf{(iii.2) Sub-exponential tail condition.} For some constants $v, b>0$, $e_k$ and $e_{s_k}$ satisfy	
\begin{multline}\label{eq4} 
\max \left\{ \mE\left[e^{\lambda(e_k-\mE[e_k\mid\F_{k-1}])}\mid \F_{k-1}\right],	\mE\left[e^{\lambda(e_{s_k}-\mE[e_{s_k}\mid \F_{k-1/2}])}\mid \F_{k-1/2}\right]  \right\} \\
\leq \exp\left( \frac{\lambda^2v^2}{2}\right), \quad \forall \lambda\in[0,\frac{1}{b}].
\end{multline}
\end{definition}

When \textbf{(iii.1) heavy-tailed condition} is satisfied, we say $e_k,e_{s_k}$ are generated via the \textit{probabilistic heavy-tailed zeroth-order  oracle}, while when \textbf{(iii.2) sub-exponential condition} is satisfied, we say $e_k,e_{s_k}$ are generated via the \textit{probabilistic sub-exponential zeroth-order  oracle}.

\begin{remark}\label{rem:1}

We further discuss the probabilistic zeroth-order oracle in this remark.$\quad$

\noindent $\bullet$ The conditions \textbf{(i)} and \textbf{(ii)} in \eqref{def:Ck} and \eqref{eq1} are standard in the literature \citep{Berahas2025Sequential, Cao2023First}, which, however, do not provide any tail information regarding the noise. Thus, existing literature often additionally imposed \textbf{(iii.2)} for the considered zeroth-order oracle.$\hskip2.5cm$

\vskip3pt
\noindent $\bullet$ For the sub-exponential oracle, if $\tilde{\epsilon}_f < \epsilon_f$, then condition \eqref{def:Ck} can be implied by \eqref{eq4}, provided $v, b$ are sufficiently small (i.e., the probability mass of $e_k, e_{s_k}$ is almost concentrated around their means). See Remark \ref{remark:pf} for a rigorous discussion. However, \eqref{eq4} should only be used to capture the tail behavior of $e_k, e_{s_k}$, and requiring infinitesimally small scalars of $v, b$ is overly stringent for the oracle (though the tail behavior remains sub-exponential). Thus, one may not need to consider the interconnections among conditions \eqref{def:Ck}--\eqref{eq4}.$\quad\quad$

\vskip3pt
\noindent $\bullet$
The heavy-tailed condition in \textbf{(iii.1)} significantly relaxes the sub-exponential condition in \textbf{(iii.2)} by reducing the requirement from the existence of infinite-order moments to a finite $1+\delta$ moment. Leveraging the \textit{Burkholder-type inequality} \citep{Burkholder1973Distribution,Chen2020Rosenthal} and the \textit{martingale Fuk-Nagaev inequality} \citep{Fuk1973Certain,Nagaev1979Large,Fan2017Deviation}, we establish high-probability complexity bounds for any $\delta > 0$.  
By Theorem \ref{thm: 1st} and its subsequent discussion, we know our high-probability complexity bounds directly imply that, for any $\delta > 0$, the algorithm finds a (first- or second-order) $\epsilon$-stationary point in a finite number of iterations \emph{almost surely}. If we suppress the irreducible noise by setting $\epsilon_h=\epsilon_g=\epsilon_f=0$, this result further implies a liminf-type almost-sure convergence; that is, for any run of the method, there exists a subsequence of iterates for which the (first- or second-order) stationarity residuals vanish (cf. Definition \ref{def:stopping_time}). 
We note that although liminf-type almost-sure convergence for second-order stationarity with heavy-tailed oracles is common in the literature \citep{Blanchet2019Convergence, Fang2024Trust}, the above asymptotic result implied from non-asymptotic analysis has a gap compared to the convergence results directly from asymptotic analyses in \cite[Theorem 4.18]{Chen2017Stochastic} and \cite[Theorem 4]{Blanchet2019Convergence}, where the authors established lim-type convergence to first-order stationarity \textit{without imposing any moment conditions} on the zeroth-order oracle. 
That being said, those methods do not incorporate irreducible noise in all orders of the oracles, and have a substantially different convergence analysis based on an ad-hoc random function \cite[e.g.,][(20)]{Blanchet2019Convergence}. 
Overall, it remains an open question whether one can strengthen our high-probability complexity bounds by completely removing the moment conditions in \textbf{(ii)} and \textbf{(iii.1)}.

\vskip3pt
\noindent $\bullet$
In the zeroth-order oracle, we follow the existing literature and impose conditions on the absolute objective value estimation errors $e_k$ and $e_{s_k}$ separately. However, we note that imposing accuracy conditions on the error difference $e_{s_k}-e_k$ is sufficient for the analysis. That is, we can redefine $e_k\coloneqq | (\barf_{s_k} -\barf_k) - (f_{s_k} - f_k) | $ and impose oracle conditions on the newly defined $e_k$.
\end{remark}

\begin{remark}\label{rmk:heavy_tail}
We introduce some existing iteration complexity results under heavy-tailed noise with bounded $1+\delta$ moment. In fact, the bounded $(1+\delta)$-moment condition has been extensively studied in the stochastic gradient descent (SGD) literature for unconstrained nonconvex optimization, although the moment condition in those works refers to the gradient noise.
To handle heavy-tailed gradient noise, existing SGD methods typically incorporate gradient clipping, gradient normalization, momentum, or combinations of these techniques.
For example, \citet{Zhang2020Adaptive} analyzed SGD with gradient clipping and established an iteration complexity of $\mathcal{O}(\epsilon^{-(3\delta+1)/\delta})$; \citet{Cutkosky2021High} proved a high-probability convergence rate using a combination of gradient normalization and clipping; \citet{Nguyen2023Improved} studied SGD with adaptive clipping; \citet{Liu2025Nonconvex,Sun2025Revisiting} demonstrated that gradient normalization alone suffices to achieve the optimal rate; and \cite{Fang2026Normalization} extended the analysis to SGD with stochastic preconditioners.
These methods also yield liminf-type almost-sure convergence to first-order stationary points. Our setting similarly allows heavy-tailed gradient noise; indeed, we impose no moment conditions at all (only the high-probability condition \eqref{def:Bk}) on the gradient noise.

Despite these similarities, the aforementioned approaches differ from our method in two respects. 
First, clipping and normalization in SGD operate on gradient estimates, while our analysis requires analogous control on objective value estimates. 
Second, their methods rely crucially on unbiased gradient estimates, while our model includes irreducible noise in objective value (and gradient) estimates, preventing a direct adaptation of their techniques. Whether clipping- or normalization-type ideas can be effectively applied to objective value estimates remains an open question.

\end{remark}

In the next subsection, we briefly discuss how to construct estimates to satisfy the oracle conditions when the estimation noise is heavy-tailed.

\vspace{-0.1cm}

\subsection{Construction of oracles with heavy-tailed noise}\label{subsec:3.2}

Heavy-tailed noise with a finite $1+\delta$ moment is prevalent across numerous application domains, including deep learning \citep{Mahoney2019Traditional}, reinforcement learning \citep{Medina2016No}, robust regression \citep{Pensia2024Robust}, and quantitative finance \citep{Muller1998Heavy}. Although the variance becomes infinite when $\delta < 1$, we show that probabilistic zeroth-, first-, and second-order oracles defined in Section \ref{sec:3.1} can still be constructed.
We denote the sample sets used to estimate $\bar\nabla^2 f_k, \barg_k, \barf_k, \barf_{s_k}$ by $\xi^h_k$, $\xi_k^g$, $\xi_k^f$, $\xi_{s_k}^f$, respectively, and denote their corresponding sample sizes by $|\cdot|$.
Samples are drawn independently from a distribution $\P$, and each realization is assumed to be unbiased with a finite $1+\delta$ moment. Without loss of generality, we restrict attention to $\delta \in (0,1]$, since bounded higher-order moments $(\delta>1)$ imply bounded lower-order moments.

We explore three estimation strategies: the sample average method, the Median-of-Means (MoM) method, and the finite-difference method (with access only to noisy objective value estimates).
We summarize the sample complexity for each method below, while defer detailed proofs to Appendix \ref{Appendix_1}.

\vskip3pt

\noindent$\bullet$ \textbf{Sample average.} For objective value estimate $\barf_k$, we draw $|\xi_k^f|$ i.i.d. samples at $\bx_k$ with each single realization $F(\bx_k,\xi^f)$ satisfying 
\begin{equation*}
\mE[F(\bx_k,\xi^f)\mid \F_{k-1}]=f_k \quad\quad \text{ and }\quad\quad \mE[|F(\bx_k,\xi^f)-f_k|^{1+\delta}\mid \F_{k-1}]\leq \tilde\Upsilon_f,
\end{equation*}
and set the estimate as $\barf_k = \barf(\bx_k,\xi_{k}^f) = \frac{1}{|\xi_k^f|}\sum_{\xi^f\in \xi_k^f}F(\bx_k,\xi^f)$. Then, by Markov inequality and Burkholder-type inequality (Lemma \ref{append:lemma1}), the zeroth-order oracle conditions \eqref{def:Ck}, \eqref{eq1}, and \eqref{eq3} are satisfied as long as $|\xi_k^f|$ is respectively greater than
\begin{equation}\label{sample_avg_1}
\mathcal{O}\left(\left[\frac{1}{p_f}\right]^{\frac{1}{\delta}}\left[\frac{1}{ \epsilon_f+\kappa_f\Delta_k^{\alpha+2} }\right]^{\frac{1+\delta}{\delta}}\right),\quad\quad 
\mathcal{O}\left( \left[\frac{1}{\tilde{\epsilon}_f}\right]^{\frac{1+\delta}{\delta}}\right), \quad\quad 
\mathcal{O}\left(\left[\frac{\tilde{\Upsilon}_f}{\Upsilon_f}\right]^{\frac{1}{\delta}}\right).
\end{equation}
Noting that the last term is independent of the noise $\epsilon_f, \tilde{\epsilon}_f$ and the same requirements apply to $\bar{f}_{s_k}$, we obtain the zeroth-order sample complexity:
\begin{equation}\label{sample_avg_obj_val_1}
\min\{|\xi_k^f|,|\xi_{s_k}^f|\} \geq \mathcal{O}\left( \left[\frac{1}{p_f}\right]^{\frac{1}{\delta}} \left[\frac{1}{\min\{\epsilon_f+\kappa_f\Delta_k^{\alpha+2},\tilde{\epsilon}_f\}}\right]^{\frac{1+\delta}{\delta}}\right).
\end{equation}
To satisfy first- and second-order oracle conditions in \eqref{def:Ak} and \eqref{def:Bk}, same derivations yield
\begin{equation}\label{sample_avg_gradhess_val_1}   
|\xi_k^g| \geq \mathcal{O}\left( \left[\frac{d}{p_g}\right]^{\frac{1}{\delta}} \left[\frac{\sqrt{d}}{\epsilon_g+\kappa_g\Delta_k^{\alpha+1}}\right]^{\frac{1+\delta}{\delta}}\right),\; |\xi_k^h| \geq \mathcal{O}\left(\left[\frac{d^2}{p_h}\right]^{\frac{1}{\delta}}\left[\frac{d}{\epsilon_h+\kappa_h\Delta_k}\right]^{\frac{1+\delta}{\delta}}\right). 
\end{equation}
\noindent$\bullet$ \textbf{Median-of-Means (MoM).} 
The MoM estimator provides robustness to outliers and heavy-tailed noise. Samples are partitioned into equal-size groups; the empirical mean of each group is computed; and the (coordinate-wise) median of these means is used as the estimator. The MoM method guarantees exponential-type concentration bounds even in finite-moment settings. Essentially, by applying standard MoM results (e.g., \cite[Theorem 3.1]{Devroye2016Sub}, \cite[Lemma 2]{Bubeck2013Bandits}, \cite[Theorem 3]{Lugosi2019Mean}), we can improve the dependence of the sample complexity on failure probabilities from $1/p_f^{1/\delta}, (d/p_g)^{1/\delta}, (d^2/p_h)^{1/\delta}$ in \eqref{sample_avg_obj_val_1} and \eqref{sample_avg_gradhess_val_1} to $\log(1/p_f)$, $\log(d/p_g)$, $\log(d^2/p_h)$. In particular, we have all the oracle conditions satisfied as long as 
\begin{align}\label{MoM_obj_val_1}
& \quad\quad\quad \min\{|\xi_k^f|,|\xi_{s_k}^f|\} \geq \mathcal{O}\bigg(\log\left(\frac{1}{p_f}\right)\left[\frac{1}{\min\{\epsilon_f+\kappa_f\Delta_k^{\alpha+2},\tilde{\epsilon}_f\}}\right]^{\frac{1+\delta}{\delta}}\bigg),\\
& |\xi_k^g| \geq \mathcal{O}\bigg(\log\left(\frac{d}{p_g}\right)\left[\frac{\sqrt{d}}{\epsilon_g+\kappa_g\Delta_k^{\alpha+1}}\right]^{\frac{1+\delta}{\delta}}\bigg),\;\;|\xi_k^h| \geq \mathcal{O}\bigg(\log\left(\frac{d^2}{p_h}\right)\left[\frac{d}{\epsilon_h+\kappa_h\Delta_k}\right]^{\frac{1+\delta}{\delta}}\bigg).
\end{align}

\noindent $\bullet$ \textbf{Finite difference.} We now consider the setting where only noisy objective evaluations are available. In this case, we estimate gradients and Hessians via finite differences. Given $\bx_k$, for any direction $\by$, we assume that each sample realization $F(\bx_k+\boldsymbol{y},\xi^f)$ satisfies 
\begin{equation*}
\mE[F(\bx_k+\by,\xi^f)\mid \F_{k-1},\by] = f(\bx_k+\by), \quad  \mE[|F(\bx_k+\boldsymbol{y},\xi^f)-f(\bx_k+\boldsymbol{y})|^{1+\delta}\mid\F_{k-1},\by]\leq \tilde\Upsilon_f.
\end{equation*}
We also assume that the true gradient $\nabla f$ and Hessian $\nabla^2f$ are Lipschitz continuous with constants $L_{\nabla f}$ and $L_{\nabla^2 f}$, respectively. 

For the finite-difference case, we can estimate the objective values $f_k, f_{s_k}$ using either sample average or MoM method, and their sample complexities are given by \eqref{sample_avg_obj_val_1} and \eqref{MoM_obj_val_1}. Furthermore, the $j$-th gradient entry and the $(i,j)$-th Hessian entry are estimated by 
\begin{align*}
[\barg_k]_j & = \frac{1}{\sigma}\rbr{\barf(\bx_k + \sigma \boldsymbol{e}_j, \xi_k^g)- \barf(\bx_k,\xi_k^g)},\\
[\bar{\nabla}^2 f_k]_{i,j} & = 
\frac{1}{\sigma^2}\rbr{\barf(\bx_k + \sigma \boldsymbol{e}_i + \sigma \boldsymbol{e}_j, \xi_k^h) - \barf(\bx_k + \sigma \boldsymbol{e}_i, \xi_k^h) - \barf(\bx_k + \sigma \boldsymbol{e}_j, \xi_k^h) + \barf(\bx_k, \xi_k^h)},
\end{align*}
where $\be_i, \be_j\in\mR^{d}$ are the $i$-th, $j$-th canonical basis of $\mR^d$. Again, each objective value estimate in the above finite differences can be obtained using either sample average or MoM method. We should mention that, for simplicity, we do not consider more advanced simultaneous perturbation or Gaussian smoothing techniques for finite differences \citep{Spall1998overview}, which may need fewer, dimension-independent number of function evaluations (but also introduce additional randomness through randomized perturbations). In addition, we do not require the independence between $\barg_k$ and $\bar{\nabla}^2 f_k$, meaning that we can use the same set of samples for estimating $\barf(\bx_k + \sigma \boldsymbol{e}_j, \xi_k^g)$ and $\barf(\bx_k + \sigma \boldsymbol{e}_j, \xi_k^h)$ to reduce function evaluations. We write $\xi_k^g, \xi_k^h$ to simply distinguish their sample complexities here.

Let us set $\sigma = \frac{\epsilon_g+\kappa_g\Delta_k^{\alpha+1}}{\sqrt{d}L_{\nabla f}}$ for gradient estimation and $\sigma = \frac{\epsilon_h+\kappa_h\Delta_k}{dL_{\nabla^2 f}}$ for Hessian estimation. If we use sample average to estimate $\bar{f}(\cdot,\xi_k^g)$ and $\bar{f}(\cdot, \xi_k^h)$, then the first- and second-order oracle conditions in \eqref{def:Ak} and \eqref{def:Bk}) are satisfied as long as
\begin{equation}\label{sn:16}
 |\xi_{k}^g| \geq \mathcal{O} \left(\left[\frac{d}{p_g}\right]^{\frac{1}{\delta}}\left[\frac{L_{\nabla f}d}{(\epsilon_g + \kappa_g\Delta_k^{\alpha+1})^2}\right]^{\frac{1+\delta}{\delta}}\right), |\xi_{k}^h| \geq \mathcal{O} \left(\left[\frac{d^2}{p_h}\right]^{\frac{1}{\delta}}\left[\frac{L_{\nabla^2 f}^2d^3}{(\epsilon_h + \kappa_h\Delta_k)^3}\right]^{\frac{1+\delta}{\delta}}\right).
\end{equation}
The failure probabilities $(d/p_g)^{1/\delta}, (d^2/p_h)^{1/\delta}$ are replaced by $\log(d/p_g)$, $\log(d^2/p_h)$ if the MoM method is used to estimate $\bar{f}(\cdot,\xi_k^g)$ and $\bar{f}(\cdot, \xi_k^h)$ in finite differences instead.

\section{High-Probability Complexity Bounds} \label{sec:4}

In this section, we analyze the first- and second-order high-probability complexity bounds for the present trust-region SSQP method. In Section \ref{sec:4.1}, we introduce fundamental lemmas. In Sections \ref{sec:4_heavytail} and \ref{sec:4_subexp}, we establish the complexity bounds under probabilistic heavy-tailed and sub-exponential zeroth-order oracles, respectively. We first state the \mbox{assumption}.

\begin{assumption}\label{assump:4-1}

Let $\Omega\subseteq\mR^d$ be an open convex set containing the iterates and trial points $\{\bx_k, \bx_{s_k}\}$. The objective $f(\bx)$ is twice continuously differentiable and bounded below by $f_{\inf}$ over $\Omega$. The gradient $\nabla f(\bx)$ and Hessian $\nabla^2 f(\bx)$ are both Lipschitz continuous over $\Omega$, with constants $L_{\nabla f}$ and $L_{\nabla^2 f}$, respectively. Analogously, the constraint $c(\bx)$ is twice continuously differentiable, its Jacobian $G(\bx)$ is Lipschitz continuous over $\Omega$ with the constant $L_G$. 
For $1 \leq i \leq m$, the Hessian of the $i$-th constraint, $\nabla^2 c^i(\bx)$, is Lipschitz continuous over $\Omega$ with the constant $L_{\nabla^2 c}$. We also assume that there exist constants $\kappa_c$, $\kappa_{\nabla f}$, $\kappa_{1,G}$, $\kappa_{2,G}>0$ such that
\begin{equation*}
\|c_k\| \leq \kappa_{c},\quad\quad \|\nabla f_k\| \leq \kappa_{\nabla f}, \quad\quad \kappa_{1,G} \cdot I \preceq G_k G_k^T \preceq \kappa_{2,G} \cdot I, \quad\quad\forall k\geq 0.
\end{equation*}

\end{assumption}

Assumption \ref{assump:4-1} is standard in the SQP literature \citep{Byrd1987Trust, Powell1990trust, ElAlem1991Global, Conn2000Trust, Berahas2021Sequential, Berahas2023Stochastic, Curtis2024Stochastic, Fang2024Fully,Fang2024Trust}. 
To analyze first-order complexity, it suffices to assume that $f(\bx)$ and $c(\bx)$ are continuously differentiable, without conditions on Hessians $\nabla^2 f(\bx)$ and $\nabla^2 c^i(\bx)$.
Assumption \ref{assump:4-1} implies that $G_k$ has full row rank, $\sqrt{\kappa_{1,G}}\leq\|G_k\|\leq\sqrt{\kappa_{2,G}}$ and $\|G_k^T[G_kG_k^T]^{-1}\| \leq 1/\sqrt{\kappa_{1,G}}$. Consequently, both the true Lagrangian multiplier $\blambda_k = -[G_kG_k^T]^{-1}G_k\nabla f_k$ and the estimated counterpart $\barblambda_k = -[G_kG_k^T]^{-1}G_k\barg_k$ are well defined. Additionally, $\|\nabla^2 f(\bx)\| \leq L_{\nabla f}$ and $\|\nabla^2 c^i(\bx)\| \leq L_G$ for $1\leq i\leq m$ over $\Omega$.

\subsection{Fundamental lemmas}\label{sec:4.1}

In this section, we analyze basic properties of the trust-region SSQP method. Lemmas in this section apply to both the heavy-tailed and the sub-exponential oracles.
We first show that on the event $\A_k\cap\B_k$ (cf. \eqref{def:Ak} and \eqref{def:Bk}), the Hessian estimate $\barH_k$ constructed for second-order convergence has an upper bound. 

\begin{lemma}\label{lemma:bounded_Hessian}
Under Assumption \ref{assump:4-1} with $\alpha=1$, there exists a positive constant $\kappa_B\geq 1$ such that $\|\barH_k\|\leq \kappa_B$ on the event $\A_k\cap\B_k$.
\end{lemma}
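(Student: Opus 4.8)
\textbf{Proof plan for Lemma \ref{lemma:bounded_Hessian}.}

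The plan is to bound $\|\barH_k\|$ by controlling each piece in its definition $\barH_k=\bar{\nabla}^2 f_k+\sum_{i=1}^{m}\barblambda_k^i\nabla^2 c_k^i$. First I would use the triangle inequality to write $\|\barH_k\|\leq \|\bar{\nabla}^2 f_k\|+\sum_{i=1}^m|\barblambda_k^i|\,\|\nabla^2 c_k^i\|$. For the Hessian term, on the event $\A_k$ (cf. \eqref{def:Ak}) we have $\|\bar{\nabla}^2 f_k-\nabla^2 f_k\|\leq \epsilon_h+\kappa_h\Delta_k$, and since $\Delta_k\leq\Delta_{\max}$ always (the radius is initialized at $\Delta_{\max}$ and never exceeds it by the update rule in Case 1), together with $\|\nabla^2 f_k\|\leq L_{\nabla f}$ from Assumption \ref{assump:4-1}, this gives $\|\bar{\nabla}^2 f_k\|\leq L_{\nabla f}+\epsilon_h+\kappa_h\Delta_{\max}$.

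Next I would bound the multiplier estimate $\barblambda_k=-[G_kG_k^T]^{-1}G_k\barg_k$. Writing $\barg_k=(\barg_k-g_k)+g_k$, on the event $\B_k$ we have $\|\barg_k-g_k\|\leq \epsilon_g+\kappa_g\Delta_k^{\alpha+1}\leq \epsilon_g+\kappa_g\Delta_{\max}^{2}$ (using $\alpha=1$), and $\|g_k\|=\|\nabla f_k\|\leq\kappa_{\nabla f}$ by Assumption \ref{assump:4-1}. Combined with the bound $\|[G_kG_k^T]^{-1}G_k\|=\|G_k^T[G_kG_k^T]^{-1}\|\leq 1/\sqrt{\kappa_{1,G}}$ noted after Assumption \ref{assump:4-1}, this yields $\|\barblambda_k\|\leq (\kappa_{\nabla f}+\epsilon_g+\kappa_g\Delta_{\max}^2)/\sqrt{\kappa_{1,G}}$, hence $\sum_{i=1}^m|\barblambda_k^i|\leq\sqrt{m}\,\|\barblambda_k\|$ is bounded. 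Finally, each $\|\nabla^2 c_k^i\|\leq L_G$ over $\Omega$ by Assumption \ref{assump:4-1}.

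Collecting these, I would set
\[
\kappa_B \coloneqq \max\Bigl\{1,\; L_{\nabla f}+\epsilon_h+\kappa_h\Delta_{\max} + \frac{\sqrt{m}\,(\kappa_{\nabla f}+\epsilon_g+\kappa_g\Delta_{\max}^2)}{\sqrt{\kappa_{1,G}}}\, L_G \Bigr\},
\]
which is a positive constant depending only on problem constants and algorithm parameters, and conclude $\|\barH_k\|\leq\kappa_B$ on $\A_k\cap\B_k$. This argument is entirely routine; the only point requiring a moment of care is invoking $\Delta_k\leq\Delta_{\max}$ for all $k$, which follows immediately from the radius update rules in Algorithm \ref{Alg:STORM} (the radius is capped at $\Delta_{\max}$ whenever it is increased and is otherwise decreased), so there is no genuine obstacle here — the lemma is essentially a bookkeeping step that isolates the constant $\kappa_B$ used elsewhere in the analysis.
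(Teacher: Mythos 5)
Your proposal is correct and follows essentially the same route as the paper: triangle inequality on $\barH_k=\bar{\nabla}^2 f_k+\sum_i\barblambda_k^i\nabla^2 c_k^i$, the oracle events $\A_k$ and $\B_k$ together with $\Delta_k\leq\Delta_{\max}$ to control the estimation errors, and the bound $\|[G_kG_k^T]^{-1}G_k\|\leq 1/\sqrt{\kappa_{1,G}}$ for the multiplier. The only cosmetic difference is that the paper splits $\barblambda_k$ as $(\barblambda_k-\blambda_k)+\blambda_k$ whereas you split $\barg_k$ as $(\barg_k-g_k)+g_k$ inside the multiplier formula; both yield the identical constant $\kappa_B$.
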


\begin{proof}
Recall that $\barH_k = \bar{\nabla}^2f_k+\sum_{i=1}^m\barblambda_k^i\nabla^2c^i_k$, we have
\begin{align*}
\|\barH_k\| & \leq \|\bar{\nabla}^2f_k-\nabla^2f_k\|+\|\nabla^2f_k\| + \|\sum_{i=1}^m(\barblambda_k^i-\blambda_k^i)\nabla^2c^i_k\| +\|\sum_{i=1}^m\blambda_k^i\nabla^2c^i_k\| \nonumber\\
& \leq \|\bar{\nabla}^2f_k-\nabla^2f_k\|+\|\nabla^2f_k\|+\|\barblambda_k-\blambda_k\| \big\{\sum_{i=1}^m\|\nabla^2c^i_k\|^2\big\}^{1/2}+\|\blambda_k\| \big\{\sum_{i=1}^m\|\nabla^2c^i_k\| \big\}^{1/2}\nonumber\\
& \leq \|\bar{\nabla}^2f_k-\nabla^2f_k\|+L_{\nabla f}+\frac{\sqrt{m} L_G}{\sqrt{\kappa_{1,G}}}\|\barg_k-g_k\|+\frac{\sqrt{m} L_G\kappa_{\nabla f}}{\sqrt{\kappa_{1,G}}},
\end{align*}
where the last inequality follows from Assumption \ref{assump:4-1} and the definitions of $\blambda_k$ and $\barblambda_k$. On the event $\A_k\cap\B_k$, $\|\bar{\nabla}^2f_k - \nabla^2 f_k\|\leq\epsilon_h+\kappa_h\Delta_k$ and $\|\barg_k - g_k\|\leq\epsilon_g+\kappa_g\Delta_k^2$. Since $\Delta_k\leq\Delta_{\max}$, it follows that  
\begin{equation*}
\|\barH_k\| \leq \epsilon_h+ \kappa_h\Delta_{\max}+L_{\nabla f}+\frac{\sqrt{m} L_G}{\sqrt{\kappa_{1,G}}}(\epsilon_g+\kappa_g\Delta_{\max}^2+\kappa_{\nabla f}).
\end{equation*}\vskip -0.2cm
\hskip-0.5cm By setting $\kappa_B=\max\{1,\epsilon_h+\kappa_h\Delta_{\max}+L_{\nabla f}+\sqrt{m} L_G/\sqrt{\kappa_{1,G}}\cdot(\epsilon_g+\kappa_g\Delta_{\max}^2+\kappa_{\nabla f})\}$, we complete the proof.
\end{proof}

In the next lemma, we demonstrate that for the second-order stationarity, the difference between the true Lagrangian Hessian $\nabla_{\bx}^2\L_k$ and its estimate $\barH_k$ is bounded by $\mathcal{O}(\Delta_k)+\mathcal{O}(\epsilon_h+\epsilon_g)$ on the event $\A_k\cap\B_k$. Furthermore, the difference between $\tau_k^+\coloneqq |\min\{\tau_k,0\}|$ and its estimate $\bartau_k^+$ is bounded by the same quantity. This lemma ensures that $\bartau_k^+$ is an accurate estimate of $\tau_k^+$ provided that both the objective gradient and Hessian estimates are accurate.

\begin{lemma}\label{lemma:tau_accurate} 
	
Under Assumption \ref{assump:4-1} with $\alpha=1$, we have $\|\nabla_{\bx}^2\L_k-\barH_k\| \leq \epsilon_H + \kappa_H\Delta_k$ and $| \tau_k^+-\bartau_k^+|\leq \epsilon_H + \kappa_H\Delta_k$ on the event $\A_k\cap\B_k$, where $\kappa_H \coloneqq  \kappa_h+\frac{\sqrt{m}\kappa_gL_G\Delta_{\max}}{\sqrt{\kappa_{1,G}}}$ and $\epsilon_H \coloneqq \epsilon_h+\frac{\sqrt{m}L_G}{\sqrt{\kappa_{1,G}}}\epsilon_g$.
	
\end{lemma}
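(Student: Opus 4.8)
The plan is to prove the bound on $\|\nabla_{\bx}^2\L_k-\barH_k\|$ first and then transfer it to $|\tau_k^+-\bartau_k^+|$ via a standard eigenvalue-perturbation argument. Recall that $\nabla_{\bx}^2\L_k = \nabla^2 f_k + \sum_{i=1}^m \blambda_k^i \nabla^2 c_k^i$ with $\blambda_k = -[G_kG_k^T]^{-1}G_k\nabla f_k$, whereas the estimate is $\barH_k = \bar\nabla^2 f_k + \sum_{i=1}^m \barblambda_k^i \nabla^2 c_k^i$ with $\barblambda_k = -[G_kG_k^T]^{-1}G_k\barg_k$. Subtracting and applying the triangle inequality gives
\begin{equation*}
\|\nabla_{\bx}^2\L_k-\barH_k\| \leq \|\nabla^2 f_k - \bar\nabla^2 f_k\| + \sum_{i=1}^m |\blambda_k^i - \barblambda_k^i|\cdot\|\nabla^2 c_k^i\|.
\end{equation*}
On $\A_k$ the first term is at most $\epsilon_h + \kappa_h\Delta_k$. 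For the second, note $\blambda_k - \barblambda_k = -[G_kG_k^T]^{-1}G_k(\nabla f_k - \barg_k)$, so $\|\blambda_k-\barblambda_k\| \leq \|G_k^T[G_kG_k^T]^{-1}\|\cdot\|\nabla f_k-\barg_k\| \leq (1/\sqrt{\kappa_{1,G}})\|\nabla f_k-\barg_k\|$, which on $\B_k$ (with $\alpha=1$, so the bound reads $\epsilon_g + \kappa_g\Delta_k^2$) is at most $(\epsilon_g + \kappa_g\Delta_k^2)/\sqrt{\kappa_{1,G}}$. Using $\sum_{i=1}^m |\blambda_k^i-\barblambda_k^i| \leq \sqrt{m}\,\|\blambda_k-\barblambda_k\|$ and $\|\nabla^2 c_k^i\| \leq L_G$ from Assumption \ref{assump:4-1}, plus $\Delta_k^2 \leq \Delta_{\max}\Delta_k$ since $\Delta_k \leq \Delta_{\max}$, the second term is bounded by $\frac{\sqrt{m}L_G}{\sqrt{\kappa_{1,G}}}\epsilon_g + \frac{\sqrt{m}\kappa_g L_G \Delta_{\max}}{\sqrt{\kappa_{1,G}}}\Delta_k$. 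Collecting terms yields exactly $\epsilon_H + \kappa_H\Delta_k$ with the stated constants.

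For the eigenvalue part, I would use Weyl's inequality: for symmetric matrices $A,B$, $|\lambda_{\min}(A) - \lambda_{\min}(B)| \leq \|A-B\|$. Apply this to $A = Z_k^T\nabla_{\bx}^2\L_k Z_k$ and $B = Z_k^T\barH_k Z_k$; since $\|Z_k\|=1$ (orthonormal columns), $\|A-B\| = \|Z_k^T(\nabla_{\bx}^2\L_k - \barH_k)Z_k\| \leq \|\nabla_{\bx}^2\L_k-\barH_k\| \leq \epsilon_H+\kappa_H\Delta_k$ on $\A_k\cap\B_k$. Hence $|\tau_k - \bartau_k| \leq \epsilon_H+\kappa_H\Delta_k$, where $\tau_k = \lambda_{\min}(Z_k^T\nabla_{\bx}^2\L_k Z_k)$. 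Finally, the map $x \mapsto |\min\{x,0\}|$ is $1$-Lipschitz, so $|\tau_k^+ - \bartau_k^+| \leq |\tau_k - \bartau_k| \leq \epsilon_H+\kappa_H\Delta_k$, completing the proof.

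I do not anticipate a genuine obstacle here; the lemma is a routine propagation-of-error estimate. The only points requiring mild care are (i) making sure the $\alpha=1$ form of the event $\B_k$ is used, so that $\kappa_g\Delta_k^{\alpha+1} = \kappa_g\Delta_k^2$, and then bounding $\Delta_k^2 \le \Delta_{\max}\Delta_k$ to keep the final bound affine in $\Delta_k$; and (ii) invoking the $1$-Lipschitz property of the negative-part function to pass from $|\tau_k-\bartau_k|$ to $|\tau_k^+-\bartau_k^+|$. Everything else is the triangle inequality, the operator-norm bound $\|G_k^T[G_kG_k^T]^{-1}\|\le 1/\sqrt{\kappa_{1,G}}$ recorded right after Assumption \ref{assump:4-1}, and Weyl's inequality.
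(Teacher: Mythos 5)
Your proposal is correct and follows essentially the same route as the paper: the same triangle-inequality decomposition into the Hessian estimation error and the multiplier-difference term, the same bounds $\|[G_kG_k^T]^{-1}G_k\|\le 1/\sqrt{\kappa_{1,G}}$ and $\Delta_k^2\le\Delta_{\max}\Delta_k$, and the same eigenvalue-perturbation step (the paper proves the Weyl-type bound directly via Rayleigh quotients with the two eigenvectors, which is just the standard proof of the inequality you cite). No gaps.
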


\vspace{-0.22cm}
\begin{proof}
We have 
\begin{align*}
\|\nabla_{\bx}^2\L_k-\barH_k\| & = \|\nabla^2f_k-\bar{\nabla}^2f_k+\sum_{i=1}^m(\blambda_k^i-\barblambda_k^i)\nabla^2c_k^i\| \\
& \leq \|\nabla^2f_k-\bar{\nabla}^2f_k\|+\|\barblambda_k-\blambda_k\|\big\{\sum_{i=1}^m\|\nabla^2c_k^i\|^2\}^{1/2} \\
& \leq \|\nabla^2f_k-\bar{\nabla}^2f_k\|+\frac{\sqrt{m} L_G}{\sqrt{\kappa_{1,G}}}\|g_k-\barg_k\| \quad\quad (\text{Assumption \ref{assump:4-1}})\\
& \leq\underbrace{\left(\kappa_h+\frac{\sqrt{m}\kappa_gL_G\Delta_{\max}}{\sqrt{\kappa_{1,G}}}\right)}_{\kappa_H}\Delta_k + \underbrace{\left(\epsilon_h+\frac{\sqrt{m}L_G}{\sqrt{\kappa_{1,G}}}\epsilon_g\right)}_{\epsilon_H},
\end{align*}
where the last inequality is due to the event $\A_k\cap\B_k$ and $\Delta_k\leq \Delta_{\max}$. Next, we show $|\tau_k-\bartau_k|\leq\kappa_H\Delta_k+ \epsilon_H$. Let $\barbzeta_k$ be a normalized eigenvector corresponding to $\bartau_k$, then
\begin{equation*}
\tau_k-\bartau_k \leq \barbzeta_k^T\left[Z_k^T(\nabla_{\bx}^2\L_k-\barH_k)Z_k\right]\barbzeta_k \leq\|\nabla_{\bx}^2\L_k-\barH_k\|\leq\kappa_H\Delta_k+ \epsilon_H.
\end{equation*}
Let $\bzeta_k$ be a normalized eigenvector corresponding to $\tau_k$, then
\begin{equation*}
\bartau_k-\tau_k \leq \bzeta_k^T\left[Z_k^T(\barH_k-\nabla_{\bx}^2\L_k)Z_k\right]\bzeta_k \leq\|\nabla_{\bx}^2\L_k-\barH_k\|\leq\kappa_H\Delta_k+ \epsilon_H.
\end{equation*}
Combining the above two displays, we have $|\tau_k-\bartau_k|\leq\kappa_H\Delta_k+ \epsilon_H$, which implies $|\tau_k^+-\bartau_k^+|\leq\kappa_H\Delta_k+ \epsilon_H$. We complete the proof.
\end{proof}

Let us define $\L_{\barmu_{k}}^{s_k} \coloneqq \L_{\barmu_{k}}(\bx_{s_k})$ and $\L_{\barmu_{k}}^{k} \coloneqq \L_{\barmu_{k}}(\bx_{k})$, where $\barmu_k$ is the merit parameter in the $k$-th iteration after the update \eqref{eq:threshold_Predk}. Here, $\bx_{s_k}=\bx_k+\Delta\bx_k$ if the SOC step is not performed and $\bx_{s_k}=\bx_k+\Delta\bx_k+\bd_k$ if the SOC step is performed. The following two lemmas examine the difference between the reduction in the merit function (i.e., $\L_{\barmu_{k}}^{s_k}-\L_{\barmu_{k}}^{k}$) and the predicted reduction $\text{Pred}_k$ in \eqref{def:Pred_k} among the first $T$ iterations, for any finite $T$. 
We first show that on the event $\A_k\cap\B_k$ and without the SOC step, the difference has an upper bound $\epsilon_g\Delta_k+\O(\Delta_k^2)$.

\begin{lemma}\label{lemma:diff_ared_pred_wo_corr_step}

Under Assumption \ref{assump:4-1}, for any $T\geq 1$ and any $0 \leq k \leq T-1$, on the event $\A_k\cap\B_k$ and suppose the SOC step is not performed, we have 
\begin{equation*}
\big|\L_{\barmu_{k}}^{s_k}-\L_{\barmu_{k}}^{k}-\text{Pred}_k\big| \leq \epsilon_g\Delta_k + \Upsilon_1\Delta_k^2,
\end{equation*}
where $\Upsilon_1=\kappa_g\max\{1,\Delta_{\max}\}+\frac{1}{2}(L_{\nabla f}+\kappa_B+\barmu_{T-1} L_G)$.
\end{lemma}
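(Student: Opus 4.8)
The plan is to expand $\L_{\barmu_{k}}^{s_k}-\L_{\barmu_{k}}^{k}$ by Taylor's theorem and compare it term-by-term with $\text{Pred}_k$ from \eqref{def:Pred_k}. Writing $\bx_{s_k} = \bx_k + \Delta\bx_k$ (no SOC step), I would split the difference into the objective part $f_{s_k}-f_k$ and the feasibility part $\barmu_k(\|c_{s_k}\|-\|c_k\|)$. For the objective part, a second-order Taylor expansion with an integral remainder gives $f_{s_k}-f_k = g_k^T\Delta\bx_k + \frac12 \Delta\bx_k^T\nabla^2 f(\zeta_k)\Delta\bx_k$ for some $\zeta_k$ on the segment; subtracting the model quantity $\barg_k^T\Delta\bx_k + \frac12\Delta\bx_k^T\barH_k\Delta\bx_k$ leaves $(g_k-\barg_k)^T\Delta\bx_k$ plus $\frac12\Delta\bx_k^T(\nabla^2 f(\zeta_k)-\barH_k)\Delta\bx_k$. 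For the feasibility part, a first-order Taylor expansion of $c$ gives $c_{s_k} = c_k + G_k\Delta\bx_k + O(\|\Delta\bx_k\|^2)$, so by the triangle inequality $\big|\|c_{s_k}\| - \|c_k + G_k\Delta\bx_k\|\big| \le \frac12 m L_G \|\Delta\bx_k\|^2$ (using $\|\nabla^2 c^i\|\le L_G$ from Assumption \ref{assump:4-1}).

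Next I would bound each remaining piece. On $\B_k$, $\|g_k-\barg_k\|\le \epsilon_g + \kappa_g\Delta_k^{\alpha+1}$, so $|(g_k-\barg_k)^T\Delta\bx_k| \le (\epsilon_g + \kappa_g\Delta_k^{\alpha+1})\|\Delta\bx_k\| \le (\epsilon_g + \kappa_g\Delta_{\max})\Delta_k$ using $\|\Delta\bx_k\|\le\Delta_k$ and $\Delta_k\le\Delta_{\max}$ — this is where the leading $\epsilon_g\Delta_k$ term comes from, with the $\kappa_g\Delta_{\max}$ contribution folded into $\Upsilon_1\Delta_k^2$ after pulling out one factor of $\Delta_k$. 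For the Hessian mismatch term, I would use $\|\nabla^2 f(\zeta_k)\|\le L_{\nabla f}$ and, on $\A_k\cap\B_k$, $\|\barH_k\|\le\kappa_B$ (Lemma \ref{lemma:bounded_Hessian}), so $\frac12\|\nabla^2 f(\zeta_k)-\barH_k\|\,\|\Delta\bx_k\|^2 \le \frac12(L_{\nabla f}+\kappa_B)\Delta_k^2$. The constraint-curvature term contributes $\frac12\barmu_k m L_G\Delta_k^2$, but since $\barmu_k$ is nondecreasing in $k$ and $k\le T-1$, I can bound $\barmu_k\le\barmu_{T-1}$, which is precisely why the statement quantifies over a finite horizon $T$ and why $\barmu_{T-1}$ appears in $\Upsilon_1$. (One should double-check whether the $m L_G$ or just $L_G$ appears; the stated constant uses $\barmu_{T-1}L_G$, so presumably a sharper or differently normalized bound on $\|c_{s_k}-c_k-G_k\Delta\bx_k\|$ is used — I would track this carefully but it is routine.) Collecting, everything except the single $\epsilon_g\Delta_k$ term is $O(\Delta_k^2)$ with the combined constant $\Upsilon_1 = \kappa_g\max\{1,\Delta_{\max}\} + \frac12(L_{\nabla f}+\kappa_B+\barmu_{T-1}L_G)$.

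The main obstacle, such as it is, is bookkeeping rather than conceptual: correctly splitting the gradient-error cross term $(g_k-\barg_k)^T\Delta\bx_k$ so that exactly $\epsilon_g\Delta_k$ is exposed as a first-order-in-$\Delta_k$ term while the $\kappa_g\Delta_k^{\alpha+1}\cdot\Delta_k$ piece is absorbed into the quadratic term — this needs $\Delta_k^{\alpha+1}\le\max\{1,\Delta_{\max}\}$ (valid for $\alpha\in\{0,1\}$ and $\Delta_k\le\Delta_{\max}$), which explains the $\max\{1,\Delta_{\max}\}$ factor in $\Upsilon_1$. A secondary subtlety is ensuring the Taylor remainders for both $f$ and $c$ are controlled on $\Omega$, which is where twice-continuous-differentiability and the Lipschitz/boundedness hypotheses of Assumption \ref{assump:4-1} enter; since $\Omega$ is convex and contains the segment $[\bx_k,\bx_{s_k}]$, this is immediate. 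I would also note that the merit parameter $\barmu_k$ used here is the one \emph{after} the update \eqref{eq:threshold_Predk}, consistent with the definitions of $\L_{\barmu_k}^{s_k}$, $\L_{\barmu_k}^k$, and $\text{Pred}_k$, so no additional reconciliation is needed.
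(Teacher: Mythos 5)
Your proposal is correct and follows essentially the same route as the paper: decompose $\L_{\barmu_k}^{s_k}-\L_{\barmu_k}^{k}-\text{Pred}_k$ into the objective and feasibility parts, Taylor-expand, expose $(g_k-\barg_k)^T\Delta\bx_k$ to extract $\epsilon_g\Delta_k$ plus $\kappa_g\max\{1,\Delta_{\max}\}\Delta_k^2$ on $\B_k$, and absorb the remaining quadratic terms via $L_{\nabla f}$, $\kappa_B$, and $\barmu_k\leq\barmu_{T-1}$. On the one point you flag: the paper obtains $\big|\|c_{s_k}\|-\|c_k+G_k\Delta\bx_k\|\big|\leq\|c_{s_k}-c_k-G_k\Delta\bx_k\|\leq\frac{1}{2}L_G\|\Delta\bx_k\|^2$ with no dimension factor by using the Lipschitz continuity of the Jacobian $G$ (constant $L_G$ in Assumption \ref{assump:4-1}) directly, rather than summing componentwise bounds from the constraint Hessians as you sketch, which is exactly why the stated constant is $\barmu_{T-1}L_G$ and not $\sqrt{m}\,\barmu_{T-1}L_G$.
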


\begin{proof}
Since the SOC step is not performed, $\bx_{s_k}=\bx_k+\Delta\bx_k$. By the definition of $\ell_2$ merit function $\mL_{\mu}(\bx)$ and \eqref{def:Pred_k}, we have
\begin{equation*}
\big|\L_{\barmu_{k}}^{s_k}-\L_{\barmu_{k}}^{k}-\text{Pred}_k\big| = \left|f_{s_k}+\barmu_{k}\|c_{s_k}\|-f_k-\barg_k^T\Delta\bx_k-\frac{1}{2}\Delta\bx_k^T\barH_k\Delta\bx_k-\barmu_{k}\|c_k+G_k\Delta\bx_k\|\right|.
\end{equation*} 
By the Taylor expansion of $f(\bx)$ and the Lipschitz continuity of $\nabla f(\bx)$, we have
\begin{equation*}
f_{s_k}-f_k-\barg_k^T\Delta\bx_k \leq (g_k-\barg_k)^T\Delta\bx_k+\frac{1}{2}L_{\nabla f}\|\Delta\bx_k\|^2.
\end{equation*}
Similarly, we have
\begin{equation*}
\big|\|c_{s_k}\|-\|c_k+G_k\Delta\bx_k\|\big|\leq \|c_{s_k}-c_k-G_k\Delta\bx_k\|\leq \frac{1}{2}L_G\|\Delta\bx_k\|^2.
\end{equation*}
Recall that the monotonic updating scheme of the merit parameter implies $\barmu_{k}\leq\barmu_{T-1}$ for all $k\leq T-1$, and Assumption \ref{assump:4-1} and Lemma \ref{lemma:bounded_Hessian} imply $\|\barH_k\|\leq\kappa_B$ under $\A_k$. Combining the above two displays and using the Cauchy-Schwartz inequality lead to
\begin{equation*}
\big|\L_{\barmu_{k}}^{s_k}-\L_{\barmu_{k}}^{k}-\text{Pred}_k\big| \leq \|g_k-\bar{g}_k\|\|\Delta\bx_k\|+\frac{1}{2}(L_{\nabla f}+\kappa_B+\barmu_{T-1} L_G)\|\Delta\bx_k\|^2. 
\end{equation*}
On the event $\B_k$, we have $\|g_k-\bar{g}_k\|\leq \epsilon_g+\kappa_g\max\{1,\Delta_{\max}\}\Delta_k$. Combining $\|\Delta\bx_k\|\leq \Delta_k$ with the above display, we complete the proof.
\end{proof}

Next, we show that on the event $\A_k\cap\B_k$ with the SOC step, the difference between the two reductions is $\epsilon_g^{3/2}+\mathcal{O}(\Delta_k^2+\Delta_k^3)$. Compared to Lemma \ref{lemma:diff_ared_pred_wo_corr_step}, the error bound includes an additional constant $\epsilon_g^{3/2}$, while the linear term of $\Delta_k$ is removed. The proof is deferred to Appendix \ref{lemma:append:A.4}.

\begin{lemma}\label{lemma:diff_ared_pred_w_corr_step}
	
Under Assumption \ref{assump:4-1}, for any $T\geq 1$ and any $0 \leq k \leq T-1$, on the event $\A_k\cap\B_k$ and suppose the SOC step is performed, we have  
\begin{equation*}
\big|\L_{\barmu_{k}}^{s_k}-\L_{\barmu_{k}}^{k}-\text{Pred}_k\big|\leq \epsilon_g^{3/2}+\left(\frac{1}{2}\epsilon_h+\frac{\sqrt{m}L_G}{2\sqrt{\kappa_{1,G}}}\epsilon_g\right)\Delta_k^2+ \Upsilon_2\Delta_k^3,
\end{equation*}
where  
\begin{multline*}
\Upsilon_2 = \kappa_g+1+\frac{L_{\nabla^2 f}+\kappa_h}{2} +\frac{L_G^2\Delta_{\max}(0.5L_{\nabla f} + \sqrt{m}\barmu_{T-1} L_G)}{\kappa_{1,G}}\\
+ \frac{0.5\sqrt{m}L_{\nabla^2 c}(L_{\nabla f}\Delta_{\max} + \kappa_{\nabla f}) + 0.5 \sqrt{m}L_G(\kappa_g\Delta_{\max} + L_{\nabla f} + 2\barmu_{T-1} L_G) }{\sqrt{\kappa_{1,G}}}.
\end{multline*}
\end{lemma}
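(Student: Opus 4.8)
\textbf{Proof plan for Lemma \ref{lemma:diff_ared_pred_w_corr_step}.}
The plan is to mimic the structure of the proof of Lemma \ref{lemma:diff_ared_pred_wo_corr_step}, but carefully track how the SOC step $\bd_k$ modifies the expansion. Recall that now $\bx_{s_k}=\bx_k+\Delta\bx_k+\bd_k$, and by \eqref{def:correctional_step} we have $G_k\bd_k=-\{c(\bx_k+\Delta\bx_k)-c_k-G_k\Delta\bx_k\}$, so that $c_k+G_k(\Delta\bx_k+\bd_k)=c(\bx_k+\Delta\bx_k)$. The key quantitative fact about the SOC step, which I would establish first (or cite from the step-computation section), is the bound $\|\bd_k\|\leq \frac{L_G}{2\sqrt{\kappa_{1,G}}}\|\Delta\bx_k\|^2 = \O(\Delta_k^2)$, coming from Lipschitz continuity of $G$ and $\|G_k^T[G_kG_k^T]^{-1}\|\leq 1/\sqrt{\kappa_{1,G}}$. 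This is why all $\bd_k$-dependent contributions land at order $\Delta_k^2$ or higher, and crucially why the troublesome linear-in-$\Delta_k$ term $\epsilon_g\Delta_k$ present in Lemma \ref{lemma:diff_ared_pred_wo_corr_step} disappears: that term arose from pairing the gradient error $\barg_k - g_k = \O(\epsilon_g)$ with $\Delta\bx_k = \O(\Delta_k)$, but in the SOC analysis the corresponding pairing is of the gradient error with $\bd_k=\O(\Delta_k^2)$, producing $\epsilon_g\Delta_k^2$ instead, and the $\epsilon_g^{3/2}$ piece will come from a separate Young's-inequality split.

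The main computation is to decompose $\L_{\barmu_k}^{s_k}-\L_{\barmu_k}^{k}-\text{Pred}_k$ into an objective part and a constraint part. For the objective part, write $f(\bx_{s_k})-f(\bx_k) = g_k^T(\Delta\bx_k+\bd_k) + \frac12(\Delta\bx_k+\bd_k)^T\nabla^2 f(\tilde\bx)(\Delta\bx_k+\bd_k)$ by Taylor, compare with $\barg_k^T\Delta\bx_k + \frac12\Delta\bx_k^T\barH_k\Delta\bx_k$ from $\text{Pred}_k$, and collect: (i) $(g_k-\barg_k)^T\Delta\bx_k$ — this is the dangerous $\O(\epsilon_g\Delta_k)$ term, which I would handle by noting $(g_k-\barg_k)^T\bd_k$ is now what appears once we also account for the $\barg_k^T\bd_k$ discrepancy; in fact the cleanest route is to add and subtract $\barg_k^T\bd_k$ and use that $\bd_k\in\text{im}(G_k^T)$ is orthogonal to $Z_k$, plus the merit-parameter/$\text{Pred}_k$ structure, to absorb the linear part into the constraint reduction — but since $G_k\bd_k \neq 0$ in general this needs care, so I would instead bound $(g_k-\barg_k)^T\Delta\bx_k$ together with the constraint-side term $\barmu_k(\|c(\bx_k+\Delta\bx_k)\|-\|c_k+G_k\Delta\bx_k\|)$, which is itself $\O(\Delta_k^2)$ by the SOC identity and Lipschitzness of $G$, and then split $\epsilon_g\|\Delta\bx_k\|\le \epsilon_g^{3/2} + \frac14\epsilon_g^{1/2}\|\Delta\bx_k\|^2$ — wait, that still leaves an $\epsilon_g^{1/2}\Delta_k^2$ term. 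The correct split, matching the stated bound, is to observe that the linear term is actually controlled by the eigen-step / gradient-step reduction condition: on the SOC branch we are in Case 2, and the relevant gradient-error contribution is paired against the predicted reduction, giving a term of the form $\epsilon_g \cdot \O(\Delta_k)$ which, using $\|\Delta\bx_k\| \le \Delta_k$ and Young's inequality $\epsilon_g\Delta_k \le \epsilon_g^{3/2} + \frac14\epsilon_g^{1/2}\Delta_k^2 \le \epsilon_g^{3/2} + \frac14 C\Delta_k^2$ for $\Delta_k$ bounded — here one uses $\epsilon_g^{1/2}$ is a constant absorbed into $\Upsilon_2$. Actually the slicker and surely-intended argument: $\epsilon_g\Delta_k \le \epsilon_g^{3/2} + \epsilon_g^{1/2}\Delta_k^2$ is false dimensionally for small $\Delta_k$; the right inequality is $ab \le \frac23 a^{3/2} + \frac13 b^3$ type — so $\epsilon_g\|\bd_k\|^{1/2}\cdot\|\bd_k\|^{1/2}$... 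I'll instead use: since $\|\bd_k\|=\O(\Delta_k^2)$, the term $(g_k-\barg_k)^T\bd_k$ is $\O(\epsilon_g\Delta_k^2)$, absorbed into the $\Delta_k^2$ coefficient after noting $\epsilon_g$ is a fixed constant — and the genuine $\epsilon_g^{3/2}$ term arises from the gradient-error interaction with the \emph{tangential predicted reduction} via the Cauchy/eigen conditions, bounded using $\|\bar\nabla\L_k\| \ge$ something — this is exactly the place where I expect the delicate bookkeeping.

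For the constraint part, expand $\barmu_k(\|c(\bx_{s_k})\| - \|c(\bx_k)\|) - \barmu_k(\|c_k+G_k\Delta\bx_k\| - \|c_k\|)$; using the SOC identity $c_k+G_k(\Delta\bx_k+\bd_k)=c(\bx_k+\Delta\bx_k)$ and the second-order Taylor expansion of $c$ with $\nabla^2 c^i$ Lipschitz, one gets $\|c(\bx_{s_k})\| - \|c_k+G_k\Delta\bx_k\|$ is controlled by $\|c(\bx_k+\Delta\bx_k) - c_k - G_k\Delta\bx_k\| + \O(\|\Delta\bx_k\|\|\bd_k\|+\|\bd_k\|^2) = \O(\Delta_k^2) + \O(\Delta_k^3)$, with the $\Delta_k^3$ terms carrying $L_{\nabla^2 c}$, $\sqrt{m}$, and $\barmu_{T-1}$, consistent with the stated $\Upsilon_2$. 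Then combine the Hessian-estimation error: $\frac12\Delta\bx_k^T(\nabla^2_\bx\L_k - \barH_k)\Delta\bx_k$ contributes $\frac12(\epsilon_H+\kappa_H\Delta_k)\Delta_k^2$ by Lemma \ref{lemma:tau_accurate} on $\A_k\cap\B_k$, and unpacking $\epsilon_H = \epsilon_h + \frac{\sqrt m L_G}{\sqrt{\kappa_{1,G}}}\epsilon_g$ yields precisely the $\left(\frac12\epsilon_h + \frac{\sqrt m L_G}{2\sqrt{\kappa_{1,G}}}\epsilon_g\right)\Delta_k^2$ term in the claim, with $\frac12\kappa_H\Delta_k^3$ folded into $\Upsilon_2$ (this accounts for the $\frac{L_{\nabla^2 f}+\kappa_h}{2}$ and the $L_G^2\Delta_{\max}\cdots/\kappa_{1,G}$ pieces once $\kappa_H$ is expanded, together with the Hessian-of-objective Taylor remainder $\frac{L_{\nabla^2 f}}{6}\|\Delta\bx_k\|^3$-type terms). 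Finally, use $\|\Delta\bx_k\| \le \Delta_k \le \Delta_{\max}$ and $\barmu_k \le \barmu_{T-1}$ (the merit parameter is nondecreasing) throughout to make every coefficient uniform over $0\le k\le T-1$, sum the pieces, and read off $\Upsilon_2$. The main obstacle, as flagged above, is producing exactly the $\epsilon_g^{3/2}$ term with no residual $\epsilon_g\Delta_k$ or $\epsilon_g^{1/2}\Delta_k^2$ leftover — this requires the right application of Young's inequality combined with the trust-region step-length bounds $\|\bw_k\|\le \breve\Delta_k$, $\|\bt_k\|\le\tilde\Delta_k$, and the radius decomposition, so that the gradient-error term is matched against the guaranteed model reduction; everything else is a careful but routine Taylor-expansion-and-triangle-inequality grind.
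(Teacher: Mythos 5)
Your skeleton is right (decompose into objective and constraint parts, use the SOC identity to show the constraint part is $\O(\Delta_k^3)$, invoke Lemma \ref{lemma:tau_accurate} for the Hessian-error term), and you correctly flag the two hard spots — but you do not resolve either of them, and your candidate workarounds would not prove the lemma as stated. First, the term $(g_k-\barg_k)^T\Delta\bx_k$ does \emph{not} disappear in the SOC case; it is still present and still a priori $\O(\epsilon_g\Delta_k)$. The paper's mechanism is not Young's inequality (all your attempted splits indeed fail, as you notice) and not any interaction with the Cauchy/eigen reduction conditions. It is the elementary case split: since the SOC step only occurs when $\alpha=1$, the event $\B_k$ gives $\|g_k-\barg_k\|\le\epsilon_g+\kappa_g\Delta_k^2$; if $\epsilon_g^{1/2}<\Delta_k$ then $\epsilon_g<\Delta_k^2$ and the whole error is $\le(\kappa_g+1)\Delta_k^2$, while otherwise $\epsilon_g\Delta_k\le\epsilon_g^{3/2}$. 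Hence $\|g_k-\barg_k\|\Delta_k\le\epsilon_g^{3/2}+(\kappa_g+1)\Delta_k^3$, which is exactly where the $\epsilon_g^{3/2}$ and the "$\kappa_g+1$" in $\Upsilon_2$ come from. This argument uses the improved $\Delta_k^{\alpha+1}=\Delta_k^2$ oracle accuracy for $\alpha=1$, a fact you never exploit.

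Second, the term $\nabla f(\bx_k+\Delta\bx_k)^T\bd_k$ cannot be bounded crudely by $\|\nabla f\|\,\|\bd_k\|=\O(\Delta_k^2)$: that would introduce a $\Delta_k^2$ coefficient of order $\kappa_{\nabla f}L_G/\sqrt{\kappa_{1,G}}$, independent of the irreducible noise levels, whereas the statement requires the $\Delta_k^2$ coefficient to be exactly $\tfrac12\epsilon_h+\tfrac{\sqrt{m}L_G}{2\sqrt{\kappa_{1,G}}}\epsilon_g$ (this sharpness is needed later in Lemma \ref{lemma:guarantee_succ_step_eigen}, Case B.2). The paper's trick is to define $\tilde{\blambda}_k=-[G_kG_k^T]^{-1}G_k\nabla f(\bx_k+\Delta\bx_k)$ and use the SOC formula \eqref{def:correctional_step} to write $\nabla f(\bx_k+\Delta\bx_k)^T\bd_k=\tilde{\blambda}_k^T\{c(\bx_k+\Delta\bx_k)-c_k-G_k\Delta\bx_k\}=\tfrac12\sum_i\tilde{\blambda}_k^i\Delta\bx_k^T\nabla^2c^i(\phi_3^i)\Delta\bx_k$, a genuine $\O(\Delta_k^2)$ quantity with a \emph{constraint-curvature} coefficient that can then be paired against the $\sum_i\barblambda_k^i\nabla^2c_k^i$ block of $\barH_k$ inside $\text{Pred}_k$; the residual involves only $\|\tilde{\blambda}_k-\barblambda_k\|=\O(\Delta_k)+\O(\epsilon_g)$ and a Lipschitz remainder in $\nabla^2c^i$, landing at $\O(\epsilon_g\Delta_k^2)+\O(\Delta_k^3)$ and producing the $\kappa_{\nabla f}$ and $L_{\nabla^2c}$ pieces of $\Upsilon_2$. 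Your suggestion to use $\bd_k\in\mathrm{im}(G_k^T)$ and orthogonality to $Z_k$ does not lead anywhere here (and you abandon it yourself); without the multiplier identity the stated $\Delta_k^2$ coefficient is unreachable.
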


\begin{proof}
See Appendix \ref{lemma:append:A.4}.
\end{proof}

For our non-asymptotic analysis, we recall the definitions of first- and second-order stationarity points in Section \ref{sec:2_short} and define the stopping time based on $\epsilon$-stationary points.

\begin{definition}[Stopping time]\label{def:stopping_time}

For first-order stationarity ($\alpha=0$), the stopping time is defined as the index of the first iteration in which the true KKT residual enters the neighborhood of radius $\epsilon$: 
\begin{equation*}
T_{\epsilon} =\min \{k: \|\nabla\L_k\|\leq \epsilon\}.
\end{equation*}
For second-order stationarity ($\alpha=1$), the stopping time is defined as the index of the first iteration in which the true KKT residual and the true eigenvalue both enter the neighborhood of radius $\epsilon$: 
\begin{equation*}
T_{\epsilon} =\min \{k: \max(\|\nabla\L_k\|,\tau_k^+) \leq \epsilon\}.
\end{equation*}
\end{definition}

We define the stopping time directly using the deterministic KKT residual and the negative eigenvalue of the reduced Lagrangian Hessian. In comparison, \citep[{Definition 3.7}]{Berahas2025Sequential} defines the stopping time for first-order stationarity based on the reduction of the linear approximation of the merit function. 
In their Remark 3.8, the authors show that this definition is equivalent to our Definition \ref{def:stopping_time}, but with the threshold $\epsilon$ depending on the lower bound of an \textit{unknown} true merit parameter. Such a definition may not be direct, since even after the algorithm terminates under that definition, the true KKT residual is only controlled with an unknown upper bound.
In addition, the stopping time in \cite{Cao2023First} depends on the Lipschitz constants and the irreducible noises.

Similar to \cite{Berahas2025Sequential, Cao2023First}, we require $\epsilon$ to exceed a threshold determined by irreducible noises $\epsilon_f$, $\epsilon_g$, and $\epsilon_h$. Specifically, we impose the following assumption.

\begin{assumption}\label{assump:epsilon}

For the stationarity level $\epsilon$, we assume
\begin{align*}
\text{If }\alpha=0:\quad \epsilon & > \frac{1}{\Upsilon_3}\sqrt{\frac{8\gamma^4\barmu_{T-1}\epsilon_f}{\kappa_{fcd}\eta^3\barmu_0(p-1/2)}}+\frac{\Upsilon_4}{\Upsilon_3}\epsilon_g,\\
\text{If } \alpha=1:\quad \epsilon & > \frac{1}{\Upsilon_5}\sqrt[3]{\frac{2\barmu_{T-1}\gamma^6\max\{\Delta_{\max},1\}(4\epsilon_f+\epsilon_g^{3/2})}{\kappa_{fcd}\eta^3\barmu_0(p-1/2)}}+ \frac{\Upsilon_6}{\Upsilon_5}\epsilon_g+\frac{\Upsilon_7}{\Upsilon_5}\epsilon_h,
\end{align*}
where $\Upsilon_3, \Upsilon_4, \Upsilon_5, \Upsilon_6,\Upsilon_7$ are constants independent of irreducible noise levels and are defined in Lemmas \ref{lemma:guarantee_succ_step_KKT} and  \ref{lemma:guarantee_succ_step_eigen}. Furthermore, the failure oracle probabilities $p_h,p_g,p_f$ are small enough such that $1-p_h-p_g-2p_f\eqqcolon p\in(1/2,1)$. 
\end{assumption}

Here, $p$ represents the lower bound of probability that $\A_k\cap \B_k \cap \C_k \cap \C_k'$ holds, where $ \C_k'$ denotes the event that accurate $\barf_k,\barf_{s_k}$ are regenerated after computing the SOC step. For notational consistency, we assume $\C_k'$ holds when the SOC step is not performed. As demonstrated in Assumption \ref{assump:epsilon}, for first-order stationarity, we need $\epsilon \geq \mathcal{O}(\sqrt{\epsilon_f}+\epsilon_g)$; for second-order stationarity, we need $\epsilon \geq \mathcal{O}(\sqrt[3]{\epsilon_f}+\sqrt{\epsilon_g}+\epsilon_h) $. These orders matches the ones in \cite{Berahas2025Sequential,Cao2023First}.

\begin{remark}\label{rem:4.8}
The dependence of $\epsilon$ on $\epsilon_f$, $\epsilon_g$, and $\epsilon_h$ is standard in existing literature \citep{Cao2023First,Berahas2025Sequential,Sun2023trust}. In the presence of biased, irreducible noise in objective estimation, the iterates can only be guaranteed to converge to an $\epsilon$-neighborhood of a stationary point, where $\epsilon$ is lower bounded by a quantity determined by the magnitude of the bias. This dependence arises for the following reasons.  
First, the first- and second-order stationarity conditions are characterized by the KKT residual and the negative curvature, both of which rely on gradient and Hessian information. When their estimates contain biased irreducible noise, different true gradients and Hessians are not identifiable whenever their differences lie within $\epsilon_g$ and $\epsilon_h$, respectively.
Second, the algorithm employs a merit function to adaptively update the trust-region radius, which depends on the objective value that can only be estimated up to an irreducible noise $\epsilon_f$. This adaptive mechanism leads to the dependence of $\epsilon$ on $\epsilon_f$. We note that for non-adaptive algorithms, where the stepsize or trust-region radius sequences are prespecified and thus do not use objective value information, the dependence of $\epsilon$ on $\epsilon_f$ can be suppressed.
\end{remark}

In the next two lemmas, we show that before the algorithm terminates, if $\A_k\cap \B_k \cap \C_k \cap \C_k'$ holds and the trust-region radius is sufficiently small, then both two conditions in \eqref{nequ:3} hold. We first consider $\alpha=0$.

\begin{lemma}\label{lemma:guarantee_succ_step_KKT}

Under Assumptions \ref{assump:4-1}, \ref{assump:epsilon}, and the event $\A_k\cap \B_k \cap \C_k \cap \C_k'$, for $\alpha=0$ and $k<T_{\epsilon}$, if  
\begin{equation}\label{delta:lemma_guarantee_succ_step_KKT}
\Delta_k \leq  \Upsilon_3\|\nabla\L_k\|-\Upsilon_4 \epsilon_g
\end{equation}
with $\Upsilon_3$, $\Upsilon_4$ given by 
\begin{equation*}
\Upsilon_3 = \frac{\kappa_{fcd}(1-\eta)}{4\kappa_f+\kappa_g+2\Upsilon_1+\kappa_B} \quad \text{and}\quad
\Upsilon_4  = \frac{\kappa_{fcd}(1-\eta)+2}{4\kappa_f+\kappa_g+2\Upsilon_1+\kappa_B},
\end{equation*}
then (\ref{nequ:3}a) and  (\ref{nequ:3}b) both hold.
	
\end{lemma}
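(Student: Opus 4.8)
\textbf{Proof proposal for Lemma~\ref{lemma:guarantee_succ_step_KKT}.}

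The plan is to verify conditions (\ref{nequ:3}a) and (\ref{nequ:3}b) separately, working throughout on the event $\A_k\cap\B_k\cap\C_k\cap\C_k'$ with $\alpha=0$, $k<T_\epsilon$, and assuming the radius bound \eqref{delta:lemma_guarantee_succ_step_KKT}. Since $\alpha=0$, no eigen step or SOC step is triggered, so $\bartau_k^+=0$, $\|\barH_k\|\leq\kappa_B$ on $\A_k$, and the trial step $\Delta\bx_k$ is a gradient step. First I would establish a lower bound on the magnitude of the estimated KKT residual in terms of the true one: on $\B_k$ we have $\|\barg_k-g_k\|\leq\epsilon_g+\kappa_g\Delta_k$, hence $\|\bnabla\L_k\|\geq\|\nabla\L_k\|-\|\barg_k-g_k\|\cdot(1+\|G_k^T[G_kG_k^T]^{-1}G_k\|^{1/2}\text{-type factor})\geq\|\nabla\L_k\|-C(\epsilon_g+\kappa_g\Delta_k)$ for a dimension/constant factor absorbed into the definitions of $\Upsilon_3,\Upsilon_4$; combined with \eqref{delta:lemma_guarantee_succ_step_KKT} this keeps $\|\bnabla\L_k\|$ bounded away from zero and comparable to $\|\nabla\L_k\|$, which is itself $>\epsilon$ since $k<T_\epsilon$.

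For condition (\ref{nequ:3}b), the quantity on the left is $\|\bnabla\L_k\|/\max\{1,\|\barH_k\|\}$ (the $\bartau_k^+$ term vanishes). Using $\|\barH_k\|\leq\kappa_B$ and the lower bound on $\|\bnabla\L_k\|$ just derived, I would show $\|\bnabla\L_k\|/\max\{1,\kappa_B\}\geq\eta\Delta_k$ is implied by \eqref{delta:lemma_guarantee_succ_step_KKT}; this is essentially a matter of checking that the constant $\Upsilon_3$ (which has $\kappa_B$ in its denominator) is small enough that $\Delta_k\leq\Upsilon_3\|\nabla\L_k\|-\Upsilon_4\epsilon_g$ forces $\eta\Delta_k\leq\|\bnabla\L_k\|/\max\{1,\kappa_B\}$. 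For condition (\ref{nequ:3}a), the core estimate is the sandwich
\[
\text{Ared}_k-\vartheta_0 \;\leq\; \big(\L_{\barmu_k}^{s_k}-\L_{\barmu_k}^k\big) + 2\epsilon_f - \vartheta_0 + (\text{estimation error in }\barf_k,\barf_{s_k}),
\]
where on $\C_k\cap\C_k'$ the zeroth-order errors are each at most $\epsilon_f+\kappa_f\Delta_k^2$, so $\text{Ared}_k\leq\L_{\barmu_k}^{s_k}-\L_{\barmu_k}^k+2(\epsilon_f+\kappa_f\Delta_k^2)$; with $\vartheta_0=2\epsilon_f$ the irreducible parts cancel, leaving $\text{Ared}_k-\vartheta_0\leq\L_{\barmu_k}^{s_k}-\L_{\barmu_k}^k+2\kappa_f\Delta_k^2$. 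Next apply Lemma~\ref{lemma:diff_ared_pred_wo_corr_step} to bound $\L_{\barmu_k}^{s_k}-\L_{\barmu_k}^k\leq\text{Pred}_k+\epsilon_g\Delta_k+\Upsilon_1\Delta_k^2$. Since $\text{Pred}_k<0$ by \eqref{eq:threshold_Predk}, dividing by $\text{Pred}_k$ flips the inequality, so it suffices to show
\[
\epsilon_g\Delta_k+(2\kappa_f+\Upsilon_1)\Delta_k^2 \;\leq\; (1-\eta)\,|\text{Pred}_k|.
\]
Then I would lower-bound $|\text{Pred}_k|$ using \eqref{eq:threshold_Predk}: $|\text{Pred}_k|\geq\frac{\kappa_{fcd}}{2}\|\bnabla\L_k\|\min\{\Delta_k,\|\bnabla\L_k\|/\|\barH_k\|\}$. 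From (\ref{nequ:3}b)-type reasoning (or directly from \eqref{delta:lemma_guarantee_succ_step_KKT} and $\|\barH_k\|\leq\kappa_B$) the min equals $\Delta_k$, giving $|\text{Pred}_k|\geq\frac{\kappa_{fcd}}{2}\|\bnabla\L_k\|\Delta_k$. Substituting and dividing by $\Delta_k>0$, the required inequality becomes $\epsilon_g+(2\kappa_f+\Upsilon_1)\Delta_k\leq\frac{\kappa_{fcd}(1-\eta)}{2}\|\bnabla\L_k\|$; using $\|\bnabla\L_k\|\geq\|\nabla\L_k\|-\kappa_g\Delta_k-(\text{const})\epsilon_g$ and the hypothesis \eqref{delta:lemma_guarantee_succ_step_KKT}, one checks this reduces to an inequality in the constants that is exactly engineered by the definitions of $\Upsilon_3$ and $\Upsilon_4$ (note the denominator $4\kappa_f+\kappa_g+2\Upsilon_1+\kappa_B$ collects all the coefficients of $\Delta_k$ and the residual-correction term, and the $+2$ in $\Upsilon_4$'s numerator absorbs the $2\epsilon_g$-type slack).

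The main obstacle I expect is bookkeeping the constants so that one single radius threshold simultaneously certifies (a) that the $\min$ inside $\text{Pred}_k$ (and inside the right side of \eqref{eq:threshold_Predk}) is realized by $\Delta_k$ rather than by $\|\bnabla\L_k\|/\|\barH_k\|$, (b) condition (\ref{nequ:3}b), and (c) the ratio inequality (\ref{nequ:3}a) — all while carefully tracking how the estimated residual $\|\bnabla\L_k\|$ relates to the deterministic $\|\nabla\L_k\|$ that appears in \eqref{delta:lemma_guarantee_succ_step_KKT} (the factor relating $\|\barg_k-g_k\|$ to $\|\bnabla_\bx\L_k-\nabla_\bx\L_k\|$ via $G_k^T[G_kG_k^T]^{-1}G_k$ must be bounded uniformly, which Assumption~\ref{assump:4-1} supplies). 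The derivation itself is a chain of elementary inequalities; the delicate part is purely that the constants $\Upsilon_1,\kappa_B,\kappa_f,\kappa_g$ must be packaged precisely as in the stated $\Upsilon_3,\Upsilon_4$ for the final algebraic inequality to close with no slack to spare.
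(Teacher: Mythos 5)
Your proposal is correct and follows essentially the same route as the paper: bound $\text{Ared}_k-2\epsilon_f$ via the zeroth-order oracle so the irreducible parts cancel, invoke Lemma~\ref{lemma:diff_ared_pred_wo_corr_step}, lower-bound $|\text{Pred}_k|$ by $\tfrac{\kappa_{fcd}}{2}\|\bnabla\L_k\|\Delta_k$ (checking the min is realized by $\Delta_k$), pass from $\|\bnabla\L_k\|$ to $\|\nabla\L_k\|$ via $\B_k$, and close the constants; condition (\ref{nequ:3}b) is handled identically. The only loose end is your hedged "dimension/constant factor" relating $\|\barg_k-g_k\|$ to the KKT-residual error — it is exactly $1$, since $\bnabla_\bx\L_k-\nabla_\bx\L_k=P_k(\barg_k-g_k)$ with $P_k$ an orthogonal projection, which is what the paper uses.
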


\begin{proof}
We first note that for $\alpha=0$, Assumption \ref{assump:epsilon} implies $\epsilon>\Upsilon_4/\Upsilon_3\cdot\epsilon_g$. Moreover, when $k<T_\epsilon$, Definition \ref{def:stopping_time} ensures that $\|\nabla \L_k\|>\epsilon$. Combining these two facts yields $\|\nabla \L_k\|> \Upsilon_4/\Upsilon_3 \cdot \epsilon_g$, which can be rearranged to obtain $\Upsilon_3\|\nabla \L_k\|> \Upsilon_4\epsilon_g$. Therefore, the right-hand side of \eqref{delta:lemma_guarantee_succ_step_KKT} is positive and the inequality is well defined.
We first prove (\ref{nequ:3}a) holds. By the algorithm design, we have $\vartheta_\alpha = 2\epsilon_f$ for $\alpha=0$. Thus, we only need to show that $\text{Ared}_k-2\epsilon_f\leq \eta\text{Pred}_k$ can be satisfied by \eqref{delta:lemma_guarantee_succ_step_KKT} (recalling that $\text{Pred}_k<0$). Since no 
SOC step will be performed when $\alpha =0$, $\bx_{s_k}=\bx_k+\Delta\bx_k$ and we have
\begin{align*}
\text{Ared}_k - 2\epsilon_f & = \bar{\L}_{\barmu_{k}}^{s_k}-\bar{\L}_{\barmu_{k}}^k - 2\epsilon_f = \bar{\L}_{\barmu_{k}}^{s_k}- \L_{\barmu_{k}}^{s_k} +\L_{\barmu_{k}}^{s_k} - \L_{\barmu_{k}}^k +\L_{\barmu_{k}}^k-\bar{\L}_{\barmu_{k}}^k - 2\epsilon_f\\
& = \bar{f}_{s_k} - f_{s_k} +\L_{\barmu_{k}}^{s_k} - \L_{\barmu_{k}}^k +f_k-\bar{f}_k - 2\epsilon_f\\
& \leq  |\bar{f}_{s_k} - f_{s_k}| + |f_k-\bar{f}_k| +\text{Pred}_k +\epsilon_g\Delta_k+\Upsilon_1\Delta_k^2 - 2\epsilon_f. \qquad( \text{Lemma }\ref{lemma:diff_ared_pred_wo_corr_step})
\end{align*}
Under $\C_k$, when $\alpha=0$ we have $ |f_k-\bar{f}_k|+ |\bar{f}_{s_k} - f_{s_k}|\leq 2\epsilon_f+2\kappa_f\Delta_k^2$. Since 
\begin{equation*}
\text{Pred}_k\leq -\frac{\kappa_{fcd}}{2}\|\bnabla\L_k\|\min\left\{\Delta_k,\frac{\|\bnabla\L_k\|}{\|\barH_k\|}\right\} \stackrel{\eqref{delta:lemma_guarantee_succ_step_KKT}}{=}-\frac{\kappa_{fcd}}{2}\|\bnabla\L_k\| \Delta_k,
\end{equation*}
we only need to show 
\begin{align}\label{neq:5}
2\kappa_f\Delta_k^2 +\epsilon_g\Delta_k+\Upsilon_1\Delta_k^2 \leq \frac{\kappa_{fcd}}{2}(1-\eta)\|\bnabla\L_k\| \Delta_k.
\end{align}
To this end, we note that
\begin{align*}
\eqref{delta:lemma_guarantee_succ_step_KKT}\Rightarrow & \{(4\kappa_f+\kappa_g)+2\Upsilon_1+\kappa_B\}\Delta_k\leq \kappa_{fcd}(1-\eta)\|\nabla\L_k\|-\left\{\kappa_{fcd}(1-\eta)+2\right\}\epsilon_g\\
\Rightarrow&\{(4\kappa_f+\kappa_{fcd}(1-\eta)\kappa_g)+2\Upsilon_1\}\Delta_k \leq \kappa_{fcd}(1-\eta)\|\nabla\L_k\|-\left\{\kappa_{fcd}(1-\eta)+2\right\}\epsilon_g,
\end{align*}
since $\kappa_{fcd}(1-\eta)<1$. Rearranging the terms, we have
\begin{equation}\label{neq:7}
(2\kappa_f +\Upsilon_1) \Delta_k \leq  \frac{\kappa_{fcd}}{2}(1-\eta)(\|\nabla\L_k\|-\kappa_g\Delta_k - \epsilon_g) -\epsilon_g.
\end{equation}
Under $\B_k$, when $\alpha=0$ we have $\|\bnabla\L_k\|\geq \|\nabla\L_k\|-\kappa_g\Delta_k-\epsilon_g$. 
This, together with \eqref{neq:7}, implies
\begin{equation*}
(2\kappa_f +\Upsilon_1)\Delta_k\leq \frac{\kappa_{fcd}}{2}(1-\eta)\|\bnabla\L_k\| - \epsilon_g.
\end{equation*}
Multiplying $\Delta_k$ and rearranging the terms, we have shown \eqref{neq:5}, thus proved (\ref{nequ:3}a). Next, we prove (\ref{nequ:3}b). When $\alpha=0$, we have $\max\{1,\|\barH_k\|\}\leq\kappa_B$ under $\A_k$. Therefore it suffices to show $\|\bnabla\L_k\|\geq \eta \kappa_B\Delta_k$. Recalling $\kappa_{fcd}\leq 1, \eta<1$, we have
\begin{multline*}
(\eta\kappa_B+ \kappa_g)\Delta_k \leq (4\kappa_f+\kappa_g+2\Upsilon_1+\kappa_B) \Delta_k  \\
\stackrel{\eqref{delta:lemma_guarantee_succ_step_KKT}}{\leq} \kappa_{fcd}(1-\eta)\|\nabla\L_k\|-(\kappa_{fcd}(1-\eta)+2)\epsilon_g \leq \|\nabla\L_k\| - \epsilon_g.
\end{multline*}
Combining the above inequality with the relation $\|\bnabla\L_k\| \geq \|\nabla\L_k\| - \kappa_g\Delta_k - \epsilon_g$, we obtain $\|\bnabla\L_k\|\geq \eta \kappa_B\Delta_k$, thus  (\ref{nequ:3}b) holds. We complete the proof.
\end{proof}

In the next lemma, we show the same result holds for $\alpha=1$. By Assumption \ref{assump:epsilon} and Definition \ref{def:stopping_time}, when $k<T_\epsilon$, we know $\Upsilon_5 \max\{ \tau_k^+,\|\nabla\L_k\|\} - \Upsilon_6\epsilon_g -\Upsilon_7\epsilon_h> \Upsilon_5\epsilon- \Upsilon_6\epsilon_g -\Upsilon_7\epsilon_h>0$; i.e., the right-hand side of \eqref{delta:lemma_guarantee_succ_step_eigen} is positive. Thus the inequality is well-defined. The proof of Lemma \ref{lemma:guarantee_succ_step_eigen} is deferred to Appendix \ref{lemma:append:A.6}.

\begin{lemma}\label{lemma:guarantee_succ_step_eigen}
	
Under Assumptions \ref{assump:4-1}, \ref{assump:epsilon}, and the event $\A_k\cap\B_k\cap\C_k\cap\C_k'$, for $\alpha=1$ and $k<T_{\epsilon}$, if  
\begin{equation}\label{delta:lemma_guarantee_succ_step_eigen}
\Delta_k\leq \Upsilon_5 \max\{ \tau_k^+,\|\nabla\L_k\|\} - \Upsilon_6\epsilon_g -\Upsilon_7\epsilon_h
\end{equation} 
with $\Upsilon_5$, $\Upsilon_6$, $\Upsilon_7$ given by 
\begin{align}
\Upsilon_5 &=  \frac{(1-\eta)\kappa_{fcd}\min\{1,r\}}{(4\kappa_f +\kappa_g) \max\{\Delta_{\max},1\}+\kappa_B+2\Upsilon_1+2\Upsilon_2+(\kappa_H+\eta)(1-\eta)\kappa_{fcd}\min\{1,r\}},\\ 
\Upsilon_6 &=  \frac{\kappa_{fcd}(1-\eta)+2+2\sqrt{m}L_G/\sqrt{\kappa_{1,G}}}{(4\kappa_f +\kappa_g)\max\{\Delta_{\max},1\}+\kappa_B+2\Upsilon_1+2\Upsilon_2+(\kappa_H+\eta)(1-\eta)\kappa_{fcd}\min\{1,r\}},\\ 
\Upsilon_7 & =  \frac{2}{4\kappa_f\max\{\Delta_{\max},1\}+2\Upsilon_1+2\Upsilon_2+(\kappa_H+\eta)(1-\eta)\kappa_{fcd}\min\{1,r\}},
\end{align}
then (\ref{nequ:3}a) and  (\ref{nequ:3}b)  both hold. Here, $\kappa_H$ is derived in Lemma \ref{lemma:tau_accurate}, $\Upsilon_1$ is defined in Lemma \ref{lemma:diff_ared_pred_wo_corr_step}, and $\Upsilon_2$ is defined in Lemma \ref{lemma:diff_ared_pred_w_corr_step}.
\end{lemma}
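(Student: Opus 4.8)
\textbf{Proof proposal for Lemma \ref{lemma:guarantee_succ_step_eigen}.}

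The plan is to mirror the argument of Lemma \ref{lemma:guarantee_succ_step_KKT} but now tracking the curvature term $\bartau_k^+$ alongside the KKT residual, and handling the possibility that an SOC step is appended. First I would split the analysis according to whether the step taken is a gradient step or an eigen step (determined by \eqref{eq:cri_step_comput}), and in the eigen-step case whether $\|c_k\|\leq r$ so that an SOC step may be triggered. In either case, condition \eqref{eq:threshold_Predk} gives a lower bound on $|\text{Pred}_k|$ in terms of $\max\{\|\bnabla\L_k\|\min\{\Delta_k,\|\bnabla\L_k\|/\|\barH_k\|\},\,\bartau_k^+\Delta_k(\Delta_k+\|c_k\|)\}$. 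On $\A_k\cap\B_k$, Lemma \ref{lemma:bounded_Hessian} bounds $\|\barH_k\|\leq\kappa_B$ and Lemma \ref{lemma:tau_accurate} converts the estimated quantities $\|\bnabla\L_k\|$, $\bartau_k^+$ into the deterministic $\|\nabla\L_k\|$, $\tau_k^+$ up to errors $\O(\epsilon_g)$, $\O(\epsilon_h)$, $\O(\Delta_k)$; since $\|\barg_k-g_k\|\leq\epsilon_g+\kappa_g\Delta_k^{\alpha+1}$, we also have $\|\bnabla\L_k-\nabla\L_k\|\leq (1+\sqrt{m}L_G/\sqrt{\kappa_{1,G}})(\epsilon_g+\kappa_g\Delta_k^2)$ because $\barblambda_k-\blambda_k = -[G_kG_k^T]^{-1}G_k(\barg_k-g_k)$. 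Under hypothesis \eqref{delta:lemma_guarantee_succ_step_eigen}, $\Delta_k$ is small relative to $\max\{\tau_k^+,\|\nabla\L_k\|\}$ minus the noise offsets, so $|\text{Pred}_k|$ is bounded below by a positive multiple of $\Delta_k\cdot\max\{\tau_k^+,\|\nabla\L_k\|\}$ (using $\min\{1,r\}$ to absorb the $\min\{\Delta_k,\cdot\}$ and the $\Delta_k+\|c_k\|$ factors, and $\Delta_k\leq\Delta_{\max}$).

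Next I would bound the discrepancy $|\text{Ared}_k-\text{Pred}_k|$. On $\C_k\cap\C_k'$ the function-value errors contribute at most $2(\epsilon_f+\kappa_f\Delta_k^{\alpha+2})\leq 2\epsilon_f + 2\kappa_f\Delta_k^3$ to $|\text{Ared}_k - (\L_{\barmu_k}^{s_k}-\L_{\barmu_k}^k)|$ (a factor of two for the two evaluation points; if an SOC step is taken this still holds with the re-estimated $\barf_{s_k}$). Then Lemma \ref{lemma:diff_ared_pred_w_corr_step} (SOC case) or Lemma \ref{lemma:diff_ared_pred_wo_corr_step} (no SOC case) bounds $|\L_{\barmu_k}^{s_k}-\L_{\barmu_k}^k-\text{Pred}_k|$ by $\epsilon_g^{3/2}+\O(\epsilon_h\Delta_k^2+\epsilon_g\Delta_k^2)+\O(\Delta_k^3)$ or by $\epsilon_g\Delta_k+\O(\Delta_k^2)$. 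Combining, and recalling $\vartheta_1 = 2\epsilon_f+\epsilon_g^{3/2}$, we get
\begin{equation*}
|\text{Ared}_k - \vartheta_1 - \text{Pred}_k| \leq 4\epsilon_f + \epsilon_g^{3/2} + (\text{noise terms in }\epsilon_g,\epsilon_h)\cdot\O(\Delta_k^2) + \O(\Delta_k^2+\Delta_k^3),
\end{equation*}
where I have used $\vartheta_1 = 2\epsilon_f+\epsilon_g^{3/2}$ together with the $2\epsilon_f$ from $\C_k$ to produce the $4\epsilon_f+\epsilon_g^{3/2}$. To verify (\ref{nequ:3}a), i.e. $(\text{Ared}_k-\vartheta_1)/\text{Pred}_k\geq\eta$, it suffices (since $\text{Pred}_k<0$) to show $|\text{Ared}_k-\vartheta_1-\text{Pred}_k|\leq(1-\eta)|\text{Pred}_k|$. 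Plugging the lower bound on $|\text{Pred}_k|$ (a multiple of $\kappa_{fcd}\Delta_k\max\{\tau_k^+,\|\nabla\L_k\|\}\min\{1,r\}$) and dividing through by $\Delta_k$, this reduces to an inequality of the form $\Delta_k\cdot(\text{sum of constants}) + \Upsilon_6\epsilon_g+\Upsilon_7\epsilon_h \leq \text{const}\cdot\max\{\tau_k^+,\|\nabla\L_k\|\}$ — and the constants $\Upsilon_5,\Upsilon_6,\Upsilon_7$ as stated in the lemma are precisely chosen to make this follow from \eqref{delta:lemma_guarantee_succ_step_eigen} (the $4\epsilon_f+\epsilon_g^{3/2}$ constant term is where Assumption \ref{assump:epsilon} on $\epsilon$ enters, ensuring $\max\{\tau_k^+,\|\nabla\L_k\|\}$ dominates it; this needs $k<T_\epsilon$ so that $\max\{\|\nabla\L_k\|,\tau_k^+\}>\epsilon$). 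For (\ref{nequ:3}b), on $\A_k\cap\B_k$ we have $\max\{\|\bnabla\L_k\|/\max\{1,\|\barH_k\|\},\,\bartau_k^+\}\geq\max\{\tau_k^+,\|\nabla\L_k\|\}/(1+\kappa_B) - \O(\epsilon_g+\epsilon_h) - \O(\Delta_k)$, and \eqref{delta:lemma_guarantee_succ_step_eigen} again makes the right side at least $\eta\Delta_k$ after adjusting constants.

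I expect the main obstacle to be bookkeeping the eigen-step branch with the SOC correction: one must carefully check that in the SOC case the reduction condition \eqref{eq:reduction_eigen_step} still yields $|\text{Pred}_k|\gtrsim\kappa_{fcd}\bartau_k^+\tilde{\Delta}_k^2 = \kappa_{fcd}(\bartau_k^{RS+})^2\Delta_k^2/(\cdots)$ scaling correctly back to $\bartau_k^+\Delta_k(\Delta_k+\|c_k\|)$ through the radius decomposition, and that the extra $\epsilon_g^{3/2}$ term from Lemma \ref{lemma:diff_ared_pred_w_corr_step} is absorbed by the $\epsilon_g^{3/2}$ already built into $\vartheta_1$ — leaving only the clean constant $4\epsilon_f+\epsilon_g^{3/2}$. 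A secondary subtlety is that the lower bound on $|\text{Pred}_k|$ via \eqref{eq:threshold_Predk} is stated in terms of the \emph{estimated} residual and curvature, so one must invoke Lemma \ref{lemma:tau_accurate} and the multiplier-perturbation bound at exactly the right places, and confirm the $\O(\Delta_k)$ slack from those lemmas is dominated by the linear-in-$\Delta_k$ budget on the left side of \eqref{delta:lemma_guarantee_succ_step_eigen}; this is why $\kappa_H$ (from Lemma \ref{lemma:tau_accurate}) appears inside $\Upsilon_5,\Upsilon_6$. The rest is the same convexity-of-the-ratio manipulation as in Lemma \ref{lemma:guarantee_succ_step_KKT}.
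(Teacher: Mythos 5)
Your proposal is correct and follows essentially the same route as the paper's proof: lower-bound $|\text{Pred}_k|$ via \eqref{eq:threshold_Predk}, upper-bound the discrepancy with $\text{Ared}_k$ via the event $\C_k\cap\C_k'$ together with Lemmas \ref{lemma:diff_ared_pred_wo_corr_step} and \ref{lemma:diff_ared_pred_w_corr_step}, and convert estimated to deterministic quantities via Lemma \ref{lemma:tau_accurate}; the paper merely organizes the cases by which of $\|\nabla\L_k\|$ or $\tau_k^+$ attains the max in \eqref{delta:lemma_guarantee_succ_step_eigen} (and then by $\|c_k\|\gtrless r$) rather than by the computed step type. Your one imprecise claim --- that $|\text{Pred}_k|$ is always at least a constant multiple of $\Delta_k\max\{\tau_k^+,\|\nabla\L_k\|\}\min\{1,r\}$ --- does fail in the SOC subcase $\|c_k\|<r$, where the bound degrades to $\tfrac{\kappa_{fcd}}{2}\bartau_k^+\Delta_k^2$, but you correctly flag this and observe that the absence of a linear-in-$\Delta_k$ error term in Lemma \ref{lemma:diff_ared_pred_w_corr_step} is exactly what compensates, which is how the paper closes that case.
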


\begin{proof}
See Appendix \ref{lemma:append:A.6}.
\end{proof}

In the next lemma, we consider the reduction in the stochastic merit function in iterations where (\ref{nequ:3}a) and (\ref{nequ:3}b) both hold. For brevity, we define the function $h_\alpha(\cdot)$ as 
\begin{equation*}
h_\alpha(\Delta) = \left\{ \begin{aligned}
&\frac{\kappa_{fcd}}{2}\eta^3\Delta^2 \qquad\qquad\qquad \text{for }\alpha=0,\\
&\frac{\kappa_{fcd}}{2\max\{\Delta_{\max},1\}}\eta^3\Delta^3\quad \text{for }\alpha=1.
\end{aligned}\right.
\end{equation*}

\begin{lemma}\label{lemma:suff_iter}

Under Assumption \ref{assump:4-1} and the event $ \A_k\cap \B_k\cap \C_k \cap \C_k'$, if (\ref{nequ:3}a) and (\ref{nequ:3}b) both hold in the $k$-th iteration, then we have
\begin{equation*}
\bar{\L}_{\barmu_k}^{k+1}-\bar{\L}_{\barmu_k}^{k} \leq - h_\alpha(\Delta_k) + \vartheta_\alpha.
\end{equation*}
\end{lemma}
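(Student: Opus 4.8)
The plan is to bound $\bar{\L}_{\barmu_k}^{k+1}-\bar{\L}_{\barmu_k}^{k}$ by chaining together three ingredients already developed in the excerpt: the definition of $\text{Ared}_k$, the acceptance test (\ref{nequ:3}a), and the lower bound on the predicted reduction $\text{Pred}_k$ implied by the merit-parameter update rule \eqref{eq:threshold_Predk}. First I would observe that when (\ref{nequ:3}a) holds the iterate is updated to $\bx_{k+1}=\bx_{s_k}$, so that by the definition \eqref{def:Ared_k} of $\text{Ared}_k$ we have $\bar{\L}_{\barmu_k}^{k+1}-\bar{\L}_{\barmu_k}^{k}=\text{Ared}_k$. (If the SOC step was performed, $\bx_{s_k}=\bx_k+\Delta\bx_k+\bd_k$ and $\text{Ared}_k$ is the recomputed quantity, but the identity still holds.) Then, since $\text{Pred}_k<0$, the test (\ref{nequ:3}a) rearranges to $\text{Ared}_k \le \eta\,\text{Pred}_k+\vartheta_\alpha \le \vartheta_\alpha$, and more usefully $\text{Ared}_k-\vartheta_\alpha \le \eta\,\text{Pred}_k$.

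Next I would bring in the threshold condition \eqref{eq:threshold_Predk}, which after the merit-parameter update guarantees
\begin{equation*}
\text{Pred}_k\le -\frac{\kappa_{fcd}}{2}\max\left\{\|\bar{\nabla}\L_k\|\min\left\{\Delta_k,\frac{\|\bar{\nabla}\L_k\|}{\|\barH_k\|}\right\},\bartau_k^+\Delta_k(\Delta_k+\|c_k\|)\right\}.
\end{equation*}
The remaining task is to lower bound this maximum by a multiple of $\Delta_k^2$ (for $\alpha=0$) or $\Delta_k^3$ (for $\alpha=1$), using the fact that (\ref{nequ:3}b) holds in this iteration. For $\alpha=0$: (\ref{nequ:3}b) reads $\|\bnabla\L_k\|/\max\{1,\|\barH_k\|\}\ge\eta\Delta_k$ (since $\bartau_k^+=0$), which on $\A_k\cap\B_k$ with the bound $\|\barH_k\|\le\kappa_B$ — here I should double check whether the claimed constant $\eta^3$ rather than $\eta^2$ is genuinely needed, and it is, because the inner $\min$ gives $\min\{\Delta_k,\|\bnabla\L_k\|/\|\barH_k\|\}\ge\min\{\Delta_k,\eta\Delta_k\}=\eta\Delta_k$ and then $\|\bnabla\L_k\|\ge\eta\Delta_k\max\{1,\|\barH_k\|\}\ge\eta\Delta_k$, producing $\text{Pred}_k\le-\frac{\kappa_{fcd}}{2}\eta^2\Delta_k^2$; the extra factor of $\eta$ in $h_0$ presumably comes from combining with the $\eta$ in $\text{Ared}_k\le\eta\,\text{Pred}_k$. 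For $\alpha=1$: one splits on whether (\ref{nequ:3}b) is witnessed by the KKT term or by $\bartau_k^+$, and in the latter case uses $\bartau_k^+\Delta_k(\Delta_k+\|c_k\|)\ge\bartau_k^+\Delta_k^2\ge\eta\Delta_k^3$, while in the former one argues as in the $\alpha=0$ case and uses $\Delta_k\le\Delta_{\max}$ to convert a $\Delta_k^2$ bound into a $\Delta_k^3/\max\{\Delta_{\max},1\}$ bound. Combining, $\text{Pred}_k\le -h_\alpha(\Delta_k)/\eta$, whence $\text{Ared}_k\le\eta\,\text{Pred}_k+\vartheta_\alpha\le -h_\alpha(\Delta_k)+\vartheta_\alpha$, which is exactly the claim.

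The main obstacle I anticipate is purely bookkeeping rather than conceptual: tracking the precise powers of $\eta$ so that the final constant matches $h_\alpha$ as defined, and handling the $\alpha=1$ case where (\ref{nequ:3}b) is a maximum over two terms and one must argue each branch separately (including the subtle point that on $\A_k\cap\B_k$ the quantity $\|\barH_k\|$ is controlled by $\kappa_B$ via Lemma \ref{lemma:bounded_Hessian}, so that $\max\{1,\|\barH_k\|\}$ does not blow up). A secondary care point is that the lemma is stated on the event $\A_k\cap\B_k\cap\C_k\cap\C_k'$ even though the displayed conclusion only manifestly uses $\A_k\cap\B_k$; I would simply note that the hypotheses guarantee $\|\barH_k\|\le\kappa_B$ and that (\ref{nequ:3}a), (\ref{nequ:3}b) are assumed to hold, so no further probabilistic input is needed and the inequality is deterministic on that event. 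No Lipschitz estimates or the fundamental Lemmas \ref{lemma:diff_ared_pred_wo_corr_step}–\ref{lemma:diff_ared_pred_w_corr_step} are required here — those enter elsewhere, when showing that (\ref{nequ:3}a) in fact holds; this lemma only extracts consequences once it is known to hold.
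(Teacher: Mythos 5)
Your proposal is correct and follows essentially the same route as the paper's proof: rearrange (\ref{nequ:3}a) to get $\bar{\L}_{\barmu_k}^{k+1}-\bar{\L}_{\barmu_k}^{k}=\text{Ared}_k\leq\eta\,\text{Pred}_k+\vartheta_\alpha$, then use the threshold condition \eqref{eq:threshold_Predk} together with (\ref{nequ:3}b) (splitting on which term attains the maximum when $\alpha=1$, and using $\Delta_k\leq\Delta_{\max}$) to show $\text{Pred}_k\leq -h_\alpha(\Delta_k)/\eta$. Your careful tracking of the powers of $\eta$ is in fact slightly cleaner than the paper's displayed intermediate bound for $\alpha=1$, which states $\text{Pred}_k\leq-\tfrac{\kappa_{fcd}\eta^3}{2\max\{\Delta_{\max},1\}}\Delta_k^3$ where the argument actually yields (and the conclusion requires) the stronger $\eta^2$ in place of $\eta^3$.
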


\begin{proof}
Since (\ref{nequ:3}a) holds, we have $    \bar{\L}_{\barmu_k}^{k+1}-\bar{\L}_{\barmu_k}^{k} \leq \eta \text{Pred}_k + \vartheta_\alpha$. When $\alpha=0$, we have
\begin{equation*}
\text{Pred}_k\leq -\frac{\kappa_{fcd}}{2}\|\bnabla\L_k\|\min\left\{\Delta_k,\frac{\|\bnabla\L_k\|}{\|\barH_k\|}\right\} \stackrel{(\ref{nequ:3}\text{b})}{\leq} -\frac{\kappa_{fcd}}{2}\eta^2\Delta_k^2.
\end{equation*}
When $\alpha=1$, we similarly have
\begin{align*}
\text{Pred}_k & \leq -\frac{\kappa_{fcd}}{2} \max\left\{\|\bar{\nabla}\L_k\|\min\left\{\Delta_k,\frac{\|\bar{\nabla}\L_k\|}{\|\barH_k\|}\right\},\bartau_k^+\Delta_k\left(\Delta_k+\|c_k\|\right)\right\} \\
& \stackrel{\mathclap{(\ref{nequ:3}\text{b})}}{\leq} -\frac{\kappa_{fcd}\eta^3}{2\max\{\Delta_{\max},1\}}\Delta_k^3. 
\end{align*}
We complete the proof by combining the above two displays.
\end{proof}

To further our analysis, we manipulate the lower bound of $\epsilon$ in Assumption \ref{assump:epsilon} and define the following threshold
\begin{equation}\label{equ:delta_hat}
\hat{\Delta} =\left\{ \begin{aligned}
& \Upsilon_3\epsilon - \Upsilon_4\epsilon_g \quad\quad\quad\quad\; \text{for }\alpha=0,\\
& \Upsilon_5 \epsilon - \Upsilon_6\epsilon_g - \Upsilon_7\epsilon_h \quad\text{for }\alpha=1.
\end{aligned} 
\right.
\end{equation}
Note that for $k<T_\epsilon$, we have $\|\nabla \L_k\|>\epsilon$ when $\alpha=0$ and $\max\{\|\nabla\L_k\|,\tau_k^+\}>\epsilon$ when $\alpha=1$. Therefore, $\hat{\Delta}$ is smaller than the upper bounds of conditions in Lemmas \ref{lemma:guarantee_succ_step_KKT} and \ref{lemma:guarantee_succ_step_eigen}. This implies that when $\Delta_k\leq \hat{\Delta}$ and $\A_k\cap\B_k\cap\C_k\cap \C_k'$ holds, conclusions in both lemmas will apply. Without loss of generality, we suppose $\hat{\Delta} \leq \Delta_{\max}$. Otherwise, we can always increase $\Delta_{\max}$ to ensure this condition. In fact, when $\alpha = 0$, $\hat{\Delta}$ is independent of $\Delta_{\max}$; thus, a large $\Delta_{\max}$ trivially ensures $\hat{\Delta} \leq \Delta_{\max}$. When $\alpha = 1$, by Assumption \ref{assump:epsilon}, $\hat{\Delta}$ grows at a rate of $\sqrt[3]{\Delta_{\max}}$; thus, increasing $\Delta_{\max}$ still ensures $\hat{\Delta} \leq \Delta_{\max}$.

The trust-region radius $\Delta_k$, $k\geq 0$ always has the formula $\Delta_k = \gamma^l\Delta_0$ with $\Delta_0=\Delta_{\max}$, where $l$ is a non-positive integer. Therefore, we define the following threshold:
\begin{equation*}
\hat{\Delta}' = \max_{l \in \mathbb{Z}}\; \{\gamma^l\Delta_0:  \gamma^l\Delta_0\leq \gamma^{-1}\hat{\Delta} \}.
\end{equation*}
The threshold $\hat{\Delta}'$ is the largest trust-region radius that is less or equal to $ \gamma^{-1}\hat{\Delta}$.

\begin{remark}
We require $\hat{\Delta}'\leq \gamma^{-1}\hat{\Delta}$ instead of $\hat{\Delta}'\leq \hat{\Delta}$. This additional $\gamma^{-1}$ ensures that when $\Delta_k\leq \hat{\Delta}'$ and the trust-region radius increases in the $k$-th iteration, we will have $\Delta_{k+1} = \gamma\Delta_k\leq \Delta_{\max}$. This property is essential to our analysis, as utilized in Lemma \ref{lemma: count_iter} (c).
\end{remark}

To derive the complexity bounds, we categorize iterations $k=0,1,\cdots,T-1$ into different classes by defining the following indicator variables:
\begin{itemize}
\item \textbf{$I_k$ for accurate iteration}: 	We set the indicator $I_k=1$ if $\A_k\cap\B_k\cap\C_k\cap\C_k' $ holds and call the $k$-th iteration \textit{accurate}. Otherwise we set $I_k=0$ and call the $k$-th iteration \textit{inaccurate}. 
\item \textbf{$\Theta_k$ for sufficient iteration}: 	We set the indicator $\Theta_k=1$ if (\ref{nequ:3}a) and (\ref{nequ:3}b) both hold and call the $k$-th iteration \textit{sufficient}. Otherwise we set $\Theta_k=0$ and call the $k$-th iteration \textit{insufficient}.
\item \textbf{$\Lambda_k$ for large iteration}: We set the indicator $\Lambda_k=1$ if  $\min\{\Delta_k,\Delta_{k+1}\}\geq \hat{\Delta}'$ and call the $k$-th iteration \textit{large}. We set $\Lambda_k=0$ if $\max\{\Delta_k,\Delta_{k+1}\}\leq \hat{\Delta}'$ and call the $k$-th iteration \textit{small}.
\end{itemize}

Note that $\min\{\Delta_k, \Delta_{k+1}\} \geq \hat{\Delta}'$ and $\max\{\Delta_k, \Delta_{k+1}\} \leq \hat{\Delta}'$ are mutually disjoint. To see why, consider the only possible case where $\min\{\Delta_k, \Delta_{k+1}\} = \hat{\Delta}' = \max\{\Delta_k, \Delta_{k+1}\}$, which implies $\Delta_k = \Delta_{k+1} = \hat{\Delta}'$. However, by the definition of $\hat{\Delta}'$, we have $\hat{\Delta}' \leq \gamma^{-1} \Delta_{\max}$. According to the algorithm design, the trust-region radius updates as either $\Delta_{k+1} = \gamma \Delta_k$ or $\Delta_{k+1} = \Delta_k / \gamma$, leading to a contradiction.

In the next lemma, we characterize the relations of iterations in different classes.

\begin{lemma}\label{lemma: count_iter}

Under Assumptions \ref{assump:4-1}, \ref{assump:epsilon}, for any $T\geq 1$, we have
\begin{enumerate}[label={(\alph*)},topsep=4pt, wide,labelindent=0pt]
\setlength\itemsep{0.0em}
\item $\sum_{k=0}^{T-1}\Lambda_k\Theta_k \geq \sum_{k=0}^{T-1}\Lambda_k(1-\Theta_k) - \log_\gamma (\Delta_0/\hat{\Delta}')$;
\item $\sum_{k=0}^{T-1}\Lambda_k\Theta_k \geq 0.5\sum_{k=0}^{T-1}\Lambda_k - 0.5 \log_\gamma (\Delta_0/\hat{\Delta}')$;
\item $\sum_{k=0}^{T-1}(1-\Lambda_k)(1-\Theta_k) \geq \sum_{k=0}^{T-1}(1-\Lambda_k)\Theta_k$;
\item $\sum_{k=0}^{T}(1-\Lambda_k)I_k \leq \sum_{k=0}^{T}(1-\Lambda_k)(1- I_k )$.
\end{enumerate}
	
\end{lemma}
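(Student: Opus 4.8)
\textbf{Proof proposal for Lemma~\ref{lemma: count_iter}.}

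The plan is to analyze the four inequalities by tracking how the trust-region radius $\Delta_k$ evolves across iterations, exploiting the combinatorial structure of the update rule $\Delta_{k+1}\in\{\gamma\Delta_k,\Delta_k/\gamma,\min\{\gamma\Delta_k,\Delta_{\max}\}\}$. The unifying idea is a potential/counting argument: each increase of the radius is preceded or followed by a matching decrease (up to a logarithmic boundary term coming from $\Delta_0$), and the algorithm's design ties ``radius-increasing'' iterations to ``sufficient'' iterations $\Theta_k=1$ and ``radius-decreasing'' iterations to insufficient ones.

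For part (a), I would first observe that on a large iteration ($\Lambda_k=1$, so $\Delta_k\geq\hat\Delta'$ and $\Delta_{k+1}\geq\hat\Delta'$), if the iteration is sufficient then (\ref{nequ:3}a) holds, and the interplay of (\ref{nequ:3}b) determines whether $\Delta_{k+1}=\gamma\Delta_k$ or $\Delta_{k+1}=\Delta_k/\gamma$; but crucially a large \emph{insufficient} iteration must have $\Delta_{k+1}=\Delta_k/\gamma$ (Case~3, or Case~1 with (\ref{nequ:3}b) failing — here I'd need to check the bookkeeping that on a large iteration insufficiency forces a decrease). Then writing $\Delta_T/\Delta_0 = \gamma^{(\#\text{ increases})-(\#\text{ decreases})}$ restricted appropriately, and using $\Delta_T\geq\hat\Delta'$ only on large iterations while $\Delta_0=\Delta_{\max}$, one gets $\sum\Lambda_k\Theta_k \geq \sum\Lambda_k(1-\Theta_k) - \log_\gamma(\Delta_0/\hat\Delta')$. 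Part (b) is then immediate by adding $\sum\Lambda_k\Theta_k$ to both sides of (a) and dividing by $2$, since $\Theta_k+(1-\Theta_k)=1$.

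For part (c), the argument is the mirror image restricted to \emph{small} iterations ($\Lambda_k=0$, meaning $\max\{\Delta_k,\Delta_{k+1}\}\leq\hat\Delta'$): here a sufficient iteration with (\ref{nequ:3}b) holding would try to increase the radius, but since we are below $\hat\Delta'$ this increase stays bounded, so each small sufficient iteration is an increase and each must be ``paid for'' by a small decrease; because the radius can never exceed $\hat\Delta'$ on the small regime there is no boundary term, giving the clean inequality $\sum(1-\Lambda_k)(1-\Theta_k)\geq\sum(1-\Lambda_k)\Theta_k$. For part (d), I would use that on an inaccurate small iteration the analysis of Lemmas~\ref{lemma:guarantee_succ_step_KKT}–\ref{lemma:guarantee_succ_step_eigen} does \emph{not} apply, whereas on an \emph{accurate} small iteration with $\Delta_k\leq\hat\Delta'\leq\gamma^{-1}\hat\Delta$, those lemmas force (\ref{nequ:3}a)–(\ref{nequ:3}b) to hold, hence the iteration is sufficient and the radius increases to at most $\hat\Delta'$ — so accurate small iterations increase the radius while the only way to stay in the small regime is to also have decreases; combining with (c) (which bounds small sufficient by small insufficient) and noting small insufficient iterations are all inaccurate yields (d).

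The main obstacle I anticipate is part (a): carefully verifying the case analysis that a \emph{large insufficient} iteration necessarily decreases the radius (ruling out the possibility of a large iteration that is insufficient yet, through Case~1 with (\ref{nequ:3}b), increases it — this requires confronting the definition $\Theta_k=1 \iff$ both (\ref{nequ:3}a) and (\ref{nequ:3}b) hold, so ``insufficient'' could mean (\ref{nequ:3}a) holds but (\ref{nequ:3}b) fails, which is exactly the decrease branch of Case~1, consistent — but one must also handle (\ref{nequ:3}a) failing). The telescoping of $\log_\gamma(\Delta_k/\Delta_0)$ needs to be set up so that only the large iterations contribute to the net exponent, which requires splitting the trajectory at the boundary $\hat\Delta'$ and arguing that whenever the radius crosses from the small to the large regime it does so via an increase — this is where the definition $\hat\Delta'\leq\gamma^{-1}\hat\Delta$ and the remark preceding the lemma become essential.
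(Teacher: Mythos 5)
Your proposal is correct and follows essentially the same route as the paper: a telescoping argument on $\log_\gamma$ of the radius truncated at the boundary $\hat{\Delta}'$ (the paper formalizes your "split the trajectory at $\hat\Delta'$" via the potentials $\phi_k=\max\{\log_\gamma(\Delta_k/\hat{\Delta}'),0\}$ and its mirror $\tilde\phi_k$), with the same case analysis showing large insufficient iterations decrease the radius and small sufficient ones increase it by exactly $\gamma$, (b) obtained from (a) by rearrangement, and (d) obtained from Lemmas \ref{lemma:guarantee_succ_step_KKT}--\ref{lemma:guarantee_succ_step_eigen} combined with (c). The obstacles you flag (insufficiency forcing a decrease in both the (\ref{nequ:3}a)-fails and (\ref{nequ:3}b)-fails branches; the role of $\hat{\Delta}'\leq\gamma^{-1}\hat{\Delta}\leq\gamma^{-1}\Delta_{\max}$ in keeping small sufficient increases uncapped) are resolved exactly as you anticipate.
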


\begin{proof}
See Appendix \ref{lemma:append:A.8}.
\end{proof}

\begin{remark} We discuss the insights behind the above lemma.
 
\noindent $\bullet$ In Lemma \ref{lemma: count_iter} (a), we demonstrate that among the first $T$ iterations, the number of iterations that are large and sufficient (i.e., $\Lambda_k\Theta_k =1$) is no less that the number of iterations that are large and insufficient (i.e., $\Lambda_k(1-\Theta_k)=1$), subtracted by a constant $ \log_\gamma (\Delta_0/\hat{\Delta}')$. This conclusion is intuitive: when an iteration is large, its trust-region radius is lower bounded by a constant $\hat{\Delta}'$. Thus, to maintain this lower bound, among all large iterations, the number of sufficient iterations (in which the radius increases) must be no less than the number of insufficient iterations (in which the radius decreases), except for a constant $ \log_\gamma (\Delta_0/\hat{\Delta}')$ representing the smallest number of iterations to reduce the radius from $\Delta_0$ to $\hat{\Delta}'$.

\noindent $\bullet$ Lemma \ref{lemma: count_iter} (b) follows directly by rearranging the terms in (a).

\noindent $\bullet$ In Lemma \ref{lemma: count_iter} (c), we show that the number of iterations that are small and insufficient (i.e., $ (1-\Lambda_k)(1-\Theta_k)=1$) is no less than the number of iterations that are small and sufficient (i.e., $(1-\Lambda_k)\Theta_k=1 $). In fact, when an iteration is small, its trust-region radius is upper bounded by $\hat{\Delta}'$. Thus, to maintain the upper bound, among all small iterations, the number of iterations that are sufficient (in which the radius increases) must be no more than the number of iterations that are insufficient (in which the radius decreases).

\noindent $\bullet$ In Lemma \ref{lemma: count_iter} (d), we show that the number of iterations that are small and accurate (i.e., $ (1-\Lambda_k)I_k=1$) is no greater than the number of iterations that are small and inaccurate (i.e., $(1-\Lambda_k)(1-I_k) = 1 $). The intuition is also straightforward: when an iteration is small and accurate, Lemmas \ref{lemma:guarantee_succ_step_KKT} and \ref{lemma:guarantee_succ_step_eigen} guarantee that the step will be sufficient, leading to an increase in the trust-region radius. Consequently, to maintain the upper bound $\hat{\Delta}'$, the number of small and accurate  iterations (in which the radius increases) must not exceed the number of small and inaccurate iterations (only in this case the radius may decrease).

\end{remark}

The next lemma follows from the  conclusion that $P(I_k=1\mid \F_{k-1/2})\geq p$ (recall $p=1-p_h-p_g-2p_f$)
and the Azuma-Hoeffding inequality (cf. Lemma \ref{Azuma-Hoeffding} in Appendix \ref{append:F}), which establishes a probabilistic bound for the number of accurate iterations.

\begin{lemma}\label{lemma:I_k}
For all $T\geq 1$, $\hat{p} \in[0,p)$ with $p\in(1/2,1)$, and both $\alpha=0$ and $\alpha=1$, we have 
\begin{equation*}
P\left[\sum_{k=0}^{T-1}I_k<\hat{p}\; T\right]\leq \exp\left\{-\frac{(\hat{p}-p)^2}{2p^2}T\right\}.
\end{equation*}
\end{lemma}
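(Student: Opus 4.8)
\textbf{Proof proposal for Lemma \ref{lemma:I_k}.}

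The plan is to view $\{I_k\}$ as a sequence of Bernoulli-type indicators adapted to the filtration $\{\F_{k-1/2}\}$ and to apply a one-sided Azuma--Hoeffding concentration bound to the associated martingale difference sequence. First I would establish the key conditional lower bound $P(I_k = 1 \mid \F_{k-1/2}) \geq p$. Recall that $I_k = 1$ exactly when $\A_k \cap \B_k \cap \C_k \cap \C_k'$ holds. By the oracle definitions, $P(\A_k \mid \F_{k-1}) \geq 1 - p_h$ and $P(\B_k \mid \F_{k-1}) \geq 1 - p_g$, while $P(\C_k \mid \F_{k-1/2}) \geq 1 - p_f$, and $\C_k'$ (the regeneration of accurate $\barf_k, \barf_{s_k}$ after the SOC step, or the trivial event when no SOC step is taken) also fails with probability at most $p_f$ conditionally on $\F_{k-1/2}$. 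Since $\sigma(\A_k), \sigma(\B_k) \subseteq \F_{k-1/2}$ (the gradient and Hessian estimates are measurable with respect to $\F_{k-1/2}$ by the definition of the filtration in Section \ref{sec:2.2}), a union bound over the four failure events gives $P(I_k = 0 \mid \F_{k-1/2}) \leq p_h + p_g + 2p_f$, hence $P(I_k = 1 \mid \F_{k-1/2}) \geq 1 - p_h - p_g - 2p_f = p$.

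Next I would set up the martingale. Define $Y_k = I_k - \mE[I_k \mid \F_{k-1/2}]$, so that $\{Y_k\}$ is a martingale difference sequence adapted to $\{\F_{k-1/2}\}$ with $|Y_k| \leq 1$ almost surely (in fact each $Y_k$ lies in an interval of length $1$). By the conditional bound just established, $\mE[I_k \mid \F_{k-1/2}] \geq p$, so
\begin{equation*}
\sum_{k=0}^{T-1} I_k = \sum_{k=0}^{T-1} Y_k + \sum_{k=0}^{T-1} \mE[I_k \mid \F_{k-1/2}] \geq \sum_{k=0}^{T-1} Y_k + pT.
\end{equation*}
Therefore the event $\{\sum_{k=0}^{T-1} I_k < \hat{p}\,T\}$ is contained in the event $\{\sum_{k=0}^{T-1} Y_k < (\hat{p} - p)T\}$, and since $\hat{p} < p$, the right-hand deviation $(\hat{p}-p)T$ is negative. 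Applying the one-sided Azuma--Hoeffding inequality to the bounded martingale differences $Y_k$ (each with range at most $1$, so the sum of squared ranges is at most $T$) yields
\begin{equation*}
P\left[\sum_{k=0}^{T-1} Y_k \leq (\hat{p}-p)T\right] \leq \exp\left\{-\frac{2(p-\hat{p})^2 T^2}{T}\right\} = \exp\left\{-2(p-\hat{p})^2 T\right\}.
\end{equation*}
To match the stated bound I would use the slightly looser constant coming from the version of Azuma--Hoeffding that accounts for $p^2$ in the denominator, giving the claimed $\exp\{-(\hat{p}-p)^2 T / (2p^2)\}$; the precise constant is not essential, only that it is of the form $\exp\{-c\,T\}$ for some $c > 0$ depending on $p$ and $\hat{p}$.

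I do not expect any serious obstacle here. The only point requiring care is the measurability bookkeeping: one must verify that $\A_k$, $\B_k$ are $\F_{k-1/2}$-measurable while $\C_k$, $\C_k'$ are conditioned on $\F_{k-1/2}$ as stated in their definitions, so that a single conditioning on $\F_{k-1/2}$ legitimately captures all four events simultaneously and the union bound goes through. A minor secondary point is handling $\C_k'$ in the no-SOC case, which by the convention stated just after Assumption \ref{assump:epsilon} is the sure event and thus contributes nothing to the failure probability. Once $P(I_k=1\mid\F_{k-1/2})\geq p$ is in hand, the concentration step is a black-box application of Azuma--Hoeffding and the conclusion follows immediately for both $\alpha = 0$ and $\alpha = 1$, since the argument never used the value of $\alpha$.
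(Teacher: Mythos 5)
Your proposal is correct and takes essentially the same route as the paper: establish $P(I_k=1\mid\F_{k-1/2})\geq p$ and apply Azuma--Hoeffding to the resulting (sub)martingale of partial sums of $I_k$. The only difference is the increment bound: the paper bounds the submartingale increments $|I_k-p|$ by $p$ (using $p>1/2$) to obtain exactly the constant $1/(2p^2)$ in the exponent, whereas your range-$1$ Hoeffding bound gives $\exp\{-2(p-\hat{p})^2T\}$; since $p>1/2$ implies $2\geq 1/(2p^2)$, your bound is in fact sharper and directly implies the stated one, so the vague final step about "adjusting the constant" is unnecessary.
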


\begin{proof}
It follows from $P(I_k=1\mid \F_{k-1/2})\geq p$ that $\{\sum_{k=0}^{T-1}I_k- pT\}$ is a submartingale. Moreover, 
\begin{equation*}
\left| \left(\sum_{k=0}^{T}I_k- p(T+1)\right) -\left( \sum_{k=0}^{T-1}I_k- pT\right) \right| = \left| I_T - p\right| \leq \max\{|1-p|, |0-p| \} = p.
\end{equation*}
The last equality follows from $p >1/2$. By the Azuma-Hoeffding inequality (see Lemma \ref{Azuma-Hoeffding}) with $X_T= \sum_{k=0}^{T-1}I_k - pT$ and $X_0=0$, for any $T\geq 1$ and $c\geq 0$, we have
\begin{equation*}
P\left[\sum_{k=0}^{T-1}I_k - pT\leq -c \right] \leq \exp\left\{ -\frac{c^2}{2Tp^2}\right\}.
\end{equation*}
Letting $c = (p-\hat{p})T$, we complete the proof.
\end{proof}

In the following lemma, we demonstrate that prior to algorithm termination (i.e., $T_\epsilon>T-1$), if the number of accurate iterations (i.e., $I_k=1$) is sufficiently large, then the number of iterations that are simultaneously sufficient, accurate, and large (i.e., $\Theta_kI_k\Lambda_k=1$) cannot be too small.

\begin{lemma}\label{lemma: prob=0}
For all $T\geq 1, p\in(1/2,1)$, and both $\alpha=0$ and $\alpha=1$, we have
\begin{equation*}
P\left\{ T_{\epsilon}> T-1, \sum_{k=0}^{T-1}I_k \geq pT, \sum_{k=0}^{T-1}\Theta_kI_k\Lambda_k< \left(p-\frac{1}{2}\right) T- \frac{1}{2}\log_\gamma\left(\frac{\Delta_0}{\hat{\Delta}}\right)-1\right\}=0.
\end{equation*}
\end{lemma}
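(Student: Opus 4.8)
The plan is to show that the event in question forces a contradiction with the lower bound on $\hat\Delta$ (equivalently $\epsilon$) imposed in Assumption \ref{assump:epsilon}, via a telescoping argument on the stochastic merit function $\bar\L_{\barmu_k}^k$. Concretely, I would first decompose the first $T$ iterations $k=0,\dots,T-1$ using the three indicators $I_k,\Theta_k,\Lambda_k$, and bound the total change $\bar\L_{\barmu_{T-1}}^{T} - \bar\L_{\barmu_0}^{0}$ (after accounting for merit-parameter monotonicity, since $\barmu_k$ is nondecreasing and $\L_{\mu}$ is nondecreasing in $\mu$ whenever $\|c_k\|\ge 0$) from below by $f_{\inf}$ minus a controllable constant. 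On the other hand, I would bound this same quantity from above by summing the per-iteration changes, using Lemma \ref{lemma:suff_iter} on the sufficient-and-accurate iterations, trivial non-increase on inaccurate-or-insufficient iterations where the iterate does not move, and controlled increases where it does move.

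The key steps, in order: (1) On the event under consideration we have $T_\epsilon > T-1$, so for every $k \le T-1$ the iterate is not yet $\epsilon$-stationary; combined with $I_k=1$ and Lemma \ref{lemma:guarantee_succ_step_KKT} / \ref{lemma:guarantee_succ_step_eigen}, any accurate iteration with $\Delta_k \le \hat\Delta$ is automatically sufficient, i.e. $\Theta_k=1$. In particular every iteration that is accurate and large ($I_k=\Lambda_k=1$) is sufficient, so on the accurate-large iterations Lemma \ref{lemma:suff_iter} gives $\bar\L_{\barmu_k}^{k+1}-\bar\L_{\barmu_k}^{k} \le -h_\alpha(\Delta_k) + \vartheta_\alpha \le -h_\alpha(\hat\Delta') + \vartheta_\alpha$, and by the choice of $\hat\Delta'$ and Assumption \ref{assump:epsilon} the constant $h_\alpha(\hat\Delta')$ strictly dominates $\vartheta_\alpha$, yielding a strictly negative per-iteration decrease bounded away from zero, say by $-\delta_0 < 0$. (2) On all other iterations I bound the merit change from above by a nonnegative quantity: when the step is rejected (Case 3, or the SOC recheck fails) the iterate is unchanged and $\barmu_{k+1}=\barmu_k$, so the contribution is $0$ after absorbing merit-parameter monotonicity; when the step is accepted but the iteration is inaccurate or small, I use that the total number of such accepted "bad" increases is controlled and each increase is bounded — but actually the cleaner route is to sum $\bar\L_{\barmu_k}^{k+1}-\bar\L_{\barmu_k}^{k}$ only over sufficient iterations and use Lemma \ref{lemma:suff_iter} plus the fact that on non-sufficient iterations (\ref{nequ:3}a) fails so either the iterate is unchanged or, in Case 2, after SOC it still fails — in all non-updating cases the change is $0$, and the updating non-sufficient cases cannot occur because an update requires (\ref{nequ:3}a). (3) Telescoping from $k=0$ to $T-1$, using that $f_{\inf} - \barmu_{T-1}\kappa_c \le \bar\L_{\barmu_{T-1}}^{T}$ (a lower bound from Assumption \ref{assump:4-1}, boundedness of $\|c\|$, and the fact that we are on the high-probability event controlling function errors is not even needed here since this is a deterministic lower bound) and $\bar\L_{\barmu_0}^{0}$ is finite, I obtain $\delta_0 \cdot \#\{k : \Theta_k I_k \Lambda_k = 1\} \le \text{const}$. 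But on our event $\sum_{k=0}^{T-1}I_k \ge pT$, so by Lemma \ref{lemma: count_iter}(b) combined with the accurate-large$\Rightarrow$sufficient implication, $\sum \Theta_k I_k \Lambda_k \ge \tfrac12 \sum_k \Lambda_k I_k - \tfrac12\log_\gamma(\Delta_0/\hat\Delta')$, and relating $\sum_k \Lambda_k I_k$ to $\sum_k I_k$ via Lemma \ref{lemma: count_iter}(d) gives $\sum_k \Theta_k I_k \Lambda_k \ge (p-\tfrac12)T - \tfrac12\log_\gamma(\Delta_0/\hat\Delta) - 1$. This directly contradicts the third condition in the event, so the event has probability zero.

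The main obstacle I anticipate is getting the bookkeeping of the merit parameter exactly right in the telescoping sum: $\barmu_k$ changes within an iteration (before the step) and is held fixed afterward, so I must carefully write $\bar\L_{\barmu_{k}}^{k+1} - \bar\L_{\barmu_{k}}^{k}$ with the \emph{updated} $\barmu_k$ and then handle the jump from $\bar\L_{\barmu_{k}}^{k+1}$ (using $\barmu_k$) to $\bar\L_{\barmu_{k+1}}^{k+1}$ (using $\barmu_{k+1}=\barmu_k$ by Line 22 of Algorithm \ref{Alg:STORM}, so actually there is no jump across iterations) — the subtlety is purely the increase $\barmu_{k}^{\text{old}} \to \barmu_k$ inside iteration $k$, which increases $\L_\mu$ by $(\barmu_k - \barmu_k^{\text{old}})\|c_k\| \ge 0$ and must be tracked or shown to telescope into the bounded constant $\barmu_{T-1}\kappa_c$. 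The second delicate point is confirming that $h_\alpha(\hat\Delta') - \vartheta_\alpha$ is bounded below by a positive constant: this is exactly where Assumption \ref{assump:epsilon} and the definitions of $\hat\Delta$, $\hat\Delta'$, and $h_\alpha$ must be unpacked — since $\hat\Delta' \ge \gamma^{-1}\hat\Delta / \gamma = \gamma^{-2}\hat\Delta$ (as $\hat\Delta'$ is the largest feasible radius below $\gamma^{-1}\hat\Delta$, hence at least $\gamma^{-1}$ times that), $h_\alpha(\hat\Delta')$ is of order $\hat\Delta^2$ or $\hat\Delta^3$, and plugging the lower bound on $\epsilon$ from Assumption \ref{assump:epsilon} (with its explicit $\gamma^4$ or $\gamma^6$ factors matching the $\gamma^{-2\cdot 2}$ or $\gamma^{-2\cdot 3}$ losses) makes $h_\alpha(\hat\Delta')$ exceed $\vartheta_\alpha/( p - 1/2)$ appropriately, which is precisely what powers the final counting inequality. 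Everything else is routine telescoping and invoking Lemma \ref{lemma: count_iter}.
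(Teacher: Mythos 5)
There is a genuine gap, and it sits at the heart of your step (1): the claim that ``every iteration that is accurate and large ($I_k=\Lambda_k=1$) is sufficient'' is false. Lemmas \ref{lemma:guarantee_succ_step_KKT} and \ref{lemma:guarantee_succ_step_eigen} guarantee sufficiency only when $\Delta_k$ is \emph{below} the threshold on their right-hand sides, which for $k<T_\epsilon$ is implied by $\Delta_k\leq\hat{\Delta}$. A \emph{large} iteration has $\Delta_k\geq\hat{\Delta}'$ and may have $\Delta_k$ as big as $\Delta_{\max}$, far above $\hat{\Delta}$, where nothing forces (\ref{nequ:3}a)--(\ref{nequ:3}b) to hold even with perfectly accurate oracles. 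The correct implication, which powers Lemma \ref{lemma: count_iter}(d), is that accurate and \emph{small} iterations are sufficient (since $\Delta_k\leq\hat{\Delta}'\leq\gamma^{-1}\hat{\Delta}$). Because of this, your route from Lemma \ref{lemma: count_iter}(b) to $\sum_k\Theta_kI_k\Lambda_k\geq(p-\tfrac12)T-\tfrac12\log_\gamma(\Delta_0/\hat{\Delta})-1$ does not go through as written: to pass from $\sum_k\Lambda_k\Theta_k$ to $\sum_k\Lambda_k\Theta_kI_k$ you must subtract the large-and-inaccurate count $|\I_\L|$ explicitly, then control $|\I_\L|+|\I_{\mathcal S}|\leq(1-p)T$ via $\sum_kI_k\geq pT$ and invoke $|\A_{\mathcal S}|\leq|\I_{\mathcal S}|$ from part (d); this is exactly the bookkeeping the paper carries out over the four classes (large/small $\times$ accurate/inaccurate).

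Separately, the merit-function telescoping you build the proposal around is both unnecessary and unavailable here. Lemma \ref{lemma: prob=0} is a purely deterministic counting statement about the indicators; the paper proves it without ever touching the merit function. More importantly, the telescoping cannot be completed on the stated event: Lemma \ref{lemma:suff_iter} controls the \emph{estimated} merit decrease, and converting this into a statement about the true merit function (or chaining estimates across iterations) introduces the per-iteration noise $e_k+e_{s_k}$, whose accumulation is not bounded on the event of this lemma. That control is precisely what the extra condition $\sum_{k}(e_k+e_{s_k})\leq(2\epsilon_f+2s)T$ in event $B$ of Theorem \ref{thm: 1st} provides, and the telescoping argument you describe is the proof that $P(B_1)=0$ inside Lemma \ref{append:lemma3} --- a different lemma. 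If you drop the telescoping entirely, fix the accurate-large claim to accurate-small, and redo the counting over the four iteration classes, you recover the paper's argument.
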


\begin{proof}
It suffices to show that $T_{\epsilon}> T-1$ and $\sum_{k=0}^{T-1}I_k \geq pT$ imply $\sum_{k=0}^{T-1}\Theta_kI_k\Lambda_k \geq  \left(p-\frac{1}{2}\right) T- \frac{1}{2}\log_\gamma\left(\frac{\Delta_0}{\hat{\Delta}}\right)-1$.
For $k=0,1,\cdots,T-1$, we define the following sets and use $|\cdot|$ to denote their cardinality:
\begin{itemize}
\item $ \A_{\L}=\{k: I_k\Lambda_k=1\}$ as the set of iterations that are large and accurate; 
\item $\I_{\L} = \{k  : (1-I_k)\Lambda_k =1 \}$ as the set of iterations that are large and inaccurate; 
\item $ \A_{\mathcal{S}}=\{k : I_k(1-\Lambda_k)=1\}$ as the set of iterations that are small and accurate; 
\item $\I_{\mathcal{S}} = \{k  : (1-I_k)(1-\Lambda_k) =1 \}$ as the set of iterations that are small and  inaccurate.
\end{itemize}
This classification is valid, noticing that $| \I_{\mathcal{S}} | + | \A_{\mathcal{S}} | +| \I_{\L} | + | \A_{\L} |=T$. It follows from Lemma \ref{lemma: count_iter} (b) and the observation $ \sum_{k=0}^{T-1}\Lambda_k = | \I_{\L} |+ | \A_{\L} |$ that
\begin{equation}\label{nequ:22}
\sum_{k=0}^{T-1}\Lambda_k\Theta_k \geq  \frac{1}{2}( | \I_{\L} | + | \A_{\L} | ) - \frac{1}{2}\log_\gamma \left(\frac{\Delta_0}{\hat{\Delta}'}\right).
\end{equation}
Since $\sum_{k=0}^{T-1}I_k \geq pT$, by the fact that an iteration is either large or small, we have
\begin{equation}\label{nequ:23}
| \I_{\L} | + | \I_{\mathcal{S}} | = \sum_{k=0}^{T-1}(1-I_k)= T-\sum_{k=0}^{T-1}I_k \leq T-pT.
\end{equation}
Moreover, it follows from Lemma \ref{lemma: count_iter} (d) that \begin{equation}\label{nequ:24}
| \A_{\mathcal{S}} | = \sum_{k=0}^{T-1}(1-\Lambda_k)I_k \leq \sum_{k=0}^{T-1}(1-\Lambda_k)(1- I_k ) = | \I_{\mathcal{S}} |  .
\end{equation}
Combining the above displays, we have
\begin{align*}
& \sum_{k=0}^{T-1}\Lambda_k\Theta_kI_k   = \sum_{k=0}^{T-1}\Lambda_k\Theta_k - \sum_{k=0}^{T-1}\Lambda_k\Theta_k( 1 - I_k )  \stackrel{\mathclap{\eqref{nequ:22}}}{\geq} \; \; \frac{1}{2}( | \I_{\L} | + | \A_{\L} | ) - \frac{1}{2} \log_\gamma \left(\frac{\Delta_0}{\hat{\Delta}'}\right)- \sum_{k=0}^{T-1}\Lambda_k ( 1 - I_k ) \\
& =  \frac{1}{2}( | \I_{\L} | + | \A_{\L} | ) - \frac{1}{2} \log_\gamma \left(\frac{\Delta_0}{\hat{\Delta}'}\right) - | \I_{\L} |  = \frac{1}{2} ( T- | \I_{\mathcal{S}} | - | \A_{\mathcal{S}} | )- \frac{1}{2} \log_\gamma \left(\frac{\Delta_0}{\hat{\Delta}'}\right) - | \I_{\L} | \\
& \stackrel{\mathclap{\eqref{nequ:23}}}{\geq} \; \; \frac{1}{2} ( T- | \I_{\mathcal{S}} | - | \A_{\mathcal{S}} | )- \frac{1}{2} \log_\gamma \left(\frac{\Delta_0}{\hat{\Delta}'}\right) +  | \I_{\mathcal{S}} | -T +pT\\
& = \frac{1}{2}(| \I_{\mathcal{S}} | - | \A_{\mathcal{S}} | ) + \left(p-\frac{1}{2}\right) T- \frac{1}{2} \log_\gamma \left(\frac{\Delta_0}{\hat{\Delta}'}\right)  \stackrel{\mathclap{\eqref{nequ:24}}}{\geq} \; \; \left(p-\frac{1}{2}\right) T- \frac{1}{2} \log_\gamma \left(\frac{\Delta_0}{\hat{\Delta}'}\right) \\
& \geq \left(p-\frac{1}{2}\right) T- \frac{1}{2}\log_\gamma\left(\frac{\Delta_0}{\hat{\Delta}}\right)-1.
\end{align*}
In the last inequality, we use the relation $\gamma^{-2}\hat{\Delta}\leq \hat{\Delta}' $, which implies $\Delta_0/\hat{\Delta}' \leq \Delta_0/(\gamma^{-2}\hat{\Delta})$ and thus
\begin{equation*}
\log_\gamma\left(\frac{\Delta_0}{\hat{\Delta}'}\right) \leq \log_\gamma\left(\frac{\Delta_0}{\gamma^{-2}\hat{\Delta}}\right)  = \log_\gamma\left(\frac{\Delta_0}{\hat{\Delta}}\right) +2.
\end{equation*}
We complete the proof.
\end{proof}

Until now, we have presented all the preparatory lemmas that hold for both heavy-tailed and sub-exponential zeroth-order oracles. In the next two subsections, we establish the first- and second-order complexity bounds for these two oracles separately.

\subsection{Complexity bounds with heavy-tailed zeroth-order oracle}\label{sec:4_heavytail}

In this section, we establish the first and second-order high-probability complexity bounds when the noise of the objective value estimate follows the probabilistic heavy-tailed zeroth-order oracle (i.e., conditions \textbf{(i)}, \textbf{(ii)}, and \textbf{(iii.1)} of Definition \ref{def:heavy-tailed oracle}). We will show that$\quad$
\begin{equation*}
P\{T_{\epsilon}\leq T-1 \} \geq \text{ a probability that converges to 1 as }T \rightarrow\infty.
\end{equation*}

In the next theorem, we establish a generic iteration complexity bound of our method, which unifies the analysis of first and second-order stationarity. The corresponding complexity bounds are then derived in subsequent corollaries.

\begin{theorem}\label{thm: 1st}
Under Assumptions \ref{assump:4-1}, \ref{assump:epsilon}, and the probabilistic heavy-tailed zeroth-order oracle (i.e., bounded $1+\delta$ moment), for any $s\geq 0$, when  
\begin{equation}\label{thm:p}
\hat{p}\in\left(\frac{1}{2}+\frac{\barmu_{T-1}(2\epsilon_f+\vartheta_\alpha+2s)}{\barmu_0h_\alpha(\gamma^{-2}\hat{\Delta})},p\right)
\end{equation}
and
\begin{equation}\label{thm:T}
T>  \left(\frac{\hat{p}-0.5}{\barmu_{T-1}}-\frac{2\epsilon_f+\vartheta_\alpha+2s}{\barmu_0h_\alpha(\gamma^{-2}\hat{\Delta})}\right)^{-1}\cdot\left(\frac{f(\bx_0)+\barmu_0\|c(\bx_0)\|-f_{\inf}}{\barmu_0 h_\alpha(\gamma^{-2}\hat{\Delta})}+\frac{ \log_\gamma (\Delta_0/\hat{\Delta})+2}{2\barmu_{T-1}}\right),
\end{equation}
we have  
\begin{multline}\label{prob_heavy_tailed}
P\{T_\epsilon\leq T-1\} \geq  1 - \exp\rbr{-\frac{(p-\hat{p})^2}{2p^2}T} \\ 
- 2 \exp \left( - \frac{2(\epsilon_f-\tilde{\epsilon}_f+s)^2}{(3+\delta)^2 e^{1+\delta} \Upsilon_f^{\frac{2}{1+\delta}}} \cdot T \right)
- \frac{ 3^{2+\delta} \Upsilon_f }{(\epsilon_f -\tilde{\epsilon}_f + s)^{1+\delta}} \cdot T^{-\delta}.
\end{multline}	

\end{theorem}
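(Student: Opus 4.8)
The plan is to bound the failure event $\{T_\epsilon > T-1\}$ by a union of three events, each controlled by a different tool. On the core event we will exploit the telescoping of the merit function decrease, combining the counting relations of Lemma~\ref{lemma: count_iter} and the probability-zero event of Lemma~\ref{lemma: prob=0} to force a contradiction with the lower bound $f_{\inf}$ of the merit function whenever $T$ is large enough. The three pieces of probability mass that must be discarded are: (1) the event that too few iterations are accurate, handled by Lemma~\ref{lemma:I_k} with $\hat p$ as in \eqref{thm:p}; (2) the event that the accumulated zeroth-order oracle noise on accurate iterations exceeds a margin $s$ in one direction, which on the ``light'' side of the tail is controlled by an Azuma--Hoeffding-type bound after a bounded-difference truncation --- this is where the $\exp(-c(\epsilon_f-\tilde\epsilon_f+s)^2 T)$ term with the $(3+\delta)^2 e^{1+\delta}\Upsilon_f^{2/(1+\delta)}$ constant comes from; and (3) the genuinely heavy-tailed contribution, where the Burkholder-type inequality and the martingale Fuk--Nagaev inequality only give a polynomial tail, yielding the $3^{2+\delta}\Upsilon_f(\epsilon_f-\tilde\epsilon_f+s)^{-(1+\delta)}T^{-\delta}$ term.

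First I would set up the merit-function telescoping. Summing Lemma~\ref{lemma:suff_iter} over the sufficient iterations, and using that on insufficient or inaccurate iterations the merit function increase is controlled (via Lemmas~\ref{lemma:diff_ared_pred_wo_corr_step} and \ref{lemma:diff_ared_pred_w_corr_step}, condition (\ref{nequ:3}a), and the definition of $\text{Ared}_k$), one gets on the event $\{T_\epsilon > T-1\}$ a chain
\[
f(\bx_0)+\barmu_0\|c(\bx_0)\| - f_{\inf} \;\geq\; \barmu_0\Big(\sum_{k=0}^{T-1}\Theta_kI_k\Lambda_k\Big)h_\alpha(\gamma^{-2}\hat\Delta) - \barmu_{T-1}\big(2\epsilon_f+\vartheta_\alpha+2s\big)T - \text{(error)},
\]
where the crucial point is that $\Delta_k \geq \gamma^{-2}\hat\Delta$ on large sufficient accurate iterations (using the $\gamma^{-1}$ slack in the definition of $\hat\Delta'$, as flagged in the remark before Lemma~\ref{lemma: count_iter}), the relaxation constant $\vartheta_\alpha$ absorbs the irreducible bias, and $2s$ absorbs the deviation of the accumulated noise from its conditional mean up to the tolerance $s$. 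I would then invoke Lemma~\ref{lemma: prob=0} to replace $\sum \Theta_kI_k\Lambda_k$ by its lower bound $(p-\tfrac12)T - \tfrac12\log_\gamma(\Delta_0/\hat\Delta) - 1$ (valid up to a probability-zero set once we are on the event $\sum I_k \geq pT$), and push $p$ down to $\hat p$; rearranging gives exactly the threshold on $T$ in \eqref{thm:T}, so that for $T$ exceeding it the inequality is violated --- hence $\{T_\epsilon > T-1\}$ forces one of the discarded events.

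The heart of the argument, and the main obstacle, is controlling the accumulated zeroth-order noise $\sum_{k} I_k (e_{s_k} - \mathbb{E}[e_{s_k}\mid\cdot] + e_k - \mathbb{E}[e_k\mid\cdot])$ (or the difference form in the last bullet of Remark~\ref{rem:1}), which is a martingale difference sequence with only a finite $(1+\delta)$ conditional moment \eqref{eq3}. The standard Azuma--Hoeffding bound does not apply since the increments are unbounded. The plan is the familiar two-regime split: for the ``bulk'' part, truncate each increment at a level proportional to its $(1+\delta)$-moment norm and apply Azuma to the truncated martingale, controlling the truncation bias by Markov's inequality on the $(1+\delta)$ moment; this produces the sub-Gaussian term with the $e^{1+\delta}$ and $(3+\delta)$ factors (the $3$ comes from splitting the probability into three pieces, the factor $e$ and the $\Upsilon_f^{2/(1+\delta)}$ from optimizing the truncation level). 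For the ``tail'' part --- the event that some increment is large --- one uses a union bound with Markov on the $(1+\delta)$-moment, which is irreducibly polynomial: this is the $T^{-\delta}$ term, and it is why almost-sure finiteness of $T_\epsilon$ holds only when $\delta>1$ (so that $\sum_T T^{-\delta} < \infty$ and Borel--Cantelli applies). Assembling the sub-Gaussian and polynomial tails with the Azuma bound from Lemma~\ref{lemma:I_k} gives precisely the three subtracted terms in \eqref{prob_heavy_tailed}; I expect that carefully tracking the constants through the truncation optimization --- so that they come out as stated --- will be the most delicate bookkeeping, and that invoking the Burkholder-type inequality \citep{Burkholder1973Distribution,Chen2020Rosenthal} together with the martingale Fuk--Nagaev inequality \citep{Fuk1973Certain,Nagaev1979Large,Fan2017Deviation} in the right form will be the key technical step that replaces the sub-exponential MGF bound used in \cite{Cao2023First,Berahas2025Sequential}.
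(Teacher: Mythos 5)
Your proposal follows essentially the same route as the paper: split off the event that the accumulated oracle noise exceeds its budget $(2\epsilon_f+2s)T$ (controlled via Burkholder/Markov for $\delta\in(0,1]$ and the martingale Fuk--Nagaev inequality for $\delta>1$, which is exactly the truncation-plus-exponential argument you sketch), use Azuma--Hoeffding on $\sum_k I_k$ and the probability-zero event of Lemma \ref{lemma: prob=0} for the remaining mass, and derive the threshold on $T$ from the merit-function telescoping contradiction. One caution for the execution: the noise sum you would concentrate should be $\sum_k(e_k+e_{s_k})$ over \emph{all} iterations, not weighted by $I_k$ --- since $I_k$ depends on $\C_k$ and hence on $e_k,e_{s_k}$ themselves, the weighted increments are no longer martingale differences, and the paper's telescoping only needs the unweighted total anyway.
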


\begin{proof}
See Appendix \ref{append:B}.
\end{proof}

In the above theorem, the existence of $s\geq 0$ is ensured by Assumption \ref{assump:epsilon} along with the definitions of $\hat{\Delta}$ in \eqref{equ:delta_hat} and $\vartheta_\alpha$ in \eqref{nequ:3}. In particular, we have $0.5+\barmu_{T-1}(2\epsilon_f+\vartheta_\alpha)/\{\barmu_0h_\alpha(\gamma^{-2}\hat{\Delta})\}<p$ for both $\alpha=0$ and $\alpha=1$. 
With Theorem \ref{thm: 1st}, we derive in the following corollary the complexity bound for first-order $\epsilon$-stationarity. 
The corollary follows directly from substituting $h_{\alpha}(\gamma^{-2}\hat{\Delta}) = \frac{\kappa_{fcd}}{2}\eta^3\gamma^{-4}(\Upsilon_3\epsilon - \Upsilon_4\epsilon_g)^2$ and $\vartheta_\alpha=2\epsilon_f$ for $\alpha=0$ into the conclusion in Theorem \ref{thm: 1st}.

\begin{corollary}\label{coro:1st}

Under conditions of Theorem \ref{thm: 1st}, for first-order stationarity ($\alpha=0$), the conclusion \eqref{prob_heavy_tailed} holds given
\begin{equation}\label{coro1:p}
\hat{p}\in\left(\frac{1}{2}+\frac{4\gamma^4 \barmu_{T-1}(2\epsilon_f+s)}{\barmu_0\kappa_{fcd}\eta^3(\Upsilon_3\epsilon-\Upsilon_4\epsilon_g)^2},p\right)
\end{equation}
and
\begin{multline}\label{coro1:T}
T >  \left(\frac{\hat{p} - 0.5}{\barmu_{T-1}}-\frac{4\gamma^4(2\epsilon_f+s)}{\barmu_0\kappa_{fcd}\eta^3(\Upsilon_3\epsilon-\Upsilon_4\epsilon_g)^2 }\right)^{-1} \\
\cdot\left(\frac{2\gamma^4(f(\bx_0)+\barmu_0\|c(\bx_0)\|-f_{\inf})}{\barmu_0 \kappa_{fcd}\eta^3(\Upsilon_3\epsilon-\Upsilon_4\epsilon_g)^2}+\frac{ \log_\gamma\{\Delta_0/(\Upsilon_3\epsilon-\Upsilon_4\epsilon_g) \}+2}{2\barmu_{T-1}}\right).
\end{multline}
\end{corollary}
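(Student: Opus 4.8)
The plan is to derive Corollary~\ref{coro:1st} as an immediate specialization of Theorem~\ref{thm: 1st} to the case $\alpha=0$. First I would recall that for first-order stationarity the relaxation term is $\vartheta_\alpha = \vartheta_0 = 2\epsilon_f$, and that the per-iteration reduction function is $h_0(\Delta) = \tfrac{\kappa_{fcd}}{2}\eta^3\Delta^2$. Plugging the threshold $\hat\Delta = \Upsilon_3\epsilon - \Upsilon_4\epsilon_g$ from \eqref{equ:delta_hat} into $h_0$ evaluated at $\gamma^{-2}\hat\Delta$ gives
\begin{equation*}
h_0(\gamma^{-2}\hat\Delta) = \frac{\kappa_{fcd}}{2}\eta^3\gamma^{-4}(\Upsilon_3\epsilon - \Upsilon_4\epsilon_g)^2 = \frac{\kappa_{fcd}\eta^3(\Upsilon_3\epsilon - \Upsilon_4\epsilon_g)^2}{2\gamma^4}.
\end{equation*}
I would then substitute $2\epsilon_f + \vartheta_0 + 2s = 4\epsilon_f + 2s = 2(2\epsilon_f + s)$ together with this expression for $h_0(\gamma^{-2}\hat\Delta)$ into the range \eqref{thm:p} for $\hat p$, which turns the fraction $\tfrac{\barmu_{T-1}(2\epsilon_f+\vartheta_\alpha+2s)}{\barmu_0 h_\alpha(\gamma^{-2}\hat\Delta)}$ into exactly $\tfrac{4\gamma^4\barmu_{T-1}(2\epsilon_f+s)}{\barmu_0\kappa_{fcd}\eta^3(\Upsilon_3\epsilon-\Upsilon_4\epsilon_g)^2}$, yielding \eqref{coro1:p}.

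Next I would perform the analogous substitution into the lower bound \eqref{thm:T} for $T$. The first factor $\bigl(\tfrac{\hat p - 0.5}{\barmu_{T-1}} - \tfrac{2\epsilon_f+\vartheta_\alpha+2s}{\barmu_0 h_\alpha(\gamma^{-2}\hat\Delta)}\bigr)^{-1}$ becomes $\bigl(\tfrac{\hat p - 0.5}{\barmu_{T-1}} - \tfrac{4\gamma^4(2\epsilon_f+s)}{\barmu_0\kappa_{fcd}\eta^3(\Upsilon_3\epsilon-\Upsilon_4\epsilon_g)^2}\bigr)^{-1}$. In the second factor, the term $\tfrac{f(\bx_0)+\barmu_0\|c(\bx_0)\| - f_{\inf}}{\barmu_0 h_\alpha(\gamma^{-2}\hat\Delta)}$ becomes $\tfrac{2\gamma^4(f(\bx_0)+\barmu_0\|c(\bx_0)\| - f_{\inf})}{\barmu_0\kappa_{fcd}\eta^3(\Upsilon_3\epsilon-\Upsilon_4\epsilon_g)^2}$, while $\log_\gamma(\Delta_0/\hat\Delta) = \log_\gamma\{\Delta_0/(\Upsilon_3\epsilon - \Upsilon_4\epsilon_g)\}$, giving exactly \eqref{coro1:T}. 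Since the probability bound \eqref{prob_heavy_tailed} in Theorem~\ref{thm: 1st} does not depend on $\alpha$ (it involves only $\delta$, $\Upsilon_f$, $\epsilon_f$, $\tilde\epsilon_f$, $s$, $p$, $\hat p$, and $T$), no further manipulation of the right-hand side of \eqref{prob_heavy_tailed} is needed, and the conclusion transfers verbatim. The only thing to verify is that the specified $\hat p$-interval in \eqref{coro1:p} is nonempty, i.e. that $\tfrac12 + \tfrac{4\gamma^4\barmu_{T-1}(2\epsilon_f+s)}{\barmu_0\kappa_{fcd}\eta^3(\Upsilon_3\epsilon-\Upsilon_4\epsilon_g)^2} < p$ for some admissible $s\ge 0$; this follows from Assumption~\ref{assump:epsilon} (first display), which is precisely the statement that $\epsilon$ exceeds the threshold making $\tfrac12 + \tfrac{4\gamma^4\barmu_{T-1}\cdot 2\epsilon_f}{\barmu_0\kappa_{fcd}\eta^3(\Upsilon_3\epsilon-\Upsilon_4\epsilon_g)^2} < p$ (take $s$ slightly above $0$), combined with the remark immediately after Theorem~\ref{thm: 1st}.

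This corollary is essentially a bookkeeping exercise, so there is no substantive obstacle; the only mild care needed is algebraic — tracking the $\gamma^4$ and the factor-of-$2$ that appear when $h_0(\gamma^{-2}\hat\Delta)$ is expanded, and confirming that $\vartheta_0 = 2\epsilon_f$ makes $2\epsilon_f + \vartheta_0 + 2s$ collapse to $2(2\epsilon_f + s)$ rather than some other combination. I would present it as a short direct computation invoking Theorem~\ref{thm: 1st} with $\alpha=0$ and the definitions \eqref{equ:delta_hat} and \eqref{nequ:3} of $\hat\Delta$ and $\vartheta_\alpha$.
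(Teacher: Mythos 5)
Your proposal is correct and is exactly the paper's argument: the paper states that Corollary \ref{coro:1st} follows directly by substituting $h_0(\gamma^{-2}\hat{\Delta}) = \frac{\kappa_{fcd}}{2}\eta^3\gamma^{-4}(\Upsilon_3\epsilon - \Upsilon_4\epsilon_g)^2$ and $\vartheta_0 = 2\epsilon_f$ into Theorem \ref{thm: 1st}, which is precisely the bookkeeping you carry out, and your algebra (the factor $2(2\epsilon_f+s)$ and the $\gamma^4$ from inverting $h_0$) checks out.
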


Next, we show the high-probability complexity bound for second-order $\epsilon$-stationarity by substituting $h_{\alpha}(\gamma^{-2}\hat{\Delta}) = \frac{\kappa_{fcd}}{2\max\{\Delta_{\max},1\}}\eta^3\gamma^{-6}(\Upsilon_5\epsilon - \Upsilon_6\epsilon_g-\Upsilon_7\epsilon_h)^3$ and $\vartheta_\alpha=2\epsilon_f+\epsilon_g^{3/2}$ for $\alpha=1$ into the conclusion in Theorem \ref{thm: 1st}.

\begin{corollary}\label{coro:2nd}

Under conditions of Theorem \ref{thm: 1st}, for second-order stationarity ($\alpha=1$), the conclusion \eqref{prob_heavy_tailed} holds given
\begin{equation}\label{coro2:p}
\hat{p} \in\left(\frac{1}{2}+\frac{2\gamma^6\barmu_{T-1}\max\{\Delta_{\max},1\} (4\epsilon_f+\epsilon_g^{3/2}+2s)}{\barmu_0\kappa_{fcd}\eta^3(\Upsilon_5\epsilon-\Upsilon_6\epsilon_g-\Upsilon_7\epsilon_h)^3},p\right)
\end{equation}
and
\begin{multline}\label{coro2:T}
T >  \left(\frac{\hat{p}-0.5}{\barmu_{T-1}}-\frac{2\gamma^6 \max\{\Delta_{\max},1\} (4\epsilon_f+\epsilon_g^{3/2}+2s)}{\barmu_0\kappa_{fcd}\eta^3(\Upsilon_5\epsilon - \Upsilon_6\epsilon_g-\Upsilon_7\epsilon_h)^3 }\right)^{-1} \\
\cdot\left(\frac{2\gamma^6\max\{\Delta_{\max},1\}(f(\bx_0)+\barmu_0\|c(\bx_0)\|-f_{\inf})}{\barmu_0 \kappa_{fcd}\eta^3(\Upsilon_5\epsilon - \Upsilon_6\epsilon_g-\Upsilon_7\epsilon_h)^3}+\frac{ \log_\gamma\frac{\Delta_0}{\Upsilon_5\epsilon - \Upsilon_6\epsilon_g-\Upsilon_7\epsilon_h}+2}{2\barmu_{T-1}}\right).
\end{multline}
\end{corollary}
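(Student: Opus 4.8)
The plan is to derive Corollary~\ref{coro:2nd} as a direct specialization of Theorem~\ref{thm: 1st} to the case $\alpha = 1$, with no new probabilistic argument needed. The entire content is a bookkeeping substitution: replace the abstract quantities $h_\alpha(\gamma^{-2}\hat\Delta)$ and $\vartheta_\alpha$ appearing in \eqref{thm:p} and \eqref{thm:T} by their explicit forms when $\alpha=1$, and observe that the tail-probability bound \eqref{prob_heavy_tailed} is stated in terms of $\delta$, $\Upsilon_f$, $\epsilon_f$, $\tilde\epsilon_f$, $s$ only, hence carries over verbatim.

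First I would recall from the definition of $h_\alpha$ that for $\alpha=1$, $h_1(\Delta) = \frac{\kappa_{fcd}}{2\max\{\Delta_{\max},1\}}\eta^3\Delta^3$, and from \eqref{equ:delta_hat} that for $\alpha=1$, $\hat\Delta = \Upsilon_5\epsilon - \Upsilon_6\epsilon_g - \Upsilon_7\epsilon_h$. Therefore
\begin{equation*}
h_1(\gamma^{-2}\hat\Delta) = \frac{\kappa_{fcd}\eta^3}{2\max\{\Delta_{\max},1\}}\,\gamma^{-6}\,(\Upsilon_5\epsilon - \Upsilon_6\epsilon_g - \Upsilon_7\epsilon_h)^3.
\end{equation*}
Next I would recall from \eqref{nequ:3} that $\vartheta_1 = 2\epsilon_f + \epsilon_g^{3/2}$, so that $2\epsilon_f + \vartheta_\alpha + 2s = 4\epsilon_f + \epsilon_g^{3/2} + 2s$ when $\alpha=1$. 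Substituting these two expressions into \eqref{thm:p} gives
\begin{equation*}
\frac{1}{2} + \frac{\barmu_{T-1}(2\epsilon_f + \vartheta_\alpha + 2s)}{\barmu_0 h_\alpha(\gamma^{-2}\hat\Delta)}
= \frac{1}{2} + \frac{2\gamma^6\barmu_{T-1}\max\{\Delta_{\max},1\}(4\epsilon_f + \epsilon_g^{3/2} + 2s)}{\barmu_0\kappa_{fcd}\eta^3(\Upsilon_5\epsilon - \Upsilon_6\epsilon_g - \Upsilon_7\epsilon_h)^3},
\end{equation*}
which is exactly the lower endpoint in \eqref{coro2:p}. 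Likewise, substituting into \eqref{thm:T} and using $\log_\gamma(\Delta_0/\hat\Delta) = \log_\gamma\{\Delta_0/(\Upsilon_5\epsilon - \Upsilon_6\epsilon_g - \Upsilon_7\epsilon_h)\}$ reproduces \eqref{coro2:T} after clearing the $\gamma^{-6}$ and $\max\{\Delta_{\max},1\}^{-1}$ factors out of the two reciprocals. Since \eqref{prob_heavy_tailed} does not reference $h_\alpha$, $\vartheta_\alpha$, or $\hat\Delta$ at all, the conclusion is immediate once the hypotheses \eqref{coro2:p}--\eqref{coro2:T} are recognized as the $\alpha=1$ instances of \eqref{thm:p}--\eqref{thm:T}.

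There is essentially no obstacle here; the only point requiring a sentence of care is verifying that the interval in \eqref{coro2:p} is nonempty, i.e.\ that the displayed lower endpoint is strictly less than $p$. This follows from Assumption~\ref{assump:epsilon} with $\alpha=1$: that assumption is precisely the statement that $\epsilon$ is large enough to force $\frac{1}{2} + \barmu_{T-1}(2\epsilon_f+\vartheta_1)/\{\barmu_0 h_1(\gamma^{-2}\hat\Delta)\} < p$ (the case $s=0$), and by continuity the strict inequality persists for sufficiently small $s \geq 0$; this is exactly the remark following Theorem~\ref{thm: 1st}. Hence the corollary holds, and I would present it as a one-paragraph specialization rather than a standalone proof.
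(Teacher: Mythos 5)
Your proposal is correct and matches the paper exactly: the paper likewise obtains Corollary \ref{coro:2nd} by substituting $h_{1}(\gamma^{-2}\hat{\Delta}) = \frac{\kappa_{fcd}}{2\max\{\Delta_{\max},1\}}\eta^3\gamma^{-6}(\Upsilon_5\epsilon - \Upsilon_6\epsilon_g-\Upsilon_7\epsilon_h)^3$ and $\vartheta_1=2\epsilon_f+\epsilon_g^{3/2}$ into the conditions of Theorem \ref{thm: 1st}, with no separate argument. Your added sentence on the nonemptiness of the interval in \eqref{coro2:p} correctly mirrors the paper's remark following Theorem \ref{thm: 1st} that Assumption \ref{assump:epsilon} guarantees the existence of a valid $s\geq 0$.
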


The above corollaries indicate that our method achieves an iteration complexity of $\mathcal{O}(\epsilon^{-2})$ for finding a first-order $\epsilon$-stationary point and $\mathcal{O}(\epsilon^{-3})$ for finding a second-order $\epsilon$-stationary point with high probability. 
The above complexity bounds are consistent with the results established in \cite{Nesterov2004Introductory,Hollender2023computational,Wang2020Cubic} for deterministic unconstrained nonconvex optimization, which are achieved using constant-stepsize gradient-based methods or cubic-regularized Newton methods.
These complexity bounds also match the results in \cite{Berahas2025Sequential} for a step-search SSQP method in identifying a first-order $\epsilon$-stationary point in constrained optimization, and the results in \cite{Cao2023First} for a trust-region method in identifying both first- and second-order $\epsilon$-stationary points in unconstrained optimization. Notably, our analysis only requires $e_k$ and $e_{s_k}$ to have a finite $1+\delta$ moment, rather than the sub-exponential tail condition assumed in \cite{Cao2023First,Berahas2025Sequential}, thereby allowing for noise with heavy tails.

\begin{remark}

In this remark, we demonstrate the techniques used in establishing the iteration complexity and discuss the sample complexity.

\vskip3pt
\noindent $\bullet$ 
The heavy-tailed nature of the noise introduces significant challenges in complexity analysis. To this end, we employ the Burkholder-type inequality (Lemma \ref{append:lemma1}) \citep{Burkholder1973Distribution, Chen2020Rosenthal} and martingale Fuk–Nagaev inequality (Lemma \ref{Fuk–Nagaev}) \citep{Fuk1973Certain,Nagaev1979Large, Fan2017Deviation} to study the tail behavior of $e_k$ and $e_{s_k}$. In contrast, \cite{Cao2023First, Berahas2025Sequential} utilized concentration techniques for sub-exponential variables, which we also apply in Section \ref{sec:4_subexp}. We mention that the techniques used in analyzing heavy-tailed noise are independent of the algorithm designs, implying that same analysis can be applied to trust-region and line-search methods \citep{Cao2023First,Berahas2025Sequential} and derive the same complexity bounds.

\vskip3pt
\noindent $\bullet$ 
Due to the heavy-tailed noise, $P\{T_{\epsilon} \leq T-1\}$ no longer grows to 1 at an exponential rate as in \cite{Cao2023First,Berahas2025Sequential}, but is instead reduced to a polynomial rate of $\O(T^{-\delta})$ for $\delta > 0$ (cf. \eqref{prob_heavy_tailed}). 
Despite this slower rate, we still obtain for any $\delta>0$, $P[T_{\epsilon}=\infty]=P[\cap_{T=0}^\infty \{T_\epsilon > T\}] = \lim\limits_{T\rightarrow\infty}P[T_{\epsilon}>T]=0$, where the second equality is due to the monotonicity of the events $\{T_\epsilon>T+1\}\subseteq\{T_\epsilon>T\}$. Thus, for any $\delta>0$, our algorithm reaches a (first- or second-order) $\epsilon$-stationary point in finite time \textit{almost surely}. If we suppress the irreducible noise by setting $\epsilon_f=\epsilon_g=\epsilon_h=0$, this result further yields a liminf-type almost-sure convergence: for any realization of the algorithm, there exists a subsequence of iterates converging to a (first- or second-order) stationary point. 
This liminf-type second-order convergence with heavy-tailed oracles matches existing literature \citep{Blanchet2019Convergence, Fang2024Trust}; however, this asymptotic result implied from our non-asymptotic analysis still leaves a gap compared with the stronger lim-type first-order convergence established in \cite[Theorem 4.18]{Chen2017Stochastic} and \cite[Theorem 4]{Blanchet2019Convergence}, which requires no moment conditions on the objective value estimates. Closing this gap remains an open question.

\end{remark}

\begin{remark}\label{rem:4.21}
We analyze the total sample complexity of the method following the introduction of oracle constructions in Section \ref{subsec:3.2}.
By Assumption \ref{assump:epsilon}, achieving first-order stationarity requires $\epsilon \geq \O(\sqrt{\epsilon_f}+\epsilon_g)$, leading to the relations $\epsilon_f \approx \O(\epsilon^2)$ and $\epsilon_g \approx \O(\epsilon)$. Similarly, for second-order stationarity where $\epsilon \geq \O(\sqrt[3]{\epsilon_f} + \sqrt{\epsilon_g} + \epsilon_h)$, we obtain $\epsilon_f \approx \O(\epsilon^3)$, $\epsilon_g \approx \O(\epsilon^2)$, and $\epsilon_h \approx \O(\epsilon)$. 

Suppose the noise of all orders of oracles has bounded variance (i.e., $\delta=1$). As shown in Section \ref{subsec:3.2}, estimating the objective value, gradient, and Hessian in each iteration requires sample sizes of $\O(\tilde{\epsilon}_f^{ -2})$, $\O(\epsilon_g^{-2})$, and $\O(\epsilon_h^{-2})$. Thus, when $\tilde{\epsilon}_f = \O(\epsilon_f)$, achieving first-order $\epsilon$-stationarity requires $\O(\epsilon^{-4})$ and $\O(\epsilon^{-2})$ samples per iteration for estimating the objective value and gradient, respectively. Given $\O(\epsilon^{-2})$ iteration complexity, the total sample complexity amounts to $\O(\epsilon^{-6})$. 
Analogously, for second-order $\epsilon$-stationarity, $\O(\epsilon^{-6})$, $\O(\epsilon^{-4})$, and $\O(\epsilon^{-2})$ samples are required per iteration for estimating the objective value, gradient, and Hessian, respectively. Combined with $\O(\epsilon^{-3})$ iteration complexity, the total sample complexity amounts to $\O(\epsilon^{-9})$.

We mention that the total sample complexity of $\O(\epsilon^{-6})$ for first-order $\epsilon$-stationarity matches the result in \cite{Jin2025Sample}, which analyzed a trust-region method for identifying first-order $\epsilon$-stationary points in unconstrained optimization. However, this result is worse than that in \cite{Curtis2023Worst}, which established a total sample complexity of $\O(\epsilon^{-4})$ for a line-search method. The discrepancy arises because both our method and \cite{Jin2025Sample} require objective value estimation at each iteration, leading to a per-iteration sample complexity of $\O(\epsilon^{-4})$ (cf. Remark \ref{rem:4.8}).
If we adopt a non-adaptive, objective-value-free design as in \cite{Curtis2023Worst}, where only gradient estimates are required, the per-iteration sample complexity will reduce to $\O(\epsilon^{-2})$, yielding a total sample complexity of $\O(\epsilon^{-4})$. However, as demonstrated in \cite{Berahas2025Sequential}, \mbox{objective-value-free} methods tend to perform worse in practice than those using objective value estimates, especially in the presence of irreducible noise.

Reducing the above sample complexity of adaptive SSQP methods is still a challenging problem. Some helpful insights can be drawn from well-studied stochastic methods such as stochastic gradient descent (SGD). Numerous variance-reduction techniques have been proposed for SGD in nonconvex settings and have demonstrated significant improvements over vanilla SGD. For example, Stochastic Variance-Reduced Gradient (SVRG) \citep{Johnson2013Accelerating, Reddi2016Stochastic} achieves an improved sample complexity of $\mathcal{O}(N + N^{2/3}\epsilon^{-2})$ for finite-sum optimization problems with $N$ samples. Building on this idea, \cite{Fang2026Trsvr} generalize SVRG to stochastic trust-region methods, attaining the same sample complexity. In the online setting, STOchastic Recursive Momentum (STORM) \citep{Cutkosky2019Momentum} achieves a sample complexity of $\mathcal{O}(\epsilon^{-3})$. Similarly, Stochastic Path-Integrated Differential EstimatoR (SPIDER) \citep{Fang2018Spider} provides a unifying framework, achieving $\mathcal{O}(\epsilon^{-3})$ in the online setting and $\mathcal{O}(N + N^{1/2}\epsilon^{-2})$ in the finite-sum setting with $N$ samples.
In constrained stochastic optimization, \cite{Lu2024Variance} applies both truncated recursive momentum and truncated Polyak momentum for penalty methods, attaining $\mathcal{O}(\epsilon^{-3})$ sample complexity. \cite{Berahas2023Accelerating} integrates SVRG into SSQP for finite-sum problems, attaining $\mathcal{O}(N + N^{2/3}\epsilon^{-2})$ sample complexity and matching the result in \cite{Reddi2016Stochastic} for unconstrained optimization. However, the SVRG technique requires periodically computing a full gradient as a ``checkpoint'', which presents challenges for directly adapting it to the online setting.  
We leave the development of variance-reduced trust-region SSQP methods for improving sample complexity to future research.

\end{remark}

\subsection{Complexity bounds with sub-exponential zeroth-order oracle}\label{sec:4_subexp}

In this section, we suppose that the noise in the objective value estimate follows the probabilistic sub-exponential zeroth-order oracle (i.e., conditions \textbf{(i)}, \textbf{(ii)}, and \textbf{(iii.2)} of Definition \ref{def:heavy-tailed oracle}), and we demonstrate that the same high-probability complexity bounds hold as in the case of heavy-tailed oracles.

In the next theorem, we establish a generic iteration complexity bound for both first- and second-order $\epsilon$-stationarity under the sub-exponential oracle.

\begin{theorem}\label{thm: 1st_subexp}

Under Assumptions \ref{assump:4-1}, \ref{assump:epsilon}, and the probabilistic sub-exponential zeroth-order oracle, for any $s\geq 0$, when $\hat{p}$ satisfies condition \eqref{thm:p} and $T$ satisfies condition  \eqref{thm:T}, we have
\begin{multline}\label{prob_subexp}
P\{T_\epsilon\leq T-1\} \geq  1- \exp\left\{-\frac{(p-\hat{p})^2}{2p^2}T\right\} \\
- \exp\left\{-\frac{1}{2}\min\left(\frac{(\epsilon_f -\tilde{\epsilon}_f + s)^2}{v^2},\frac{(\epsilon_f -\tilde{\epsilon}_f + s)}{b}\right)T\right\}.
\end{multline}	
\end{theorem}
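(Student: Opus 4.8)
The plan is to mirror the proof of Theorem~\ref{thm: 1st} exactly, replacing only the heavy-tailed concentration arguments for the accumulated zeroth-order noise by the corresponding sub-exponential concentration arguments. The skeleton is unchanged: conditioned on the event $\{T_\epsilon > T-1\}$, I decompose the change in the stochastic merit function $\bar{\L}_{\barmu_k}$ across the first $T$ iterations into a telescoping sum. Using Lemma~\ref{lemma:suff_iter} to control sufficient iterations, Lemma~\ref{lemma: count_iter} to count iterations of each type, and Lemma~\ref{lemma: prob=0} to discard a probability-zero event, one arrives (on the complement of the stated failure events) at a contradiction with $f_{\inf}$ as the lower bound of $f$, provided $T$ exceeds the threshold \eqref{thm:T} and $\hat p$ satisfies \eqref{thm:p}. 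The term $\exp\{-\frac{(p-\hat p)^2}{2p^2}T\}$ comes verbatim from Lemma~\ref{lemma:I_k}, which is oracle-agnostic, so nothing there changes.

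The one place where the oracle matters is in bounding the accumulated deviation of the zeroth-order errors from their conditional means, i.e.\ a term of the form $\sum_{k=0}^{T-1}\big((e_k - \mE[e_k\mid\F_{k-1}]) + (e_{s_k} - \mE[e_{s_k}\mid\F_{k-1/2}])\big)$ (or the SOC-corrected analogue), which is a martingale difference sum with respect to the filtration. In the heavy-tailed proof this is handled via the Burkholder-type and martingale Fuk--Nagaev inequalities, producing both a sub-Gaussian-looking exponential term and a polynomial $T^{-\delta}$ term. Under condition~\eqref{eq4}, each increment is conditionally sub-exponential with parameters $(v,b)$, so instead I would invoke a Bernstein-type martingale concentration inequality (Freedman's inequality, or the standard sub-exponential martingale tail bound): for a sum of $T$ conditionally $(v,b)$-sub-exponential martingale differences, the probability that the sum exceeds $t$ is at most $\exp\{-\frac{1}{2}\min(t^2/(Tv^2), t/b)\}$. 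Setting $t = (\epsilon_f - \tilde\epsilon_f + s)T$ — the slack between the mean-error bound $\tilde\epsilon_f$ in \eqref{eq1} and the target level $\epsilon_f + s$ — and absorbing the factor $T$ appropriately yields precisely the single term $\exp\{-\frac{1}{2}\min((\epsilon_f-\tilde\epsilon_f+s)^2/v^2,\ (\epsilon_f-\tilde\epsilon_f+s)/b)T\}$ in \eqref{prob_subexp}. The polynomial term simply disappears, since sub-exponential tails decay faster than any polynomial.

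Concretely, the steps I would carry out are: (i) restate, on $\{T_\epsilon > T-1\}\cap\{\sum I_k \geq pT\}$, the chain of inequalities from Appendix~\ref{append:B} that bounds $\bar\L_{\barmu_0}^0 - \bar\L_{\barmu_{T-1}}^{T}$ below in terms of $\sum_k \Theta_k I_k \Lambda_k$, the count $\log_\gamma(\Delta_0/\hat\Delta)$, and the accumulated noise term $2\sum_k(e_k + e_{s_k})$ plus the $\vartheta_\alpha$ relaxation; (ii) use $\mE[e_k\mid\F_{k-1}], \mE[e_{s_k}\mid\F_{k-1/2}]\leq\tilde\epsilon_f$ from \eqref{eq1} to split the noise sum into its conditional-mean part (deterministically $\leq 2T\tilde\epsilon_f$) and the martingale part; (iii) apply the sub-exponential martingale Bernstein inequality to the martingale part to conclude that, outside the stated exponential failure event, the full noise sum is at most $2T(\epsilon_f + s)$; (iv) combine with Lemma~\ref{lemma: prob=0} and Lemma~\ref{lemma:I_k} and the lower bound $f\geq f_{\inf}$ to derive that $T_\epsilon \leq T-1$ must hold, obtaining \eqref{prob_subexp} by a union bound over the two failure events.

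The main obstacle — though it is a bookkeeping obstacle rather than a conceptual one — is verifying that the martingale structure lines up correctly: the increments $e_k - \mE[e_k\mid\F_{k-1}]$ and $e_{s_k} - \mE[e_{s_k}\mid\F_{k-1/2}]$ are measured against \emph{different} sub-$\sigma$-algebras ($\F_{k-1}$ versus $\F_{k-1/2}$), so I must order them as a single martingale difference sequence with respect to the refined filtration $\F_{-1}\subseteq\F_{-1/2}\subseteq\F_0\subseteq\cdots$ and check that each increment is conditionally sub-exponential with the \emph{same} $(v,b)$ with respect to its own predecessor $\sigma$-algebra — which is exactly what \eqref{eq4} provides. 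A secondary subtlety is that the number of terms in the noise sum depends on whether SOC steps are performed, but as noted in Remark~\ref{rem:1} one may work directly with $e_k := |(\barf_{s_k}-\barf_k)-(f_{s_k}-f_k)|$, keeping a fixed count of $T$ martingale increments; I would adopt that reduction to keep the argument clean. Once the martingale is set up, the concentration bound is a black box and the rest is identical to the heavy-tailed case with the polynomial term excised.
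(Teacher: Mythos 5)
Your proposal is correct and follows essentially the same route as the paper: the event decomposition into $A$ and $B$, the oracle-agnostic treatment of $P(B)$ via Lemmas \ref{lemma:I_k} and \ref{lemma: prob=0}, and a Chernoff-type concentration bound on the centered noise sum under \eqref{eq4}. The only cosmetic difference is that you invoke a sub-exponential martingale Bernstein inequality as a black box, whereas the paper re-derives it inline by combining the two per-iteration increments with H\"older's inequality (yielding a $(2v,2b)$-sub-exponential increment), peeling off terms by the tower property, and optimizing the Chernoff parameter over $\lambda\in[0,1/(2b)]$ — both yield the stated bound.
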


\begin{proof}
See Appendix \ref{thm:Append:C.2}.
\end{proof}

Plugging the formulas of $\vartheta_\alpha$ and $h_{\alpha}(\gamma^{-2}\hat{\Delta})$ for $\alpha=0$ or $1$ into \eqref{thm:p} and \eqref{thm:T}, we immediately know that Corollaries \ref{coro:1st} and \ref{coro:2nd} remain true for the sub-exponential oracle. We combine them into the following corollary.

\begin{corollary}
Under conditions of Theorem \ref{thm: 1st_subexp}, for first-order stationarity (i.e., $\alpha=0$), the conclusion \eqref{prob_subexp} holds when $\hat{p}$ satisfies condition \eqref{coro1:p} and $T$ satisfies condition \eqref{coro1:T}.
For second-order stationarity (i.e., $\alpha=1$), the conclusion \eqref{prob_subexp} holds when $\hat{p}$ satisfies condition \eqref{coro2:p} and $T$ satisfies condition \eqref{coro2:T}.

\end{corollary}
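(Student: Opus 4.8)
The plan is to obtain this corollary as a direct specialization of Theorem~\ref{thm: 1st_subexp}: the only difference between the generic conclusion \eqref{prob_subexp} and the stated inequality is that the abstract quantities $\vartheta_\alpha$ and $h_\alpha(\gamma^{-2}\hat{\Delta})$ appearing in conditions \eqref{thm:p}--\eqref{thm:T} get replaced by their explicit forms for $\alpha=0$ and $\alpha=1$. First I would recall from \eqref{nequ:3} that $\vartheta_0=2\epsilon_f$ and $\vartheta_1=2\epsilon_f+\epsilon_g^{3/2}$; from the definition of $h_\alpha(\cdot)$ that $h_0(\Delta)=\tfrac{\kappa_{fcd}}{2}\eta^3\Delta^2$ and $h_1(\Delta)=\tfrac{\kappa_{fcd}}{2\max\{\Delta_{\max},1\}}\eta^3\Delta^3$; and from \eqref{equ:delta_hat} that $\hat{\Delta}=\Upsilon_3\epsilon-\Upsilon_4\epsilon_g$ when $\alpha=0$ and $\hat{\Delta}=\Upsilon_5\epsilon-\Upsilon_6\epsilon_g-\Upsilon_7\epsilon_h$ when $\alpha=1$. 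Under the hypotheses of Theorem~\ref{thm: 1st_subexp} (which include Assumption~\ref{assump:epsilon} and the normalization $\hat{\Delta}\le\Delta_{\max}$ adopted after \eqref{equ:delta_hat}) all these quantities are well defined and $\hat{\Delta}>0$, so the substitutions are legitimate and it remains only to simplify.

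For $\alpha=0$ I would use $2\epsilon_f+\vartheta_0+2s=4\epsilon_f+2s=2(2\epsilon_f+s)$ and $h_0(\gamma^{-2}\hat{\Delta})=\tfrac{\kappa_{fcd}}{2}\eta^3\gamma^{-4}(\Upsilon_3\epsilon-\Upsilon_4\epsilon_g)^2$. Substituting these into \eqref{thm:p}, the ratio $\barmu_{T-1}(2\epsilon_f+\vartheta_\alpha+2s)/\{\barmu_0 h_\alpha(\gamma^{-2}\hat{\Delta})\}$ becomes $4\gamma^4\barmu_{T-1}(2\epsilon_f+s)/\{\barmu_0\kappa_{fcd}\eta^3(\Upsilon_3\epsilon-\Upsilon_4\epsilon_g)^2\}$, so the admissible interval for $\hat{p}$ coincides with \eqref{coro1:p}; substituting the same two expressions (and $\hat{\Delta}=\Upsilon_3\epsilon-\Upsilon_4\epsilon_g$ inside the logarithm) into \eqref{thm:T} and simplifying the $\gamma^{-4}$ factors in the denominator of the term subtracted from $(\hat{p}-0.5)/\barmu_{T-1}$ and of the first summand of the second bracket reproduces \eqref{coro1:T}. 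Hence \eqref{prob_subexp} holds whenever \eqref{coro1:p} and \eqref{coro1:T} hold.

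For $\alpha=1$ I would proceed identically with $2\epsilon_f+\vartheta_1+2s=4\epsilon_f+\epsilon_g^{3/2}+2s$ and $h_1(\gamma^{-2}\hat{\Delta})=\tfrac{\kappa_{fcd}}{2\max\{\Delta_{\max},1\}}\eta^3\gamma^{-6}(\Upsilon_5\epsilon-\Upsilon_6\epsilon_g-\Upsilon_7\epsilon_h)^3$, where $\gamma^{-2}$ inside a cubic produces $\gamma^{-6}$ and the factor $\max\{\Delta_{\max},1\}$ from $h_1$ carries through. Then the ratio in \eqref{thm:p} turns into $2\gamma^6\max\{\Delta_{\max},1\}\barmu_{T-1}(4\epsilon_f+\epsilon_g^{3/2}+2s)/\{\barmu_0\kappa_{fcd}\eta^3(\Upsilon_5\epsilon-\Upsilon_6\epsilon_g-\Upsilon_7\epsilon_h)^3\}$, matching \eqref{coro2:p}, and carrying the same substitutions (with $\hat{\Delta}=\Upsilon_5\epsilon-\Upsilon_6\epsilon_g-\Upsilon_7\epsilon_h$ in the logarithm) through \eqref{thm:T} reproduces \eqref{coro2:T}. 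Hence \eqref{prob_subexp} holds whenever \eqref{coro2:p} and \eqref{coro2:T} hold, which completes the corollary.

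Since the argument amounts to nothing more than plugging closed forms into an already-proved theorem, there is no genuine obstacle; the only point demanding care is clean bookkeeping of the algebraic constants — the power $\gamma^4$ versus $\gamma^6$ arising from squaring versus cubing $\gamma^{-2}$, the surviving $\max\{\Delta_{\max},1\}$ in the second-order case, the collapse $2\epsilon_f+\vartheta_\alpha\mapsto 4\epsilon_f$ (resp.\ $4\epsilon_f+\epsilon_g^{3/2}$), and the fact that $\hat{\Delta}$ itself equals $\Upsilon_3\epsilon-\Upsilon_4\epsilon_g$ (resp.\ $\Upsilon_5\epsilon-\Upsilon_6\epsilon_g-\Upsilon_7\epsilon_h$), so that no $\hat{\Delta}$ remains in the final statements. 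I would also record that the non-emptiness of the interval for $\hat{p}$ in \eqref{thm:p} — hence the consistency of the corollary's hypotheses — is inherited from Assumption~\ref{assump:epsilon} exactly as noted after Theorem~\ref{thm: 1st}.
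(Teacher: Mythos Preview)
Your proposal is correct and follows exactly the same approach as the paper: the corollary is obtained by substituting the explicit formulas for $\vartheta_\alpha$, $h_\alpha(\gamma^{-2}\hat{\Delta})$, and $\hat{\Delta}$ (for $\alpha=0$ and $\alpha=1$) into the generic conditions \eqref{thm:p}--\eqref{thm:T} of Theorem~\ref{thm: 1st_subexp}. The paper states this in one sentence before the corollary, and your careful verification of the resulting algebraic constants is accurate.
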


The corollary suggests that the method enjoys an iteration complexity of $\mathcal{O}(\epsilon^{-2})$ for finding a first-order $\epsilon$-stationary point and $\mathcal{O}(\epsilon^{-3})$ for finding a second-order $\epsilon$-stationary point with high probability. These results align with the conclusions in \cite{Berahas2025Sequential,Cao2023First} as well as those under the heavy-tailed oracle in Section \ref{sec:4_heavytail}. Comparing \eqref{prob_subexp} with \eqref{prob_heavy_tailed}, we observe that $P\{T_\epsilon\leq T-1\}$ approaches 1 at an exponential rate. This faster convergence is attributed to the more restrictive sub-exponential condition \eqref{eq4}. Technically, applying the Chernoff bound to analyze the concentration of the accumulated oracle noise $e_k$ and $e_{s_k}$ leads to an exponential decay rate in $T$.

\begin{remark}\label{rem:4.24}
In this remark, we discuss how the estimation quality affects the iteration complexity. In our algorithm, the estimation quality is governed by the zeroth-, first-, and second-order probabilistic oracles. The parameters in these oracles directly determine the quality of the stochastic estimates. For example, smaller values of $\kappa_f$, $\kappa_g$, $\kappa_h$ and $\epsilon_f$, $\epsilon_g$, $\epsilon_h$ correspond to higher-quality estimates. These parameters also appear explicitly in our complexity bounds. 
For first-order stationarity (cf. \eqref{coro1:T}), the definitions of $\Upsilon_3$ and $\Upsilon_4$ imply that decreasing $(\kappa_f, \kappa_g, \epsilon_f, \epsilon_g)$ yields a smaller lower bound on $T$, thereby improving the constant factors in the complexity bound. Similarly, for second-order stationarity (cf. \eqref{coro2:T}), the definitions of $\Upsilon_5$, $\Upsilon_6$, $\Upsilon_7$ imply that decreasing $(\kappa_f, \kappa_g, \kappa_h, \epsilon_f, \epsilon_g, \epsilon_h)$ yields a smaller lower bound on $T$. Thus, although the iteration complexity in $\epsilon$ remains of the same order, the constant factors are influenced by the quality of the stochastic estimates.

\end{remark}

\begin{remark}\label{remark:pf}

In this remark, we echo Remark \ref{rem:1} and revisit the probabilistic sub-exponential zeroth-order oracle by exploring the connection between \eqref{def:Ck} and \eqref{eq4}. 
We show that when $\tilde{\epsilon}_f<\epsilon_f$ and $v,b$ are sufficiently small in \eqref{eq4} --- that is, the probability mass of $e_k, e_{s_k}$ is largely concentrated around their means --- then \eqref{def:Ck} holds with a proper choice of $p_f$. 

\begin{lemma}\label{lemma:p_f}
Under the probabilistic sub-exponential zeroth-order oracle (i.e., conditions \textbf{(i)}, \textbf{(ii)}, and \textbf{(iii.2)} of Definition \ref{def:heavy-tailed oracle}), the event $\C_k$ satisfies
\begin{equation}\label{eq5}
P(\C_k\mid \F_{k-1/2})\geq 1 -  2 \exp\left\{-\frac{1}{2}\min\left(\frac{(\epsilon_f-\tilde{\epsilon}_f)^2}{v^2} ,\frac{\epsilon_f-\tilde{\epsilon}_f}{b} \right)\right\}.
\end{equation} 
\end{lemma}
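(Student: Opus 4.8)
The plan is to reduce the event $\C_k$ to two one-sided deviation events and to control each one with a conditional Bernstein-type tail bound extracted from the sub-exponential MGF condition \eqref{eq4}. First, since $\kappa_f\geq 0$ and $\Delta_k\geq 0$ we have $\epsilon_f+\kappa_f\Delta_k^{\alpha+2}\geq\epsilon_f$, so the complement of $\C_k$ satisfies $\C_k^c\subseteq\{e_k>\epsilon_f\}\cup\{e_{s_k}>\epsilon_f\}$; by a union bound it therefore suffices to prove
\begin{equation*}
P(e_k>\epsilon_f\mid\F_{k-1/2})\leq\exp\left\{-\tfrac12\min\left(\tfrac{(\epsilon_f-\tilde{\epsilon}_f)^2}{v^2},\tfrac{\epsilon_f-\tilde{\epsilon}_f}{b}\right)\right\}
\end{equation*}
together with the analogous bound for $e_{s_k}$. (This discarding of the non-negative slack $\kappa_f\Delta_k^{\alpha+2}$ is also what makes the right-hand side of \eqref{eq5} independent of $k$ and identifiable with $p_f$ in \eqref{def:Ck}.)

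The second step is to record the elementary conditional sub-exponential inequality I would invoke: if a random variable $X$ satisfies $\mE[e^{\lambda X}\mid\mathcal{G}]\leq e^{\lambda^2v^2/2}$ for all $\lambda\in[0,1/b]$, then $P(X\geq t\mid\mathcal{G})\leq\exp\{-\tfrac12\min(t^2/v^2,\,t/b)\}$ for every $t\geq 0$. This comes from the conditional Chernoff bound $P(X\geq t\mid\mathcal{G})\leq\inf_{\lambda\in[0,1/b]}e^{-\lambda t}\mE[e^{\lambda X}\mid\mathcal{G}]\leq\inf_{\lambda\in[0,1/b]}e^{-\lambda t+\lambda^2v^2/2}$: the unconstrained minimizer $\lambda=t/v^2$ is feasible precisely when $t\leq v^2/b$ and then gives $e^{-t^2/(2v^2)}$, whereas for $t>v^2/b$ the exponent is decreasing on $[0,1/b]$, so one takes $\lambda=1/b$ and uses $v^2/(2b^2)\leq t/(2b)$ to obtain $e^{-t/(2b)}$; the two cases combine into the stated form.

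I would then apply this fact with $X=e_{s_k}-\mE[e_{s_k}\mid\F_{k-1/2}]$, $\mathcal{G}=\F_{k-1/2}$, and $t=\epsilon_f-\tilde{\epsilon}_f>0$: condition \eqref{eq4} supplies the MGF bound and condition \eqref{eq1} gives $\mE[e_{s_k}\mid\F_{k-1/2}]\leq\tilde{\epsilon}_f$, so that $\{e_{s_k}\geq\epsilon_f\}\subseteq\{X\geq\epsilon_f-\tilde{\epsilon}_f\}$, and the desired tail bound follows immediately. For the $e_k$ term, \eqref{eq1} and \eqref{eq4} are stated relative to $\F_{k-1}$ rather than $\F_{k-1/2}$; since $e_k$ is a measurable function of $\bx_k$ (which is $\F_{k-1}$-measurable) and of the fresh zeroth-order oracle draw at $\bx_k$, which is independent—conditionally on $\F_{k-1}$—of the first- and second-order oracle draws $\barg_k,\bnabla^2 f_k$ that generate the extra information in $\F_{k-1/2}$, the conditional law of $e_k$ given $\F_{k-1/2}$ agrees with that given $\F_{k-1}$; hence $\mE[e_k\mid\F_{k-1/2}]\leq\tilde{\epsilon}_f$ and the MGF bound both transfer to the conditioning on $\F_{k-1/2}$, and the same estimate applies. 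Summing the two bounds yields \eqref{eq5}. The only place where genuine care is needed is this last filtration/independence bookkeeping that reconciles the conditioning levels $\F_{k-1}$ and $\F_{k-1/2}$; everything else is a textbook Bernstein estimate combined with Markov's inequality applied through \eqref{eq1}.
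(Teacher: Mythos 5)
Your proposal is correct and follows essentially the same route as the paper's proof: a union bound over the two errors, discarding the nonnegative slack $\kappa_f\Delta_k^{\alpha+2}$, centering by the conditional mean via \eqref{eq1}, and a Chernoff bound from \eqref{eq4} optimized over $\lambda\in[0,1/b]$ to produce the standard sub-exponential tail. Your explicit justification of why the conditioning on $\F_{k-1}$ for $e_k$ transfers to $\F_{k-1/2}$ is a point the paper merely asserts, so it is a welcome (but not substantively different) addition.
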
	

\begin{proof}
See Appendix \ref{lemma:Append:C.1}.
\end{proof}
	
It follows from the lemma that when $\tilde{\epsilon}_f<\epsilon_f$ and $v,b$ are sufficiently small, \eqref{eq4} implies \eqref{def:Ck} with $p_f \leq 2 \exp\{-0.5\min ((\epsilon_f-\tilde{\epsilon}_f)^2/v^2 , (\epsilon_f-\tilde{\epsilon}_f)/b)\}$.

\end{remark}

\section{Numerical Experiments}\label{sec:5}

In this section, we demonstrate the empirical performance of the trust-region SSQP method (Algorithm \ref{Alg:STORM}). We apply the methods to a subset of 35 equality-constrained problems from the CUTEst test set \citep{Gould2014CUTEst} to find first- and second-order stationary points, referred to as TR-SSQP and TR-SSQP2, respectively. The 35 problems in our experiment have $d\leq10$ and $m\leq 5$.
We introduce the algorithm setup in Section \ref{subsec:5.1}. In Section \ref{subsec:5.2}, we examine the performance of the algorithms for various values of $\epsilon$ without irreducible noise. In Section \ref{subsec:5.3}, we examine their performance under different combinations of irreducible noise.$\quad\quad$

\subsection{Algorithm setup}\label{subsec:5.1}

For all methods, we generate samples and construct estimates of objective quantities in each iteration by sample average. Given a sample $\xi$, we denote the realizations of objective value, gradient, and Hessian at $\bx$ as $F(\bx,\xi), \nabla F(\bx,\xi)$, and $\nabla^2 F(\bx,\xi)$.
We allow samples to be dependent within the iteration, but independent across iterations. Additionally, the sample size for constructing each estimate is limited to at most $10^4$.

For TR-SSQP and TR-SSQP2, we construct estimates of objective Hessians, gradients, and values by generating samples $(\xi_k^h, \xi_k^g, \xi_k^f,\xi_{s_k}^f)$ following \eqref{sample_avg_obj_val_1} and \eqref{sample_avg_gradhess_val_1}. We set $\barmu_0=1$, $\Delta_0=\Delta_{\max}=5$, $\rho=1.2$, $\gamma=1.5$, $\kappa_f=\kappa_h=\kappa_g=0.05$, $p_f=p_h=p_g=0.1$, $\eta=0.4$, $C_h=C_g=C_f=5$, $r=0.01$. To solve \eqref{eq:Sto_tangential_step}, we use the \texttt{IPOPT} solver \citep{Waechter2005Implementation}. Since trust-region methods allow indefinite Hessian matrices, same as \cite{Fang2024Fully,Fang2024Trust}, we try four different $\barH_k$ for TR-SSQP in all problems.
\begin{enumerate}
\item Identity matrix (Id). It has been used in various existing SSQP literature, especially for line-search methods \citep[see, e.g.,][] {Berahas2021Sequential,Berahas2023Stochastic, Berahas2025Sequential,Na2022adaptive,Na2023Inequality}.

\item Symmetric rank-one (SR1) update.  We initialize $\barH_{0}=I$ and for $k\geq 1$, $\barH_k$ is updated as
\begin{equation*}
\barH_{k}=\barH_{k-1}+\frac{(\boldsymbol{y}_{k-1}-\barH_{k-1}\Delta\bx_{k-1})(\boldsymbol{y}_{k-1}-\barH_{k-1}\Delta\bx_{k-1})^T}{(\boldsymbol{y}_{k-1}-\barH_{k-1}\Delta\bx_{k-1})^T\Delta\bx_{k-1}}
\end{equation*}
where $\boldsymbol{y}_{k-1}=\bar{\nabla}_{\bx}\L_{k}-\bar{\nabla}_{\bx}\L_{k-1}$, and $\Delta\bx_{k-1}=\bx_{k}-\bx_{k-1}$. We employ SR1 instead of BFGS since SR1 can generate indefinite matrices, which may be preferable for constrained problems \citep{Khalfan1993Theoretical}.

\item Estimated Hessian (EstH). In the $k$-th iteration, we estimate the objective Hessian matrix $\bar{\nabla}^2f_k$ using \textit{one} single sample and set $\barH_k=\bar{\nabla}_{\bx}^2\L_k=\bar{\nabla}^2f_k+\sum_{i=1}^m \barblambda_k^i\nabla^2c_k^i$, where $\barblambda_k=-[G_kG_k^T]^{-1}G_k\barg_k$ is the least-squares Lagrangian multiplier introduced in Section \ref{sec:2.2}, with $\barg_k$ obtained from the first-order oracle.

\item Averaged Hessian (AveH). In the $k$-th iteration, we estimate the Hessian matrix $\bar{\nabla}_{\bx}^2\L_k$ as in EstH but set $\barH_k=\frac{1}{50}\sum_{i=k-49}^{k}\bar{\nabla}_{\bx}^2\L_i$. \cite{Na2022Hessian, Na2025Statistical} demonstrated that Hessian averaging facilitates stochastic Newton methods achieving faster (local) convergence.
\end{enumerate}

We recall from Section \ref{sec:2.2} $(\alpha = 1)$ that, for TR-SSQP2, $\bar{H}_k$ is defined in the same form as EstH but involves a second-order oracle. That is, $\barH_k = \bar{\nabla}^2 f_k+ \sum_{i=1}^{m}\bar{\boldsymbol{\lambda}}_k^i\nabla^2 c_k^i$, where 
$\bar{\nabla}^2 f_k$ is obtained from the second-order oracle using samples $\xi_k^h$ and $\barblambda_k=-[G_kG_k^T]^{-1}G_k\barg_k$ with $\barg_k$ obtained from the first-order oracle using samples $\xi_k^g$. 
We clarify that in EstH and AveH, the objective Hessians are estimated using a single sample; hence, they do not satisfy the second-order oracle condition \eqref{def:Ak}. However, this does not pose any issue since these Hessian estimates are used only within the first-order TR-SSQP method, for which access to a second-order oracle is not required anyway. Thus, for both TR-SSQP and TR-SSQP2, all oracle conditions required in the analysis are fulfilled by our constructions.

For all methods, the objective model estimates are generated based on deterministic evaluations provided by the CUTEst package, with entrywise independent noise (denoted by $\texttt{rand}$) following six different distributions: (i) standard normal distribution, (ii) $t$-distribution with degree of freedom 4 (denoted as $t_4$), (iii) $t$-distribution with degree of freedom 2 (denoted as $t_2$), (iv) log-normal distribution with location-scale parameters $(\mu=0,\sigma=1)$, (v) Weibull distribution with scale-shape parameters $(\lambda=1,k=1)$, and (vi) Cauchy distribution with location-scale parameters $(x_0=0,\gamma=1)$. Noise from the normal distribution exhibits sub-exponential tail, while noise from the other five distributions are heavy-tailed. We generate noise from two different $t$-distributions; noise from the $t_4$-distribution has bounded variance, while noise from the $t_2$-distribution only has a bounded $1+\delta$ moment for $\delta\in(0,1)$. Noise from the Cauchy distribution has neither a finite mean nor a bounded $1+\delta$ moment for any $\delta>0$. 
This distribution violates the conditions imposed by the probabilistic oracles, and we incorporate it solely to examine its impact on empirical performance. Specifically, for distributions (i), (ii), (iii) and (vi), we generate $F(\bx_k,\xi) = f_k+\sigma\cdot\texttt{rand}$, $[\nabla F(\bx_k,\xi)]_i= [\nabla f_k]_i + \sigma \cdot \texttt{rand}$, and $[\nabla^2 F(\bx_k,\xi)]_{i,j} = (\nabla^2 f_k)_{i,j} \\ + \sigma \cdot \texttt{rand}$. 
For distributions (iv) and (v), $\texttt{rand}$ is replaced by $\delta \cdot \texttt{rand}$, with $\delta$ being a Rademacher variable (i.e., $P(\delta=1)=P(\delta=-1)=0.5$), so that the noise becomes symmetric. Throughout the experiments, we set $\sigma=10^{-2}$.

We set the maximum iteration budget to be $10^5$ and define the stopping time as in Definition \ref{def:stopping_time}. The values of $\epsilon$ in stopping time are specified in each experiment below. For each algorithm and each problem, under every combination of irreducible noise level and $\epsilon$, we report the average of $T_{\epsilon}$ over five independent runs.

\subsection{Algorithms performance without irreducible noise}\label{subsec:5.2}

In this section, we aim to validate our complexity bounds in Theorems \ref{thm: 1st} and \ref{thm: 1st_subexp}, which state that achieving first-order $\epsilon$-stationarity requires at most $\mathcal{O}(\epsilon^{-2})$ iterations, while achieving second-order $\epsilon$-stationarity requires at most $\mathcal{O}(\epsilon^{-3})$ iterations with high probability. To allow for arbitrarily small $\epsilon$, we set $\epsilon_f = \epsilon_g = \epsilon_h = 0$. We vary $\epsilon \in \{10^{-1}, 10^{-2}$, $10^{-3}, 10^{-4}\}$ and report the average stopping time in Figure \ref{fig:cutest}.

\begin{figure}[t]
\centering
\subfigure[Normal distribution]{\includegraphics[width=0.32\textwidth]{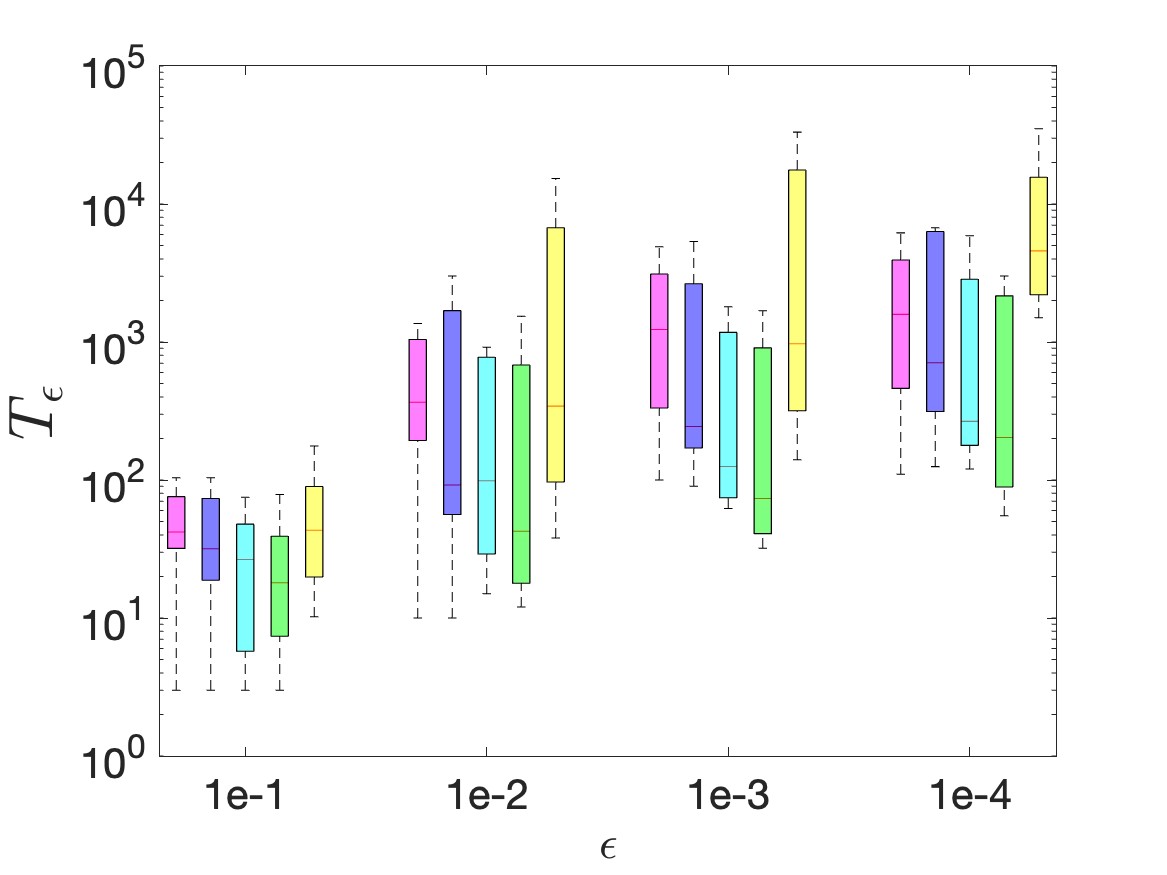}}
\subfigure[$t_4$-distribution]{\includegraphics[width=0.32\textwidth]{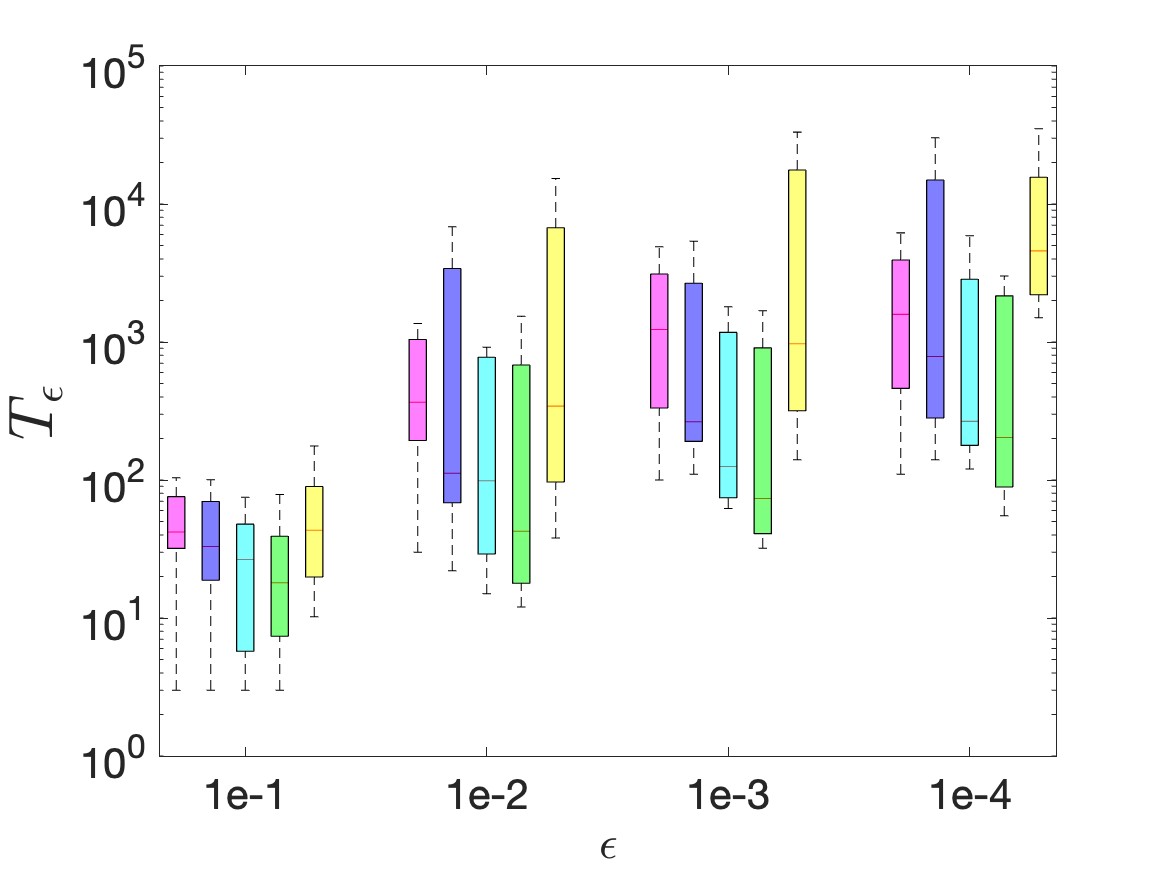}}
\subfigure[$t_2$-distribution]{\includegraphics[width=0.32\textwidth]{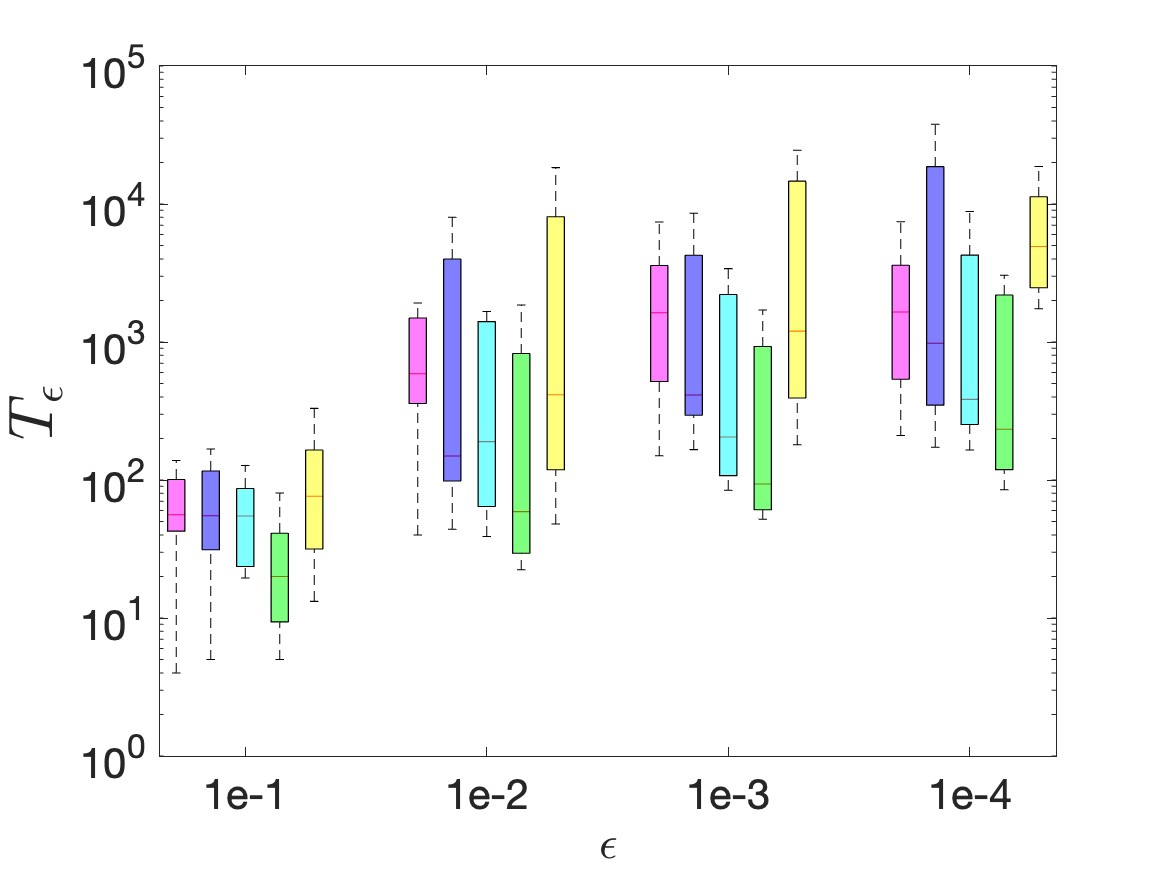}}
\subfigure[Log-normal distribution]{\includegraphics[width=0.32\textwidth]{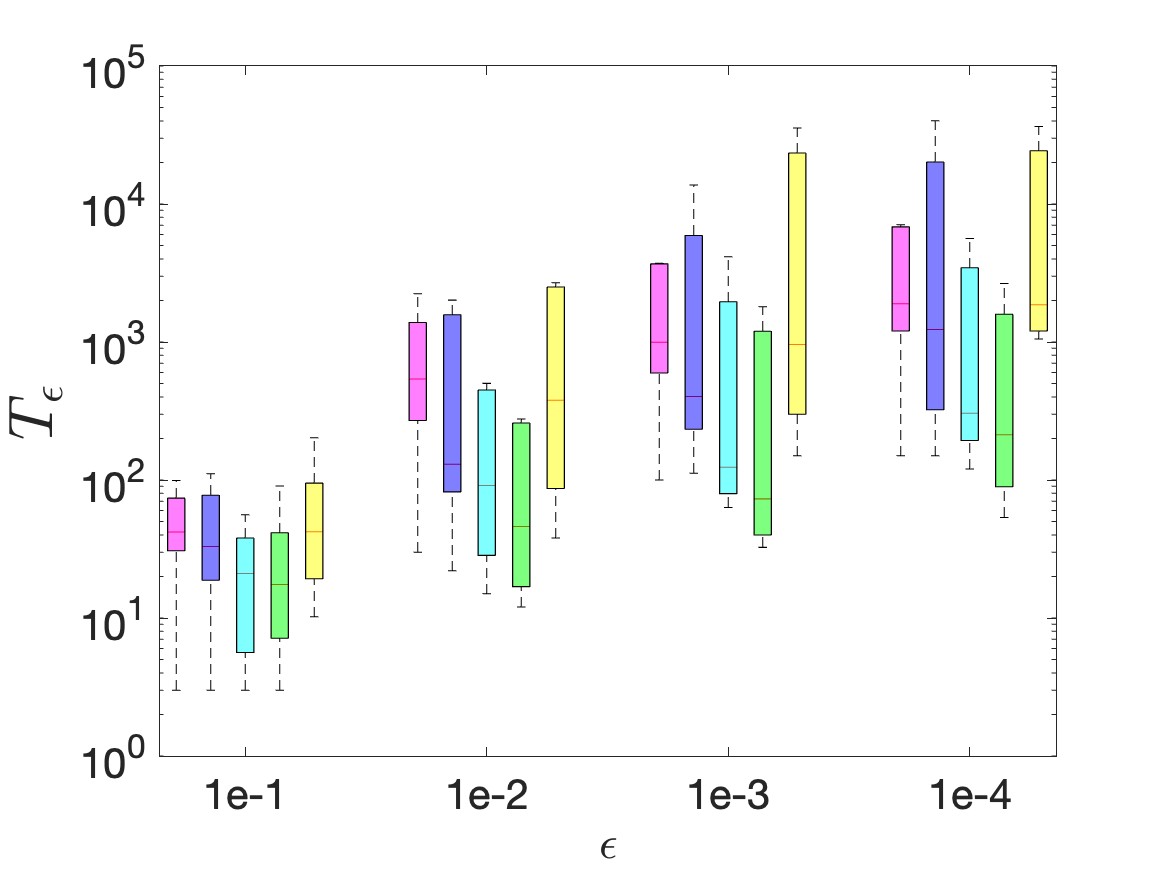}}
\subfigure[Weibull distribution]{\includegraphics[width=0.32\textwidth]{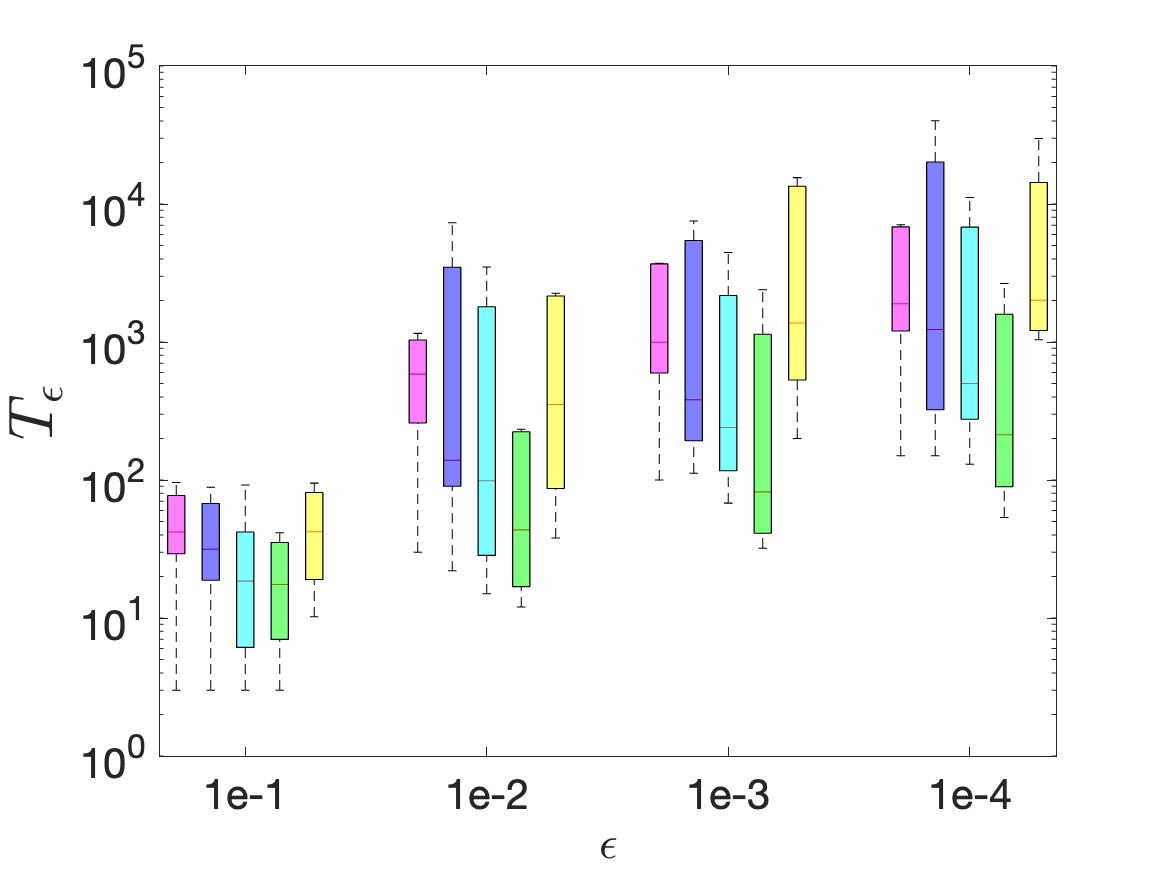}}
\subfigure[Cauchy distribution]{\includegraphics[width=0.32\textwidth]{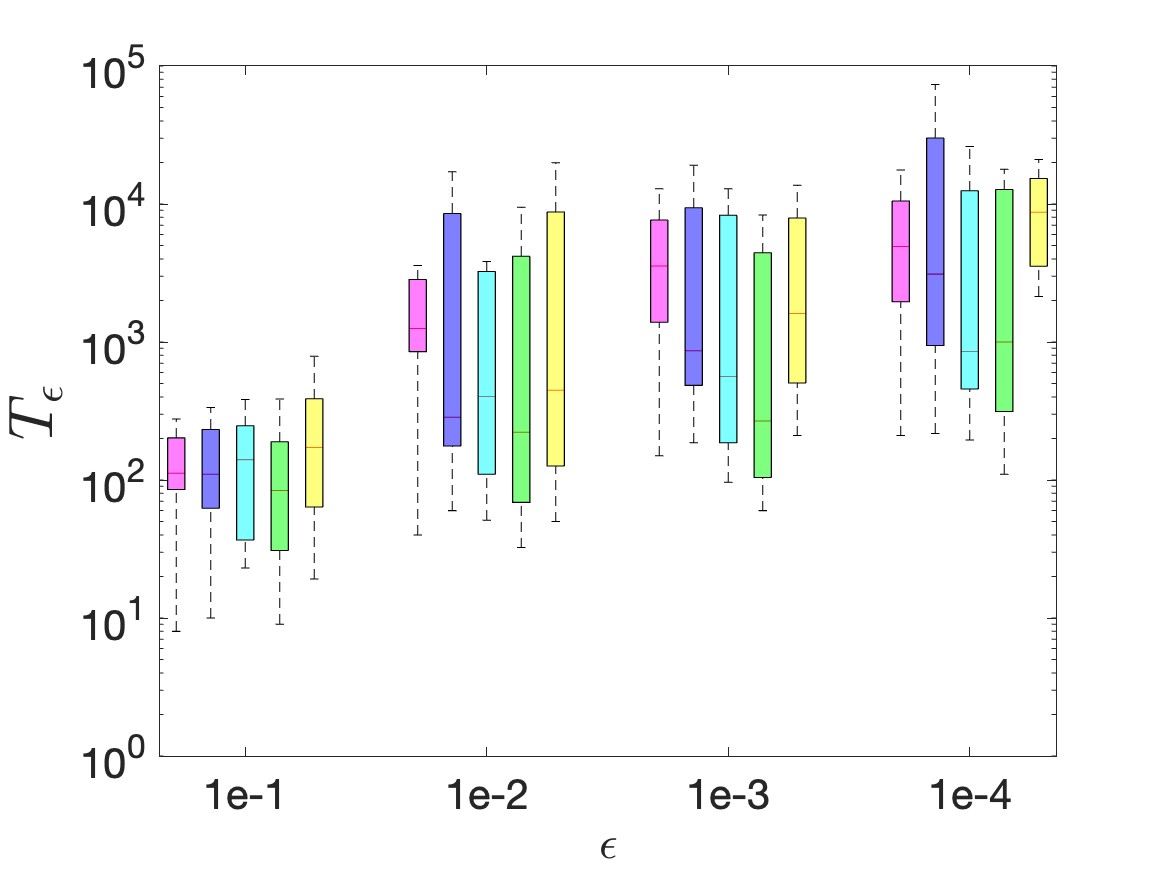}}
\subfigure{\includegraphics[width=0.9\textwidth]{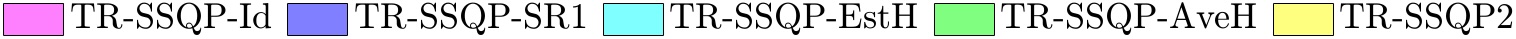}}
\caption{Averaged stopping time $T_{\epsilon}$ with noise from six different distributions. In every plot, the first four boxes correspond to TR-SSQP with different choices of $\barH_k$, and the fifth box corresponds to TR-SSQP2.}
\label{fig:cutest}	\vspace{-0.5cm}
\end{figure}

First, we observe that as $\epsilon$ decreases, all methods require more iterations to converge. Specifically, when $\epsilon$ is reduced by a factor of 10, Figure \ref{fig:cutest} shows that the stopping time of TR-SSQP increases by a factor between 10 and 100, while that of TR-SSQP2 increases by approximately 100. This behavior aligns with our theoretical results, though it is important to emphasize that our analysis provides only a \textit{worst-case guarantee}. Due to this worst-case nature, the actual iteration complexity may grow more slowly than predicted when gradually decreasing $\epsilon$.

Second, we observe that for TR-SSQP methods, performance varies across the four Hessian approximations. TR-SSQP-SR1 exhibits the most unstable performance when $\epsilon$ is small. This instability is largely attributed to the Hessian updating scheme, which can accumulate estimation errors over iterations. In contrast, TR-SSQP with the averaged Hessian (TR-SSQP-AveH) achieves the best performance, followed by TR-SSQP-EstH, with their advantage becoming particularly pronounced as $\epsilon$ decreases. This highlights the benefits of fully exploring Hessian information in algorithm designs. We discuss the insights behind the differences between EstH and AveH in more detail in Remark \ref{Hessian_approximation}.

Third, we observe that when the noise satisfies the conditions specified in the probabilistic oracles, the noise distribution has a limited impact on the stopping time for most methods. As shown in Figure \ref{fig:cutest} (a-e), changes in the noise distribution result in no significant variation in the average stopping time for most methods. In contrast, Figure \ref{fig:cutest} (f) displays a substantial deviation from this pattern, with the average stopping time increasing significantly compared to (a)-(e), even for large $\epsilon$. This deterioration is attributed to the extremely heavy-tailed nature of Cauchy noise, which does not even have a finite mean, so the sample average cannot reliably estimate the true quantity even as the sample size increases. 
Due to the unbounded mean, TR-SSQP-AveH is affected by Cauchy noise more severely than the other methods. This method terminates within $10^3$ to $10^4$ iterations under Cauchy noise with $\epsilon=10^{-4}$, while it terminates within $10^2$ to $10^3$ iterations under the other five noises with the same $\epsilon$. This phenomenon is less apparent for the other methods, as they generally require more iterations to converge across different noise distributions.

\begin{remark}\label{Hessian_approximation}

We further discuss the impact of Hessian approximation quality on both the theory and practice of our method. For first-order stationarity, our theoretical iteration complexity results do not require any high-accuracy Hessian approximation assumptions; that is, the quality of the Hessian estimate does not affect the theoretical guarantees. This is reasonable, since the first-order convergence analysis relies on establishing sufficient decrease guarantees, which depend only on gradient information and do not involve second-order oracles. Nevertheless, as our experiments indicate, the Hessian approximation can have a substantial effect on practical performance, as reflected in the noticeably different behavior produced by different Hessian construction methods in Figure \ref{fig:cutest}.

We highlight the contrast between the estimated Hessian (EstH) and the averaged Hessian (AveH). In stochastic settings, using the most recent Hessian estimate yields an unbiased approximation but suffers from high variance, which can significantly impair performance, sometime even performing worse than simply using the identity matrix. In contrast, averaging Hessian estimates over iterations dramatically reduces variance due to the noise concentration effect, though at the expense of introducing bias. Notably, as the iterate approaches a stationary point, the bias of the averaged Hessian estimate diminishes, while the variance of the last-iterate Hessian estimate still remains at a constant level. This bias-variance trade-off explains why AveH typically produces more stable and reliable performance than EstH. This behavior is also consistent with deterministic SQP theory, where higher-quality Hessian approximations are better able to satisfy the Dennis-Moré condition \citep{Dennis1974characterization}, which underpin local superlinear convergence, even though such a condition is not required for global convergence.

Finally, we note that for achieving second-order stationarity, the quality of the Hessian approximation becomes essential and \textit{explicitly appears in the theoretical lower bound on $T$}, as discussed in Remark \ref{rem:4.24}.
\end{remark}

\begin{figure}[t]
\centering
\subfigure{\includegraphics[width=0.3\textwidth]{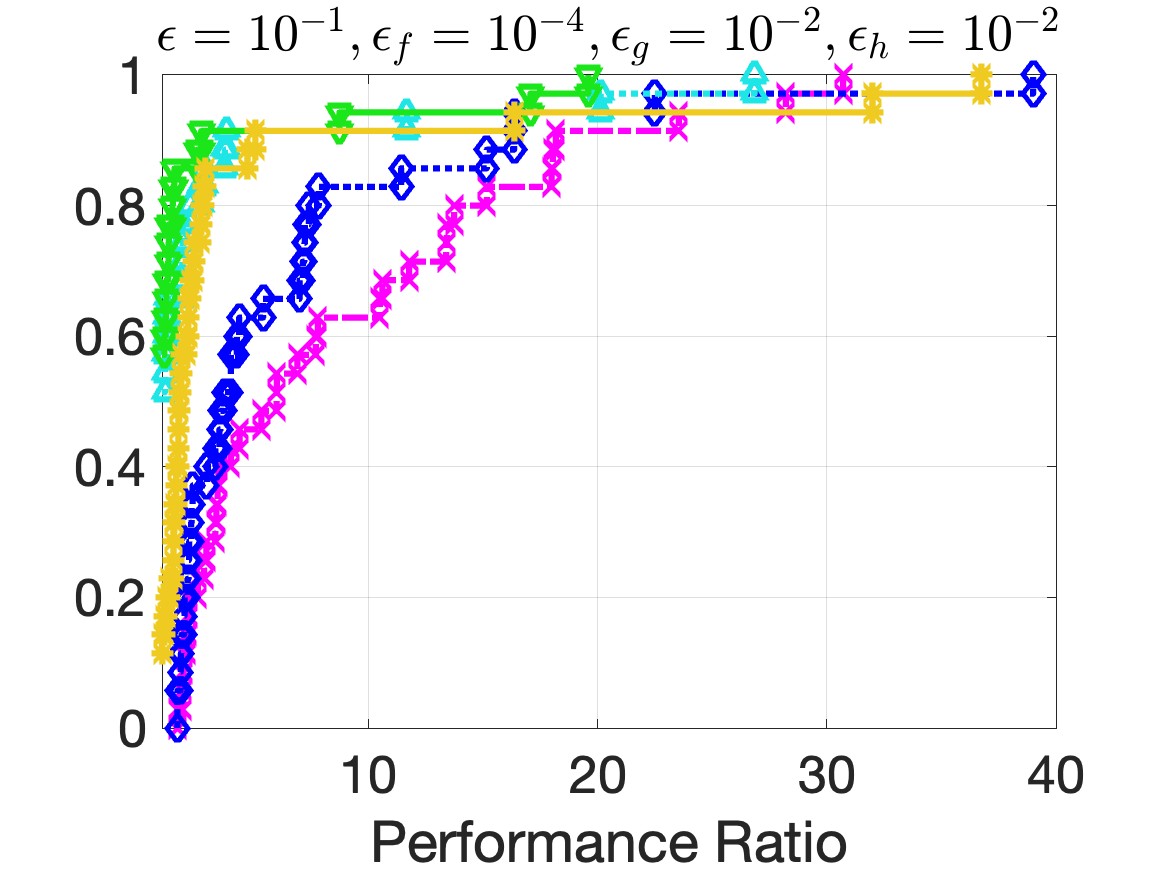}}
\subfigure{\includegraphics[width=0.3\textwidth]{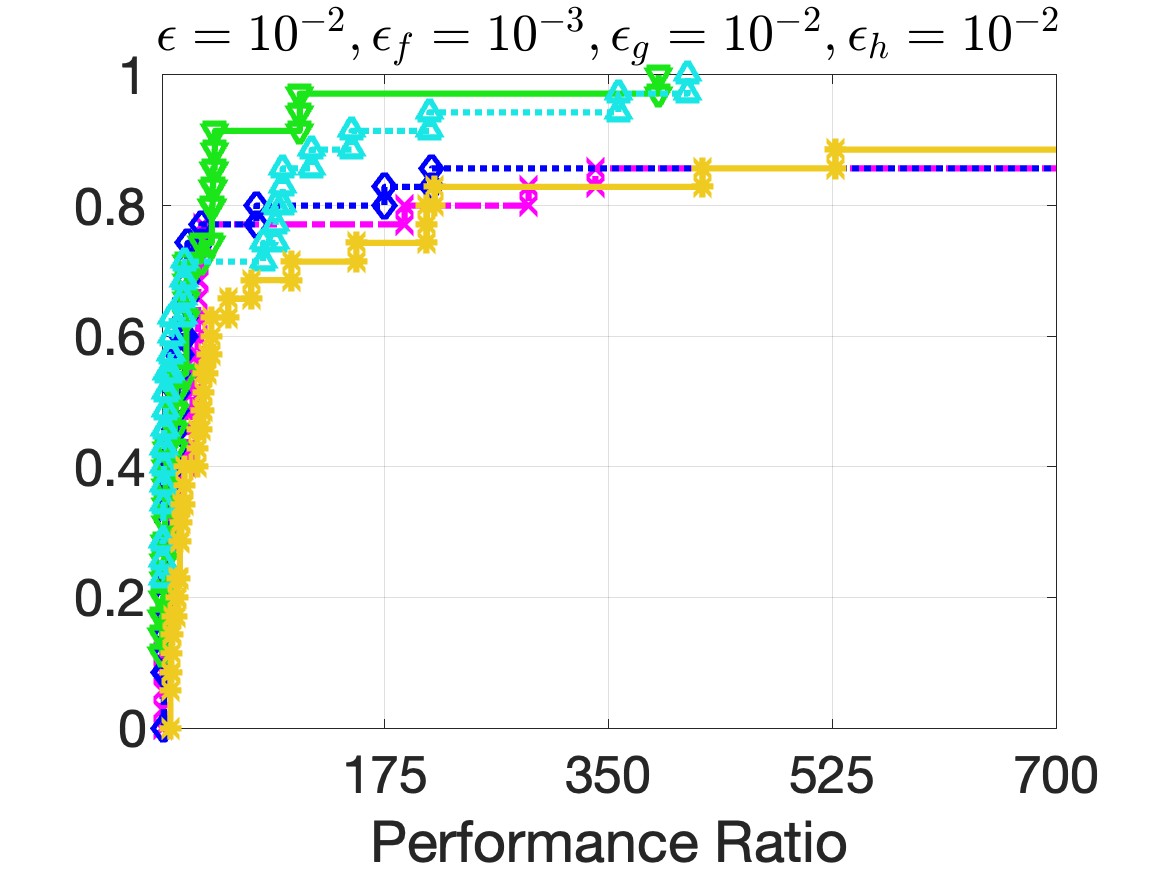}}
\subfigure{\includegraphics[width=0.3\textwidth]{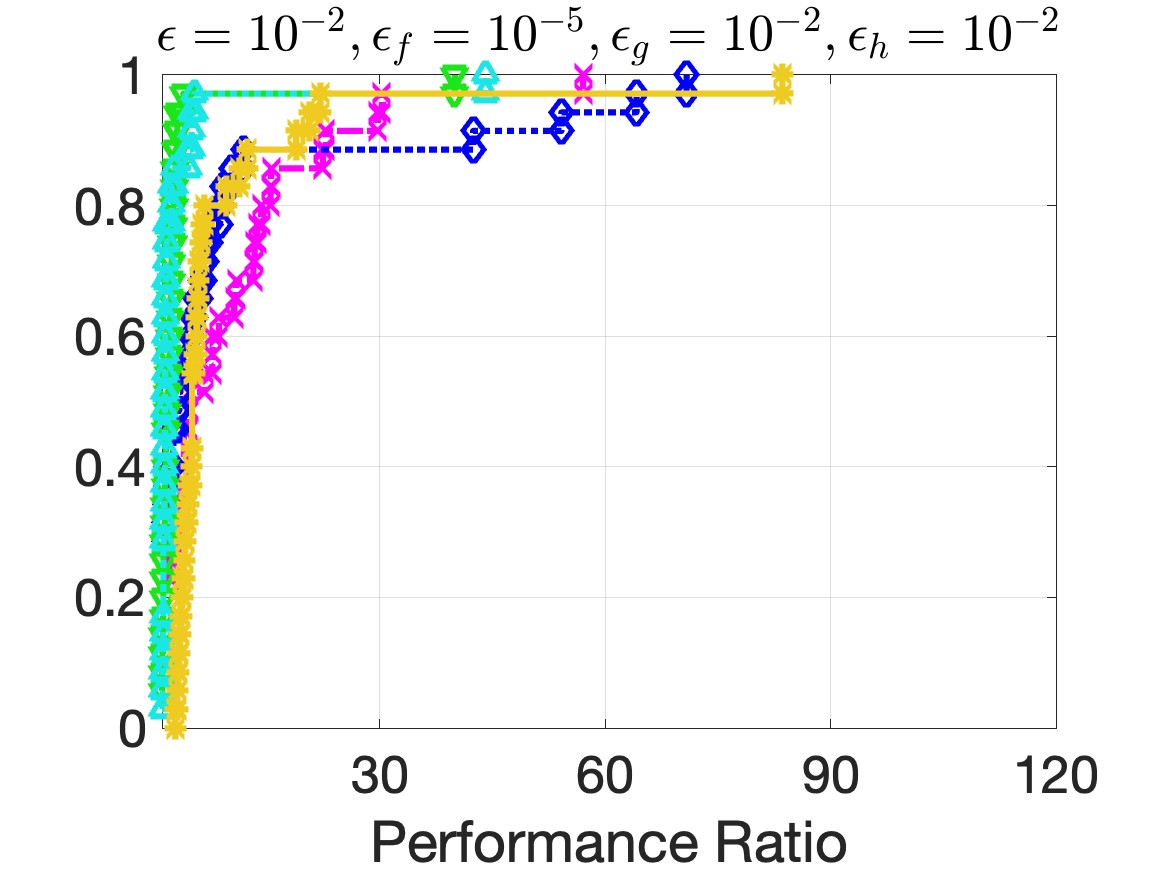}}\\	
\subfigure{\includegraphics[width=0.3\textwidth]{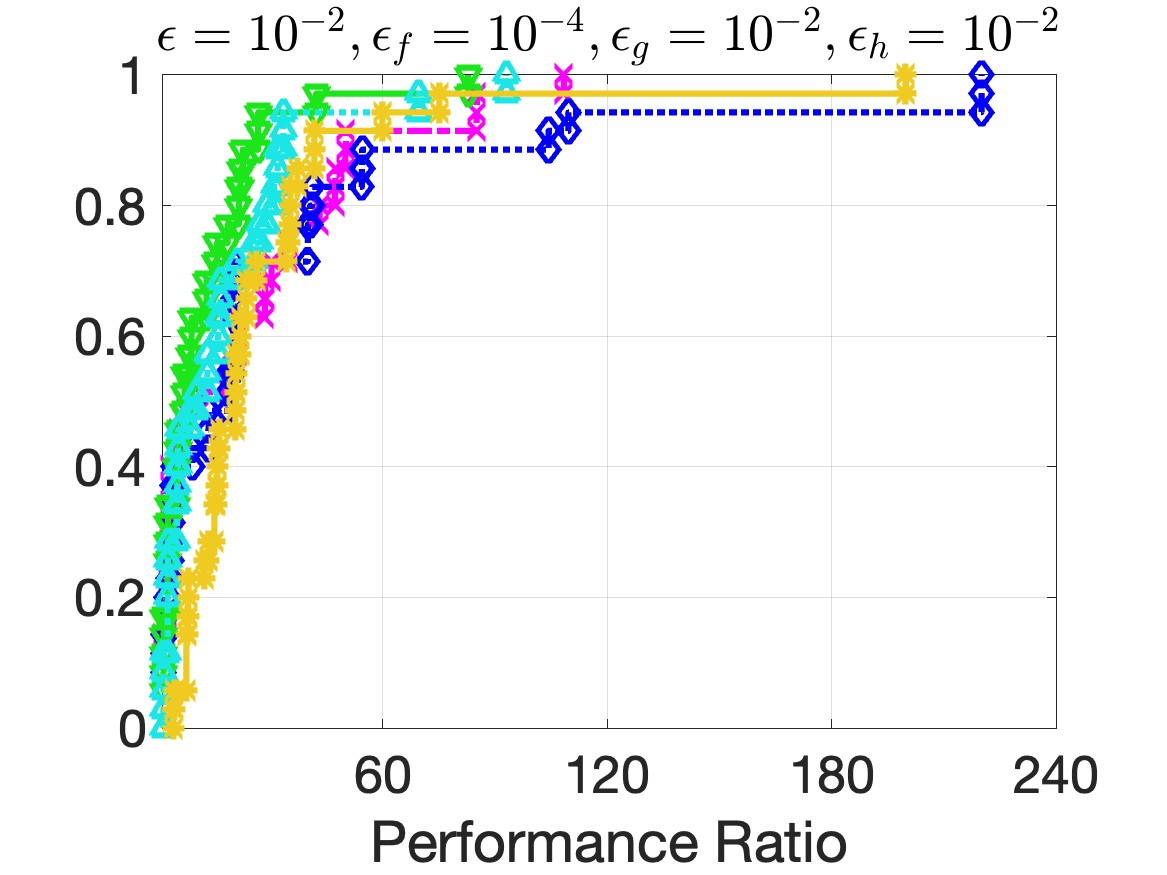}}
\subfigure{\includegraphics[width=0.3\textwidth]{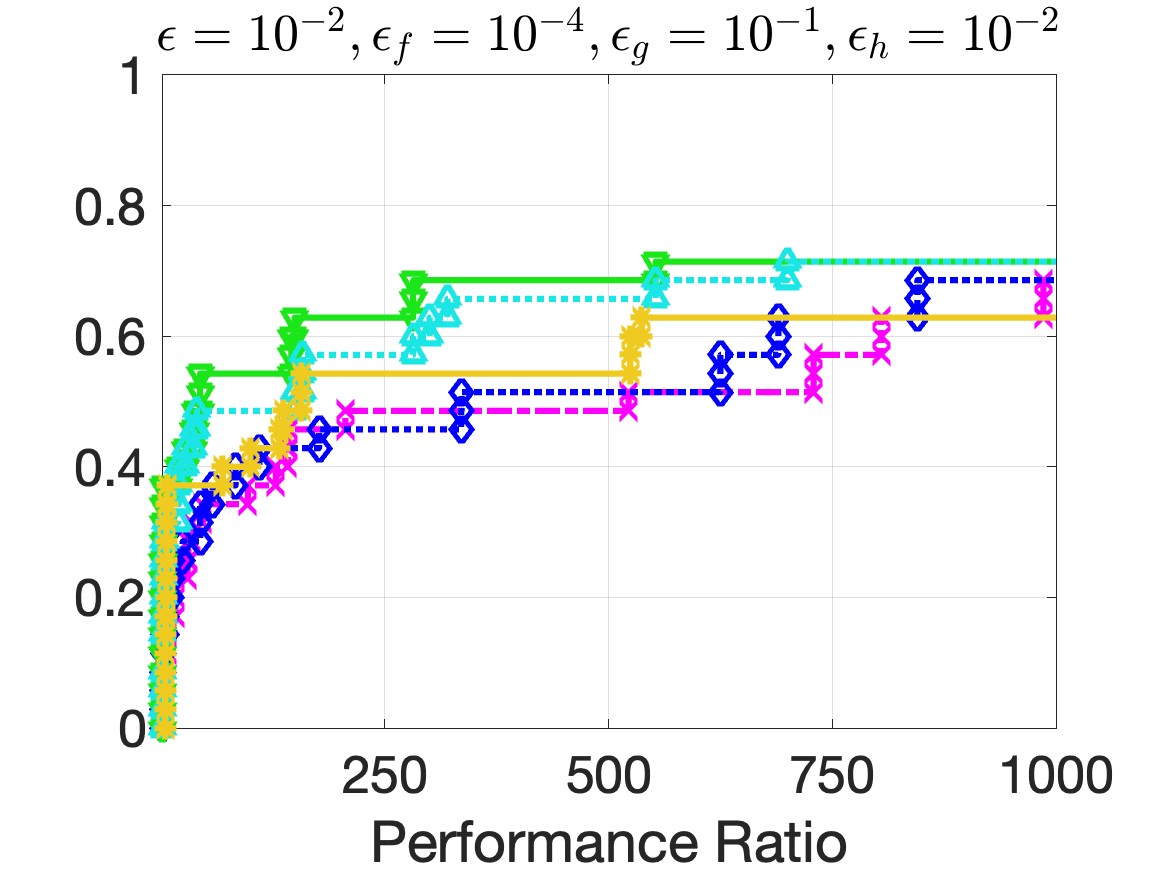}}  \subfigure{\includegraphics[width=0.3\textwidth]{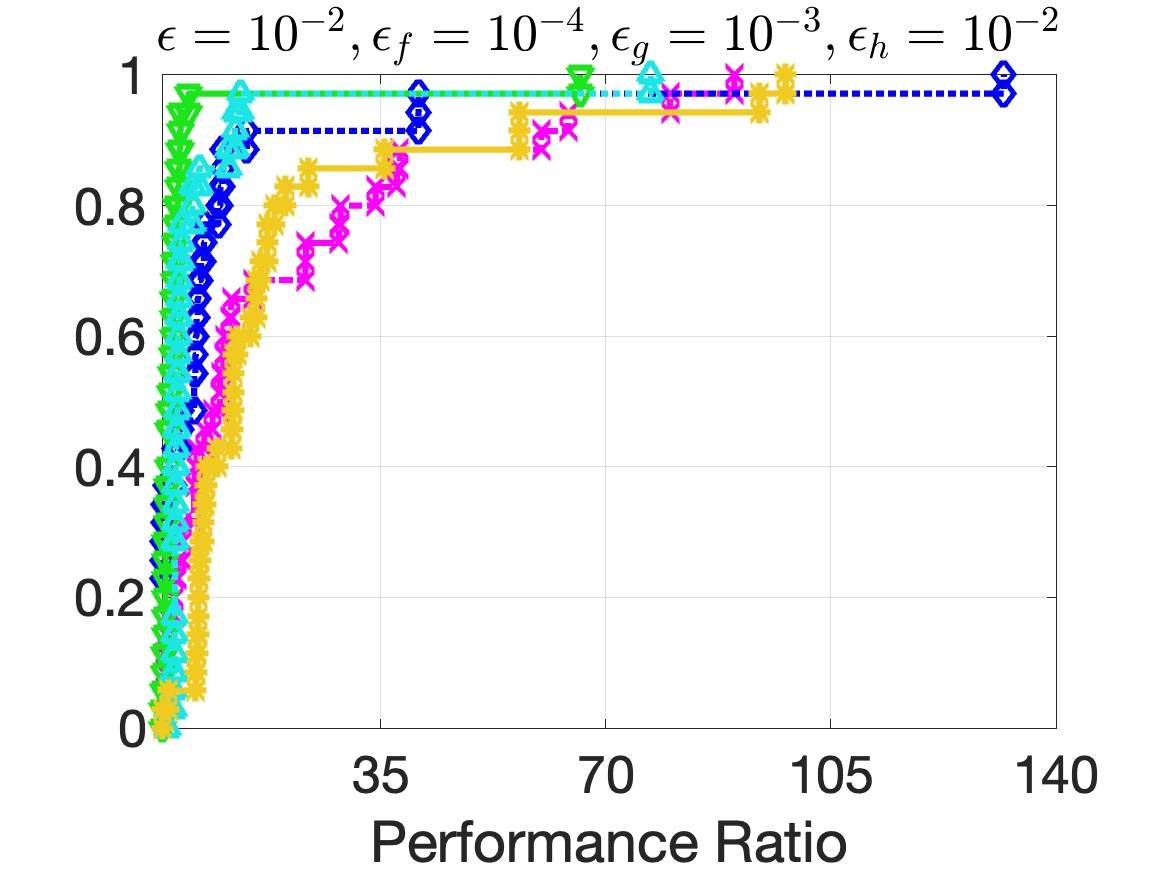}}\\
\subfigure{\includegraphics[width=0.3\textwidth]{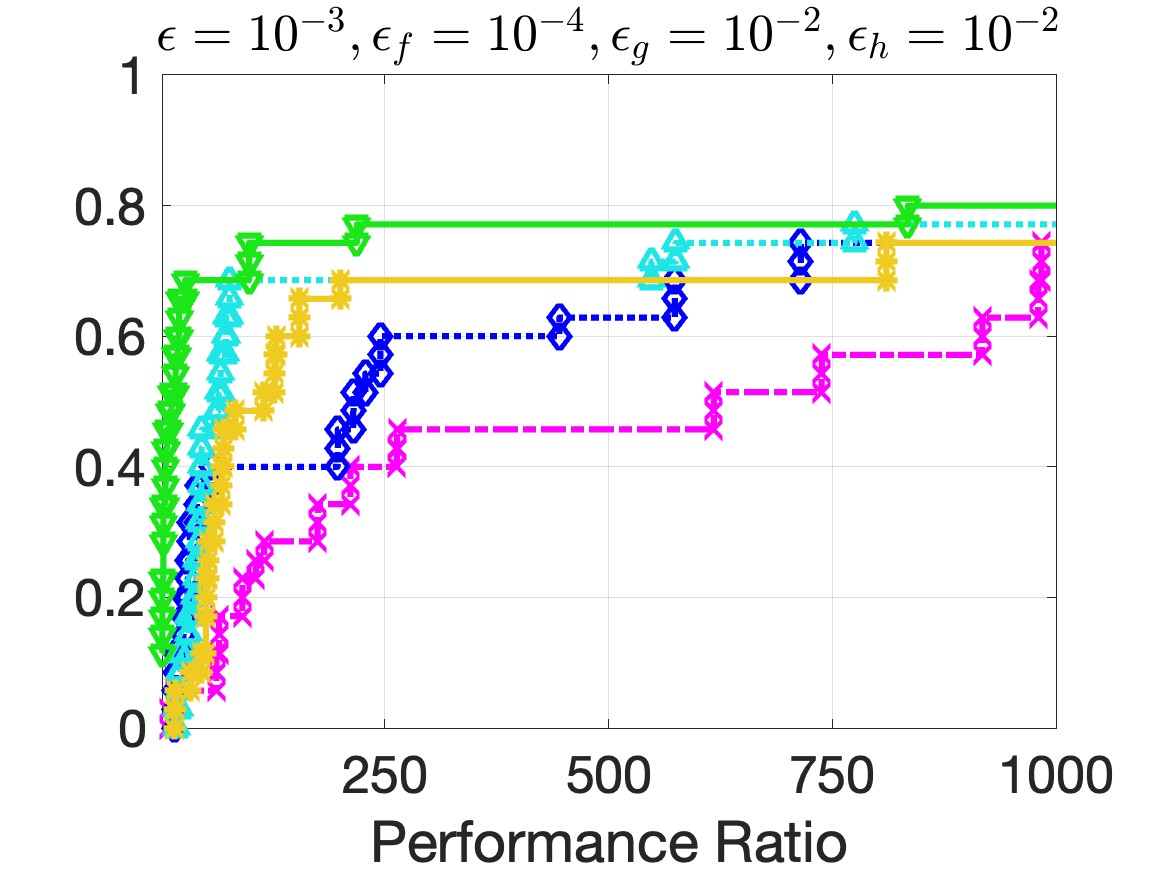}}
\subfigure{\includegraphics[width=0.3\textwidth]{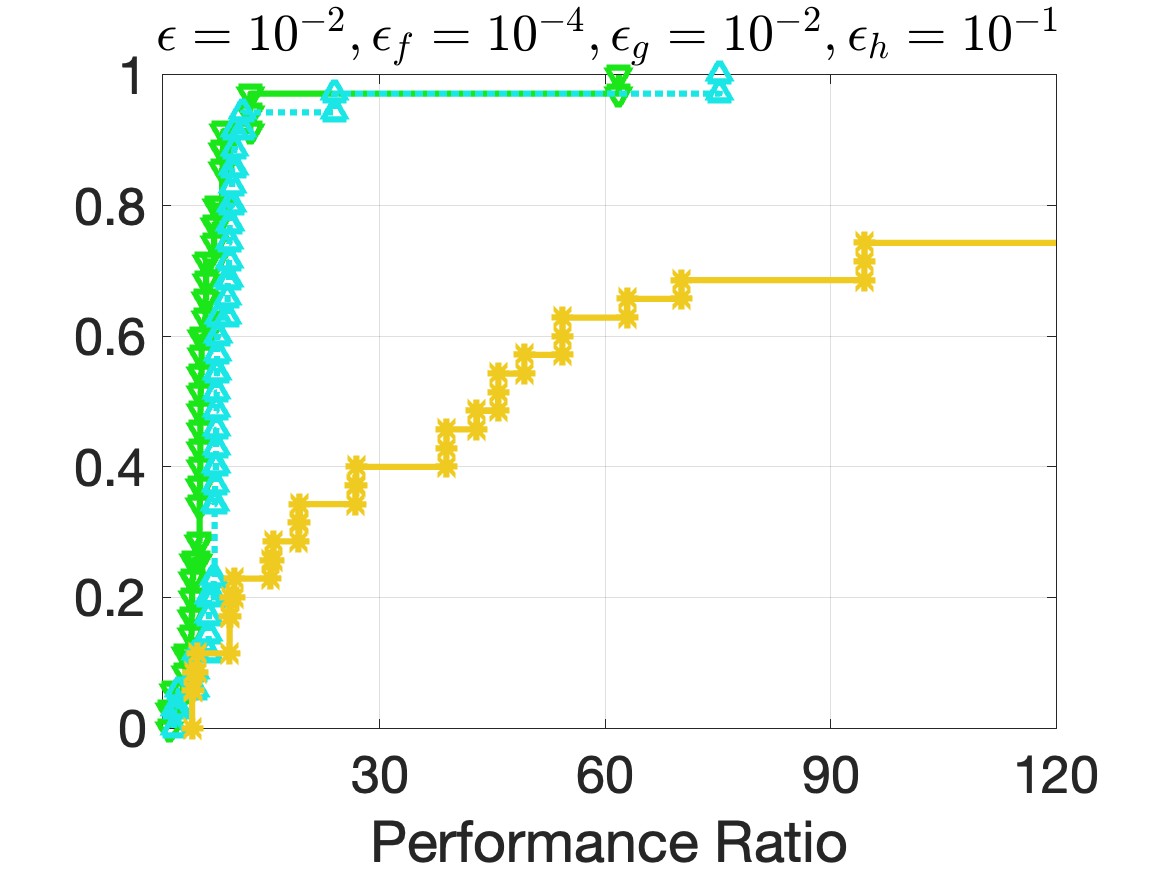}}
\subfigure{\includegraphics[width=0.3\textwidth]{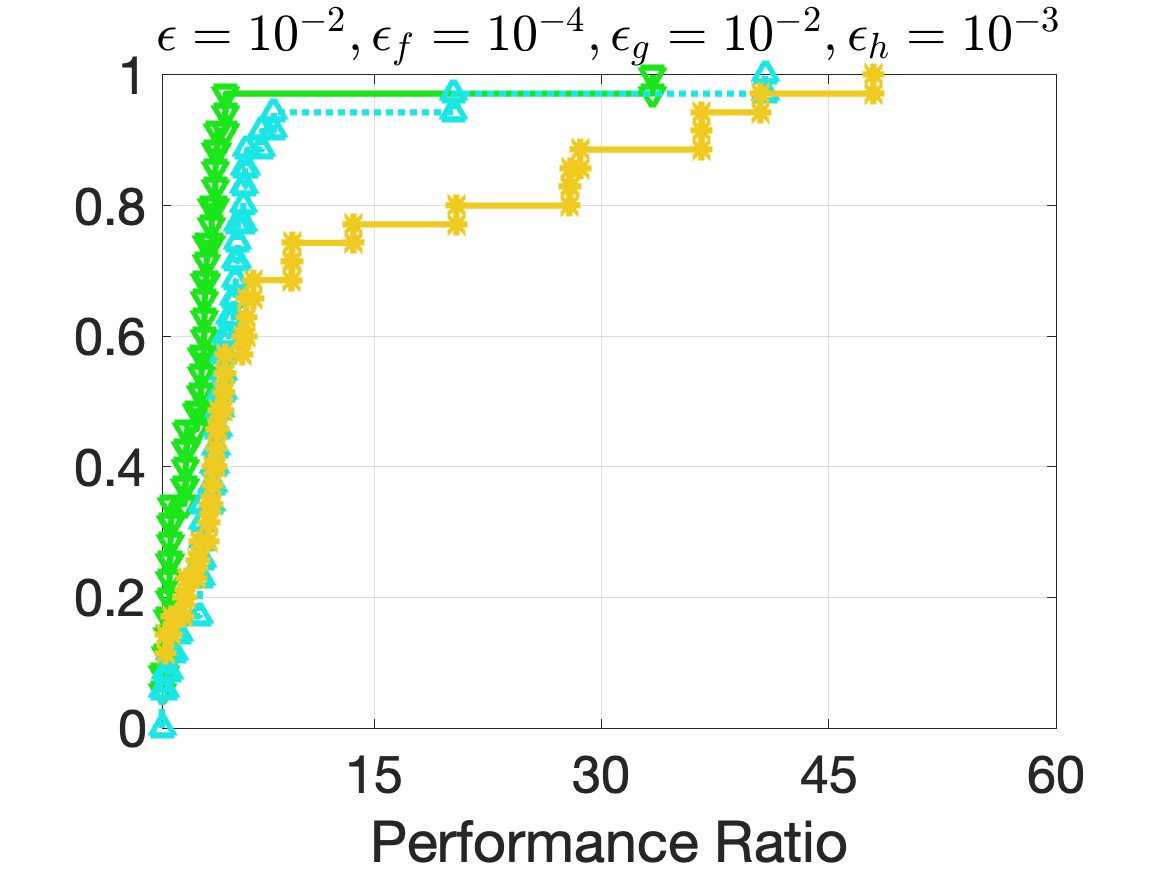}}\\
\subfigure{\includegraphics[width=0.95\textwidth]{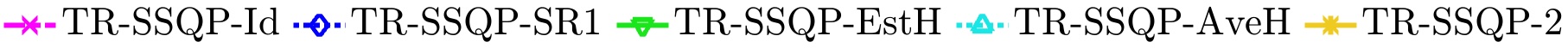}}
\caption{Performance profiles with noise following a normal distribution. Each line represents a different method. The first column corresponds to the default irreducible noise with varying $\epsilon$. Each row of the last two columns corresponds to varying $\epsilon_f$, $\epsilon_g$, $\epsilon_h$, respectively.}
\label{fig:cutest2}	\vskip-0.2cm
\end{figure}

\subsection{Algorithms performance with irreducible noise}\label{subsec:5.3}

In this section, we evaluate the performance of all methods under different combinations of irreducible noise levels. To limit the number of total combinations, we set the default values as $\epsilon = 10^{-2}$, $\epsilon_f = 10^{-4}$, $\epsilon_g = 10^{-2}$, $\epsilon_h = 10^{-2}$, and vary one parameter at a time while keeping the others fixed. In particular, we vary $\epsilon \in \{10^{-1}, 10^{-3}\}$, $\epsilon_f \in \{10^{-3}, 10^{-5}\}$, $\epsilon_g \in \{10^{-1}, 10^{-3}\}$, and $\epsilon_h \in \{10^{-1}, 10^{-3}\}$. The default setup ensures that the condition $\epsilon \geq \mathcal{O}(\sqrt{\epsilon_f} + \epsilon_g)$ for first-order stationarity is satisfied (cf. Assumption \ref{assump:epsilon}). For consistency, we use the same default setup for second-order stationarity.
When introducing irreducible noise to each method, we sample a Rademacher variable $\delta$ and directly add the noise $\delta \epsilon_h, \delta \epsilon_g, \delta \epsilon_f$ to the corresponding objective quantity estimates. The Rademacher variable ensures that the irreducible noise is two-side distributed.

\begin{figure}[t]
\centering
\subfigure{\includegraphics[width=0.3\textwidth]{ 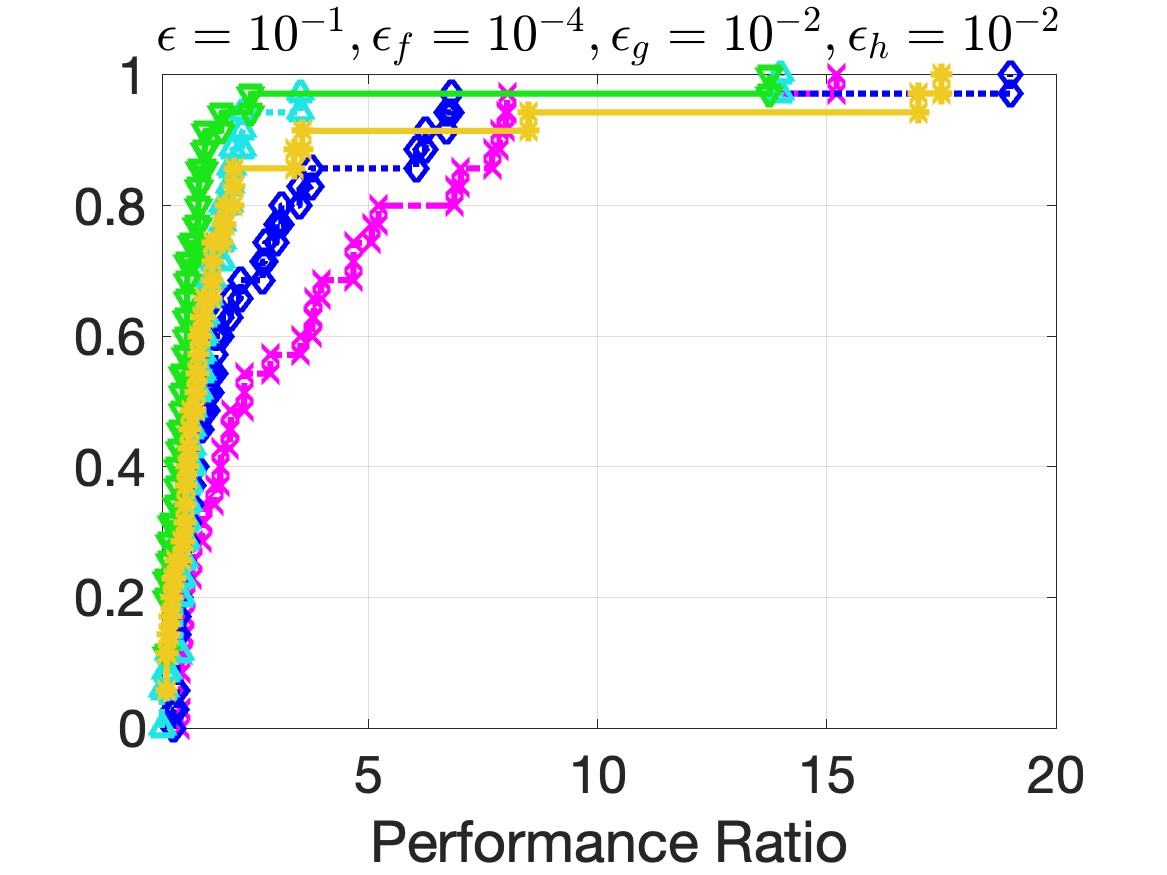}}
\subfigure{\includegraphics[width=0.3\textwidth]{ 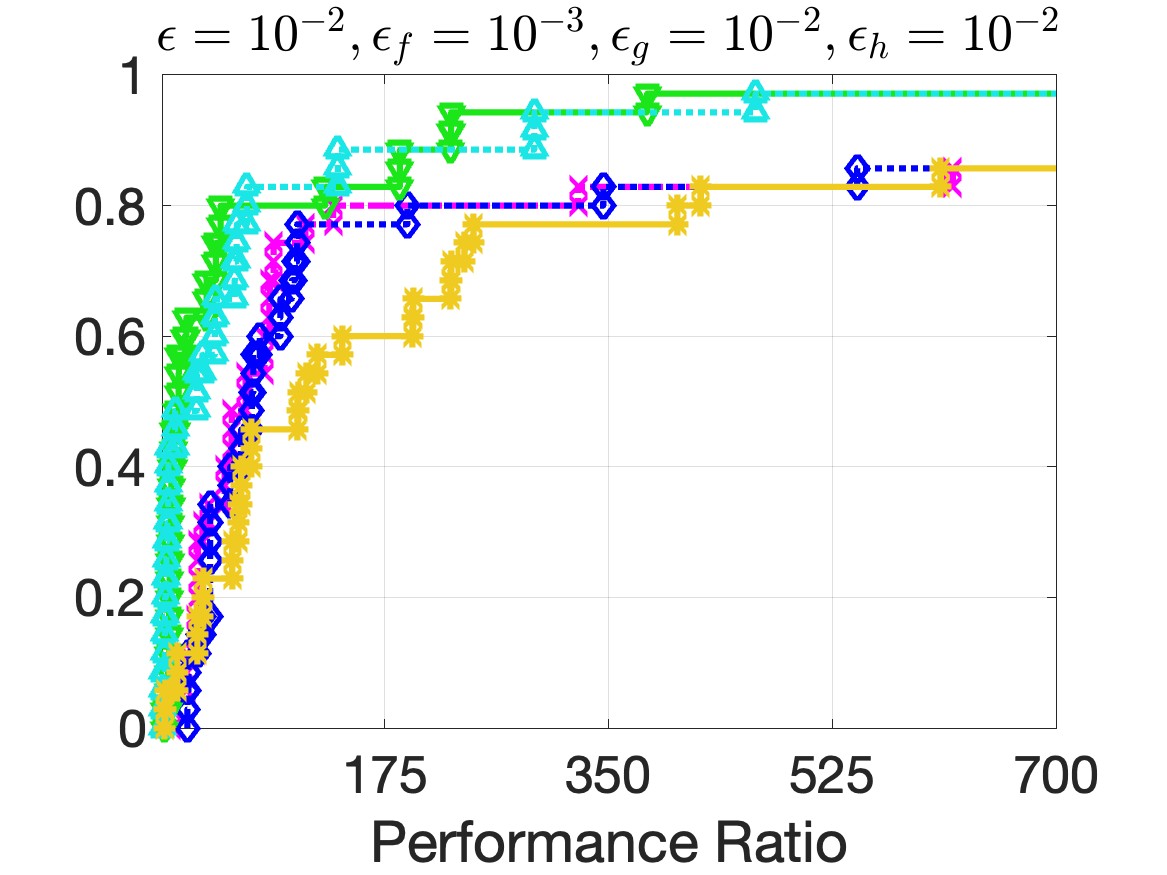}}
\subfigure{\includegraphics[width=0.3\textwidth]{ 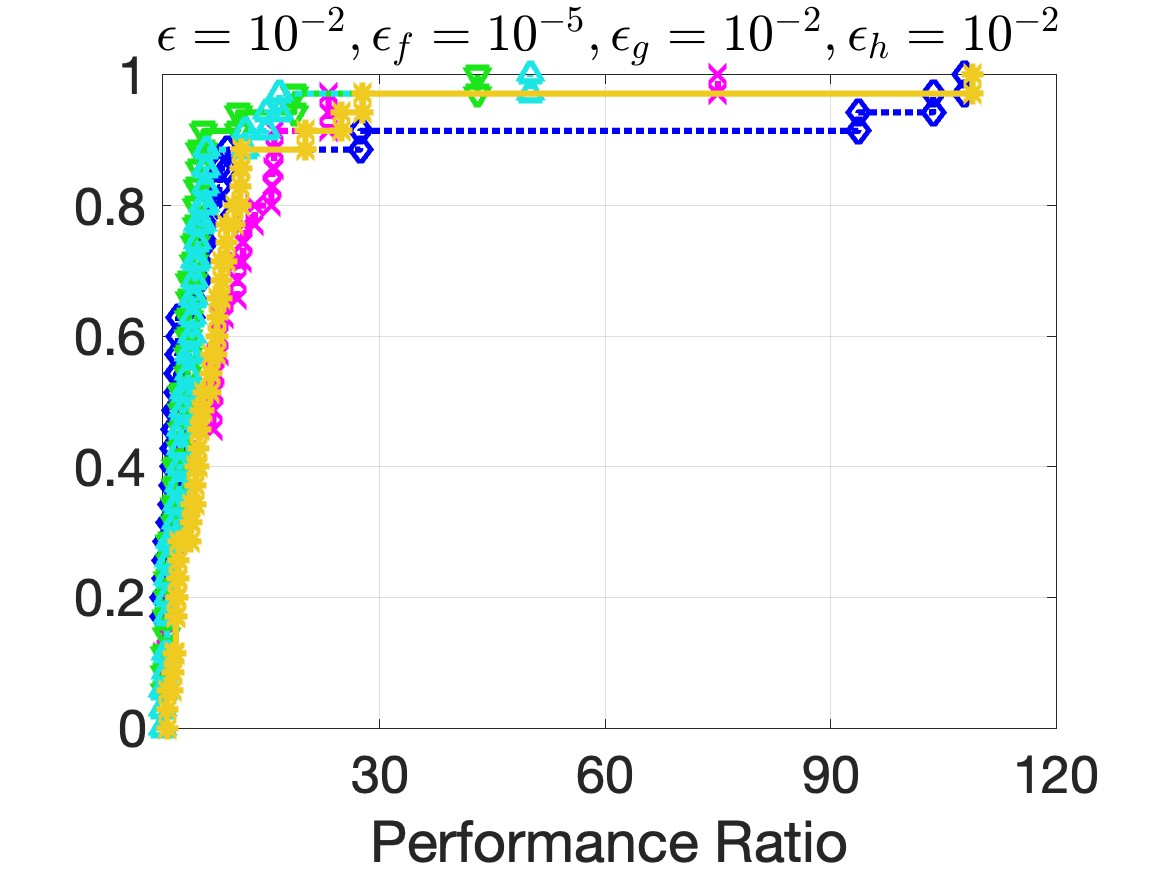}}\\	
\subfigure{\includegraphics[width=0.3\textwidth]{ 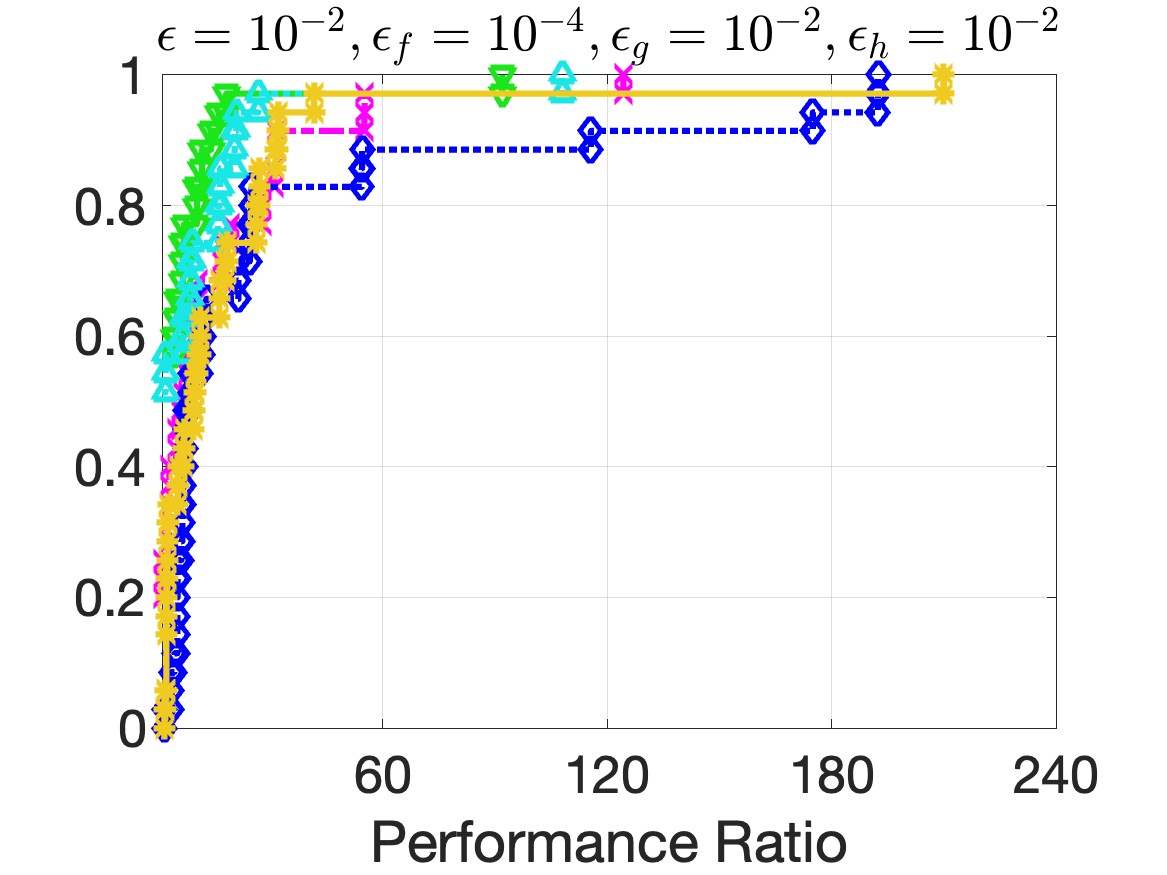}}
\subfigure{\includegraphics[width=0.3\textwidth]{ 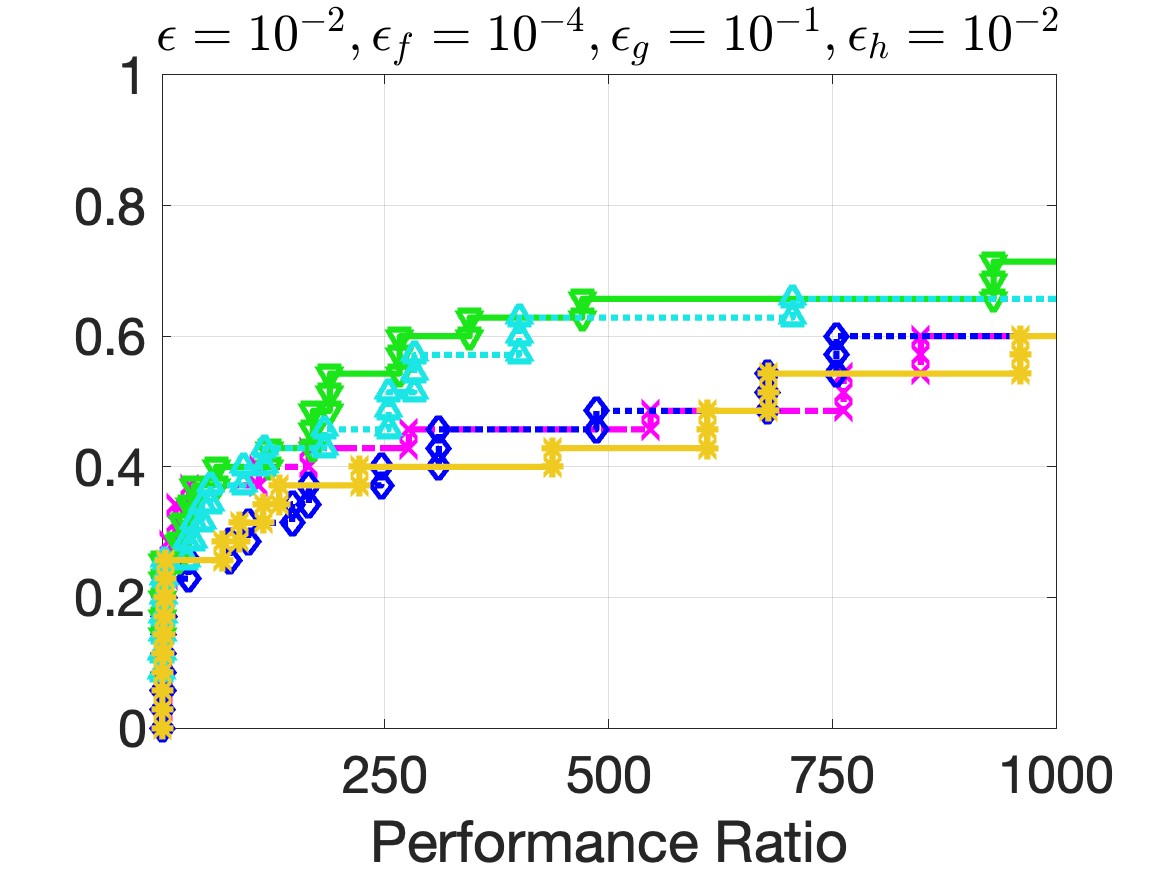}}  \subfigure{\includegraphics[width=0.3\textwidth]{ 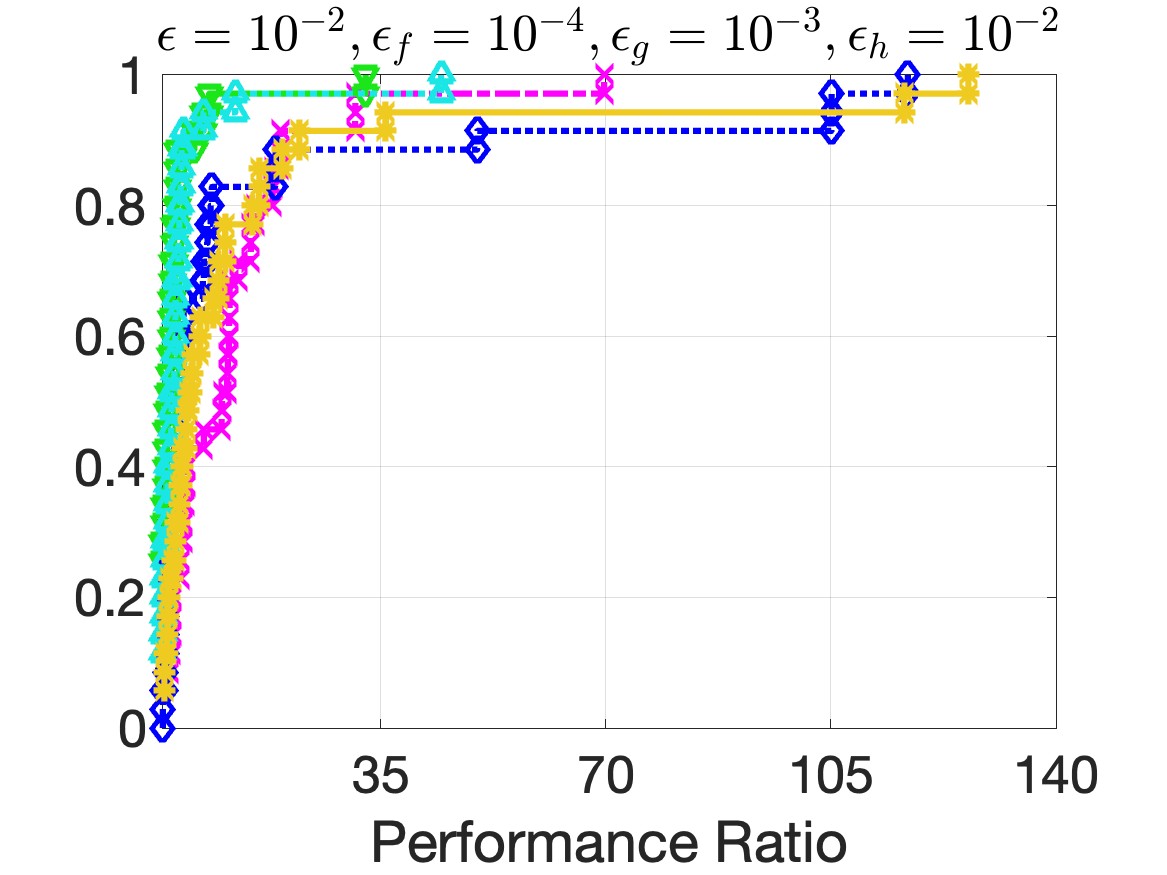}}\\
\subfigure{\includegraphics[width=0.3\textwidth]{ 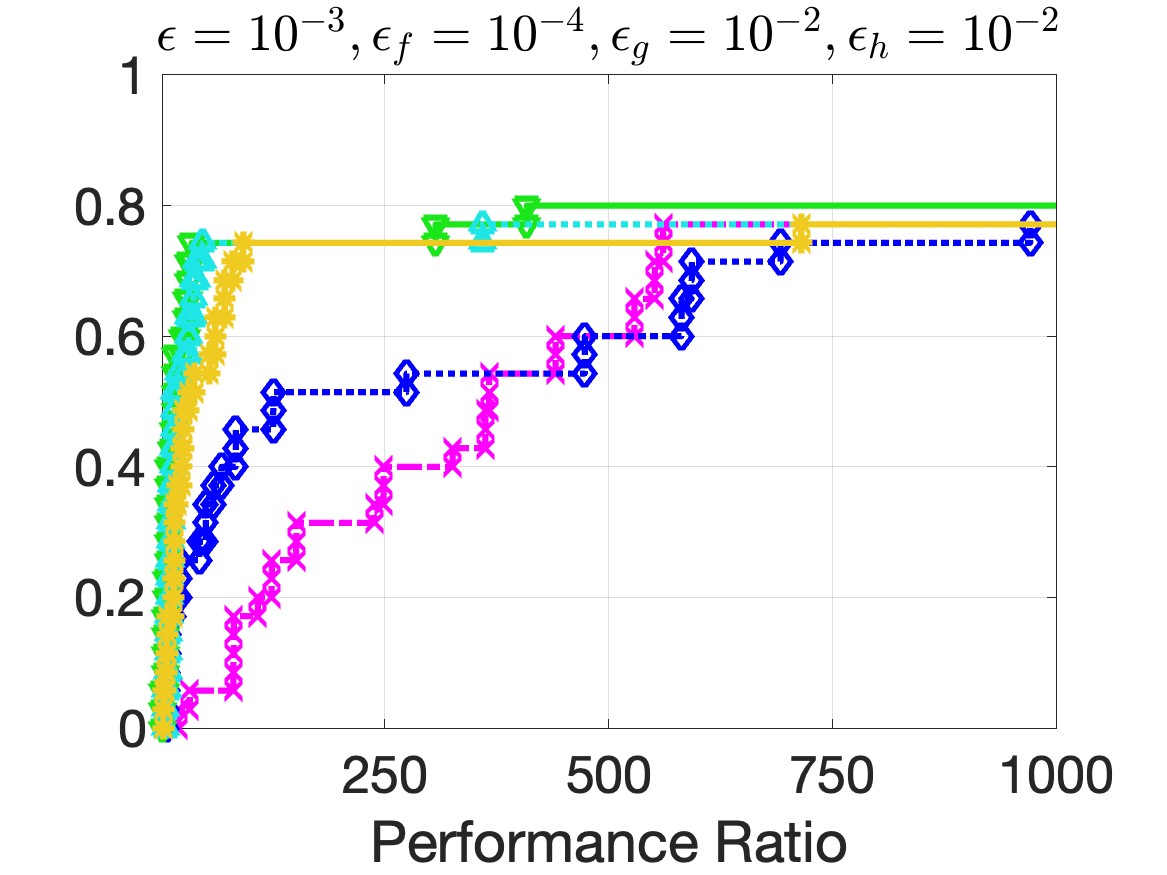}}
\subfigure{\includegraphics[width=0.3\textwidth]{ 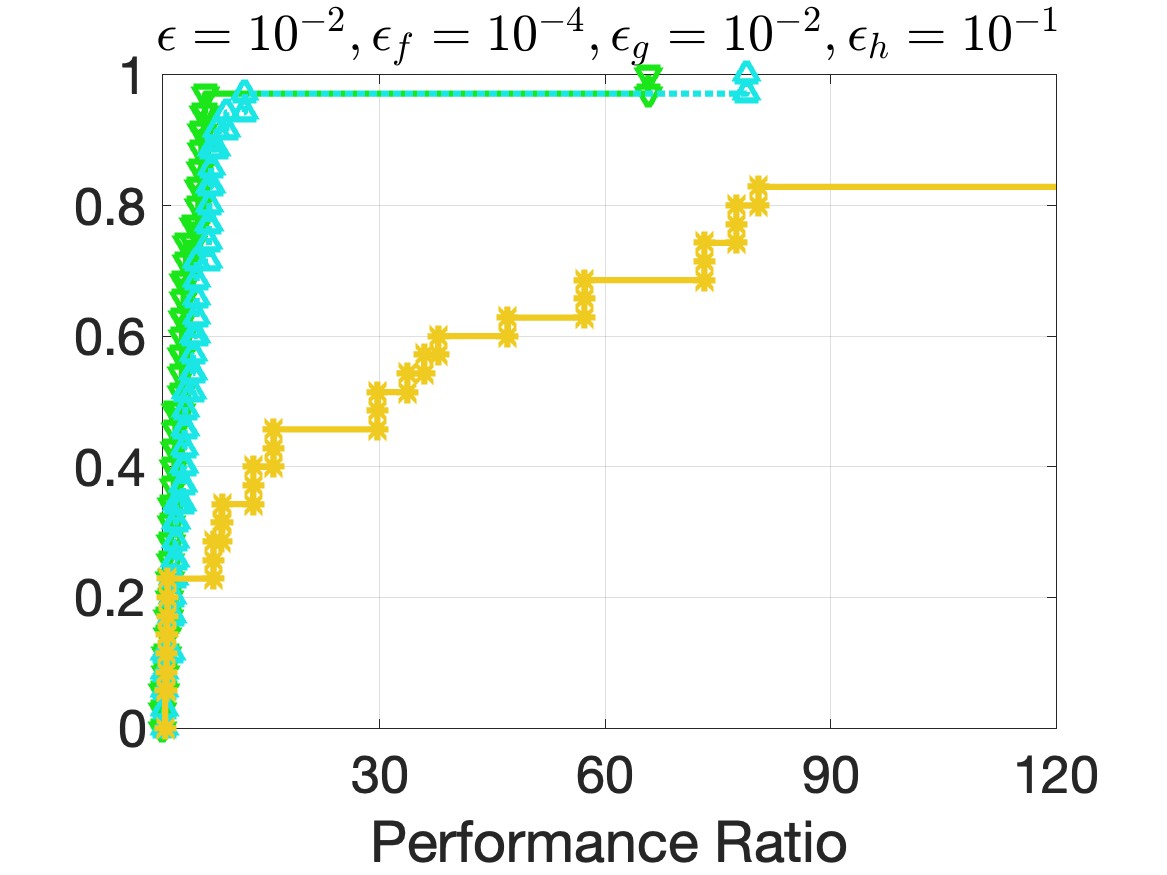}}
\subfigure{\includegraphics[width=0.3\textwidth]{ 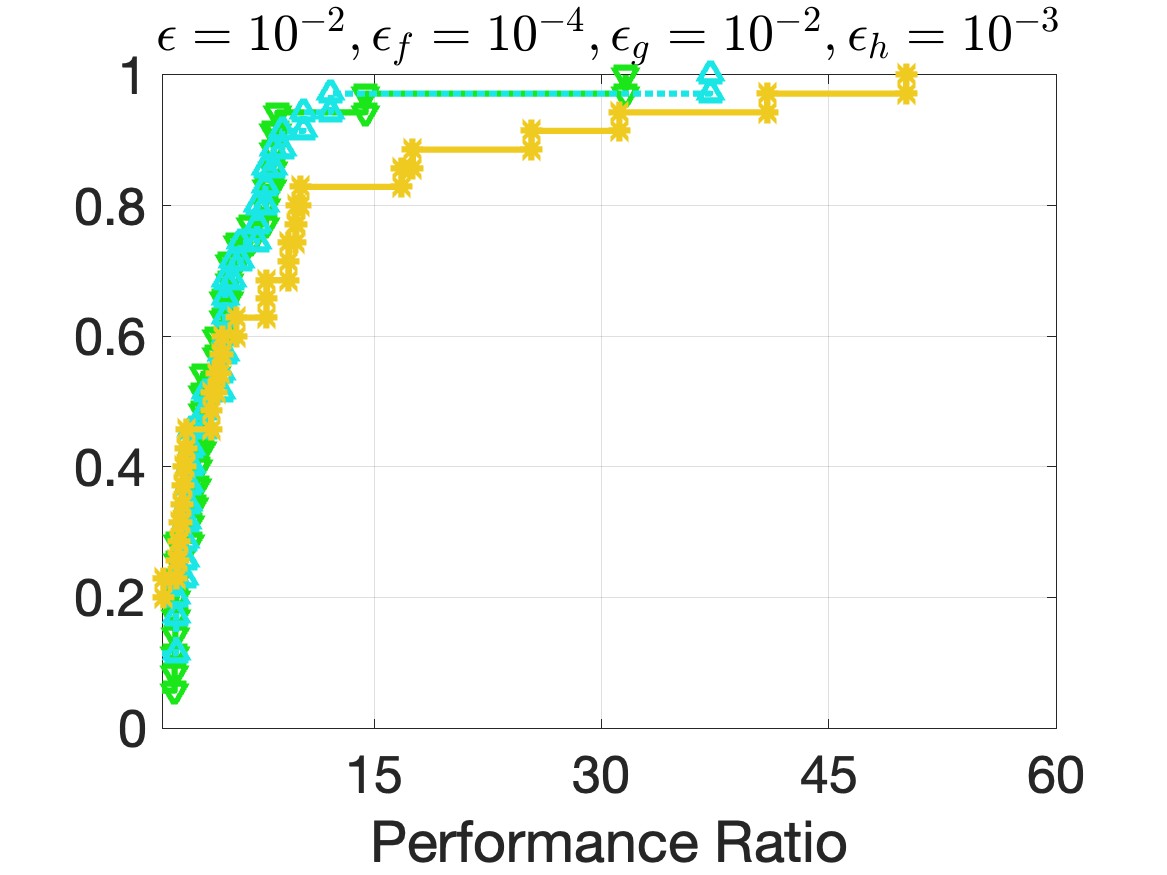}}\\
\subfigure{\includegraphics[width=0.95\textwidth]{ label2}}
\caption{Performance profiles with noise following a $t_4$-distribution. Each line represents a different method. The first column corresponds to the default irreducible noise with varying $\epsilon$. Each row of the last two columns corresponds to varying $\epsilon_f$, $\epsilon_g$, $\epsilon_h$, respectively.}
\label{fig:cutest3}\vskip-0.2cm
\end{figure}

Results are presented as performance profiles with respect to iterations. As in \cite{Berahas2025Sequential}, we employ the convergence metric $\|\nabla\L(\bx_0)\| - \|\nabla\L(\bx)\| \geq (1 - \varepsilon_{pp})  \left(\|\nabla\L(\bx_0)\| - \|\nabla\L_b\|\right) $. Here, $\bx_0$ is the initial iterate, and $\|\nabla\L_b\|$ represents the best KKT residual value identified by any algorithm for the given problem instance within the iteration budget in the experiments of Section \ref{subsec:5.2} (no irreducible noise). The prescribed tolerance is set to $\varepsilon_{pp} = 10^{-3}$.
This convergence metric extends the approach in \cite{More2009Benchmarking} to accommodate the constrained optimization setting and the nonconvexity of the objective function. The algorithm terminates when any of the following criteria are met: the stopping time is reached, the convergence metric is satisfied, or the iteration budget is exhausted.

\begin{figure}[t]
\centering
\subfigure{\includegraphics[width=0.3\textwidth]{ 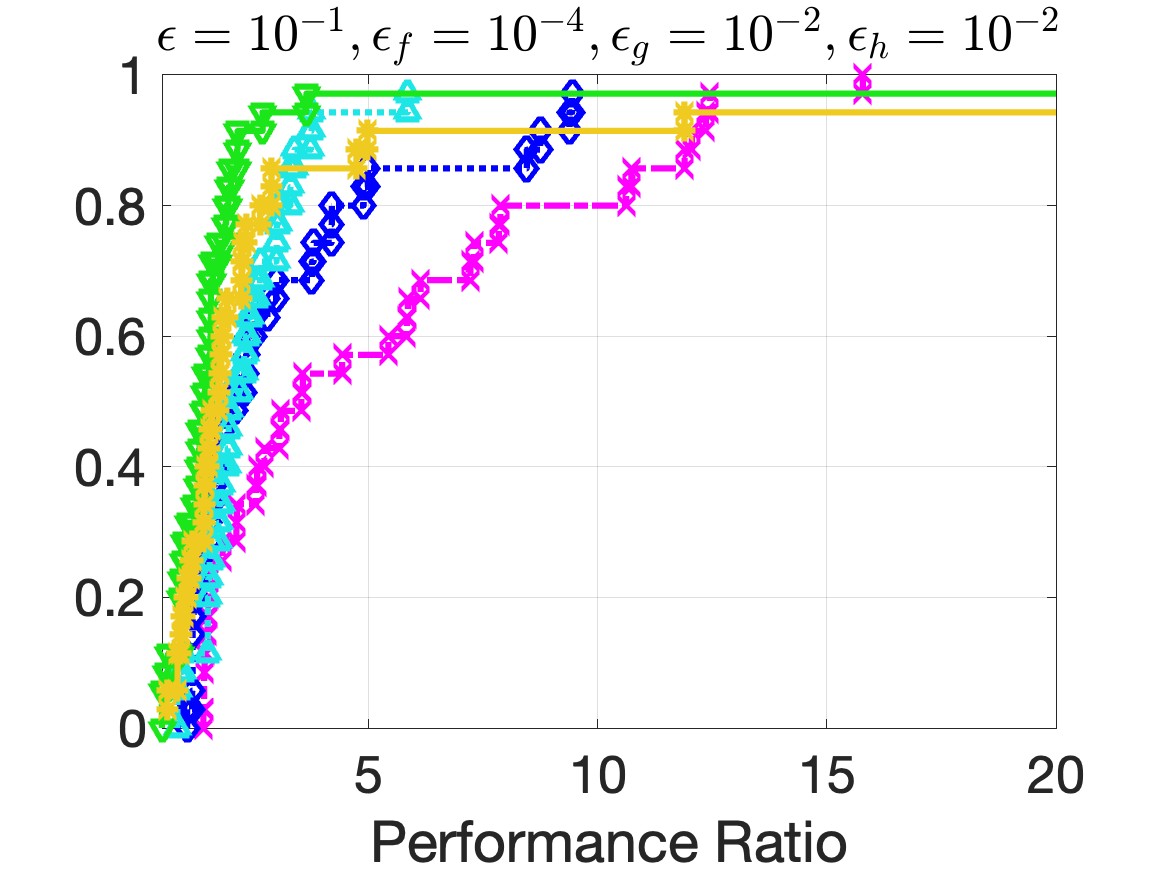}}
\subfigure{\includegraphics[width=0.3\textwidth]{ 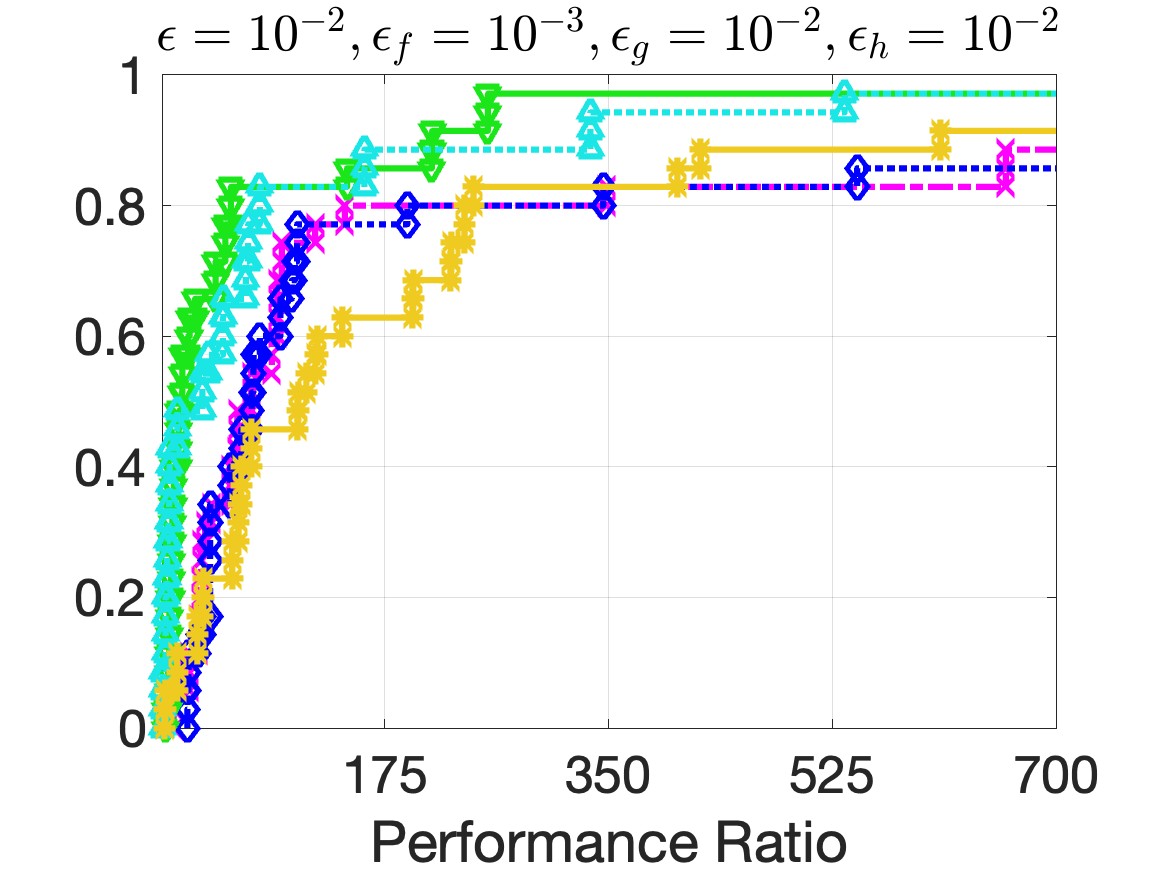}}
\subfigure{\includegraphics[width=0.3\textwidth]{ 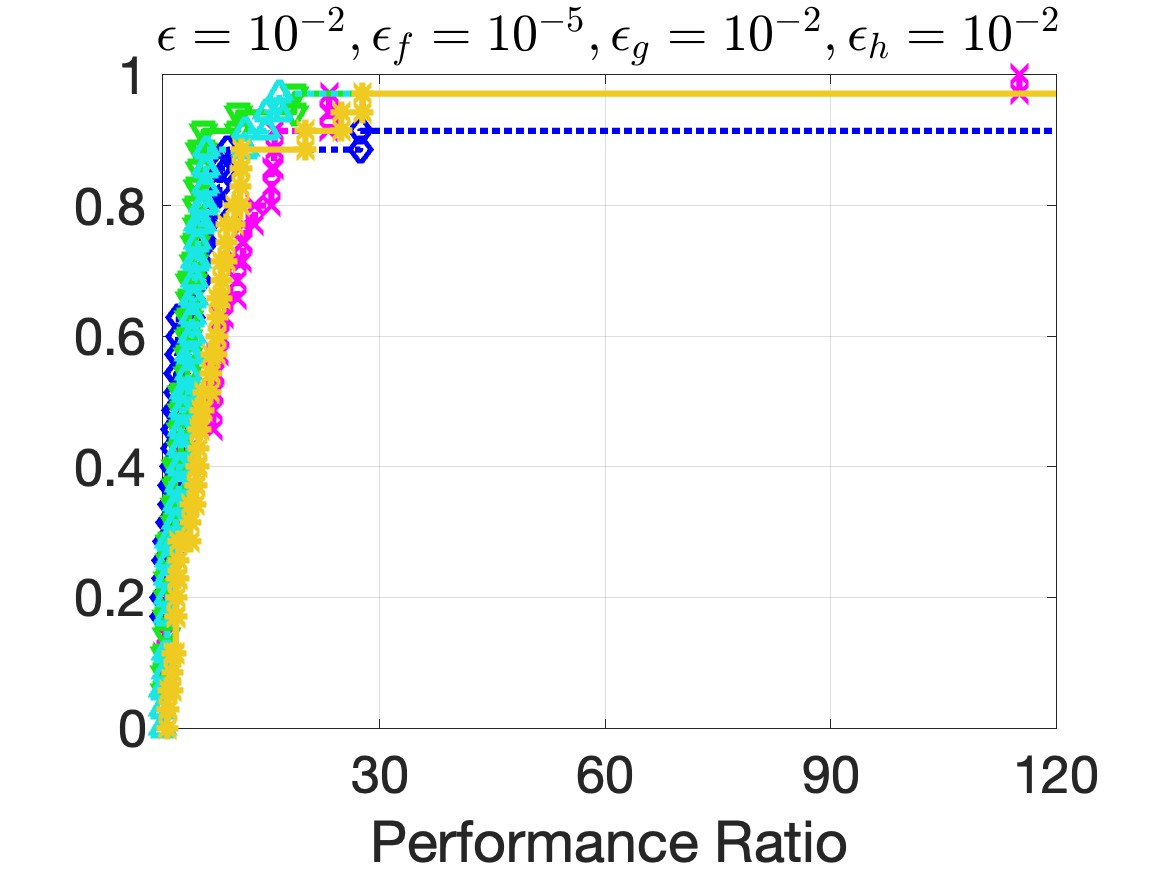}}\\	
\subfigure{\includegraphics[width=0.3\textwidth]{ 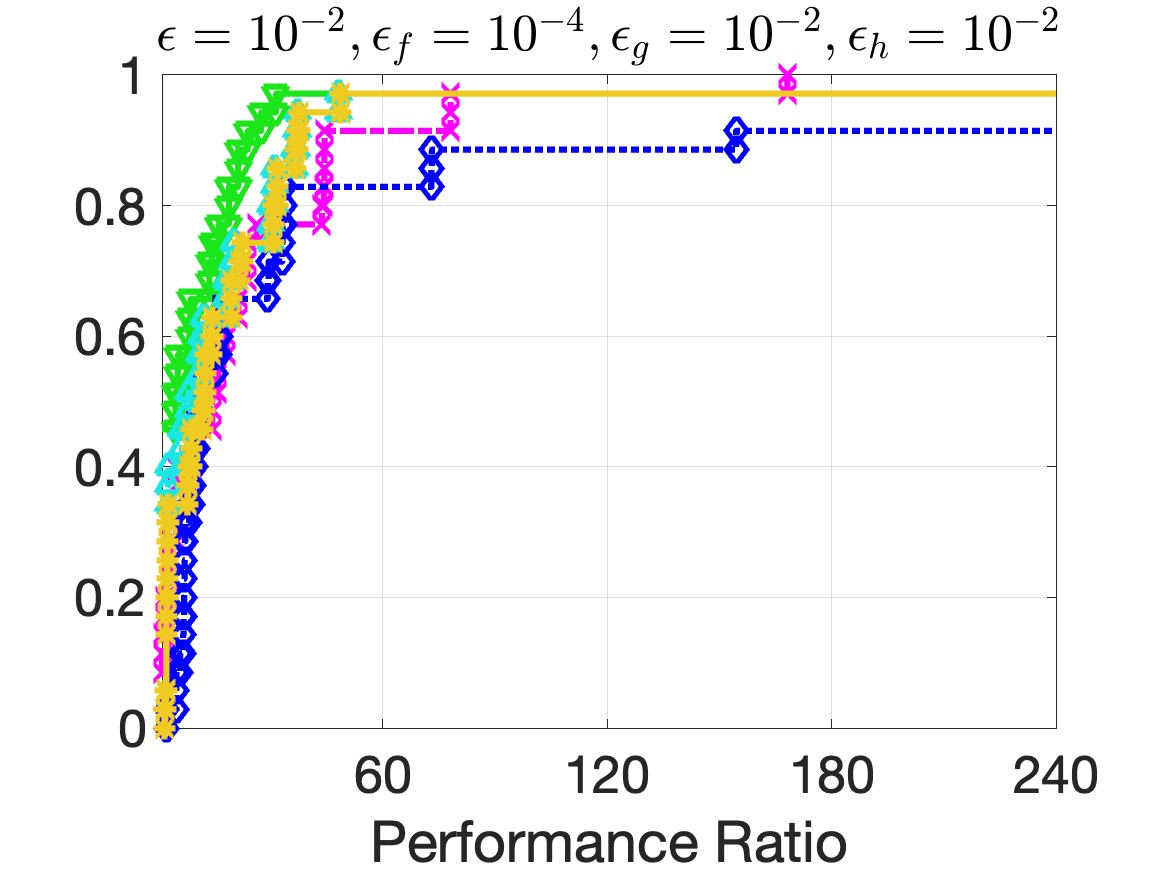}}
\subfigure{\includegraphics[width=0.3\textwidth]{ 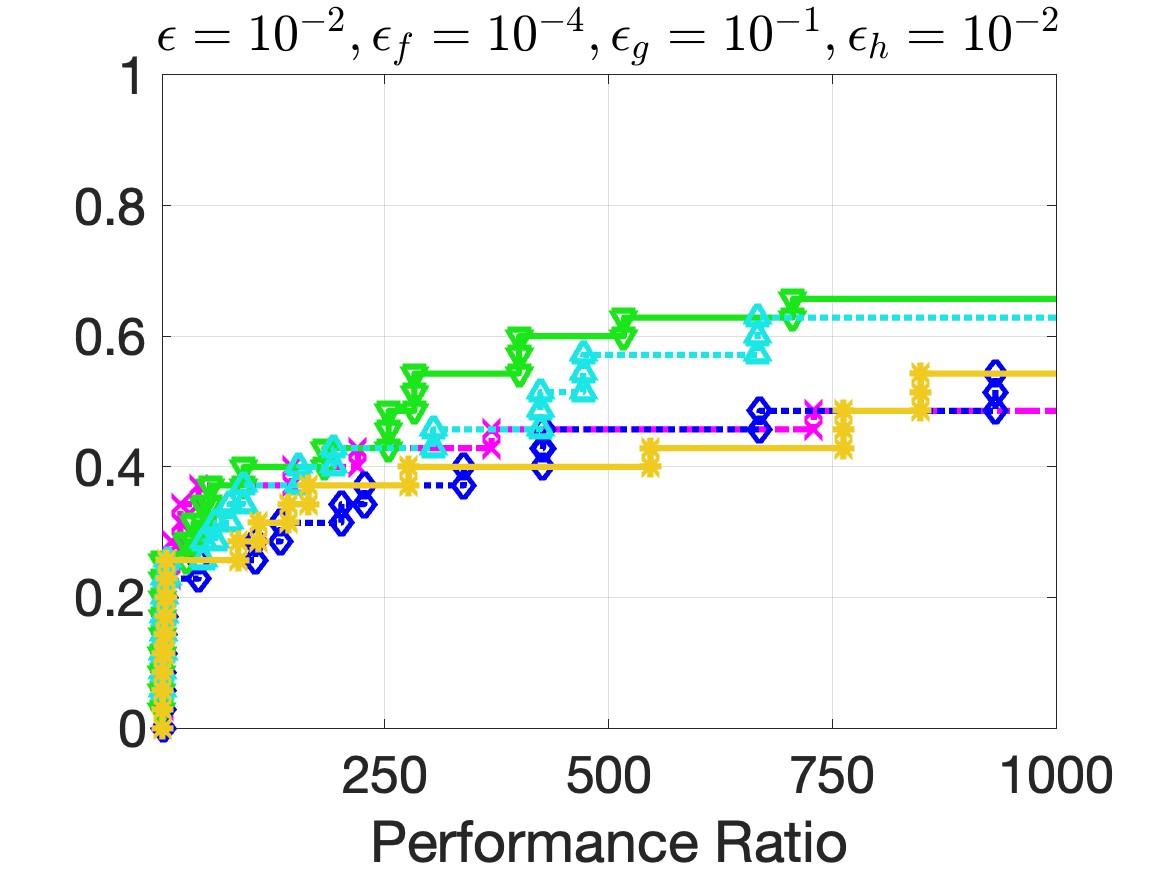}}  \subfigure{\includegraphics[width=0.3\textwidth]{ 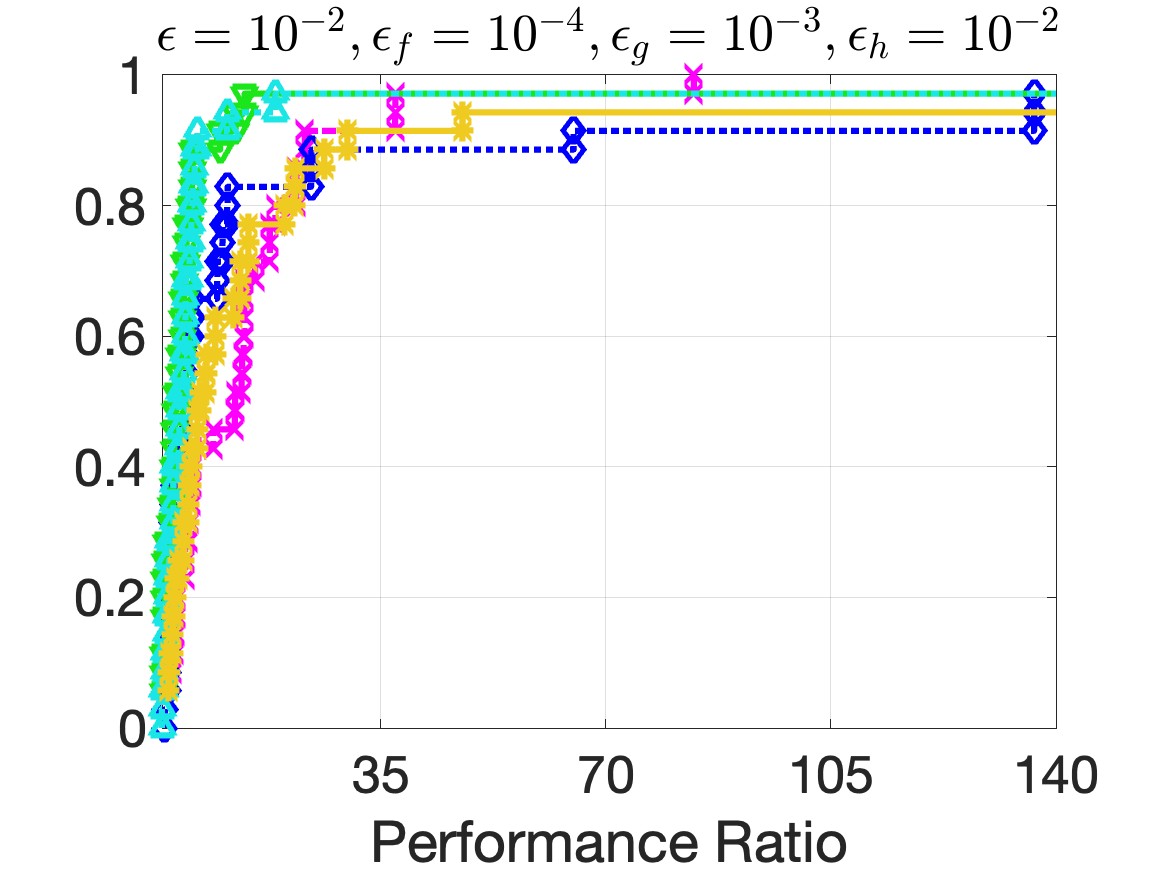}}\\
\subfigure{\includegraphics[width=0.3\textwidth]{ 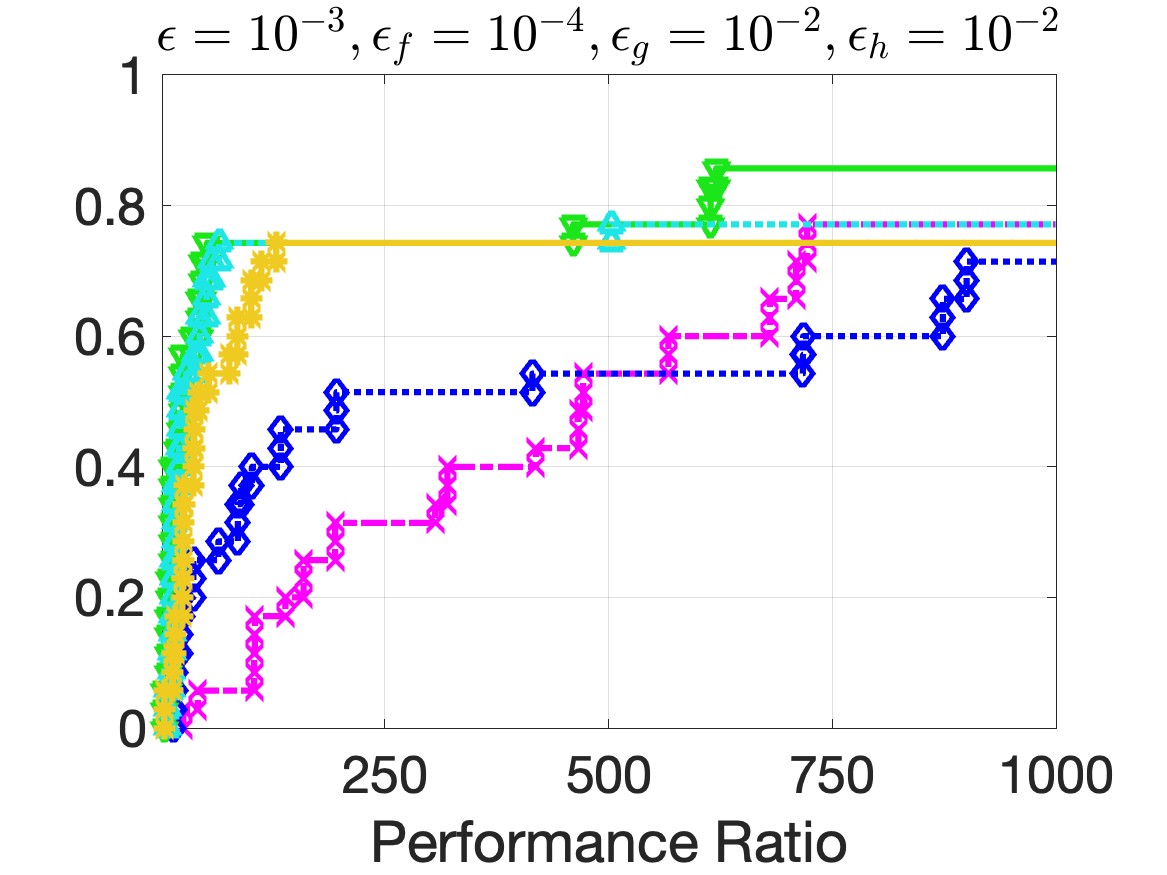}}
\subfigure{\includegraphics[width=0.3\textwidth]{ 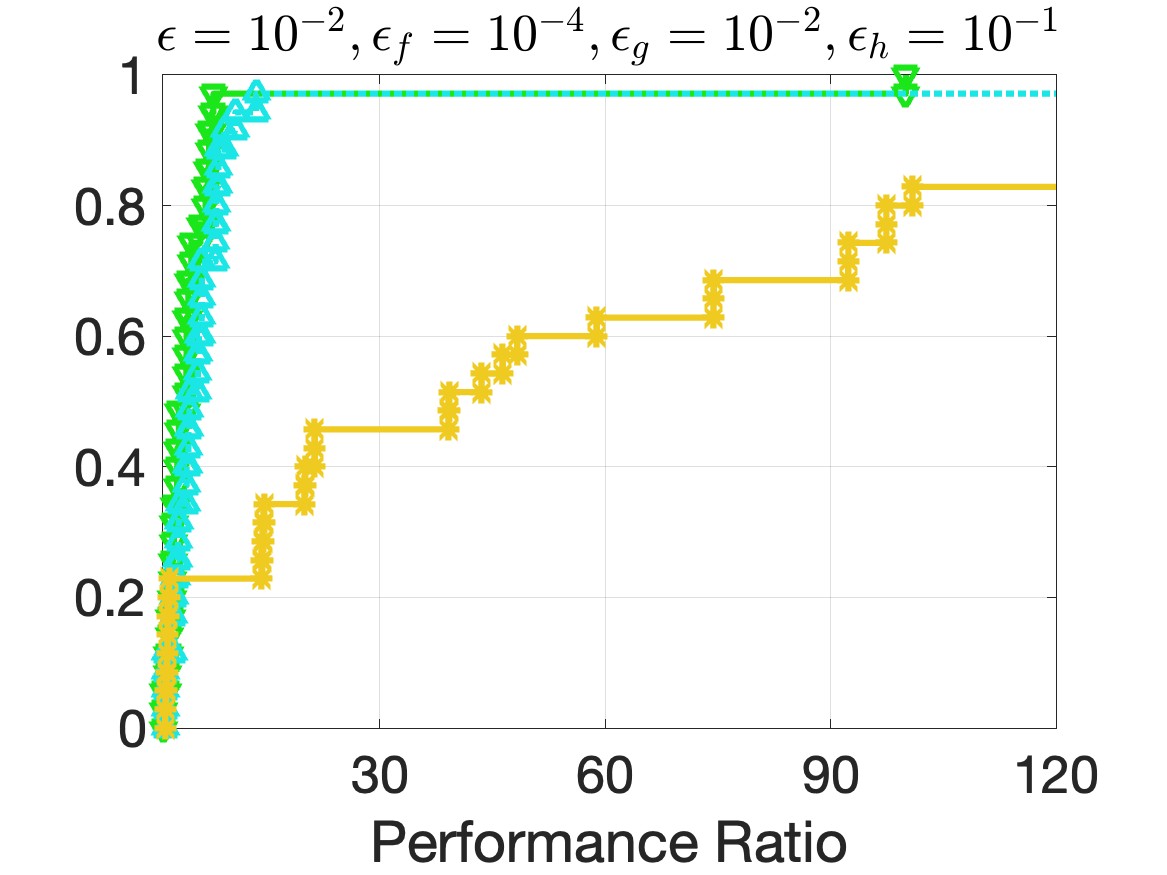}}
\subfigure{\includegraphics[width=0.3\textwidth]{ 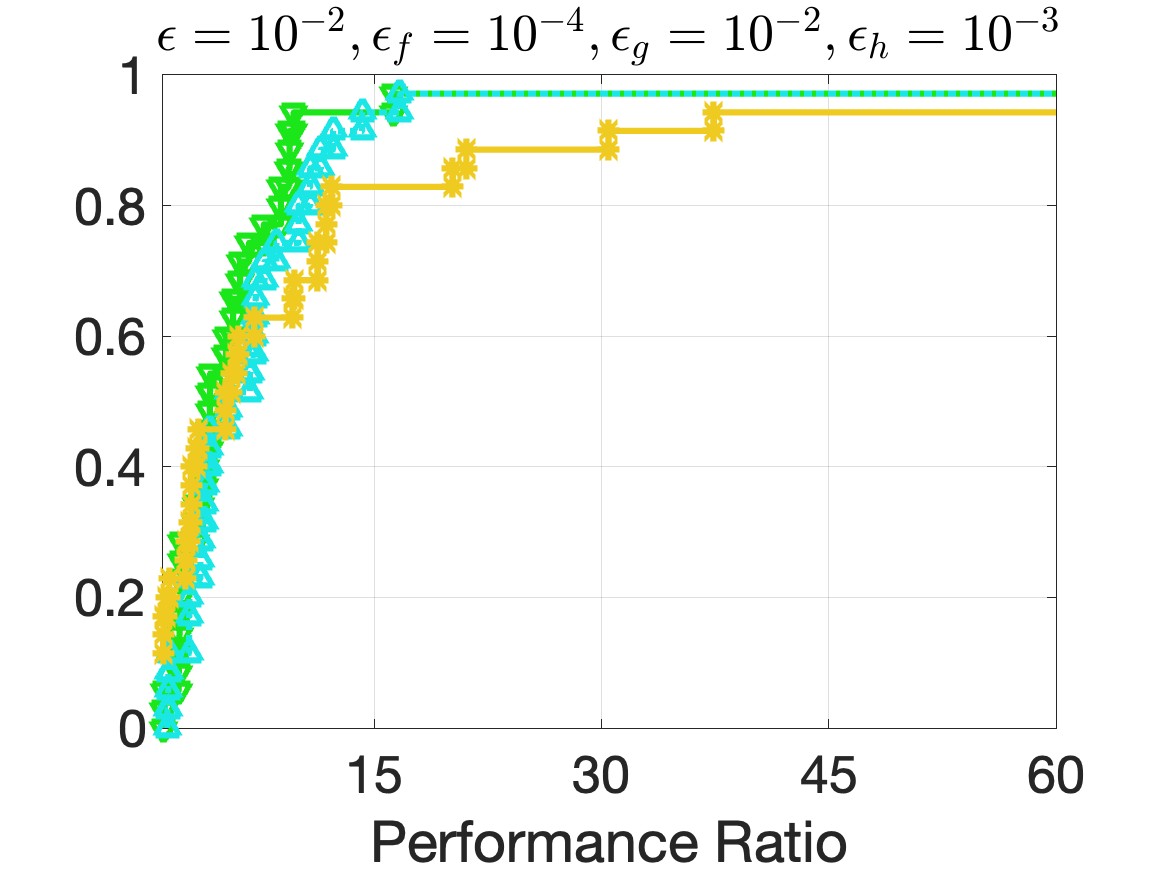}}\\
\subfigure{\includegraphics[width=0.95\textwidth]{ label2}}
\caption{Performance profiles with noise following a $t_2$-distribution. Each line represents a different method. The first column corresponds to the default irreducible noise with varying $\epsilon$. Each row of the last two columns corresponds to varying $\epsilon_f$, $\epsilon_g$, $\epsilon_h$, respectively.}
\label{fig:cutest4}\vskip-0.2cm
\end{figure}

\begin{figure}[t]
\centering
\subfigure{\includegraphics[width=0.3\textwidth]{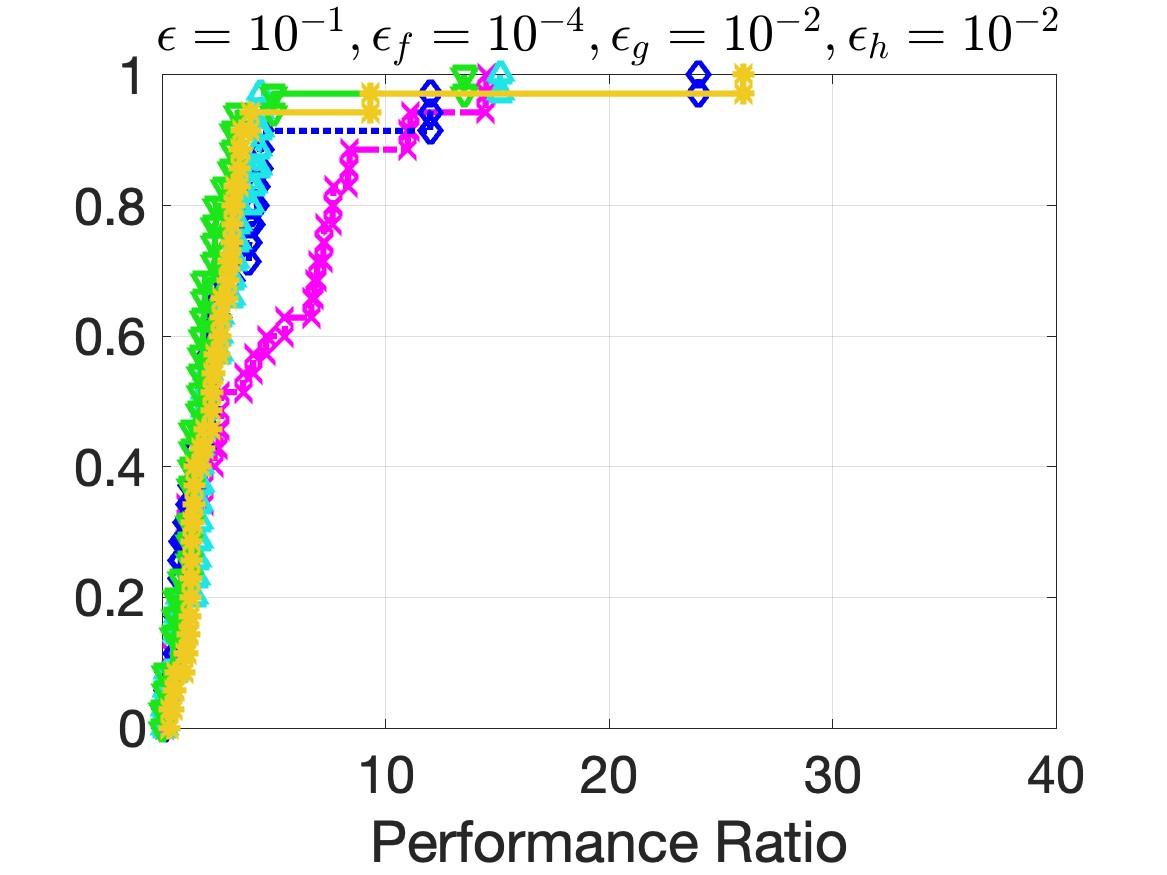}}
\subfigure{\includegraphics[width=0.3\textwidth]{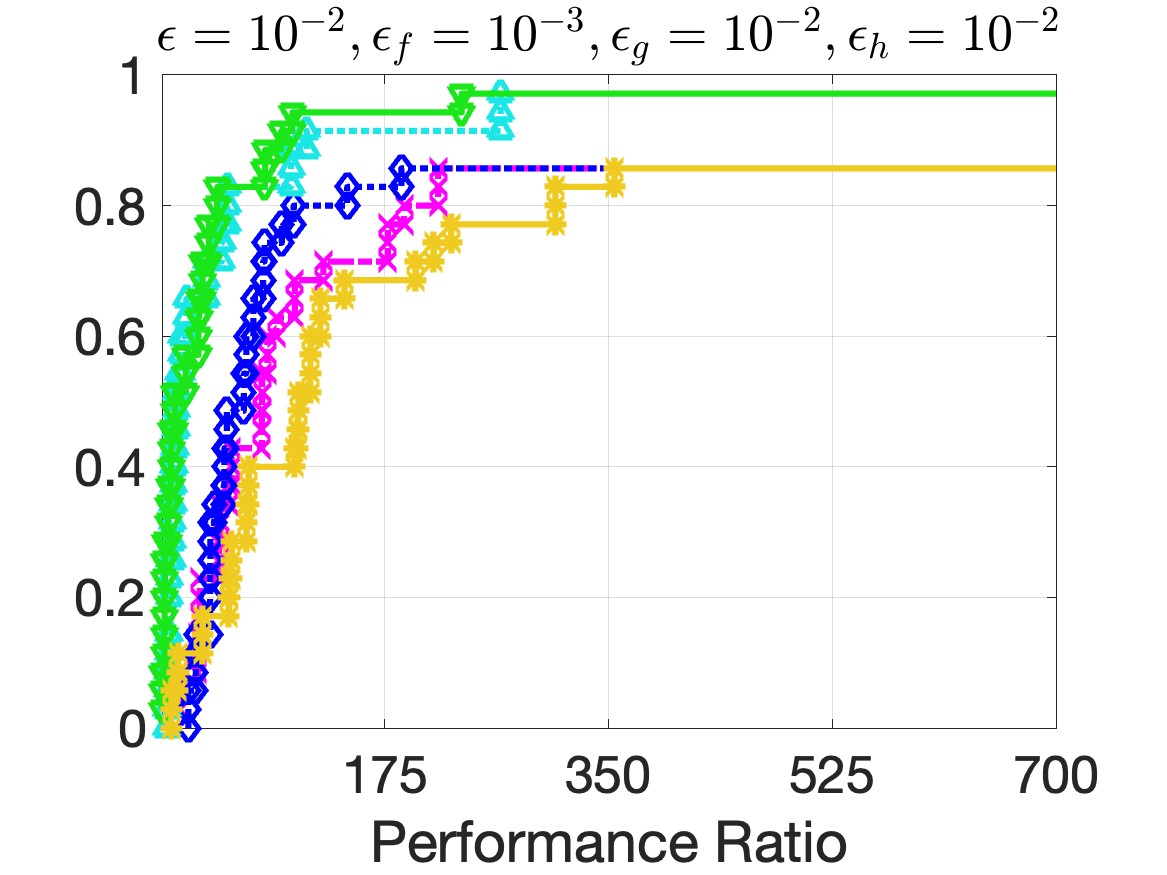}}
\subfigure{\includegraphics[width=0.3\textwidth]{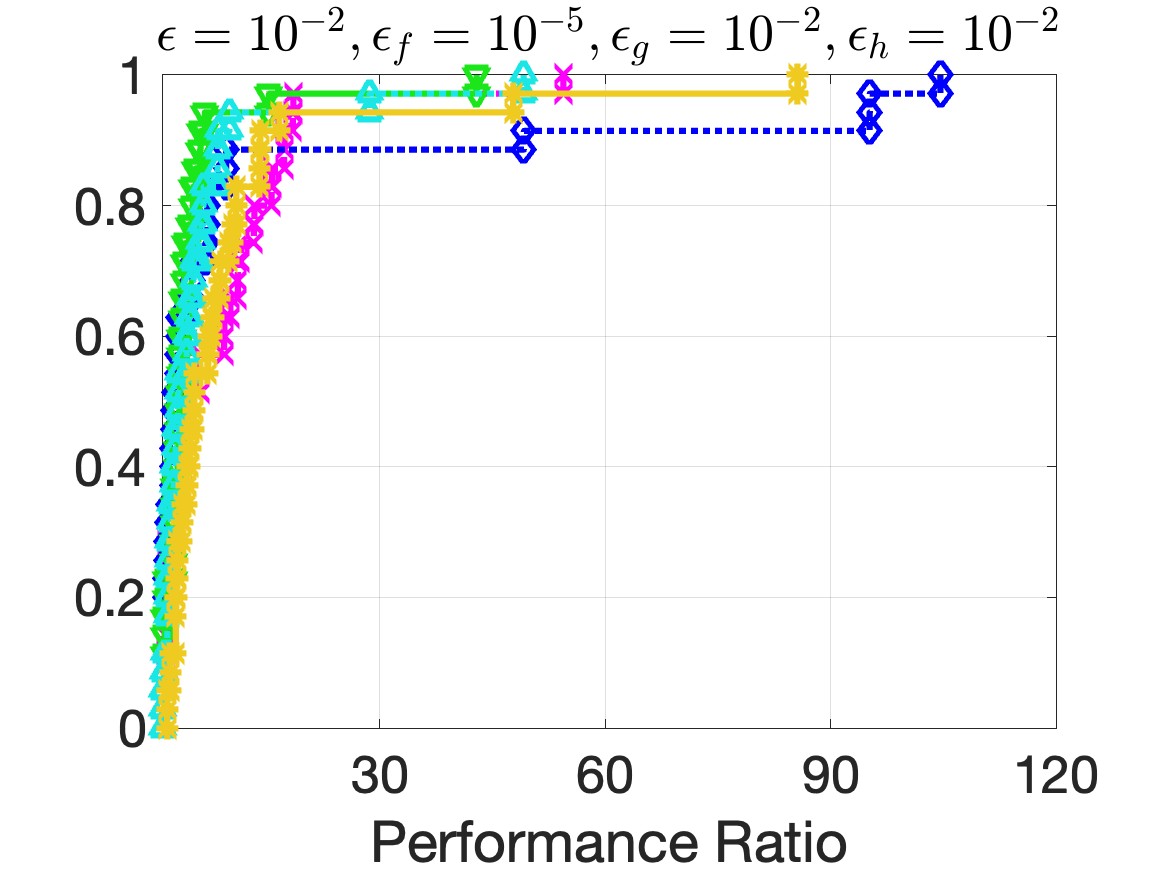}}\\	
\subfigure{\includegraphics[width=0.3\textwidth]{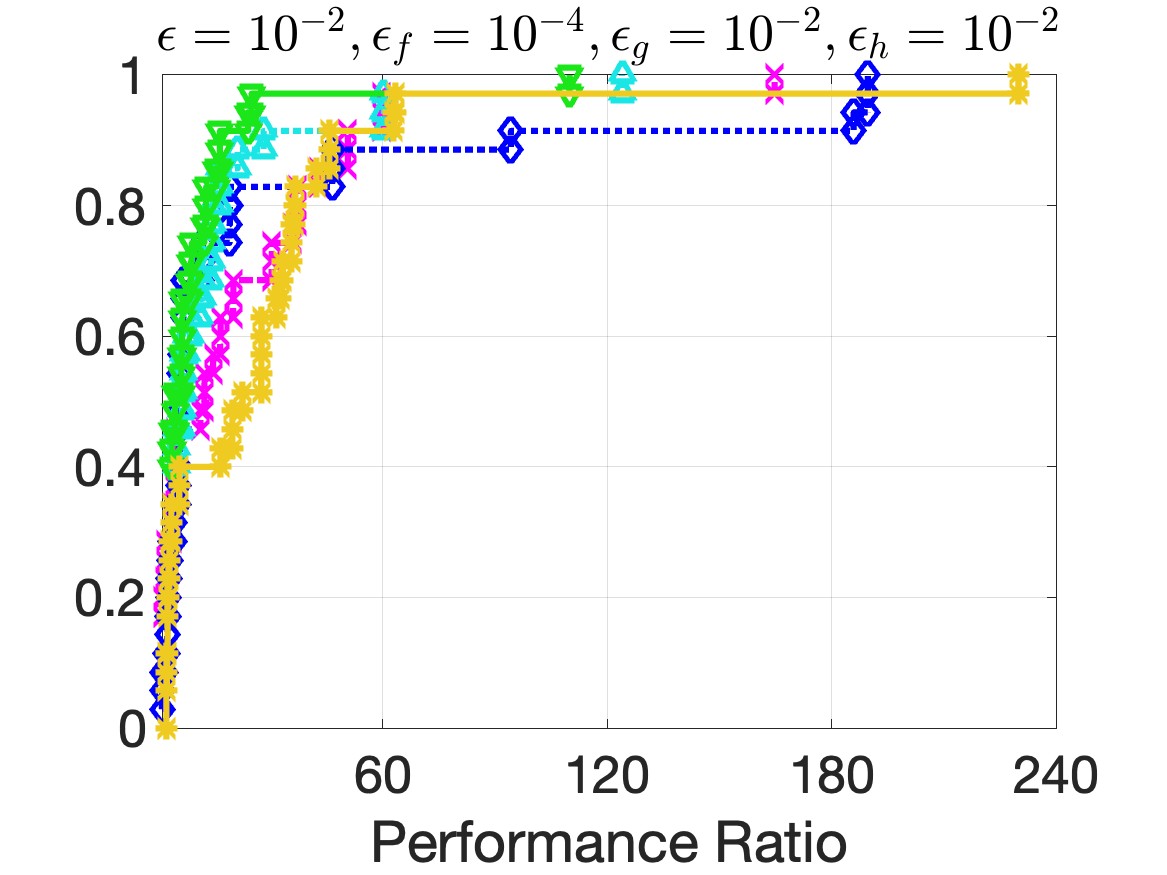}}
\subfigure{\includegraphics[width=0.3\textwidth]{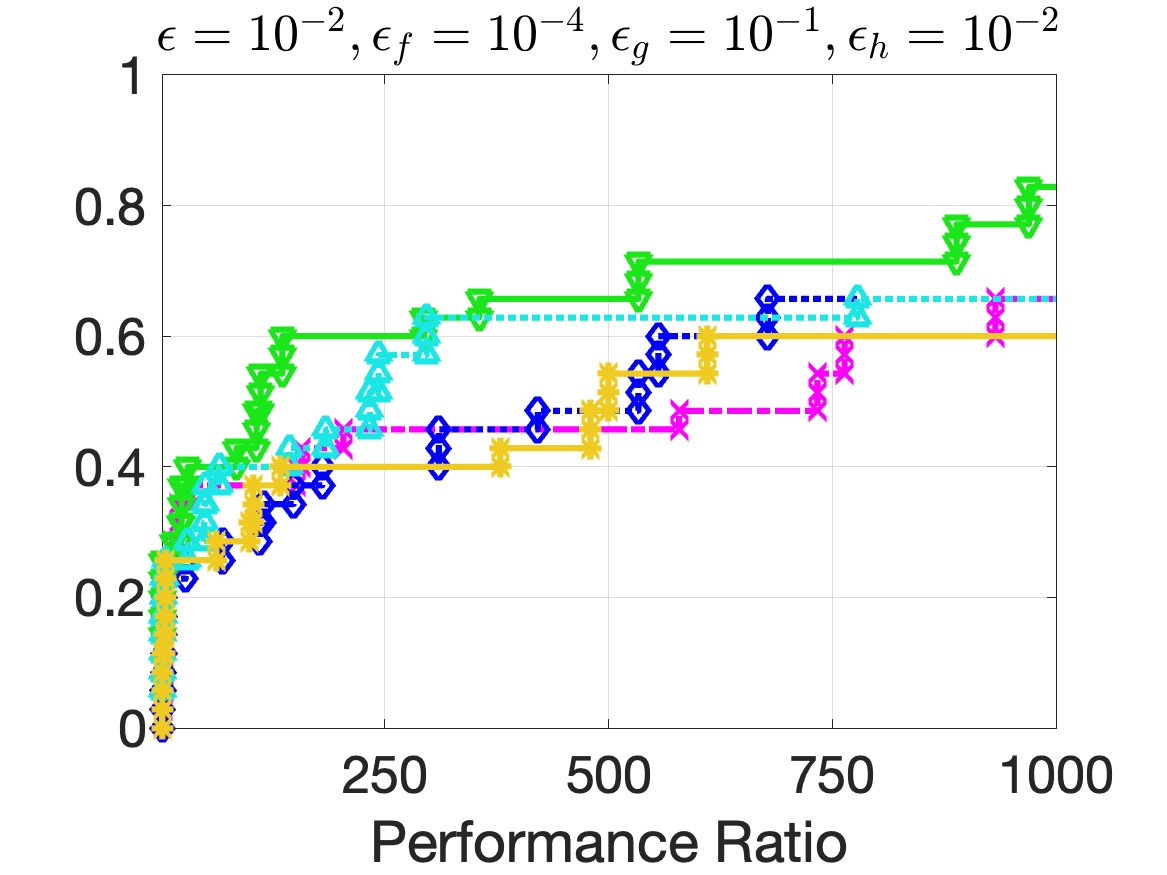}}  \subfigure{\includegraphics[width=0.3\textwidth]{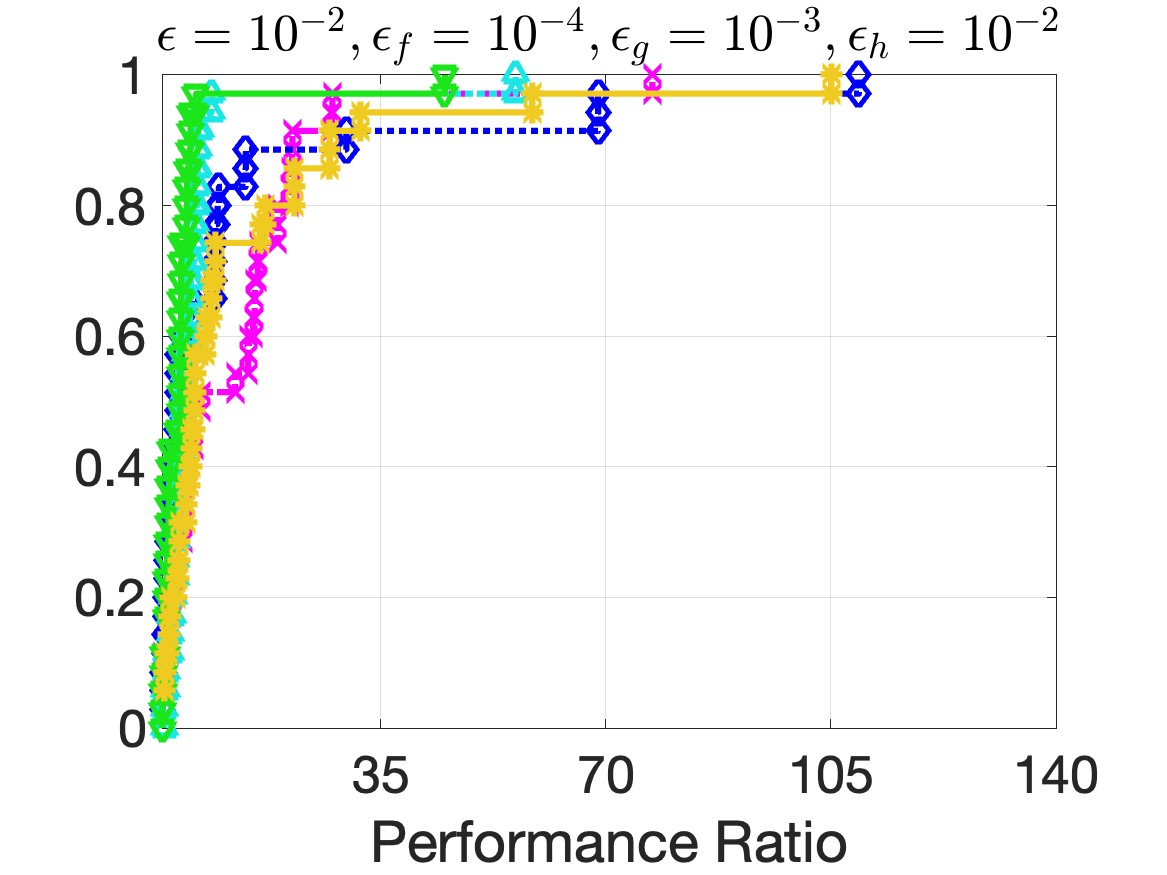}}\\
\subfigure{\includegraphics[width=0.3\textwidth]{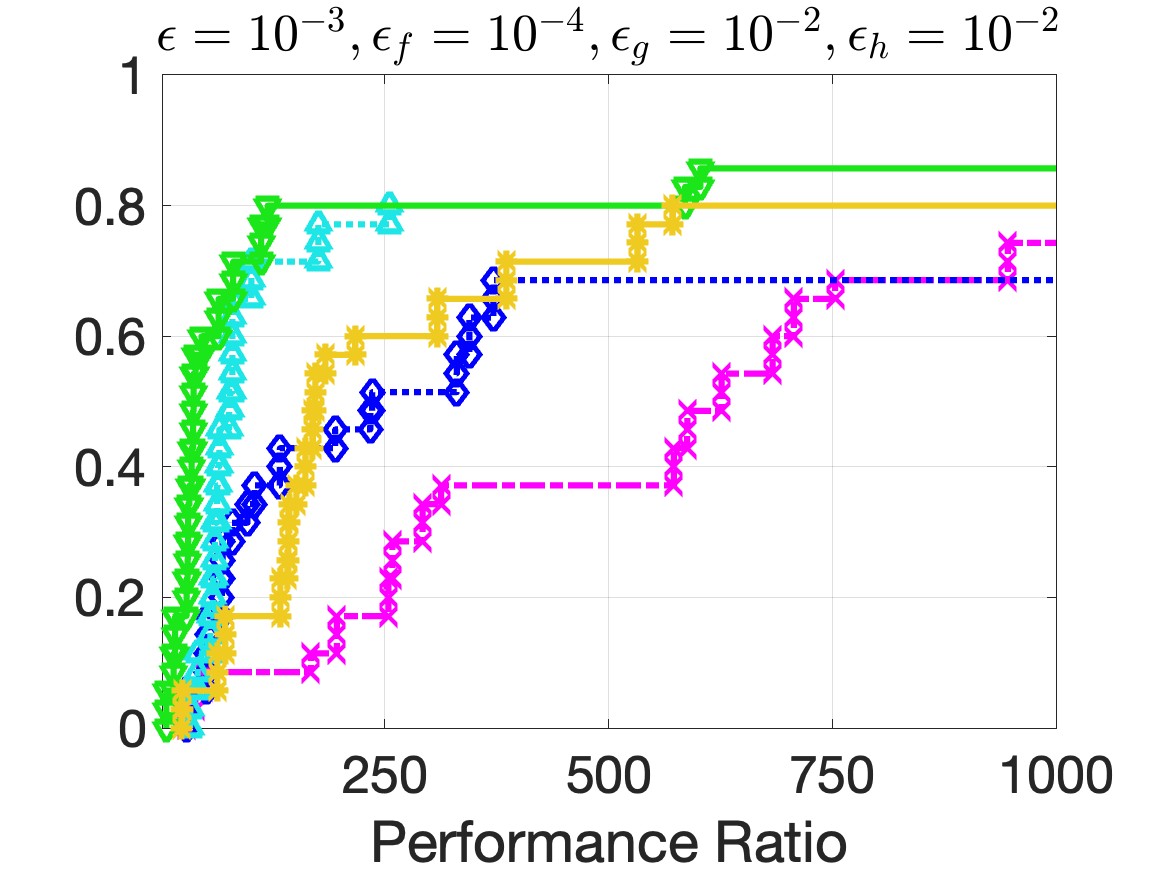}}
\subfigure{\includegraphics[width=0.3\textwidth]{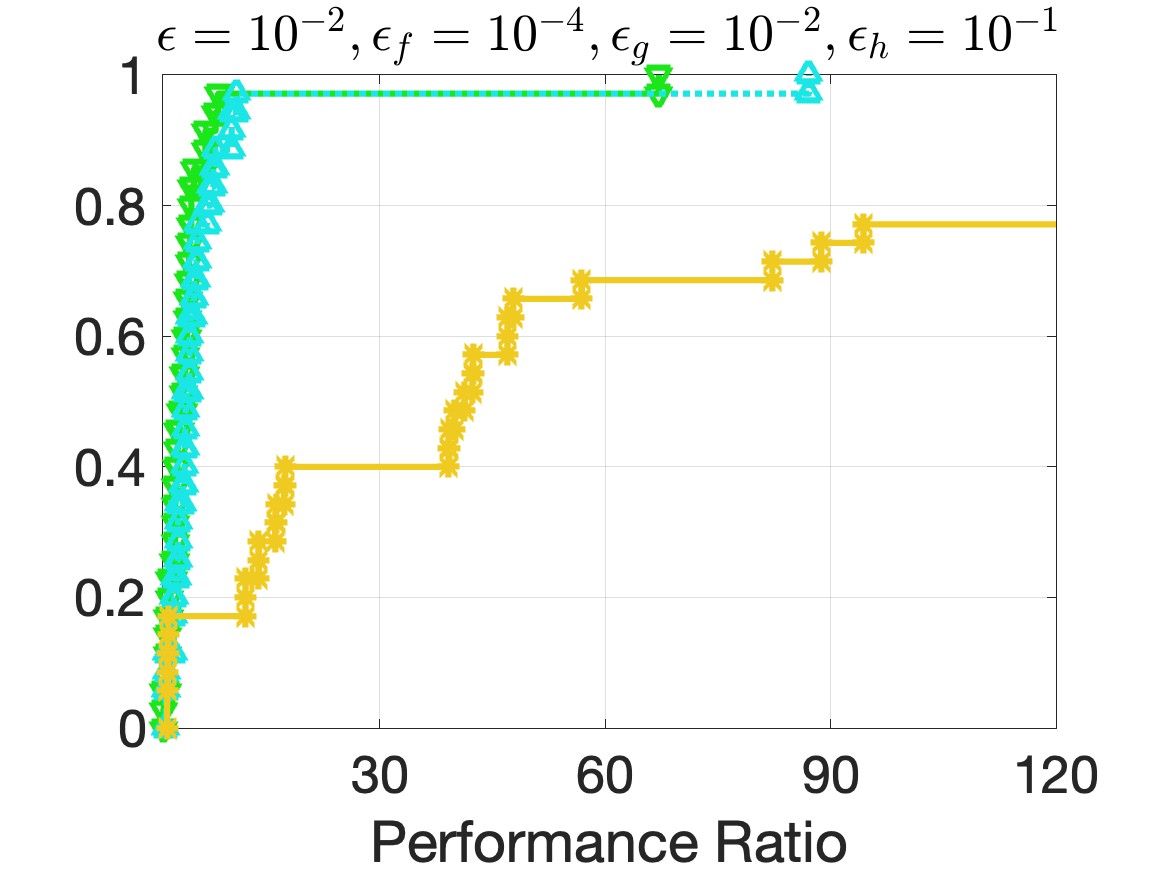}}
\subfigure{\includegraphics[width=0.3\textwidth]{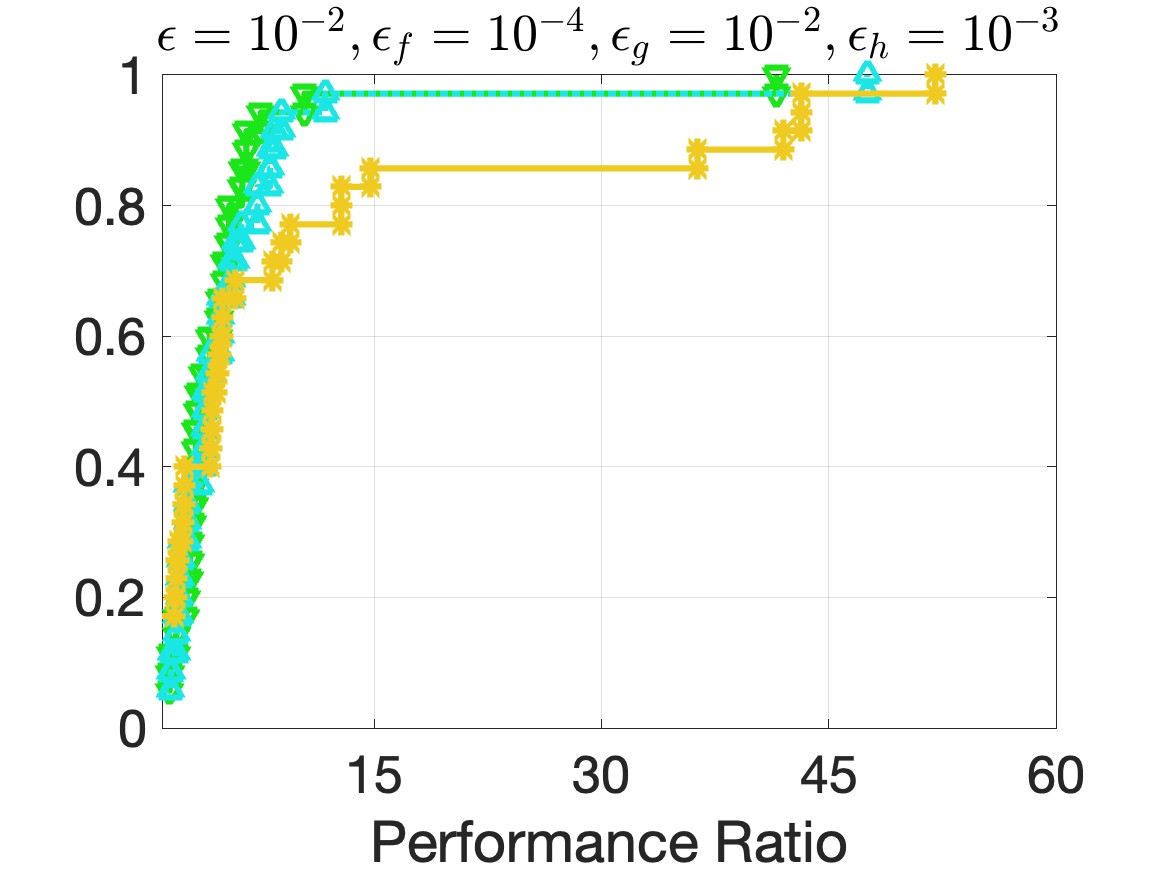}}\\
\subfigure{\includegraphics[width=0.95\textwidth]{ label2}}
\caption{Performance profiles with noise following a log-normal-distribution. Each line represents a different method. The first column corresponds to the default irreducible noise with varying $\epsilon$. Each row of the last two columns corresponds to varying $\epsilon_f$, $\epsilon_g$, $\epsilon_h$, respectively.}
\label{fig:cutest5}\vskip-0.2cm
\end{figure}

We present the experimental results in Figures \ref{fig:cutest2}, \ref{fig:cutest3}, \ref{fig:cutest4}, \ref{fig:cutest5}, \ref{fig:cutest6}, and \ref{fig:cutest7}, corresponding to noise from the normal distribution, $t_4$-distribution, $t_2$-distribution, log-normal distribution, Weibull distribution, and Cauchy distribution, respectively. In each figure, the first column represents the performance of all methods under the default irreducible noise levels with varying $\epsilon$. The last two columns represent the performance of TR-SSQP and TR-SSQP2 under varying irreducible noise levels with the default $\epsilon$.
For clarity, we trim the performance ratio ($x$-axis) based on two conditions: (1) if all methods converge, we trim the performance ratio after the last method converges; (2) if some methods diverge on some problems, we trim the performance ratio when no method converges on new problems within the iteration budget.

\begin{figure}[t]
\centering
\subfigure{\includegraphics[width=0.3\textwidth]{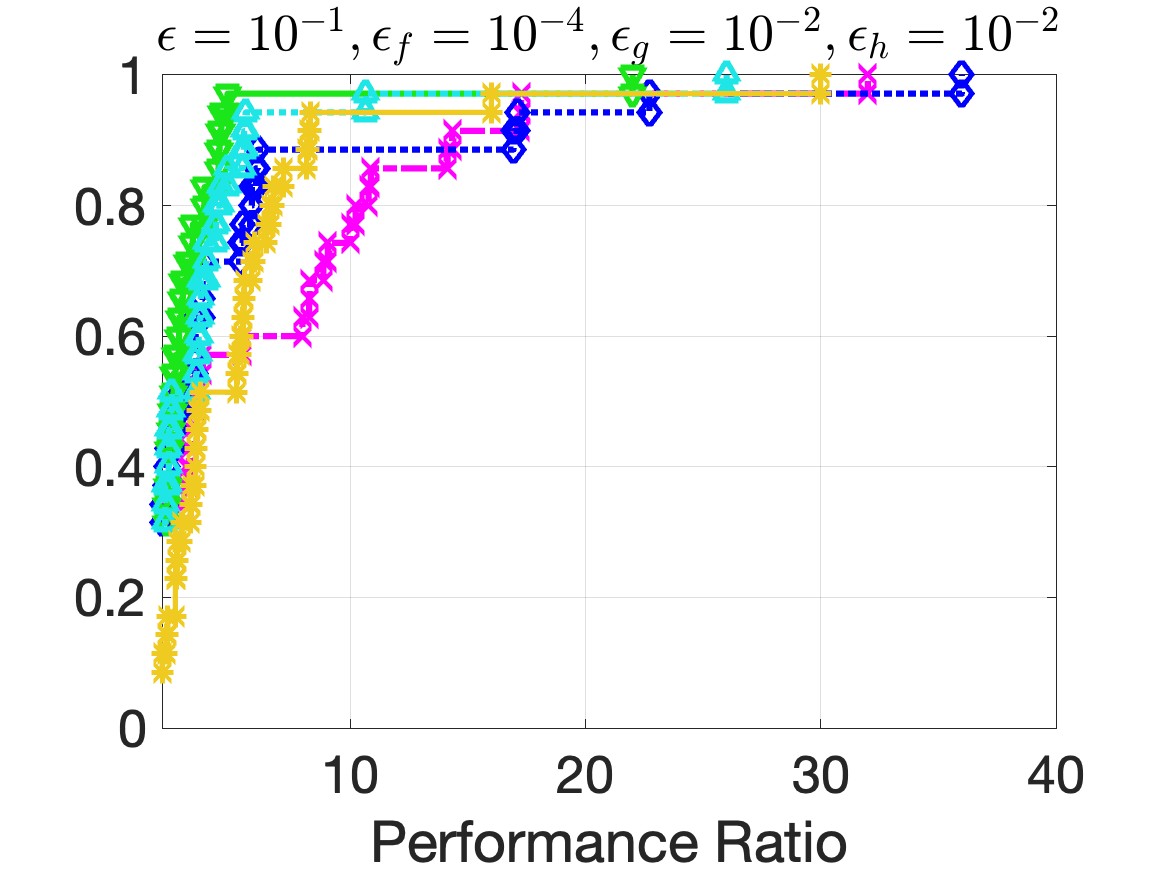}}
\subfigure{\includegraphics[width=0.3\textwidth]{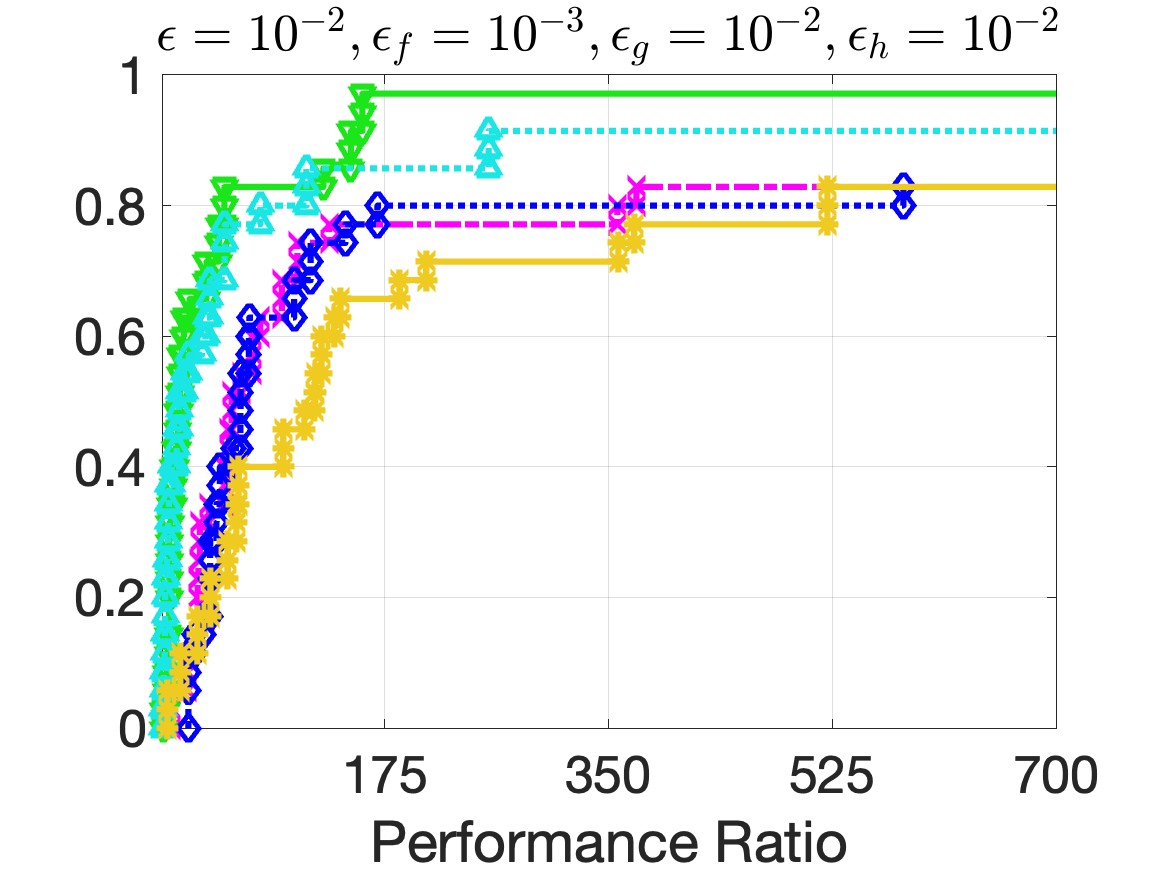}}
\subfigure{\includegraphics[width=0.3\textwidth]{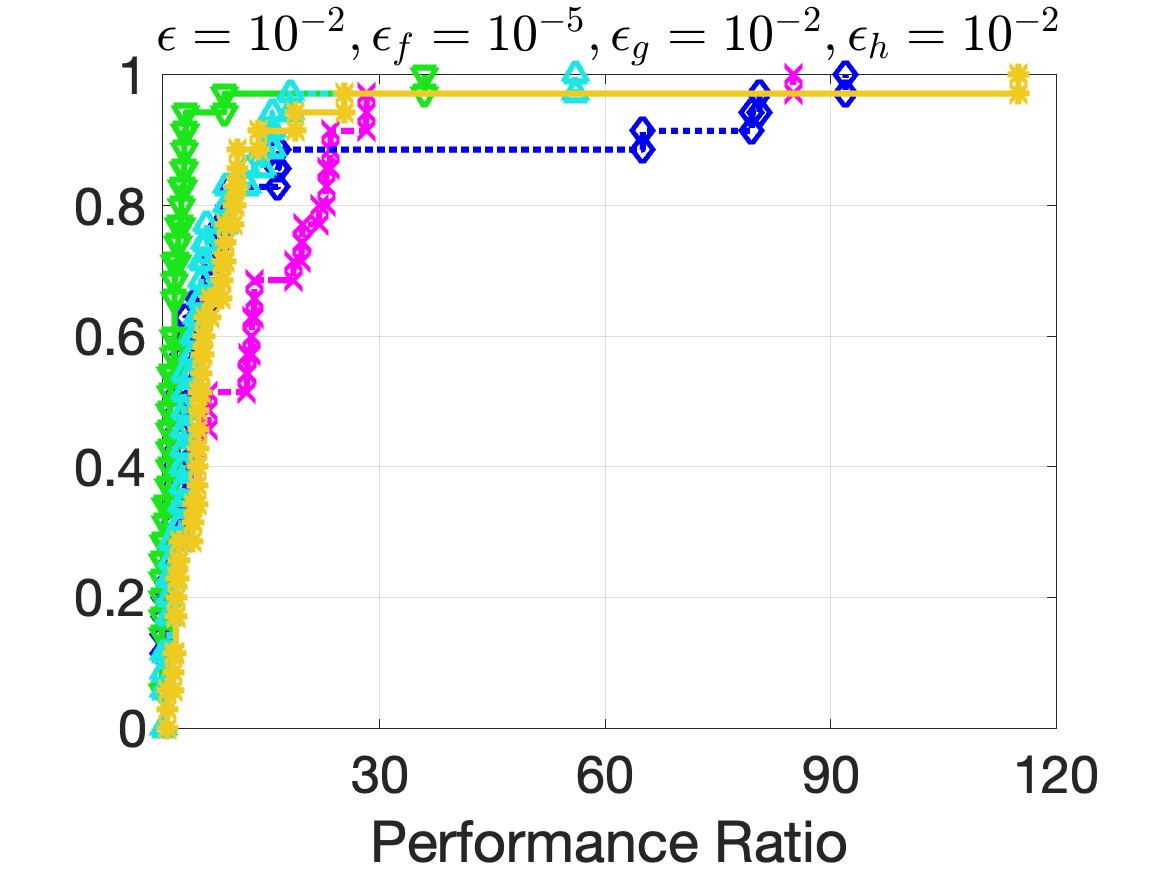}}\\	
\subfigure{\includegraphics[width=0.3\textwidth]{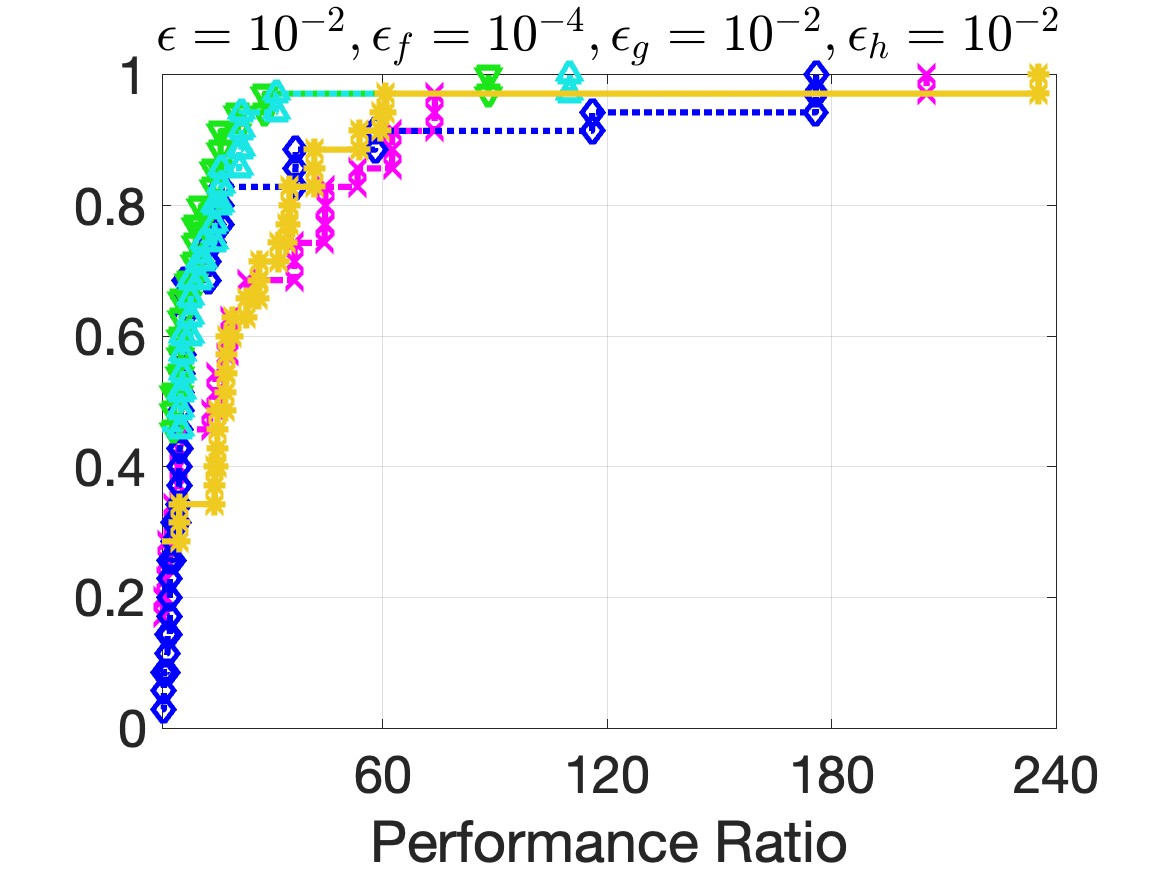}}
\subfigure{\includegraphics[width=0.3\textwidth]{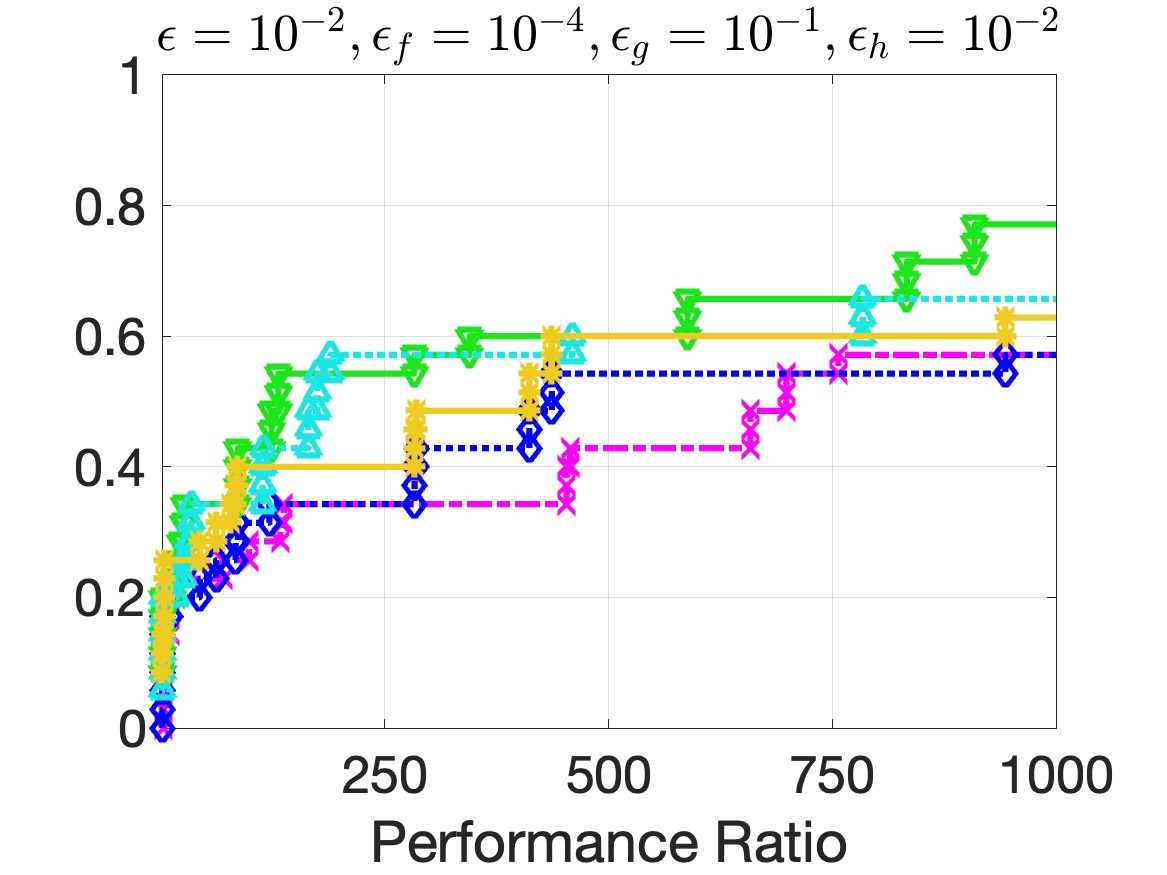}}  \subfigure{\includegraphics[width=0.3\textwidth]{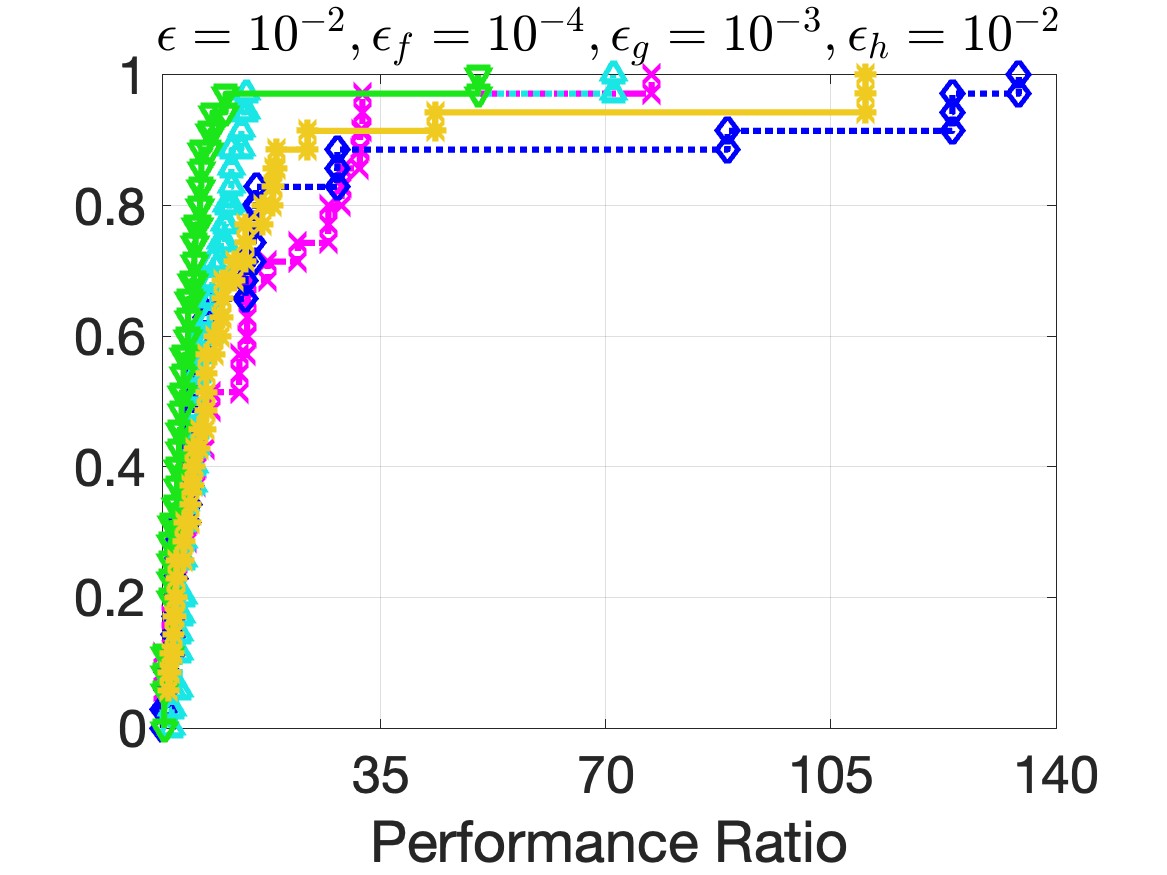}}\\
\subfigure{\includegraphics[width=0.3\textwidth]{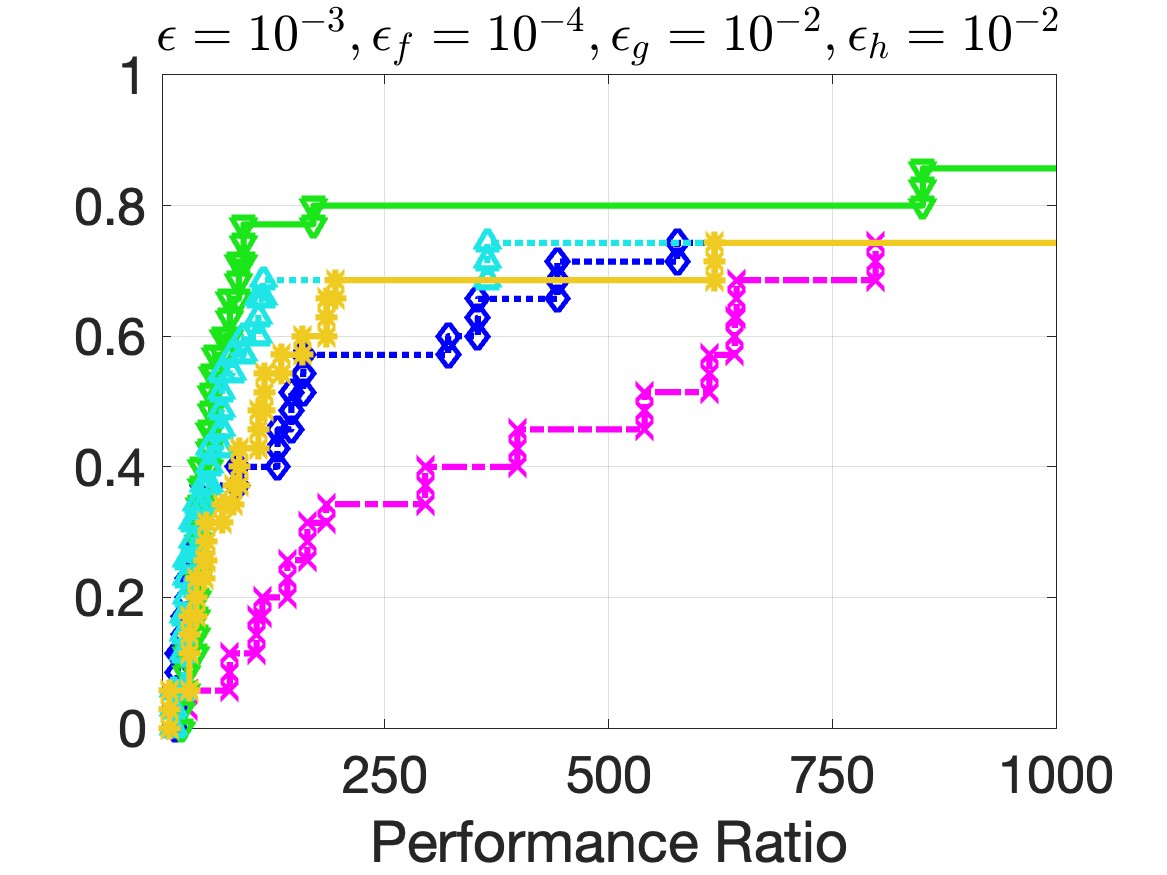}}
\subfigure{\includegraphics[width=0.3\textwidth]{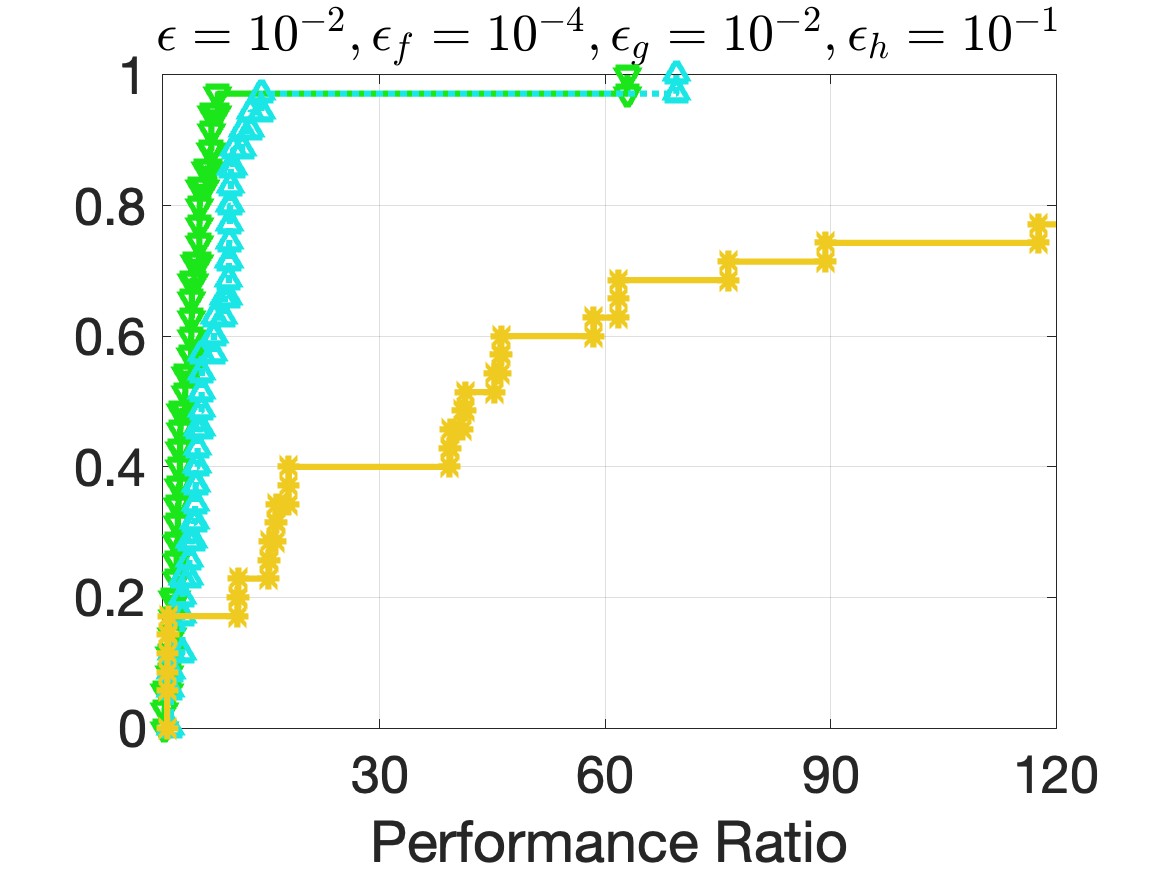}}
\subfigure{\includegraphics[width=0.3\textwidth]{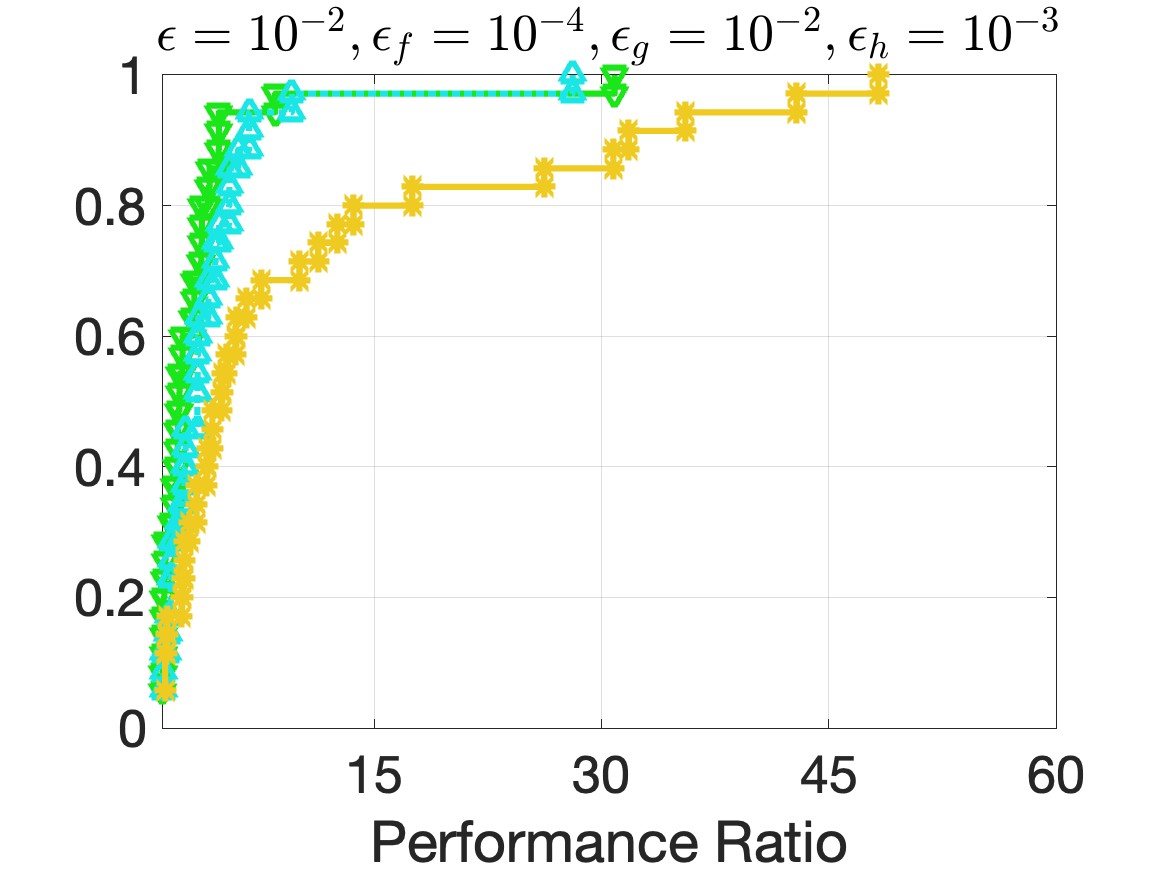}}\\
\subfigure{\includegraphics[width=0.95\textwidth]{ label2 }}
\caption{Performance profiles with noise following a Weibull distribution. Each line represents a different method. The first column corresponds to the default irreducible noise with varying $\epsilon$. Each row of the last two columns corresponds to varying $\epsilon_f$, $\epsilon_g$, $\epsilon_h$, respectively.}
\label{fig:cutest6}\vskip-0.2cm
\end{figure}

Comparing corresponding plots across the six figures, we first observe that all methods are robust to the noise distribution. For any fixed combination of $\epsilon, \epsilon_f, \epsilon_g, \epsilon_h$, changing the noise distribution results in limited differences in the performance ratios. Notably, even when the noise follows a Cauchy distribution, which does not satisfy the bounded $(1+\delta)$-moment condition, the performance ratios degrade only by a limited amount.
Next, we investigate the impact of varying $\epsilon$, $\epsilon_f$, $\epsilon_g$, $\epsilon_h$.

\vskip 0.2cm
\noindent $\bullet$ $\boldsymbol{\epsilon}:$
From the first column of the six figures, we observe that as $\epsilon$ decreases from $10^{-1}$ to $10^{-2}$, all methods require more iterations to converge, increasing by a factor of 10 to 100. This observation aligns well with our theoretical complexity bounds. However, when $\epsilon$ further decreases to $10^{-3}$, all methods fail to converge on some datasets within the iteration budget. This is primarily because, given the default irreducible noise levels, $\epsilon = 10^{-3}$ no longer satisfies the lower bound in Assumption \ref{assump:epsilon}. That being said, the impact of decreasing $\epsilon$ is less notable for TR-SSQP-AveH and TR-SSQP-EstH, highlighting the advantages of exploring Hessian information in the algorithm design.

\vskip 0.2cm
\noindent $\bullet$ $\boldsymbol{\epsilon_f}:$
Comparing the first row of the second and third columns with the default setting (middle plot in the first column) in each figure, we observe that increasing $\epsilon_f$ from $10^{-5}$ to $10^{-4}$ approximately triples the performance ratio. This is expected since, by $\epsilon \approx \mathcal{O}(\sqrt{\epsilon_f} + \epsilon_g)$, a 10-fold increase in $\epsilon_f$ leads to at most a $\sqrt{10}$-fold increase in the performance ratio. This pattern no longer holds when $\epsilon_f$ is further increased, as all methods fail to converge on some datasets (due to violation of Assumption \ref{assump:epsilon}).

\begin{figure}[t]
\centering
\subfigure{\includegraphics[width=0.3\textwidth]{ 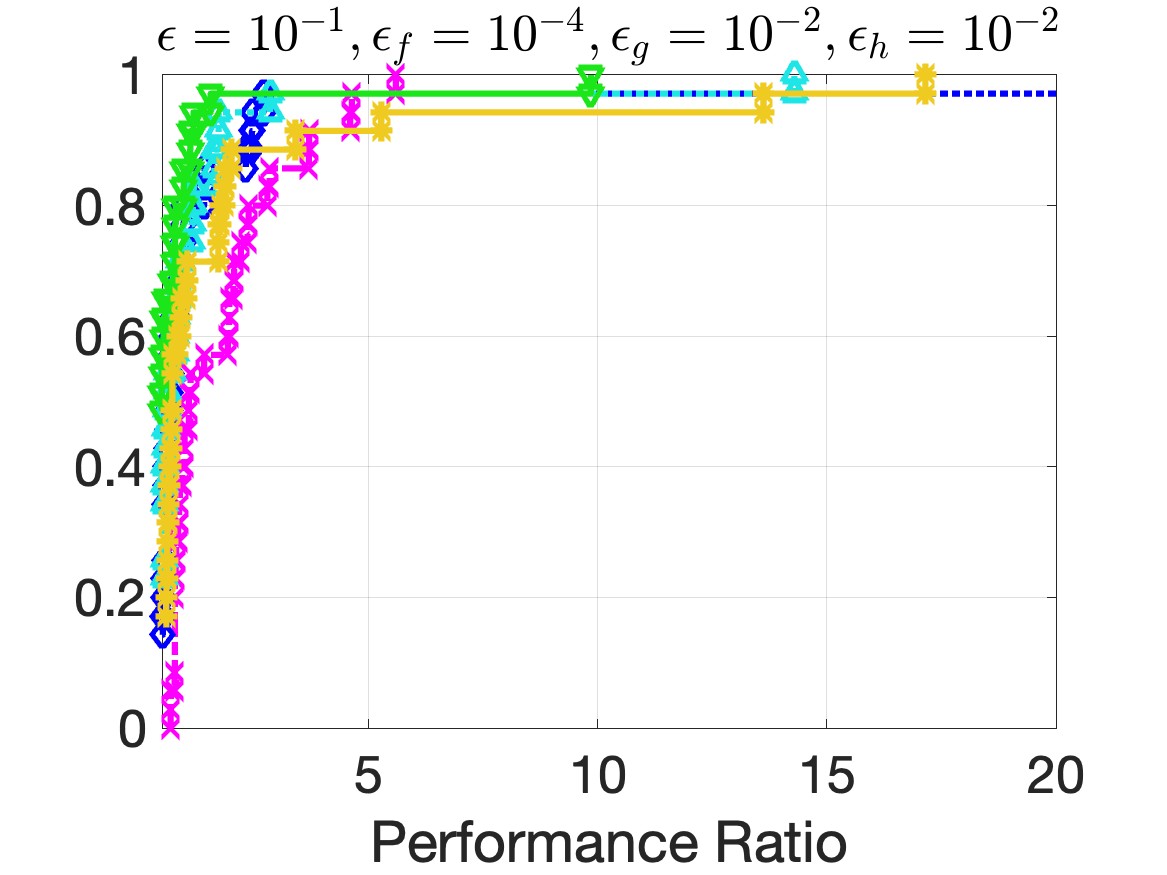}}
\subfigure{\includegraphics[width=0.3\textwidth]{ 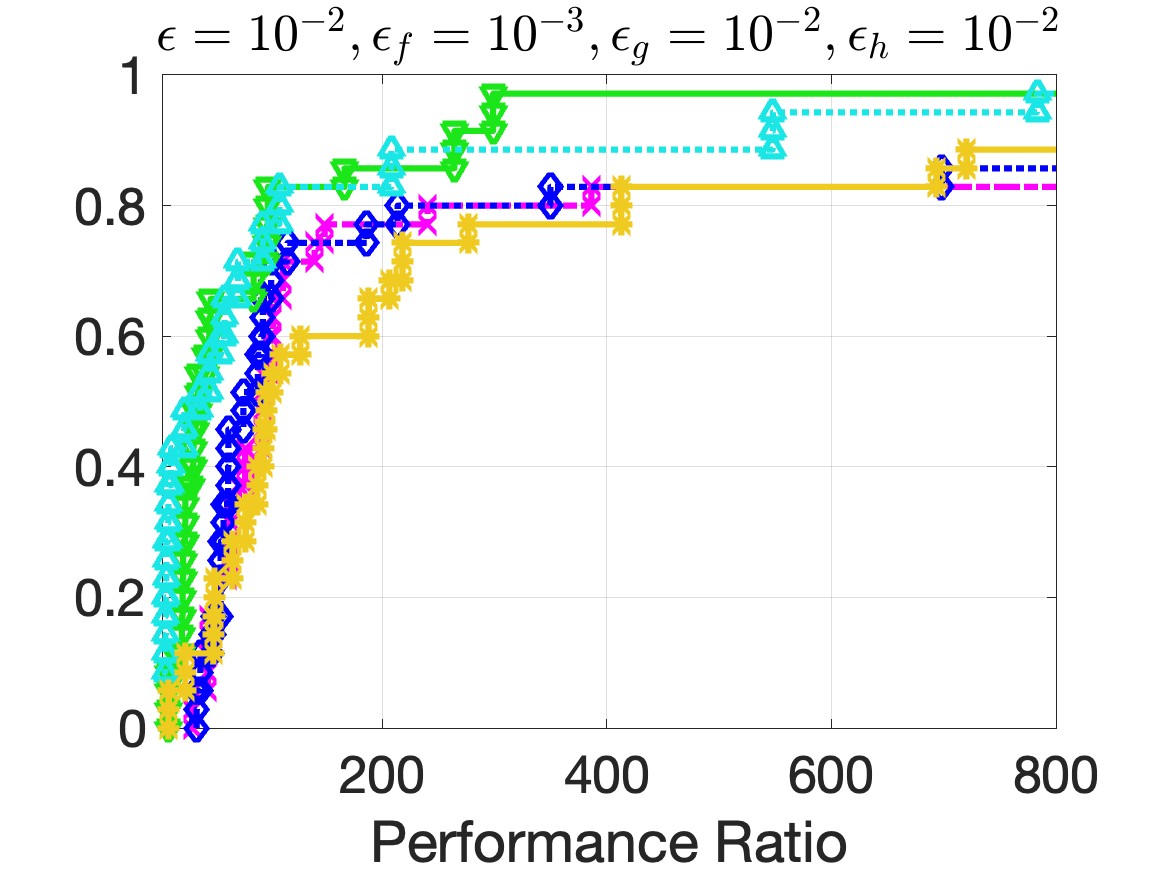}}
\subfigure{\includegraphics[width=0.3\textwidth]{ 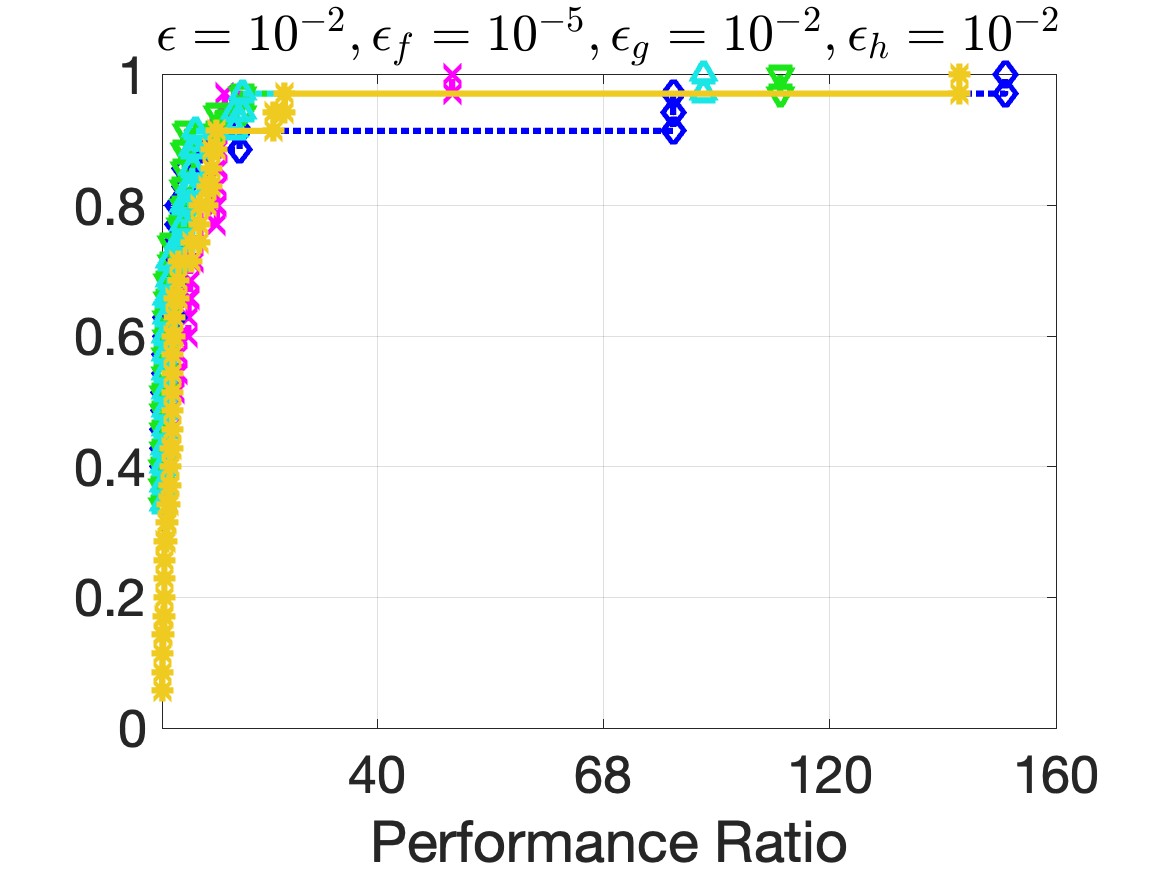}}\\	
\subfigure{\includegraphics[width=0.3\textwidth]{ 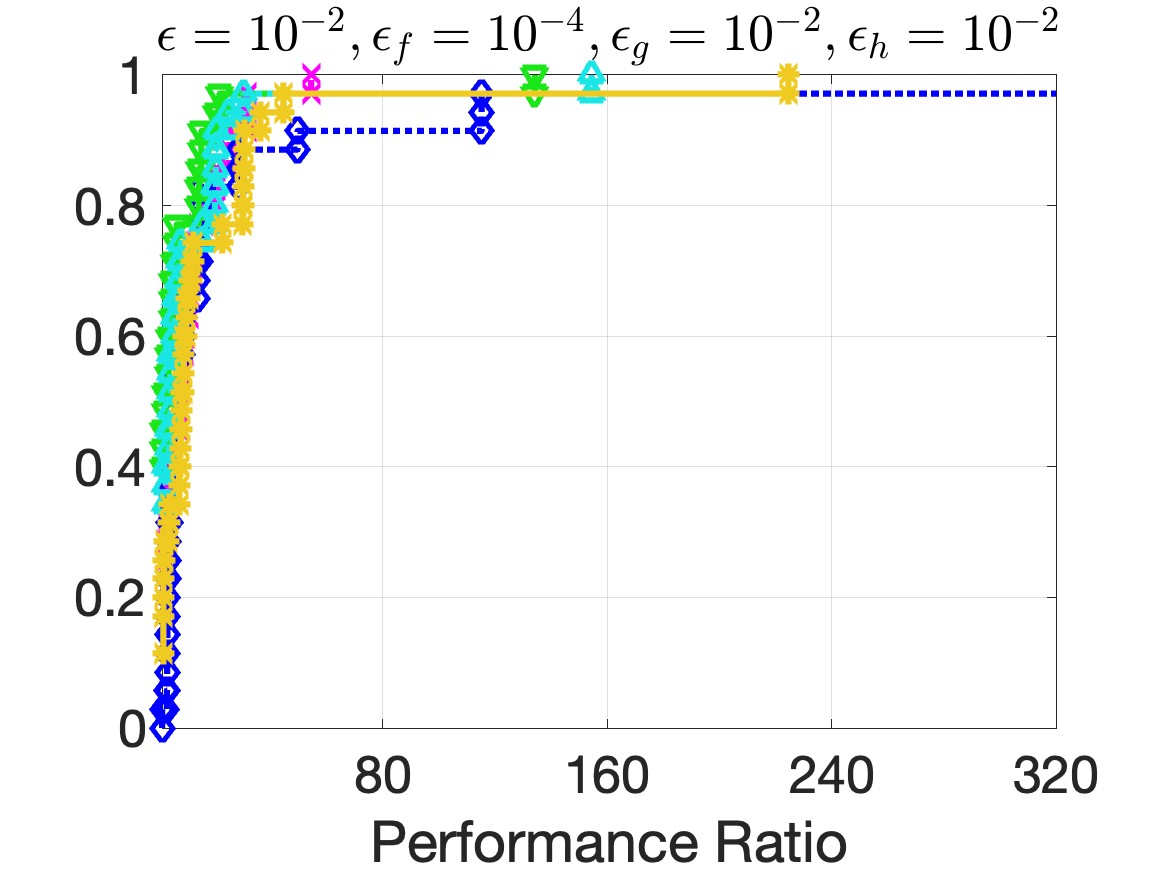}}
\subfigure{\includegraphics[width=0.3\textwidth]{ 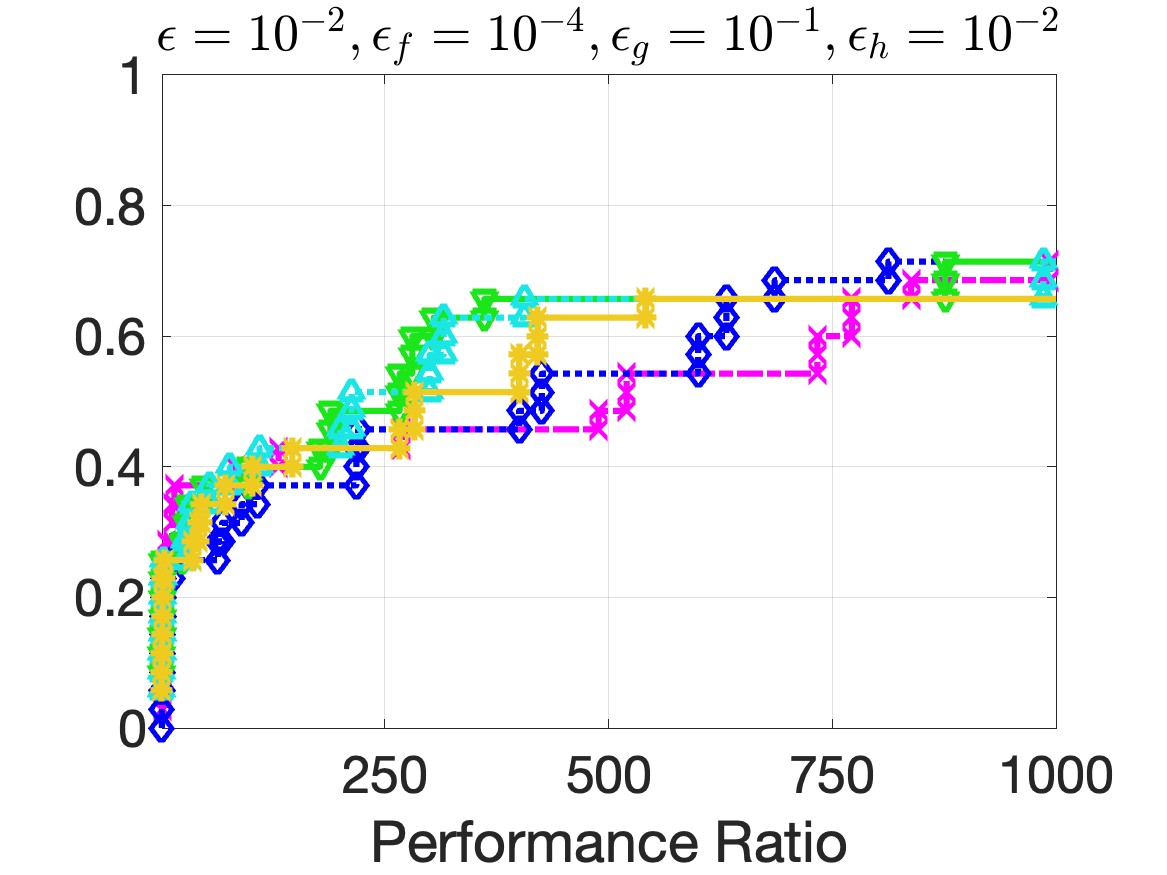}}  \subfigure{\includegraphics[width=0.3\textwidth]{ 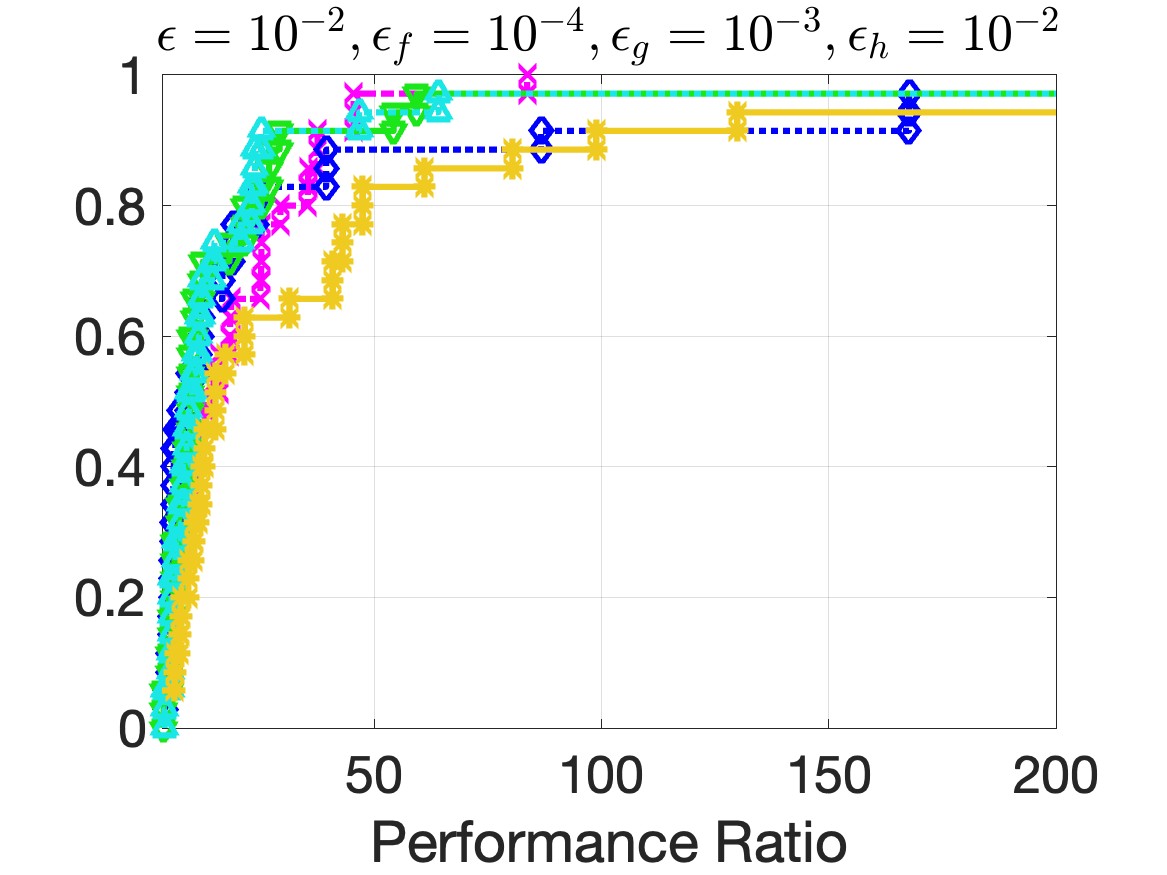}}\\
\subfigure{\includegraphics[width=0.3\textwidth]{ 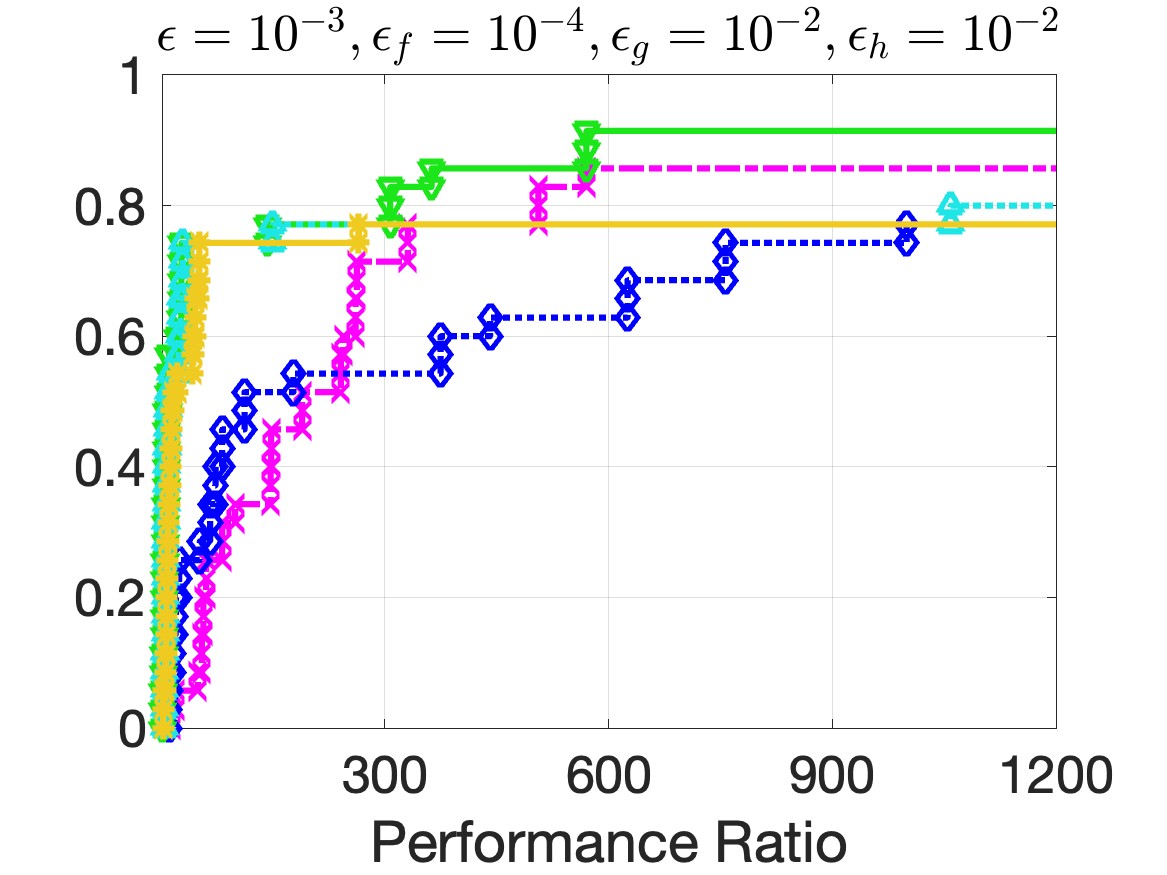}}
\subfigure{\includegraphics[width=0.3\textwidth]{ 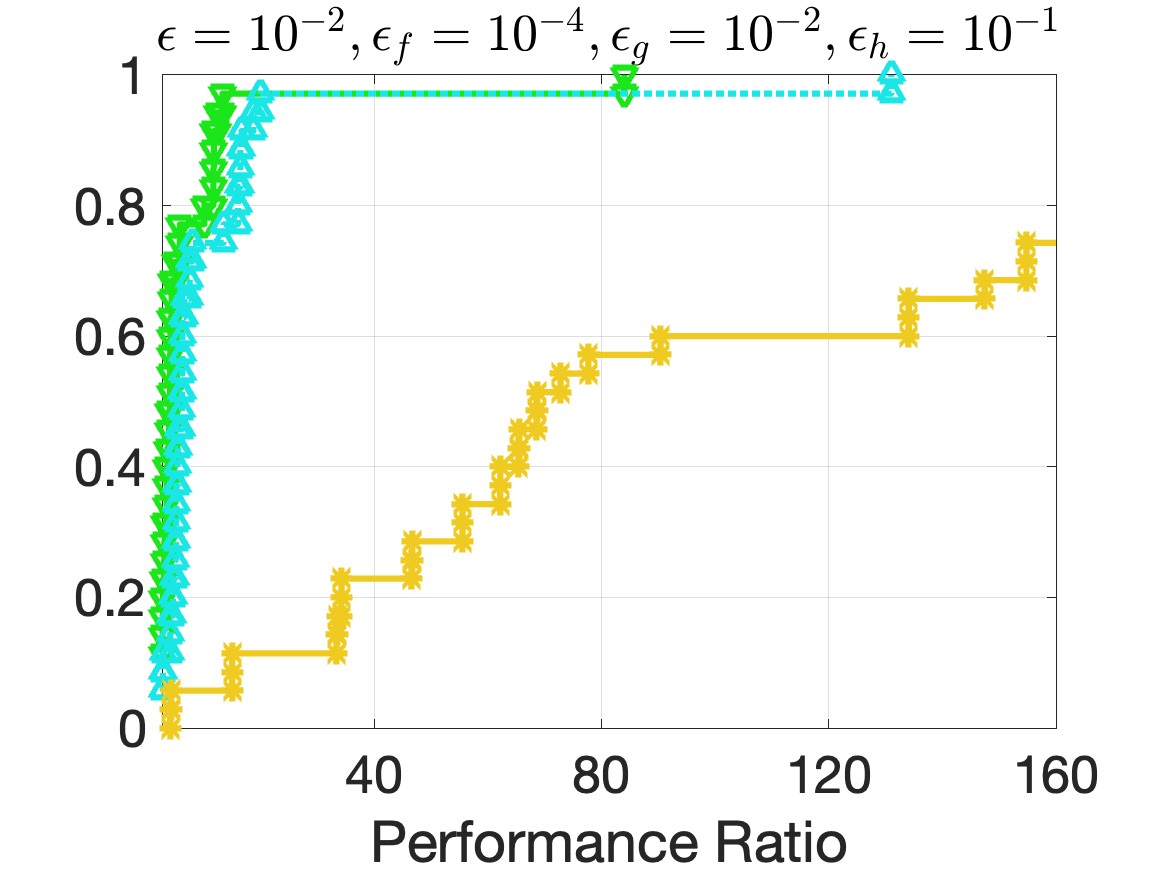}}
\subfigure{\includegraphics[width=0.3\textwidth]{ 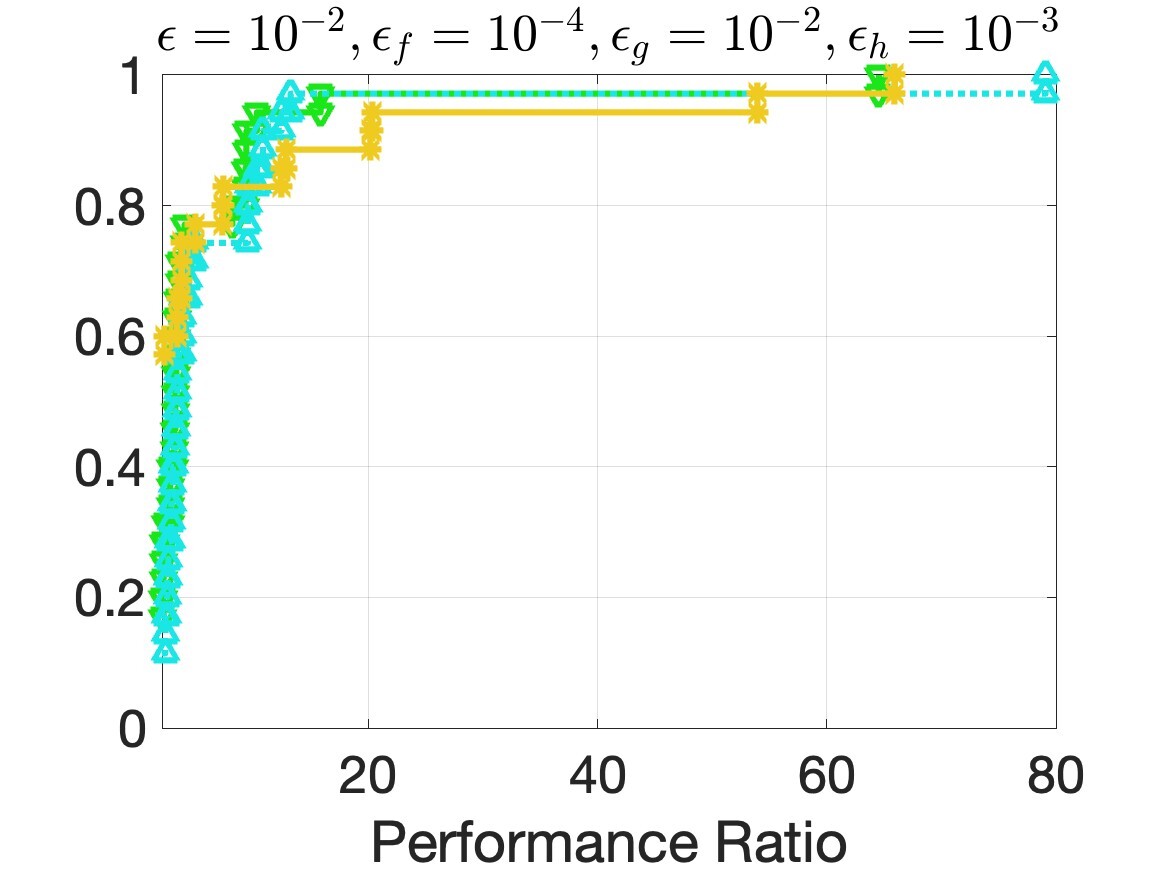}}\\
\subfigure{\includegraphics[width=0.95\textwidth]{ label2}}
\caption{Performance profiles with noise following a Cauchy distribution. Each line represents a different method. The first column corresponds to the default irreducible noise with varying $\epsilon$. Each row of the last two columns corresponds to varying $\epsilon_f$, $\epsilon_g$, $\epsilon_h$, respectively.}
\label{fig:cutest7}\vskip-0.2cm
\end{figure}

\vskip 0.2cm
\noindent $\bullet$ $\boldsymbol{\epsilon_g}:$
Comparing the second row of the last two columns with the default setting, we observe that when $\epsilon_g$ increases from $10^{-3}$ to $10^{-2}$, the performance ratio also increases by a factor of 2 to 3. This observation aligns with our analysis, as under Assumption \ref{assump:epsilon}, a 10-fold increase in $\epsilon_g$ should not result in more than a 10-fold increase in the performance ratio. However, when $\epsilon_g$ further increases from $10^{-2}$ to $10^{-1}$, the performance of all methods deteriorates rapidly. As shown in the middle plot, all methods fail to converge on more than 20\% of problem instances.

\vskip 0.2cm
\noindent $\bullet$ $\boldsymbol{\epsilon_h}:$
The third row of the last two columns illustrates the impact of varying $\epsilon_h$. We observe that TR-SSQP-AveH and TR-SSQP-EstH behave robustly to changes in $\epsilon_h$, primarily because eigenvalues are not included in the definition of the stopping time for first-order stationarity. In contrast, the performance of TR-SSQP2 is significantly affected by the increase in $\epsilon_h$.
When $\epsilon_h$ increases from $10^{-3}$ to $10^{-2}$, TR-SSQP2 requires approximately $5$ times more iterations to converge. This observation also aligns with our theoretical analysis, as for second-order stationarity, $\epsilon \geq \mathcal{O}(\sqrt[3]{\epsilon_f} + \sqrt{\epsilon_g} + \epsilon_h)$ implies that a 10-fold increase in $\epsilon_h$ should lead to no more than a 10-fold increase in the performance ratio. Furthermore, we observe that TR-SSQP2 fails to converge on approximately 25\% of problem instances when $\epsilon_h$ is further increased from $10^{-2}$ to $10^{-1}$, while TR-SSQP-AveH and TR-SSQP-EstH still perform well in this case for finding first-order stationary points.

\section{Conclusion}\label{sec:6}

In this paper, we derived high-probability complexity bounds for a trust-region Stochastic Sequential Quadratic Programming (SSQP) method in identifying first- and second-order $\epsilon$-stationary points in equality-constrained optimization problems. Our method extends the existing literature by allowing irreducible and heavy-tailed noise in objective function estimation. Under weaker oracle conditions, we established that our method achieves an iteration complexity of $\O(\epsilon^{-2})$ for identifying a first-order $\epsilon$-stationary point and $\O(\epsilon^{-3})$ for identifying a second-order $\epsilon$-stationary point with high probability, provided that $\epsilon$ is above a threshold determined by the irreducible noise. We validated our theoretical analysis through numerical experiments on problems from the CUTEst dataset.

\section*{Acknowledgement}
The authors sincerely appreciate Katya Scheinberg's in-depth discussions on the paper and her valuable contributions to relaxing the second-order oracle condition and pointing out the gap between heavy-tailed zeroth-order oracle and high-probability first-order oracle in the initial version of the manuscript. Her comments significantly helped refine our moment condition from $2+\delta$ to $1+\delta$.

\bibliographystyle{my-plainnat}
\bibliography{ref}

\appendix
\numberwithin{equation}{section}
\numberwithin{theorem}{section}

\section{Summary of Constant Definitions}

In this appendix, we provide the definitions for all constants that appear in Section \ref{sec:4}.

\begin{table}[h]
\centering
\begin{tabular}{m{2.5cm} m{12.5cm}}
\hline
Constant & Definition \\
\hline
$\Upsilon_1$ & $\kappa_g\max\{1,\Delta_{\max}\}+\frac{1}{2}(L_{\nabla f}+\kappa_B+\barmu_{T-1} L_G)$ \\
\hline\\[-10pt]
$\Upsilon_2$ & 
$\kappa_g+1+${\Large $\frac{L_{\nabla^2 f}+\kappa_h}{2}$ }+ {\Large $\frac{L_G^2\Delta_{\max}(0.5L_{\nabla f} + \sqrt{m}\barmu_{T-1} L_G)}{\kappa_{1,G}}$ }\\
& {\large $\quad\;+ \frac{0.5\sqrt{m}L_{\nabla^2 c}(L_{\nabla f}\Delta_{\max} + \kappa_{\nabla f}) + 0.5 \sqrt{m}L_G(\kappa_g\Delta_{\max} + L_{\nabla f} + 2\barmu_{T-1} L_G) }{\sqrt{\kappa_{1,G}}}$} \\ [8pt]
\hline\\[-15pt]
$\Upsilon_3$ & {\Large $\frac{\kappa_{fcd}(1-\eta)}{4\kappa_f+\kappa_g+2\Upsilon_1+\kappa_B}$ } \\[8pt]
\hline\\[-15pt]
$\Upsilon_4$ & {\Large $\frac{\kappa_{fcd}(1-\eta)+2}{4\kappa_f+\kappa_g+2\Upsilon_1+\kappa_B}$ }\\[8pt]
\hline\\[-15pt]
$\Upsilon_5$ & {\Large $\frac{(1-\eta)\kappa_{fcd}\min\{1,r\}}{(4\kappa_f +\kappa_g) \max\{\Delta_{\max},1\}+\kappa_B+2\Upsilon_1+2\Upsilon_2+(\kappa_H+\eta)(1-\eta)\kappa_{fcd}\min\{1,r\}}$ }\\[8pt]
\hline\\[-15pt]
$\Upsilon_6$ &  {\Large $\frac{\kappa_{fcd}(1-\eta)+2+2\sqrt{m}L_G/\sqrt{\kappa_{1,G}}}{(4\kappa_f +\kappa_g)\max\{\Delta_{\max},1\}+\kappa_B+2\Upsilon_1+2\Upsilon_2+(\kappa_H+\eta)(1-\eta)\kappa_{fcd}\min\{1,r\}}$} \\[8pt]
\hline\\[-15pt]
$\Upsilon_7$ &   {\Large $\frac{2}{4\kappa_f\max\{\Delta_{\max},1\}+2\Upsilon_1+2\Upsilon_2+(\kappa_H+\eta)(1-\eta)\kappa_{fcd}\min\{1,r\}}$} \\[8pt]
\hline
\end{tabular}
\end{table}

\section{Proofs of Section \ref{subsec:3.2}}\label{Appendix_1}

In this section, we present the proofs of the sample complexities for the oracle constructions introduced in Section~\ref{subsec:3.2}. We consider three estimation strategies: the sample average method, the Median-of-Means (MoM) method, and the finite-difference method.

\subsection{Sample average}\label{app:B1}

For the zeroth-order oracle, we need $\barf_k$ to satisfy \eqref{def:Ck}, \eqref{eq1}, and \eqref{eq3}. We analyze the~three conditions separately.

\vskip3pt
\noindent $\bullet$ \textbf{Sample complexity to satisfy \eqref{def:Ck}.} By Markov inequality and Burkholder-type inequality (Lemma \ref{append:lemma1} with i.i.d. case), we have for any $t>0$,
\begin{multline}\label{sample_avg_eq1}
P\rbr{e_k > t \mid \mF_{k-1}} = P\rbr{|\barf_k- f_k| > t \mid \mF_{k-1}} \leq \frac{\mE[|\barf_k- f_k|^{1+\delta}\mid \mF_{k-1}]}{t^{1+\delta}}\\
\leq \frac{2^{1-\delta}\sum_{\xi^f\in \xi_k^f}\mE[\big|F(\bx_k,\xi^f) - f_k\big|^{1+\delta}\mid \F_{k-1}]}{|\xi_k^f|^{1+\delta} \cdot t^{1+\delta}}   \leq \frac{2^{1-\delta} \tilde\Upsilon_f}{|\xi_k^f|^{\delta}\cdot t^{1+\delta}}.
\end{multline}
By setting $t = \epsilon_f+\kappa_f\Delta_k^{\alpha+2}$ and requiring the right-hand-side term to be bounded~by~$0.5p_f$, we obtain the sample complexity
\begin{equation}\label{pf:sample_avg_1}
|\xi_k^f| \geq \mathcal{O}\left(\left[\frac{1}{p_f}\right]^{\frac{1}{\delta}}\left[\frac{1}{ \epsilon_f+\kappa_f\Delta_k^{\alpha+2} }\right]^{\frac{1+\delta}{\delta}}\right).
\end{equation}
$\bullet$ \textbf{Sample complexity to satisfy \eqref{eq1}.} By Jensen's inequality, we have
\begin{multline*}
\mE[e_k\mid \mF_{k-1}]^{1+\delta} = \mE[|\barf_k-f_k|\mid \mF_{k-1}]^{1+\delta} =  \mE\left[\bigg|\frac{1}{|\xi_k^f|}\sum_{\xi^f\in \xi_k^f}F(\bx_k,\xi^f) - f_k\bigg| \mid \F_{k-1}\right]^{1+\delta} \\
\leq \mE\left[\bigg|\frac{1}{|\xi_k^f|}\sum_{\xi^f\in \xi_k^f} F(\bx_k,\xi^f) - f_k\bigg|^{1+\delta}\mid \F_{k-1}\right]  \stackrel{\eqref{sample_avg_eq1}}{\leq} \frac{2^{1-\delta}}{|\xi_k^f|^{\delta}}\tilde\Upsilon_f.
\end{multline*}
Thus, setting $\frac{2^{1-\delta}}{|\xi_k^f|^{\delta}}\tilde\Upsilon_f \leq \tilde{\epsilon}_f^{1+\delta}$ leads to
\begin{equation}\label{sample_avg_2}
|\xi_k^f| \geq \mathcal{O}\left( \left[\frac{1}{\tilde{\epsilon}_f}\right]^{\frac{1+\delta}{\delta}}\right).
\end{equation}
$\bullet$ \textbf{Sample complexity to satisfy \eqref{eq3}.} We note that
\begin{align*}
\mE\left[\big| e_k - \mE[e_k \mid \F_{k-1}] \big|^{1+\delta} \mid \F_{k-1}\right] & \leq 2^\delta \mE\left[ e_k^{1+\delta} + (\mE[e_k \mid \F_{k-1}] )^{1+\delta}\mid \F_{k-1}\right]\\
& \leq  2^{1+\delta}\mE[e_k^{1+\delta} \mid \F_{k-1}] \stackrel{\eqref{sample_avg_eq1}}{\leq} \frac{4}{|\xi_k^f|^{\delta}}\tilde\Upsilon_f.
\end{align*}
Thus, we only need $\frac{4}{|\xi_k^f|^{\delta}}\tilde{\Upsilon}_f \leq \Upsilon_f$ to satisfy \eqref{eq3}, implying that
\begin{equation}\label{sample_avg_3}
|\xi_k^f| \geq \mathcal{O}\left((\tilde{\Upsilon}_f/\Upsilon_f)^{1/\delta}\right).
\end{equation}
Combining \eqref{pf:sample_avg_1}, \eqref{sample_avg_2}, \eqref{sample_avg_3}, noting that \eqref{sample_avg_3} is independent of $(\epsilon_f, \tilde{\epsilon}_f)$, and applying the same analysis for $\barf_{s_k}$, we obtain the sample complexity for the zeroth-order oracle:$\quad$
\begin{equation}\label{sample_avg_obj_val}
\min\{|\xi_k^f|,|\xi_{s_k}^f|\} \geq \mathcal{O}\left( \left[\frac{1}{p_f}\right]^{\frac{1}{\delta}} \left[\frac{1}{\min\{\epsilon_f+\kappa_f\Delta_k^{\alpha+2},\tilde{\epsilon}_f\}}\right]^{\frac{1+\delta}{\delta}}\right).
\end{equation}
This verifies the results in \eqref{sample_avg_1} and \eqref{sample_avg_obj_val_1}.

\vskip3pt
\noindent$\bullet$ \textbf{Sample complexity to satisfy \eqref{def:Bk}.} We note that for any $t>0$,
\begin{multline}\label{sn:7}
P(\|\barg_k-g_k\|>t\mid \mF_{k-1}) \leq P\rbr{\max_{j\in[d]}|[\barg_k]_j - [g_k]_j|>\frac{t}{\sqrt{d}} \mid \mF_{k-1}} \\ \leq \sum_{j=1}^{d}P\rbr{|[\barg_k]_j - [g_k]_j|>\frac{t}{\sqrt{d}} \mid \mF_{k-1}} \stackrel{\eqref{sample_avg_eq1}}{\leq}\frac{d^{1.5+0.5\delta}2^{1-\delta}\tilde{\Upsilon}_g}{|\xi_{k}^g|^{\delta}\cdot t^{1+\delta}},
\end{multline}
where we use $\tilde{\Upsilon}_g$ to denote the upper bound of the $1+\delta$ moment of (entry-wise) estimate $\nabla F(\bx_k,\xi^g)$: for any $j\in[d]$,
\begin{equation*}
\mE[|[\barg_k]_j - [g_k]_j|^{1+\delta}\mid \mF_{k-1}] \leq \mE[\|\barg_k-g_k\|^{1+\delta}\mid \mF_{k-1}] \leq \tilde{\Upsilon}_g.
\end{equation*}
Thus, we let $t = \epsilon_g+\kappa_g\Delta_k^{\alpha+1}$ and require the right-hand-side term to be bounded by~$p_g$. This leads to the sample complexity to satisfy \eqref{def:Bk}:
\begin{equation}\label{sn:8}
|\xi_k^g| \geq \mathcal{O}\left( \left[\frac{d}{p_g}\right]^{\frac{1}{\delta}} \left[\frac{\sqrt{d}}{\epsilon_g+\kappa_g\Delta_k^{\alpha+1}}\right]^{\frac{1+\delta}{\delta}}\right).
\end{equation}
$\bullet$ \textbf{Sample complexity to satisfy \eqref{def:Ak}.} We follow the analysis in \eqref{sn:7} and just note that
\begin{align}\label{sn:10}
P(& \|\bnabla^2 f_k - \nabla^2 f_k\|>t\mid \mF_{k-1}) \leq P\rbr{\|\bnabla^2 f_k - \nabla^2 f_k\|_F>t\mid \mF_{k-1}} \nonumber\\
& \leq P\rbr{\max_{i,j\in[d]}|[\bnabla^2 f_k]_{i,j} - [\nabla^2 f_k]_{i,j}|>\frac{t}{d} \mid \mF_{k-1}} \nonumber\\
& \leq \sum_{i,j=1}^{d}P\rbr{|[\bnabla^2 f_k]_{i,j} - [\nabla^2 f_k]_{i,j}|>\frac{t}{d} \mid \mF_{k-1}} \stackrel{{\eqref{sample_avg_eq1}}}{\leq} \; \frac{d^22^{1-\delta}\tilde{\Upsilon}_h}{|\xi_k^h|^{\delta}}\left[\frac{d}{t}\right]^{1+\delta}.
\end{align}
Thus, we let $t = \epsilon_h+\kappa_h\Delta_k$ and require the right-hand-side term to be bounded by~$p_h$. This leads to the sample complexity to satisfy \eqref{def:Ak}:
\begin{equation}\label{sn:9}
|\xi_k^h| \geq \mathcal{O}\left(\left[\frac{d^2}{p_h}\right]^{\frac{1}{\delta}}\left[\frac{d}{\epsilon_h+\kappa_h\Delta_k}\right]^{\frac{1+\delta}{\delta}}\right).
\end{equation}
Finally, we combine \eqref{sn:8} and \eqref{sn:9}, and can verify the result in \eqref{sample_avg_gradhess_val_1}.

\subsection{Median of means}\label{app:B2} 

We follow the analysis in Appendix \ref{app:B1}. We first introduce some notations of the median-of-means estimator. Let us decompose the sample set $\xi_k^f$ into $K$ mutually exclusive batches $\xi_k^f = \xi_k^{f,1}\cup\cdots\cup\xi_k^{f,K}$, with each batch having batch size $|\xi_k^{f,j}| = b$, $\forall j\in [K]=\{1,\ldots,K\}$. The median-of-means estimator can be expressed as
\begin{equation*}
\barf_k \coloneqq \text{median}\cbr{\barf(\bx_k,\xi_{k}^{f,1}),\ldots,\barf(\bx_k,\xi_{k}^{f,K})}\quad \text{with}\quad \barf(\bx_k,\xi_{k}^{f,j}) = \frac{1}{|\xi_k^{f,j}|}\sum_{\xi^f\in \xi_k^{f,j}}F(\bx_k,\xi^f).
\end{equation*}
With these notations, we check the sample complexities $|\xi_k^f|, |\xi_{s_k}^f|$ to satisfy the three conditions \eqref{def:Ck}, \eqref{eq1}, and \eqref{eq3} for the zeroth-order oracle.

\vskip3pt
\noindent $\bullet$ \textbf{Sample complexity to satisfy \eqref{def:Ck}.} By the analysis \eqref{sample_avg_eq1}, for any $j\in [K]$ and $t>0$,
\begin{equation*}
P(|\barf(\bx_k,\xi_{k}^{f,j}) - f_k|>t \mid \mF_{k-1})\leq \frac{2^{1-\delta}\tilde{\Upsilon}_f}{b^\delta t^{1+\delta}} \eqqcolon p(t,b).
\end{equation*}
Since $|\barf_k-f_k|>t$ implies at least $K/2$ of the means $|\barf(\bx_k,\xi_{k}^{f,j}) - f_k|>t$, we have$\quad\quad$
\begin{align}\label{sn:2}
& \hspace{-0.9cm} P(|\barf_k-f_k|>t\mid \mF_{k-1}) \leq P\rbr{\text{Binomial}(K, \min(1,p(t,b))) \geq K/2} \nonumber\\
&\hspace{-0.9cm}  = P\cbr{\text{Binomial}(K, \min(1,p(t,b))) - \min(1,p(t,b))K \geq (0.5-\min(1,p(t,b)))K}.
\end{align}
Suppose
\begin{equation}\label{sn:1}
0.5- \min(1,p(t,b)) \geq 0.25 \Longleftrightarrow p(t,b)\leq 0.25,
\end{equation}
we then apply Hoeffding's inequality \citep{Hoeffding1963Probability} and have
\begin{align}\label{sn:6}
&\hskip-0.8cm P(|\barf_k-f_k|>t\mid \mF_{k-1}) \nonumber\\ 
&\hskip-0.8cm \leq P\cbr{\text{Binomial}(K, \min(1,p(t,b))) - \min(1,p(t,b))K \geq 0.25K} \leq \exp(-K/8).
\end{align}
Let us specify $t = \epsilon_f + \kappa_f\Delta_k^{\alpha + 2}$ and $\exp(-K/8)\leq 0.5p_f$; combine with \eqref{sn:1}; and~note that $|\xi_k^f| = K\cdot b$. Then, we know \eqref{def:Ck} is satisfied as long as
\begin{equation}\label{appd1_MoM1}
|\xi_k^f| \geq \mathcal{O}\left(\log\left(\frac{1}{p_f}\right)\left[\frac{1}{ \epsilon_f+\kappa_f\Delta_k^{\alpha+2} }\right]^{\frac{1+\delta}{\delta}}\right).
\end{equation}
$\bullet$ \textbf{Sample complexity to satisfy \eqref{eq1}.} We note that
\begin{align}\label{sn:4}
\mE[e_k\mid\mF_{k-1}] & = \int_{0}^{\infty}P(|\barf_k-f_k|>t\mid \mF_{k-1}) dt \nonumber\\
& = \left[\frac{4\cdot2^{1-\delta}\tilde{\Upsilon}_f}{b^\delta}\right]^{\frac{1}{1+\delta}} + \int_{\left[\frac{4\cdot2^{1-\delta}\tilde{\Upsilon}_f}{b^\delta}\right]^{\frac{1}{1+\delta}}}^{\infty}P(|\barf_k-f_k|>t\mid \mF_{k-1}) dt \nonumber\\
& \stackrel{\mathclap{\eqref{sn:2}}}{\leq} \left[\frac{4\cdot2^{1-\delta}\tilde{\Upsilon}_f}{b^\delta}\right]^{\frac{1}{1+\delta}} + \int_{\left[\frac{4\cdot2^{1-\delta}\tilde{\Upsilon}_f}{b^\delta}\right]^{\frac{1}{1+\delta}}}^{\infty} \sum_{j=K/2}^{K}\begin{pmatrix}
K\\[-6pt]
j
\end{pmatrix}\min(1,p(t,b))^j dt \nonumber\\
& \leq \left[\frac{4\cdot2^{1-\delta}\tilde{\Upsilon}_f}{b^\delta}\right]^{\frac{1}{1+\delta}} + \int_{\left[\frac{4\cdot2^{1-\delta}\tilde{\Upsilon}_f}{b^\delta}\right]^{\frac{1}{1+\delta}}}^{\infty} 2^K\min(1,p(t,b))^{K/2} dt \nonumber\\
& \leq \left[\frac{4\cdot2^{1-\delta}\tilde{\Upsilon}_f}{b^\delta}\right]^{\frac{1}{1+\delta}} + \int_{\left[\frac{4\cdot2^{1-\delta}\tilde{\Upsilon}_f}{b^\delta}\right]^{\frac{1}{1+\delta}}}^{\infty} (4p(t,b))^{K/2} dt \nonumber\\
& = \left[\frac{4\cdot2^{1-\delta}\tilde{\Upsilon}_f}{b^\delta}\right]^{\frac{1}{1+\delta}} + \left[\frac{4\cdot2^{1-\delta}\tilde{\Upsilon}_f}{b^\delta}\right]^{\frac{1}{1+\delta}} \cdot \frac{1}{1+\delta}\int_{0}^{1} u^{\frac{K}{2}-\frac{2+\delta}{1+\delta}} du.
\end{align}
Thus, by requiring $K\geq 4$ and $\left[\frac{4\cdot2^{1-\delta}\tilde{\Upsilon}_f}{b^\delta}\right]^{\frac{1}{1+\delta}}\leq 0.5\tilde{\epsilon}_f$, we know \eqref{eq1} is satisfied as long as
\begin{equation}\label{sn:3}
|\xi_k^f| \geq \mathcal{O}\left( \left[\frac{1}{\tilde{\epsilon}_f}\right]^{\frac{1+\delta}{\delta}}\right).
\end{equation}
$\bullet$ \textbf{Sample complexity to satisfy \eqref{eq3}.} Following the derivations in \eqref{sn:4} and noting~that \begin{equation*}
\mE[|\barf_k-f_k|^{1+\delta} \mid \F_{k-1} ] = \int_0^\infty P\left( |\barf_k-f_k| >t^{\frac{1}{1+\delta}} \mid \F_{k-1} \right) dt,
\end{equation*}
we immediately obtain that \eqref{eq3} is satisfied as long as
\begin{equation}\label{sn:5}
|\xi_k^f| \geq \mathcal{O}\left( (\tilde{\Upsilon}_f/\Upsilon_f)^{\frac{1}{\delta}}\right).
\end{equation}
Combining \eqref{appd1_MoM1}, \eqref{sn:3}, \eqref{sn:5}, noting that \eqref{sn:5} is independent of $(\epsilon_f, \tilde{\epsilon}_f)$, and applying the same analysis for $\barf_{s_k}$, we obtain the sample complexity for the zeroth-order oracle:
\begin{equation}\label{MoM_obj_val}
\min\{|\xi_k^f|,|\xi_{s_k}^f|\} \geq \mathcal{O}\left(\log\left(\frac{1}{p_f}\right)\left[\frac{1}{\min\{\epsilon_f+\kappa_f\Delta_k^{\alpha+2},\tilde{\epsilon}_f\}}\right]^{\frac{1+\delta}{\delta}}\right).
\end{equation}
This verifies the first result in \eqref{MoM_obj_val_1}.

\vskip3pt
\noindent$\bullet$ \textbf{Sample complexity to satisfy \eqref{def:Bk} and \eqref{def:Ak}.} For the gradient and Hessian estimates, we apply coordinate-wise MoM method. We use the gradient estimate as an example, while the Hessian estimate follows the same derivations. Combining the analysis \eqref{sn:7}~with~the complexity \eqref{appd1_MoM1}, we obtain
\begin{align*}
P(\|\barg_k - g_k\|\geq \epsilon_g & + \kappa_g\Delta_k^{\alpha+1} \mid \mF_{k-1}) \leq P\rbr{\max_{j\in[d]}|[\barg_k]_j - [g_k]_j|\geq \frac{\epsilon_g + \kappa_g\Delta_k^{\alpha+1}}{\sqrt{d}}\mid\mF_{k-1} }\\
& \leq \sum_{j=1}^{d}P\rbr{|[\barg_k]_j - [g_k]_j|\geq \frac{\epsilon_g + \kappa_g\Delta_k^{\alpha+1}}{\sqrt{d}}\mid\mF_{k-1}}\leq p_g,
\end{align*}
where the last inequality holds as long as
\begin{equation*}
|\xi_k^g| \geq \mathcal{O}\left(\log\left(\frac{d}{p_g}\right)\left[\frac{\sqrt{d}}{\epsilon_g+\kappa_g\Delta_k^{\alpha+1}}\right]^{\frac{1+\delta}{\delta}}\right).
\end{equation*}
A similar complexity for $|\xi_k^h|$ holds by combining the analysis \eqref{sn:10} with the complexity \eqref{appd1_MoM1}. This verifies the second result in \eqref{MoM_obj_val_1}.

\subsection{Finite difference} 

For objective value estimates $\barf_k$ and $\barf_{s_k}$, the sample complexities are specified in \eqref{sample_avg_obj_val}~and \eqref{MoM_obj_val} for the sample average method and the MoM method, respectively. We focus here only on the gradient and Hessian estimates, and use the sample average method to estimate function values as an example. 
The analysis for the MoM~method~\mbox{follows}~the~same~derivations but enjoys exponential tail bounds as in \eqref{sn:6}, which improves the dependence of the sample complexity on the failure probabilities from $(d/p_g)^{1/\delta}$, $(d^2/p_h)^{1/\delta}$ to $\log(d/p_g)$, $\log(d^2/p_h)$. 

We recall that the $j$-th gradient entry and the $(i,j)$-th Hessian entry are estimated~by
\begin{align*}
[\barg_k]_j & = \frac{1}{\sigma}\rbr{\barf(\bx_k + \sigma \boldsymbol{e}_j, \xi_k^g)- \barf(\bx_k,\xi_k^g)},\\
[\bar{\nabla}^2 f_k]_{i,j} & = 
\frac{1}{\sigma^2}\rbr{\barf(\bx_k + \sigma \boldsymbol{e}_i + \sigma \boldsymbol{e}_j, \xi_k^h) - \barf(\bx_k + \sigma \boldsymbol{e}_i, \xi_k^h) - \barf(\bx_k + \sigma \boldsymbol{e}_j, \xi_k^h) + \barf(\bx_k, \xi_k^h)}.
\end{align*}
$\bullet$ \textbf{Sample complexity to satisfy \eqref{def:Bk}.} By the analysis \eqref{sn:7}, we know it suffices to have
\begin{equation}\label{sn:11}
P\rbr{|[\barg_k]_j - [g_k]_j | > \frac{\epsilon_g+\kappa_g\Delta_k^{\alpha+1}}{\sqrt{d}}\mid \mF_{k-1}}\leq \frac{p_g}{d}\quad\quad \text{ for any } j\in[d].
\end{equation}
We note that
\begin{align*}
& |[\barg_k]_j - [g_k]_j |= \left| \frac{\barf(\bx_k + \sigma \boldsymbol{e}_j, \xi_k^g)- \barf(\bx_k,\xi_k^g)}{\sigma} - [g_k]_j \right| \\
& = \left| \frac{\barf(\bx_k + \sigma \boldsymbol{e}_j,\xi_k^g)- f(\bx_k + \sigma \boldsymbol{e}_j)}{\sigma} +\frac{f(\bx_k + \sigma \boldsymbol{e}_j)- f(\bx_k)}{\sigma} + \frac{f(\bx_k )- \barf(\bx_k,\xi_k^g)}{\sigma}  - [g_k]_j \right| \\
& \leq \left|\frac{f(\bx_k + \sigma \boldsymbol{e}_j)- f(\bx_k)}{\sigma}  - [g_k]_j\right| + \left| \frac{\barf(\bx_k + \sigma \boldsymbol{e}_j,\xi_k^g)- f(\bx_k + \sigma \boldsymbol{e}_j)}{\sigma}\right| + \left|\frac{f(\bx_k )- \barf(\bx_k,\xi_k^g)}{\sigma} \right| \\
& \leq \frac{L_{\nabla f} \sigma}{2} + \left| \frac{\barf(\bx_k + \sigma \boldsymbol{e}_j,\xi_k^g)- f(\bx_k + \sigma \boldsymbol{e}_j)}{\sigma}\right| + \left|\frac{f(\bx_k )- \barf(\bx_k,\xi_k^g)}{\sigma} \right|,
\end{align*}
where the last inequality is due to the Lipschitz continuity of the gradient: $|f(\bx_k + \sigma \boldsymbol{e}_j)- f(\bx_k)- \sigma[g_k]_j | \leq L_{\nabla f} \sigma^2/2$. Comparing the above display with \eqref{sn:11}, we can specify~$\sigma = \frac{\epsilon_g+\kappa_g\Delta_k^{\alpha+1}}{L_{\nabla f}\sqrt{d}}$ and know that \eqref{sn:11} is further implied by
\begin{equation*}
P\bigg(\max\{|\barf(\bx_k + \sigma \boldsymbol{e}_j,\xi_k^g)- f(\bx_k + \sigma \boldsymbol{e}_j)|, |\barf(\bx_k,\xi_k^g)- f(\bx_k )|\}
> \frac{(\epsilon_g+\kappa_g\Delta_k^{\alpha+1})^2}{4L_{\nabla f}d} \mid \mF_{k-1}\bigg) \leq \frac{p_g}{d},
\end{equation*}
which, according to \eqref{pf:sample_avg_1}, leads to the complexity
\begin{equation}\label{sn:12}
|\xi_k^g|\geq \O\rbr{\left[\frac{d}{p_g}\right]^{\frac{1}{\delta}}\left[\frac{L_{\nabla f}d}{(\epsilon_g+\kappa_g\Delta_k^{\alpha+1})^2}\right]^{\frac{1+\delta}{\delta}} }.
\end{equation}
By \eqref{appd1_MoM1}, the MoM method improves the above factor from $(d/p_g)^{1/\delta}$ to $\log(d/p_g)$.

\vskip3pt
\noindent$\bullet$ \textbf{Sample complexity to satisfy \eqref{def:Ak}.} By the analysis \eqref{sn:10}, we know it suffices to have
\begin{equation}\label{sn:13}
P\rbr{|[\bnabla^2 f_k]_{i,j} - [\nabla^2 f_k]_{i,j} | > \frac{\epsilon_h+\kappa_h\Delta_k}{d}\mid \mF_{k-1}}\leq \frac{p_h}{d^2}\quad\quad \text{ for any } i, j\in[d].
\end{equation}
We note that
\begin{align}\label{sn:14}
& |[\bar{\nabla}^2f_k]_{i,j} - [\nabla^2 f_k]_{i,j} | \nonumber\\
& = \left| \frac{\barf(\bx_k + \sigma \boldsymbol{e}_i + \sigma \boldsymbol{e}_j,\xi_{k}^h) - \barf(\bx_k + \sigma \boldsymbol{e}_i,\xi_{k}^h) - \barf(\bx_k + \sigma \boldsymbol{e}_j,\xi_{k}^h) + \barf(\bx_k,\xi_{k}^h)}{\sigma^2} - [\nabla^2 f_k]_{i,j} \right| \nonumber\\
& \leq \left|\frac{f(\bx_k + \sigma \boldsymbol{e}_i + \sigma \boldsymbol{e}_j) - f(\bx_k + \sigma \boldsymbol{e}_i) - f(\bx_k + \sigma \boldsymbol{e}_j) + f(\bx_k)}{\sigma^2}  - [\nabla^2 f_k]_{i,j} \right|  \nonumber\\
& \quad + \left| \frac{\barf(\bx_k + \sigma \boldsymbol{e}_i + \sigma \boldsymbol{e}_j,\xi_{k}^h)- f(\bx_k + \sigma \boldsymbol{e}_i + \sigma \boldsymbol{e}_j)}{\sigma^2}\right| + \left|\frac{\barf(\bx_k + \sigma \boldsymbol{e}_i,\xi_{k}^h)- f(\bx_k + \sigma \boldsymbol{e}_i)}{\sigma^2} \right| \nonumber\\
& \quad + \left| \frac{\barf(\bx_k + \sigma \boldsymbol{e}_j,\xi_{k}^h)- f(\bx_k + \sigma \boldsymbol{e}_j)}{\sigma^2}\right| + \left|\frac{\barf(\bx_k,\xi_{k}^h)- f(\bx_k)}{\sigma^2} \right|.
\end{align}
For the first term on the right hand side, by the Lipschitz continuity of the Hessian, we~have
\begin{align*}
\bigg| f(\bx_k + \sigma \boldsymbol{e}_i + \sigma \boldsymbol{e}_j) - f_k - \sigma([g_k]_i+[g_k]_j) - \frac{\sigma^2}{2}(\be_i+\be_j)^T\nabla^2 f_k(\be_i+\be_j)\bigg| & \leq \frac{\sqrt{2}L_{\nabla^2 f}\sigma^3}{3},\\
\bigg| f(\bx_k + \sigma \boldsymbol{e}_i) - f_k - \sigma[g_k]_i - \frac{\sigma^2}{2}\be_i^T\nabla^2 f_k\be_i\bigg| & \leq \frac{L_{\nabla^2 f}\sigma^3}{6},\\
\bigg| f(\bx_k + \sigma \boldsymbol{e}_j) - f_k - \sigma[g_k]_j - \frac{\sigma^2}{2}\be_j^T\nabla^2 f_k\be_j\bigg| & \leq \frac{L_{\nabla^2 f}\sigma^3}{6}.
\end{align*}
Combining the above three inequalities, we have
\begin{equation*}
\abr{f(\bx_k + \sigma \boldsymbol{e}_i + \sigma \boldsymbol{e}_j) - f(\bx_k + \sigma \boldsymbol{e}_i) - f(\bx_k + \sigma \boldsymbol{e}_j) + f(\bx_k) - \sigma^2[\nabla^2 f_k]_{i,j}} \leq \frac{5L_{\nabla^2 f}\sigma^3}{6}.
\end{equation*}
Plugging the above display into \eqref{sn:14}, and specifying $\sigma = (\epsilon_h+\kappa_h\Delta_k)/(L_{\nabla^2 f} d)$, we~know \eqref{sn:13} is further implied by
\begin{multline*}
P\bigg(\max\{|\barf(\bx_k + \sigma \boldsymbol{e}_i+\sigma\be_j,\xi_k^h)- f(\bx_k + \sigma \boldsymbol{e}_i+\sigma\be_j)|, |\barf(\bx_k+\sigma\be_i,\xi_k^h) - f(\bx_k+\sigma\be_i)|,\\
|\barf(\bx_k+\sigma\be_j,\xi_k^h) - f(\bx_k+\sigma\be_j)|, |\barf(\bx_k,\xi_k^h)- f(\bx_k)|\}> \frac{(\epsilon_h+\kappa_h\Delta_k)^3}{24L_{\nabla^2f}^2d^3}\mid\mF_{k-1}\bigg)\leq \frac{p_h}{d^2},
\end{multline*}
which, according to \eqref{pf:sample_avg_1}, leads to the complexity
\begin{equation}\label{sn:15}
|\xi_k^h|\geq \O\rbr{\left[\frac{d^2}{p_h}\right]^{\frac{1}{\delta}}\left[\frac{L_{\nabla^2 f}^2d^3}{(\epsilon_h+\kappa_h\Delta_k)^3}\right]^{\frac{1+\delta}{\delta}} }.
\end{equation}
By \eqref{appd1_MoM1}, the MoM method improves the above factor from $(d^2/p_h)^{1/\delta}$ to $\log(d^2/p_h)$. Combining the results \eqref{sn:12} and \eqref{sn:15}, we have verified the result in \eqref{sn:16}.

\section{Proofs of Section \ref{sec:4.1}}\label{append:A}

\subsection{Proof of Lemma \ref{lemma:diff_ared_pred_w_corr_step}}\label{lemma:append:A.4}

\begin{proof}

We have	
\begin{align}\label{eq:lemma_diff_w_soc_1}
& \big|\L_{\barmu_{k}}^{s_k}-\L_{\barmu_{k}}^{k}-\text{Pred}_k\big| \nonumber \\
& \qquad = \big|f_{s_k}+\barmu_{k}\|c_{s_k}\|-f_k-\barg_k^T\Delta\bx_k-\frac{1}{2} \Delta\bx_k^T\barH_k\Delta\bx_k-\barmu_{k}\|c_k+G_k\Delta\bx_k\|\big|  \nonumber \\
& \qquad \leq
\big|f_{s_k}-f_k-\barg_k^T\Delta\bx_k-\frac{1}{2} \Delta\bx_k^T\barH_k\Delta\bx_k\big|+\barmu_{T-1}\| c_{s_k}-c_k-G_k\Delta\bx_k\|,
\end{align}
where we have used Assumption \ref{assump:4-1}. First, we analyze the second term in \eqref{eq:lemma_diff_w_soc_1}.~Since~the SOC step is performed, we have $\bx_{s_k}=\bx_k+\Delta\bx_k+\bd_k$. For $1\leq i\leq m$, by the~\mbox{Taylor}~expansion, we obtain
\begin{equation*}
|c_{s_k}^i- c_k^i - (\nabla c_k^i)^T\Delta\bx_k| \stackrel{\eqref{def:correctional_step}}{=} |c_{s_k}^i - c^i(\bx_k+\Delta\bx_k) - (\nabla c_k^i)^T\bd_k| \leq L_G(\|\Delta\bx_k\|\|\bd_k\| + \|\bd_k\|^2).
\end{equation*} 
Therefore, $\| c_{s_k} -c_k-G_k\Delta\bx_k \|\leq \sqrt{m} L_G(\|\Delta\bx_k\|\|\bd_k\| + \|\bd_k\|^2)$. For $\|\bd_k\|$, we have
\begin{equation}\label{eq:length_of_correctional_step}
\|\bd_k\| \leq \|G_k^T[G_kG_k^T]^{-1}\|\|c(\bx_k+\Delta\bx_k)-c_k-G_k\Delta\bx_k\| \leq \frac{L_G}{\sqrt{\kappa_{1,G}}}\Delta_k^2.
\end{equation}
Combining the last two results and using the fact that $\Delta_k\leq \Delta_{\max}$, we have
\begin{equation}\label{eq:cubic_const}
\|c_{s_k}-c_k-G_k\Delta\bx_k\| 
\leq
m L_G\|\Delta\bx_k\|\|\bd_k\|+m L_G\|\bd_k\|^2
\leq\left(\frac{\sqrt{m}L_G^2}{\sqrt{\kappa_{1,G}}}+\frac{\sqrt{m}  L_G^3\Delta_{\max}}{\kappa_{1,G}}\right)\Delta_k^3.
\end{equation}
Next, we analyze the first term in \eqref{eq:lemma_diff_w_soc_1}. For some points $\phi_1$ between $[\bx_k+\Delta\bx_k,\bx_k+\Delta\bx_k+\bd_k]$ and $\phi_2$ between $[\bx_k,\bx_k+\Delta\bx_k]$, we have
\begin{align*}
f_{s_k} & = f(\bx_k+\Delta\bx_k+\bd_k) = f(\bx_k+\Delta\bx_k)+ \nabla f(\bx_k+\Delta\bx_k)^T\bd_k+\frac{1}{2}\bd_k^T\nabla^2 f(\phi_1)\bd_k \\
& = f_k+g_k^T\Delta\bx_k+\nabla f(\bx_k+\Delta\bx_k)^T\bd_k+\frac{1}{2}\Delta\bx_k^T\nabla^2 f(\phi_2)\Delta\bx_k +\frac{1}{2}\bd_k^T\nabla^2 f(\phi_1)\bd_k.
\end{align*}
Define $\tilde{\blambda}_k=-[G_kG_k^T]^{-1}G_k\nabla f(\bx_k+\Delta\bx_k)$. By the Taylor expansion, we have
\begin{align*} 
& \nabla f(\bx_k+\Delta\bx_k)^T\bd_k \stackrel{\mathclap{\eqref{def:correctional_step}}}{=}\; \tilde{\blambda}_k^T[c(\bx_k+\Delta\bx_k)-c_k-G_k\Delta\bx_k] \\
&\quad  =\sum_{i=1}^m\tilde{\blambda}_k^i[c^i(\bx_k+\Delta\bx_k)-c_k^i-(\nabla c_k^i)^T\Delta\bx_k] = \frac{1}{2}\sum_{i=1}^m\tilde{\blambda}_k^i\Delta\bx_k^T\nabla^2c^i(\phi_3^i)\Delta\bx_k,
\end{align*}
where the points $\{\phi_3^i\}_{i=1}^m$ are between $[\bx_k,\bx_k+\Delta\bx_k]$. Recall that $\barH_k=\bar{\nabla}^2f_k+\sum_{i=1}^{m}\barblambda_k^i\nabla^2c_k^i$. We combine the above two displays and have
\begin{align}\label{neq:4}
& \big| f_{s_k} - f_k  -\barg_k^T\Delta\bx_k-\frac{1}{2}\Delta\bx_k^T\barH_k\Delta\bx_k\big| \notag\\
& = \bigg|(g_k-\barg_k)^T\Delta\bx_k+\frac{1}{2}\Delta\bx_k^T\left(\nabla^2 f(\phi_2)-\nabla^2f_k\right)\Delta\bx_k+\frac{1}{2}\Delta\bx_k^T\left(\nabla^2 f_k-\bar{\nabla}^2f_k\right)\Delta\bx_k  \notag\\
& \quad\quad +\frac{1}{2}\sum_{i=1}^{m}\left(\blambda_k^i-\barblambda_k^i\right)\Delta\bx_k^T\nabla^2c_k^i\Delta\bx_k+\frac{1}{2}\sum_{i=1}^m\left(\tilde{\blambda}_k^i-\blambda_k^i\right)\Delta\bx_k^T\nabla^2c_k^i\Delta\bx_k \notag \\
& \quad\quad +\frac{1}{2}\bd_k^T\nabla^2 f\left(\phi_1\right)\bd_k+\frac{1}{2}\sum_{i=1}^m\tilde{\blambda}_k^i\Delta\bx_k^T\left(\nabla^2c^i(\phi_3^i)-\nabla^2c_k^i\right)\Delta\bx_k\bigg| \notag\\
& \leq \|g_k-\barg_k\|\|\Delta\bx_k\|+\frac{L_{\nabla^2 f}}{2}\|\Delta\bx_k\|^3+\frac{1}{2}\|\nabla^2 f_k-\bar{\nabla}^2f_k\|\|\Delta\bx_k\|^2 \notag\\
& \quad\quad + \frac{\sqrt{m}L_G}{2}(\|\blambda_k - \barblambda_k\| + \|\tilde{\blambda}_k-\blambda_k\|) \|\Delta\bx_k\|^2 + \frac{L_{\nabla f}}{2}\|\bd_k\|^2 + \frac{\sqrt{m}L_{\nabla^2 c}}{2}\|\tilde{\blambda}_k\| \|\Delta\bx_k\|^3 \notag\\
& \leq \|g_k-\barg_k\|\Delta_k+\frac{1}{2}\epsilon_h\Delta_k^2+ \rbr{\frac{L_{\nabla^2 f}+\kappa_h}{2}}\Delta_k^3 + \frac{\sqrt{m}L_G}{2}(\|\blambda_k - \barblambda_k\| + \|\tilde{\blambda}_k-\blambda_k\|) \Delta_k^2 \notag\\
&\quad + \frac{L_{\nabla f}}{2}\|\bd_k\|^2 + \frac{\sqrt{m}L_{\nabla^2 c}\|\tilde{\blambda}_k\|}{2} \Delta_k^3 \notag\\
& \stackrel{\mathclap{\eqref{eq:length_of_correctional_step}}}{\leq}\|g_k-\barg_k\|\Delta_k+\frac{1}{2}\epsilon_h\Delta_k^2+ \rbr{\frac{L_{\nabla^2 f}+\kappa_h}{2} + \frac{L_{\nabla f}L_G^2\Delta_{\max}}{2\kappa_{1,G}} + \frac{\sqrt{m}L_{\nabla^2 c}\|\tilde{\blambda}_k\|}{2} }\Delta_k^3  \nonumber\\
&\quad + \frac{\sqrt{m}L_G}{2}(\|\blambda_k - \barblambda_k\| + \|\tilde{\blambda}_k-\blambda_k\|) \Delta_k^2,
\end{align}
where the second inequality is by Assumption \ref{assump:4-1} and the third inequality is by the event $\A_k$ and the fact that $\Delta\bx_k\leq \Delta_k$ (note that the SOC step is performed only when $\alpha=1$). Furthermore, on the event $\B_k$, it follows from  Assumption \ref{assump:4-1} that 
\begin{align*}
& \|\blambda_k-\barblambda_k\|  \leq \|[G_kG_k^T]^{-1}G_k\|\|g_k-\barg_k\| \leq\frac{\kappa_g}{\sqrt{\kappa_{1,G}}}\Delta_k^2+ \frac{\epsilon_g}{\sqrt{\kappa_{1,G}}} \leq\frac{\kappa_g\Delta_{\max}}{\sqrt{\kappa_{1,G}}}\Delta_k+ \frac{\epsilon_g}{\sqrt{\kappa_{1,G}}},\\
& \|\blambda_k-\tilde{\blambda}_k\|  \leq \|[G_kG_k^T]^{-1}G_k\|\|g_k-\nabla f(\bx_k+\Delta\bx_k)\| \leq \frac{L_{\nabla f}}{\sqrt{\kappa_{1,G}}}\Delta_k,\\
& \|\tilde{\blambda}_k\| \leq \|[G_kG_k^T]^{-1}G_k\| \|\nabla f(\bx_k+\Delta\bx_k)\| \leq 
\frac{1}{\sqrt{\kappa_{1,G}}} (L_{\nabla f}\|\Delta\bx_k\| +\|g_k\|)\leq \frac{L_{\nabla f}\Delta_{\max}+\kappa_{\nabla f}}{\sqrt{\kappa_{1,G}}}.
\end{align*}
To simplify the first term in \eqref{neq:4}, we claim that under $\B_k$, $\|g_k-\barg_k\|\leq (\kappa_g+1)\Delta_k^2+\min\{\epsilon_g^{3/2}/\Delta_k,\epsilon_g\}$. To prove this, we notice that when $\epsilon_g^{3/2}/\Delta_k\geq \epsilon_g$, this claim is trivial~due to the definition of $\B_k$. When $\epsilon_g^{3/2}/\Delta_k< \epsilon_g$, we have $\epsilon_g^{1/2}<\Delta_k$. On the event $\B_k$, we~have $\|g_k-\barg_k\|\leq \epsilon_g+\kappa_g\Delta_k^2 < (\kappa_g+1)\Delta_k^2<(\kappa_g+1)\Delta_k^2+\min\{\epsilon_g^{3/2}/\Delta_k,\epsilon_g\}$. Thus, the claim is proved. Combining this claim with the above displays, we have
\begin{align}\label{eq:cubic_quad}
& \big|f_{s_k}-f_k  -\barg_k^T\Delta\bx_k-\frac{1}{2}\Delta\bx_k^T\barH_k\Delta\bx_k\big|  \notag \\
& \leq \epsilon_g^{3/2}+\left(\frac{1}{2}\epsilon_h+\frac{\sqrt{m}L_G}{\sqrt{\kappa_{1,G}}}\epsilon_g\right)\Delta_k^2
+\left(\kappa_g+1+\frac{L_{\nabla^2 f}+\kappa_h}{2}\right)\Delta_k^3 \notag \\
& \quad + \rbr{\frac{L_{\nabla f}L_G^2\Delta_{\max}}{2\kappa_{1,G}}+\frac{\sqrt{m}L_{\nabla^2 c}(L_{\nabla f}\Delta_{\max} + \kappa_{\nabla f}) }{2\sqrt{\kappa_{1,G}}} + \frac{\sqrt{m}L_G(\kappa_g\Delta_{\max} + L_{\nabla f})}{2\sqrt{\kappa_{1,G}}}}\Delta_k^3.
\end{align}
We complete the proof by combining \eqref{eq:lemma_diff_w_soc_1}, \eqref{eq:cubic_const}, and \eqref{eq:cubic_quad}.
\end{proof}

\subsection{Proof of Lemma \ref{lemma:guarantee_succ_step_eigen}}\label{lemma:append:A.6}

\begin{proof}
{\myred We first show that \eqref{delta:lemma_guarantee_succ_step_eigen} is well-defined. When $\alpha=1$, Assumption \ref{assump:epsilon} implies~$\epsilon>\Upsilon_6/\Upsilon_5 \cdot \epsilon_g + \Upsilon_7/\Upsilon_5 \cdot \epsilon_h$. For $k<T_\epsilon$, Definition \ref{def:stopping_time} implies $\max\{\|\nabla\L_k\|,\tau_k^+\}>\epsilon$.~Combining these two results yields $\max\{\|\nabla\L_k\|,\tau_k^+\} > \Upsilon_6/\Upsilon_5 \cdot \epsilon_g + \Upsilon_7/\Upsilon_5 \cdot \epsilon_h$, which can be rearranged to obtain $\Upsilon_5 \max\{\|\nabla\L_k\|,\tau_k^+\} > \Upsilon_6  \epsilon_g + \Upsilon_7\epsilon_h$. Thus, the right-hand side of \eqref{delta:lemma_guarantee_succ_step_eigen} is positive and the inequality is well-defined.}
To show the result, we consider two cases separately. \textbf{Case A:} $\max\{ \tau_k^+,\|\nabla\L_k\|\} = \|\nabla\L_k\|$ and \textbf{Case B:} $\max\{ \tau_k^+,\|\nabla\L_k\|\} = \tau_k^+$. 
\vskip4pt

\noindent$\bullet$ \textbf{Case A.} When $\alpha=1$, $\vartheta_{\alpha}=2\epsilon_f+\epsilon_g^{3/2}$. Thus, to show (\ref{nequ:3}a),  we only need to show $\text{Ared}_k-2\epsilon_f-\epsilon_g^{3/2} \leq \eta \text{Pred}_k$ can be satisfied by \eqref{delta:lemma_guarantee_succ_step_eigen}. By the algorithm design, we first consider the scenario in which the SOC step is not computed (i.e., $\bx_{s_k}=\bx_k+\Delta\bx_k$). We have
\begin{align}\label{nequ:10}
\text{Ared}_k & - 2\epsilon_f-\epsilon_g^{3/2}  = \bar{\L}_{\barmu_{k}}^{s_k}-\bar{\L}_{\barmu_{k}}^k - 2\epsilon_f-\epsilon_g^{3/2} \notag \\
& = \bar{\L}_{\barmu_{k}}^{s_k}- \L_{\barmu_{k}}^{s_k} +\L_{\barmu_{k}}^{s_k} - \L_{\barmu_{k}}^k +\L_{\barmu_{k}}^k-\bar{\L}_{\barmu_{k}}^k - 2\epsilon_f -\epsilon_g^{3/2} \notag \\
& = \bar{f}_{s_k} - f_{s_k} +\L_{\barmu_{k}}^{s_k} - \L_{\barmu_{k}}^k +f_k-\bar{f}_k - 2\epsilon_f -\epsilon_g^{3/2} \notag \\
& \stackrel{\mathclap{\text{Lemma }\ref{lemma:diff_ared_pred_wo_corr_step}}}{\leq}  |\bar{f}_{s_k} - f_{s_k}| + |f_k-\bar{f}_k| +\text{Pred}_k +\epsilon_g\Delta_k+\Upsilon_1\Delta_k^2 - 2\epsilon_f -\epsilon_g^{3/2}.
\end{align}
We observe that under $\A_k\cap\B_k$, \eqref{delta:lemma_guarantee_succ_step_eigen} implies $\Delta_k \leq \|\bnabla\L_k\|/\|\barH_k\|$. Thus,
\begin{equation}\label{nequ:11}
\text{Pred}_k\leq -\frac{\kappa_{fcd}}{2}\|\bnabla\L_k\|\min\left\{\Delta_k,\frac{\|\bnabla\L_k\|}{\|\barH_k\|}\right\} =-\frac{\kappa_{fcd}}{2}\|\bnabla\L_k\| \Delta_k.
\end{equation}
Combining \eqref{nequ:10} and \eqref{nequ:11}, and noticing that when $\alpha=1$ and $\C_k$ happens, $|f_k-\bar{f}_k|+ |\bar{f}_{s_k} - f_{s_k}|\leq 2\epsilon_f+2\kappa_f\max\{\Delta_{\max},1\}\Delta_k^2$,  we only need to show 
\begin{equation}\label{nequ:12}
2\kappa_f\max\{\Delta_{\max},1\}\Delta_k^2 +\epsilon_g\Delta_k+\Upsilon_1\Delta_k^2 \leq \frac{\kappa_{fcd}}{2}(1-\eta)\|\bnabla\L_k\| \Delta_k.
\end{equation}
To this end, we note that
\begin{align*}
\eqref{delta:lemma_guarantee_succ_step_eigen}\Rightarrow & \{(4\kappa_f+\kappa_g)\max\{\Delta_{\max},1\}+2\Upsilon_1\} \Delta_k\leq \kappa_{fcd}(1-\eta)\|\nabla\L_k\|-\left\{\kappa_{fcd}(1-\eta)+2\right\}\epsilon_g\\
\Rightarrow&\{4\kappa_f\max\{\Delta_{\max},1\}+\kappa_{fcd}(1-\eta)\kappa_g\max\{\Delta_{\max},1\}+2\Upsilon_1\}\Delta_k \\
&\hspace{5cm}\leq \kappa_{fcd}(1-\eta)\|\nabla\L_k\|-\left\{\kappa_{fcd}(1-\eta)+2\right\}\epsilon_g,
\end{align*}
since $\kappa_{fcd}(1-\eta)<1$. Rearranging the terms, we have
\begin{equation}\label{nequ:13}
(2\kappa_f \max\{\Delta_{\max},1\} +\Upsilon_1) \Delta_k 
\leq  \frac{\kappa_{fcd}}{2}(1-\eta)(\|\nabla\L_k\|-\kappa_g\max\{\Delta_{\max},1\}\Delta_k - \epsilon_g) -\epsilon_g.
\end{equation}
Under $\B_k$, we have $\|\bnabla\L_k\|\geq \|\nabla\L_k\|-\kappa_g\max\{\Delta_{\max},1\}\Delta_k-\epsilon_g$. This, together with \eqref{nequ:13}, implies
\begin{equation*}
(2\kappa_f \max\{\Delta_{\max},1\} +\Upsilon_1)\Delta_k\leq \frac{\kappa_{fcd}}{2}(1-\eta)\|\bnabla\L_k\| - \epsilon_g.
\end{equation*}
Multiplying $\Delta_k$ and rearranging the terms, we have shown \eqref{nequ:12}, thus proved (\ref{nequ:3}a). Since (\ref{nequ:3}a) holds, by the algorithm design, the SOC step will not be triggered in this iteration. Next, we show that  (\ref{nequ:3}b) holds. When $\alpha=1$, under $\A_k\cap\B_k$, we have $\max\{1,\|\barH_k\|\}\leq\kappa_B$. Therefore, it suffices to show $\|\bnabla\L_k\|\geq \eta \kappa_B\Delta_k$. Recalling $\kappa_{fcd}\leq 1, \eta<1$, we have
\begin{multline}\label{nequ:14}
(\eta\kappa_B+ \kappa_g\max\{\Delta_{\max},1\})\Delta_k
\leq \{(4\kappa_f+\kappa_g)\max\{\Delta_{\max},1\}+2\Upsilon_1+\kappa_B\} \Delta_k  \\
\stackrel{\mathclap{\eqref{delta:lemma_guarantee_succ_step_eigen}}}{\leq} \kappa_{fcd}(1-\eta)\|\nabla\L_k\|-\{\kappa_{fcd}(1-\eta)+2\}\epsilon_g
\leq \|\nabla\L_k\| - \epsilon_g.
\end{multline}
Since $\|\bnabla\L_k\| \geq \|\nabla\L_k\| - \kappa_g\max\{\Delta_{\max},1\}\Delta_k - \epsilon_g$ under $\B_k$, which combined with \eqref{nequ:14} leads to $\|\bnabla\L_k\|\geq \eta \kappa_B\Delta_k$, thus  (\ref{nequ:3}b) holds. 

\vskip4pt

\noindent$\bullet$ \textbf{Case B.} In this scenario, we show that (\ref{nequ:3}a) holds by considering two subcases. \textbf{Case B.1:} $\|c_k\|\geq r$ and \textbf{Case B.2:} $\|c_k\| < r$.

\vskip4pt
\noindent $\bullet$ \textbf{Case B.1.} In this case, the SOC step will not be performed, so we need to show that $\text{Ared}_k-2\epsilon_f-\epsilon_g^{3/2}\leq \eta \text{Pred}_k$ holds for $\bx_{s_k}=\bx_k+\Delta\bx_k$. Using the same analysis as in \textbf{Case A}, we have \eqref{nequ:10} and $ |f_k-\bar{f}_k|+ |\bar{f}_{s_k} - f_{s_k}|\leq 2\epsilon_f+2\kappa_f \max\{\Delta_{\max},1\}\Delta_k^2$ under $\C_k$ with $\alpha=1$. Since
\begin{equation*}
\text{Pred}_k\leq -\frac{\kappa_{fcd}}{2}\bartau_k^+\|c_k\|\Delta_k\leq -\frac{\kappa_{fcd}}{2}\bartau_k^+ r \Delta_k,
\end{equation*}
it suffices to show
\begin{equation}\label{neq:10}
2\kappa_f \max\{\Delta_{\max},1\}\Delta_k^2 +\epsilon_g\Delta_k+\Upsilon_1\Delta_k^2 \leq (1-\eta)\frac{\kappa_{fcd}}{2}\bartau_k^+ r \Delta_k.
\end{equation}
Now, we show that this can be achieved by $\Delta_k$ satisfying \eqref{delta:lemma_guarantee_succ_step_eigen}. We notice that
\begin{align*}
\tau_k^+&\stackrel{\mathclap{\eqref{delta:lemma_guarantee_succ_step_eigen}}}{\geq}\left(\frac{4\kappa_f \max\{\Delta_{\max},1\} +2\Upsilon_1+2\Upsilon_2}{(1-\eta)\kappa_{fcd}\min\{1,r\}}+\kappa_H+\eta\right)\Delta_k + \frac{(2+2\sqrt{m}L_G/\sqrt{\kappa_{1,G}})\epsilon_g+2\epsilon_h }{(1-\eta)\kappa_{fcd}\min\{1,r\}} \\ & \geq \left(\frac{4\kappa_f\max\{\Delta_{\max},1\}+2\Upsilon_1}{(1-\eta)r\kappa_{fcd}}+\kappa_H\right)\Delta_k + \frac{2\epsilon_g}{(1-\eta)r\kappa_{fcd}}+\epsilon_H \quad (\text{by the def. of }\epsilon_H).
\end{align*}
Rearranging the terms, we have
\begin{equation*}
(2\kappa_f\max\{\Delta_{\max},1\}+\Upsilon_1)\Delta_k + \epsilon_g  \leq (1-\eta)\frac{\kappa_{fcd}}{2}r\left(\tau_k^+  - \kappa_H  \Delta_k   - \epsilon_H \right).
\end{equation*}
Multiplying $\Delta_k$ on both sides, using the relation $\bartau_k^+ \geq \tau_k^+ - \kappa_H\Delta_k - \epsilon_H$ under $\A_k\cap\B_k$ (cf. Lemma \ref{lemma:tau_accurate}), we can show that \eqref{neq:10} holds.

\vskip4pt
\noindent$\bullet$ \textbf{Case B.2.} In this case, if $\text{Ared}_k-2\epsilon_f-\epsilon_g^{3/2}\leq \eta \text{Pred}_k$ holds for $\bx_{s_k}=\bx_k+\Delta\bx_k$, then there is nothing to prove. If not, the SOC step $\bd_k$ (cf. Section \ref{subsec:2.1.3}) will be performed. Thus, we only need to show that $\text{Ared}_k-2\epsilon_f-\epsilon_g^{3/2}\leq \eta \text{Pred}_k$ holds for $\bx_{s_k}=\bx_k+\Delta\bx_k+\bd_k$. Similar to the proof in \textbf{Case B.1}, but using the conclusion of Lemma \ref{lemma:diff_ared_pred_w_corr_step}, we have
\begin{align*}
\text{Ared}_k - 2\epsilon_f -\epsilon_g^{3/2} & \leq  |\bar{f}_{s_k} - f_{s_k}| + |f_k-\bar{f}_k| +\text{Pred}_k +\left(\frac{1}{2}\epsilon_h+\frac{\sqrt{m}L_G}{2\sqrt{\kappa_{1,G}}}\epsilon_g\right)\Delta_k^2+ \Upsilon_2\Delta_k^3 - 2\epsilon_f \\
&  \leq  \text{Pred}_k +\left(\frac{1}{2}\epsilon_h+\frac{\sqrt{m}L_G}{2\sqrt{\kappa_{1,G}}}\epsilon_g\right)\Delta_k^2+ (\Upsilon_2+2\kappa_f )\Delta_k^3 ,
\end{align*}
where the last inequality is due to the definition of $\C_k$ with $\alpha=1$. By the algorithm design, we have
\begin{equation*}
\text{Pred}_k\leq -\frac{\kappa_{fcd}}{2}\bartau_k^+\Delta_k^2,
\end{equation*}
thus we only need to show 
\begin{equation}\label{neq:11}
\left(\frac{1}{2}\epsilon_h+\frac{\sqrt{m}L_G}{2\sqrt{\kappa_{1,G}}}\epsilon_g\right)\Delta_k^2+ (\Upsilon_2+2\kappa_f )\Delta_k^3  \leq (1-\eta)\frac{\kappa_{fcd}}{2}\bartau_k^+\Delta_k^2.
\end{equation}
To this end, we note that with the definition of $\epsilon_H$,
\begin{align*}
\tau_k^+&  \stackrel{\mathclap{\eqref{delta:lemma_guarantee_succ_step_eigen}}}{\geq} \left(\frac{4\kappa_f\max\{\Delta_{\max},1\}+2\Upsilon_1+2\Upsilon_2}{(1-\eta)\kappa_{fcd}\min\{1,r\}}+\kappa_H+\eta\right) \Delta_k + \frac{(2+2 \sqrt{m} L_G/\sqrt{\kappa_{1,G}})\epsilon_g + 2\epsilon_h }{(1-\eta)\kappa_{fcd}\min\{1,r\}} \\
& \geq \left(\frac{	4\kappa_f+2\Upsilon_2}{(1-\eta)\kappa_{fcd}}+\kappa_H\right)\Delta_k + \frac{\epsilon_h +\epsilon_g \sqrt{m}L_G/\sqrt{\kappa_{1,G}}}{(1-\eta)\kappa_{fcd}}+\epsilon_H.
\end{align*}
Rearranging the terms and multiplying $\Delta_k^2$ on both sides, we have
\begin{equation*}
(2\kappa_f+\Upsilon_2)\Delta_k^3 + \left(\frac{1}{2}\epsilon_h + \frac{\sqrt{m}L_G}{2\sqrt{\kappa_{1,G}}}\epsilon_g\right)\Delta_k^2  \leq \frac{\kappa_{fcd}}{2}(1-\eta) (\tau_k^+ -\kappa_H\Delta_k - \epsilon_H)\Delta_k^2.
\end{equation*}
Since under $\A_k\cap \B_k$, we have $\bartau_k^+ \geq \tau_k^+ - \kappa_H\Delta_k -\epsilon_H$ (cf. Lemma \ref{lemma:tau_accurate}). This relation, together with the above display, has proved \eqref{neq:11}. Combining \textbf{Case B.1} and \textbf{Case B.2}, we have shown that  (\ref{nequ:3}a) holds in \textbf{Case B}. Next, we show that (\ref{nequ:3}b) holds in \textbf{Case B}. By the algorithm design, it is sufficient to show $\bartau_k^+\geq \eta\Delta_k$. We note that \eqref{delta:lemma_guarantee_succ_step_eigen} implies 
\begin{equation*}
\tau_k^+ \geq (\kappa_H+\eta)\Delta_k + \epsilon_H.
\end{equation*}
This, combined with the relation $\bartau_k^+ \geq \tau_k^+ - \kappa_H\Delta_k -\epsilon_H$ under $\A_k\cap \B_k$, leads to $\bartau_k^+\geq \eta\Delta_k$. We thus complete the proof.
\end{proof}

\subsection{Proof of Lemma \ref{lemma: count_iter}}\label{lemma:append:A.8}

\begin{proof}
(a) We define the sequence $\{\phi_k\}_k$ as
\begin{equation*}
\phi_k = \max\left\{\log_\gamma\left(\frac{\Delta_k}{\hat{\Delta}'}\right),0\right\}
\end{equation*}
and discuss the cases: \textbf{Case A:} $\Lambda_k\Theta_k=1$, \textbf{Case B:} $\Lambda_k(1-\Theta_k)=1$, and \textbf{Case C:} $\Lambda_k=0$.

\vskip4pt
\noindent$\bullet$ \textbf{Case A.} When $\Lambda_k\Theta_k=1$, we have $\min\{\Delta_k,\Delta_{k+1}\}\geq \hat{\Delta}'$ and $\Delta_{k+1}=\min\{\gamma\Delta_k,\Delta_{\max}\}$. Since $\Delta_{k+1}\geq \Delta_k \geq \hat{\Delta}'$, we have
\begin{equation*}
\log_\gamma\left(\frac{\Delta_{k+1}}{\hat{\Delta}'}\right)\geq \log_\gamma\left(\frac{\Delta_{k}}{\hat{\Delta}'}\right)\geq 0.
\end{equation*}
Therefore, $\phi_k= \log_\gamma ( \Delta_{k}/\hat{\Delta}')$ and $\phi_{k+1}= \log_\gamma ( \Delta_{k+1}/\hat{\Delta}')$. Since $\Delta_{k+1}\leq \gamma\Delta_k$, we further have
\begin{equation*}
\log_\gamma\left(\frac{\Delta_k}{\hat{\Delta}'}\right)+1 \geq  \log_\gamma\left(\frac{\Delta_{k+1}}{\hat{\Delta}'}\right),
\end{equation*}
thus $\phi_{k+1}-\phi_k\leq 1$.

\vskip4pt

\noindent$\bullet$ \textbf{Case B.} When $\Lambda_k(1-\Theta_k)=1$, we have $\min\{\Delta_k,\Delta_{k+1}\}\geq \hat{\Delta}'$ and $\Delta_{k+1} = \Delta_k/\gamma$. Therefore, we must have  $\Delta_k\geq\gamma\hat{\Delta}'$, which leads to
\begin{equation*}
\log_\gamma\left(\frac{\Delta_k}{\hat{\Delta}'}\right) \geq 1 \quad\text{and}\quad	\log_\gamma\left(\frac{\Delta_{k+1}}{\hat{\Delta}'}\right)=\log_\gamma\left(\frac{\Delta_k}{\hat{\Delta}'}\right)-1\geq 0.
\end{equation*}
Thus, $\phi_k =  \log_\gamma (\Delta_k/\hat{\Delta}' ) $, $\phi_{k+1} = \log_\gamma (\Delta_{k+1}/\hat{\Delta}')$ and $\phi_{k+1} -\phi_k = -1$.

\vskip4pt
\noindent$\bullet$ \textbf{Case C.} When $\Lambda_k=0$, we have $\max\{\Delta_k,\Delta_{k+1} \}\leq \hat{\Delta}'$. Thus, $\phi_k=\phi_{k+1} = 0$.

\vskip4pt
Combining the above three cases, we sum $\phi_{k+1}-\phi_k$ over $k=0,\cdots,T-1$ and get \vspace{-0.1cm}
\begin{equation}\label{neq:12}
\phi_{T}-\phi_0 = \sum_{k=0}^{T-1} \phi_{k+1}-\phi_k \leq \sum_{k=0}^{T-1} \Lambda_k\Theta_k - \sum_{k=0}^{T-1} \Lambda_k(1-\Theta_k).
\end{equation}
Noticing that $\phi_0= \log_\gamma (\Delta_0/\hat{\Delta}')$ (since $ \Delta_0> \hat{\Delta}'$) and $\phi_{T}\geq 0$, we rearrange the terms in \eqref{neq:12} and complete the proof.

\vskip4pt
\noindent (b) This is a direct corollary of (a), which can be proved by rearranging terms of (a).

\vskip4pt
\noindent (c) We define the sequence $\{\tilde{\phi}_k\}_k$ as
\begin{equation*}
\tilde{\phi}_k = \max\left\{\log_\gamma\left(\hat{\Delta}'/\Delta_k\right),0\right\}
\end{equation*}
and discuss three cases. \textbf{Case A:} $(1-\Lambda_k)\Theta_k=1$, \textbf{Case B:} $(1-\Lambda_k)(1-\Theta_k)=1$, \textbf{Case C:} $\Lambda_k=1$.

\vskip4pt
\noindent$\bullet$ \textbf{Case A.} When $(1-\Lambda_k)\Theta_k=1$, we have $\max\{\Delta_k,\Delta_{k+1}\}\leq \hat{\Delta}'$ and $\Delta_{k+1} = \min\{\gamma\Delta_k,\Delta_{\max}\}$. Since $\Delta_k\leq \hat{\Delta}'\leq \gamma^{-1}\Delta_{\max}$, we have $\gamma\Delta_k\leq\Delta_{\max}  $. Thus $\Delta_{k+1}=\gamma\Delta_k$. In this case, to ensure $\Delta_{k+1} \leq\hat{\Delta}'$, we must have $\Delta_k\leq \gamma^{-1}\hat{\Delta}'$. The above relations imply
\begin{equation*}
\log_\gamma\left(\frac{\hat{\Delta}'}{\Delta_k}\right) \geq 1\quad\text{and}\quad  \log_\gamma\left(\frac{\hat{\Delta}'}{\Delta_{k+1}}\right) =  \log_\gamma\left(\frac{\hat{\Delta}'}{\Delta_k}\right)  -1\geq 0.
\end{equation*}
Therefore, $ \tilde{\phi}_k = \log_\gamma (\hat{\Delta}'/\Delta_k) $, $ \tilde{\phi}_{k+1} = \log_\gamma (\hat{\Delta}'/\Delta_{k+1}) $, and $\tilde{\phi}_{k+1}-\tilde{\phi}_k = -1$.

\vskip4pt
\noindent$\bullet$ \textbf{Case B.} When $(1-\Lambda_k)(1-\Theta_k)=1$, we have $\max\{\Delta_k,\Delta_{k+1} \}\leq \hat{\Delta}'$ and $\Delta_{k+1} = \Delta_k/\gamma$. Thus,
\begin{equation*}
\log_\gamma\left(\frac{\hat{\Delta}'}{\Delta_k}\right)\geq 0 \quad\text{and}\quad  \log_\gamma\left(\frac{\hat{\Delta}'}{\Delta_{k+1}}\right) =  \log_\gamma\left(\frac{\hat{\Delta}'}{\Delta_k}\right)+1.
\end{equation*}
Therefore, $ \tilde{\phi}_k = \log_\gamma (\hat{\Delta}'/\Delta_k) $, $ \tilde{\phi}_{k+1} = \log_\gamma (\hat{\Delta}'/\Delta_{k+1}) $, and $\tilde{\phi}_{k+1}-\tilde{\phi}_k = 1$.

\vskip4pt
\noindent$\bullet$ \textbf{Case C.} When $\Lambda_k=1$, we have $\min\{\Delta_{k+1},\Delta_k\} \geq \hat{\Delta}'$, thus $\tilde{\phi}_{k+1}=\tilde{\phi}_k = 0$.

\vskip4pt
Combining the above cases, we sum  $\tilde{\phi}_{k+1}-\tilde{\phi}_k $ over $k=0,\cdots,T-1$ and get
\begin{equation*}
\tilde{\phi}_{T}-\tilde{\phi}_0  = \sum_{k=0}^{T-1}\tilde{\phi}_{k+1}-\tilde{\phi}_k \leq \sum_{k=0}^{T-1}(1-\Lambda_k)(1-\Theta_k) -\sum_{k=0}^{T-1}(1-\Lambda_k)\Theta_k.
\end{equation*}
Noticing that  $\tilde{\phi}_0 = \max \{\log_\gamma (\hat{\Delta}'/\Delta_0 ),0\}=0$ (since $\Delta_0=\Delta_{\max}\geq \hat{\Delta}'$) and $\tilde{\phi}_{T}\geq 0$, we rearrange the terms and complete the proof.

\vskip4pt
\noindent (d) We note that 
\begin{align*}
\sum_{k=0}^{T}(1-\Lambda_k)I_k  & = \sum_{k=0}^{T}(1-\Lambda_k)I_k \Theta_k  \leq \sum_{k=0}^{T}(1-\Lambda_k)\Theta_k 
\leq \sum_{k=0}^{T}(1-\Lambda_k)(1-\Theta_k) \\
& =  \sum_{k=0}^{T}(1-\Lambda_k)(1-\Theta_k)(1- I_k) \leq  \sum_{k=0}^{T}(1-\Lambda_k)(1- I_k).
\end{align*}
The first equality follows from Lemmas \ref{lemma:guarantee_succ_step_KKT} and \ref{lemma:guarantee_succ_step_eigen} that small and accurate iterations~must be sufficient, the second inequality is by Lemma  \ref{lemma: count_iter} (c), and the last equality is by the converse of Lemmas \ref{lemma:guarantee_succ_step_KKT} and \ref{lemma:guarantee_succ_step_eigen} that small and insufficient iterations must be inaccurate.
\end{proof}

\section{Proofs of Section \ref{sec:4_heavytail} }\label{append:B}

\subsection{Proof of Theorem \ref{thm: 1st}}\label{append:B1}

\begin{proof}
Since $P\{T_\epsilon\leq T-1\} =1- P\{T_{\epsilon}>T-1\}$, we will bound $P\{T_{\epsilon}>T-1\}$ in the proof. By the law of total probability, we have
\begin{multline}\label{eq:PAPB}
P\{T_{\epsilon}>T-1\} = P\underbrace{\left\{T_{\epsilon}>T-1, \sum_{k=0}^{T-1}(e_k+e_{s_k})> (2\epsilon_f+ 2s)T\right\}}_{A} \\
+ P\underbrace{\left\{T_{\epsilon}>T-1, \sum_{k=0}^{T-1}( e_k+e_{s_k} ) \leq (2\epsilon_f + 2s)T\right\}}_{B}.
\end{multline}
In what follows, we present lemmas that bound $P(A)$ and $P(B)$, separately. The proofs of these lemmas are presented in Appendices \ref{append:B2}, \ref{append:B3} and \ref{append:B4}.

\begin{lemma}[Burkholder-type inequality]\label{append:lemma1}
Let $X_k, k=0,1,\cdots,T-1$, be random variables with $\mE[X_k\mid X_{0:k-1}]=0$ and $\mE[|X_k|^{p}]<\infty$ for any $k\geq 0$ and some $p \in (1,2]$, then \vspace{-0.2cm}
\begin{equation}\label{append:eq2}
\mE\left[ \left| \sum_{k=0}^{T-1} X_k \right| ^{p} \right] \leq 2^{2-p} \sum_{k=0}^{T-1} \mE[|X_k|^{p}].
\end{equation}
\end{lemma}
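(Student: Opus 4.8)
\textbf{Proof plan for the Burkholder-type inequality (Lemma \ref{append:lemma1}).}

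The plan is to prove this by establishing a martingale difference structure and then invoking the $p$-smoothness of $L^p$ spaces for $1 < p \leq 2$, which gives precisely the constant $2^{2-p}$. First I would set $S_n = \sum_{k=0}^{n-1} X_k$, with $S_0 = 0$, and note that because $\mE[X_k \mid X_{0:k-1}] = 0$, the sequence $\{S_n\}$ is a martingale with respect to the natural filtration $\mathcal{G}_n = \sigma(X_0, \dots, X_{n-1})$, and each $X_k$ is $\mathcal{G}_{k+1}$-measurable with $\mE[|X_k|^p] < \infty$. The key analytic input is the classical inequality (a form of Clarkson's inequality / the $2$-uniform smoothness of $L^p$ for $p \in (1,2]$): for any real numbers (or, more generally, elements of an $L^p$ space) $a, b$,
\begin{equation*}
|a+b|^p + |a-b|^p \leq 2^{p-1}\bigl(|a|^p + |b|^p\bigr) \quad \text{is the wrong direction; instead use} \quad |a+b|^p \leq |a|^p + p\,|a|^{p-1}\mathrm{sgn}(a)\,b + C_p |b|^p,
\end{equation*}
but the cleanest route is the following elementary scalar inequality, valid for $1 \le p \le 2$ and all reals $x, y$:
\begin{equation}\label{eq:smoothness}
|x + y|^p \leq |x|^p + p\, |x|^{p-1} \mathrm{sgn}(x)\, y + 2^{2-p}\,|y|^p .
\end{equation}

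Next I would apply \eqref{eq:smoothness} with $x = S_k$ and $y = X_k$, then take expectations. The middle term is $p\,\mE\bigl[|S_k|^{p-1}\mathrm{sgn}(S_k)\,X_k\bigr]$; since $|S_k|^{p-1}\mathrm{sgn}(S_k)$ is $\mathcal{G}_k$-measurable and bounded appropriately (one handles integrability by a standard truncation or by assuming, as in the statement, the relevant moments are finite), conditioning on $\mathcal{G}_k$ and using $\mE[X_k \mid \mathcal{G}_k] = 0$ kills this term. This yields the one-step recursion
\begin{equation*}
\mE\bigl[|S_{k+1}|^p\bigr] \leq \mE\bigl[|S_k|^p\bigr] + 2^{2-p}\,\mE\bigl[|X_k|^p\bigr].
\end{equation*}
Summing this recursion from $k = 0$ to $k = T-1$ and using $S_0 = 0$ telescopes to exactly $\mE\bigl[|S_T|^p\bigr] \leq 2^{2-p}\sum_{k=0}^{T-1}\mE[|X_k|^p]$, which is \eqref{append:eq2}.

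The main obstacle is establishing the scalar smoothness inequality \eqref{eq:smoothness} with the sharp constant $2^{2-p}$; this is a one-variable calculus exercise (fix $x$, differentiate in $y$, or reduce to $x = 1$ and analyze $\varphi(t) = |1+t|^p - 1 - pt - 2^{2-p}|t|^p$), but getting the constant exactly right rather than some larger $C_p$ requires a little care, especially near $t = -1$ and for the convexity/concavity behavior across the regimes $|t| \le 1$ versus $|t| \ge 1$. A secondary technical point is justifying the interchange of expectation and the conditioning argument when $p < 2$ (so $S_k \in L^p$ but not necessarily $L^2$); this is handled by the moment hypothesis $\mE[|X_k|^p] < \infty$ together with the fact that $\mE[|S_k|^p] \le (k)^{p-1}\sum_{j<k}\mE[|X_j|^p] < \infty$ by convexity of $t \mapsto |t|^p$, so all quantities appearing are finite and the conditional expectation manipulations are legitimate. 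If one prefers to avoid \eqref{eq:smoothness} entirely, an alternative is to cite the result directly from \cite{Burkholder1973Distribution} or the Rosenthal-type refinement in \cite{Chen2020Rosenthal}, since the statement as given is a known inequality; but including the short self-contained proof via $2$-smoothness makes the paper more transparent.
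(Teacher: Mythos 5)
Your proposal is correct and follows essentially the same route as the paper: the paper's proof establishes exactly the scalar inequality $|a+b|^p \leq |a|^p + p\,\mathrm{sgn}(a)|a|^{p-1}b + 2^{2-p}|b|^p$ by a case analysis in one variable, then applies it with $a = \sum_{k=0}^{n-1}X_k$ and $b = X_n$, kills the cross term via $\mE[X_n \mid X_{0:n-1}] = 0$, and concludes by induction (your telescoping recursion). Your additional remark on the integrability of the cross term is a point the paper passes over silently, but it does not change the argument.
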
	

\begin{lemma}\label{append:lemma2} 
Under conditions of the probabilistic heavy-tailed zeroth-order oracle, for any $T\geq 1$, $s\geq 0$, and $\delta$ specified in \eqref{eq3}, we have
\begin{equation*}
P(A) \leq 2 \exp \left( - \frac{2(\epsilon_f-\tilde{\epsilon}_f+s)^2}{(3+\delta)^2 e^{1+\delta} \Upsilon_f^{\frac{2}{1+\delta}}} \cdot T \right)
 + \frac{3^{ 2+\delta } \Upsilon_f  }{(\epsilon_f -\tilde{\epsilon}_f + s)^{1+\delta}} \cdot T^{-\delta} .
\end{equation*}
\end{lemma}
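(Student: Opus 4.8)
The plan is to bound $P(A)$, the probability that the algorithm has not stopped by iteration $T-1$ \emph{and} the accumulated zeroth-order noise $\sum_{k=0}^{T-1}(e_k+e_{s_k})$ exceeds $(2\epsilon_f+2s)T$. The key observation is that on the event $\{T_\epsilon>T-1\}$ the stopping time does not interfere with the martingale structure of the noise, so it suffices to bound the probability that $\sum_{k=0}^{T-1}(e_k+e_{s_k})>(2\epsilon_f+2s)T$ outright. I would split each $e_k$ into its conditional mean plus a centered part: write $e_k=\mE[e_k\mid\F_{k-1}]+\xi_k$ and $e_{s_k}=\mE[e_{s_k}\mid\F_{k-1/2}]+\xi_{s_k}$. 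By condition \textbf{(ii)} of the oracle (\eqref{eq1}), the conditional means are each at most $\tilde\epsilon_f$, so $\sum_{k=0}^{T-1}(\mE[e_k\mid\F_{k-1}]+\mE[e_{s_k}\mid\F_{k-1/2}])\leq 2\tilde\epsilon_f T$. Consequently the event $A$ forces $\sum_{k=0}^{T-1}(\xi_k+\xi_{s_k})>(2\epsilon_f+2s-2\tilde\epsilon_f)T=2(\epsilon_f-\tilde\epsilon_f+s)T$, and the centered variables $\{\xi_k\}$, $\{\xi_{s_k}\}$ form martingale difference sequences with bounded $(1+\delta)$ conditional moments (at most $\Upsilon_f$) by the heavy-tailed condition \eqref{eq3}.

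The heart of the argument is then a large-deviation bound for a martingale sum with only a finite $(1+\delta)$ moment. I would invoke the martingale Fuk--Nagaev inequality (as cited, \citep{Fuk1973Certain,Nagaev1979Large,Fan2017Deviation}): for a martingale difference sequence with bounded conditional $(1+\delta)$ moments, the tail probability $P\{\sum \xi_k > x\}$ splits into a sub-Gaussian-type term, roughly $\exp(-cx^2/(T\cdot\Upsilon_f^{2/(1+\delta)}))$ controlling the ``bulk'' contribution, plus a polynomial term of order $\Upsilon_f T/x^{1+\delta}$ controlling the heavy-tailed ``large jump'' contribution. Applying this with $x=2(\epsilon_f-\tilde\epsilon_f+s)T$ to each of the two sequences $\{\xi_k\}$ and $\{\xi_{s_k}\}$ and union-bounding gives two exponential terms of the form $\exp(-2(\epsilon_f-\tilde\epsilon_f+s)^2 T/((3+\delta)^2 e^{1+\delta}\Upsilon_f^{2/(1+\delta)}))$ and two polynomial terms $3^{1+\delta}\Upsilon_f T/(2(\epsilon_f-\tilde\epsilon_f+s)T)^{1+\delta}$; collecting constants (the $3^{1+\delta}$ and $e^{1+\delta}$ factors coming from the explicit constants in the Fuk--Nagaev bound, and absorbing powers of $2$) yields exactly the stated $2\exp(-2(\epsilon_f-\tilde\epsilon_f+s)^2T/((3+\delta)^2e^{1+\delta}\Upsilon_f^{2/(1+\delta)})) + 3^{2+\delta}\Upsilon_f(\epsilon_f-\tilde\epsilon_f+s)^{-(1+\delta)}T^{-\delta}$.

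The Burkholder-type inequality in Lemma \ref{append:lemma1} enters as the tool used inside the proof of the Fuk--Nagaev-type bound (or, if one re-derives the Fuk--Nagaev inequality from scratch rather than citing it, as the step that controls the $(1+\delta)$-th moment of the truncated martingale sum by the sum of individual $(1+\delta)$-th moments). Concretely, one truncates $\xi_k$ at a level $y$ proportional to $x/\log$ or to $x$ itself, bounds the truncated-part sum via Markov's inequality applied after Lemma \ref{append:lemma1} (turning $\mE|\sum \xi_k^{\mathrm{trunc}}|^{1+\delta}$ into $2^{1-\delta}\sum\mE|\xi_k^{\mathrm{trunc}}|^{1+\delta}\le 2^{1-\delta}\Upsilon_f T$), and bounds the large-jump part by a union bound $\sum_k P\{|\xi_k|>y\}\le \Upsilon_f T/y^{1+\delta}$; optimizing over $y$ produces the two-term bound. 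The main obstacle I anticipate is bookkeeping the constants so that they collapse precisely into the form $(3+\delta)^2e^{1+\delta}$ and $3^{2+\delta}$ stated in the lemma — this requires choosing the truncation level and the split between the two error terms carefully, and tracking the $(3+\delta)$ factor, which presumably arises from a particular choice in the Fan--Grama--Liu-type martingale deviation bound; the probabilistic content itself (Doob-type maximal reasoning, martingale property being unaffected by the stopping event) is comparatively routine.
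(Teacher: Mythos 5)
Your opening decomposition matches the paper exactly: center $e_k$ and $e_{s_k}$ by their conditional means, use condition \textbf{(ii)} in \eqref{eq1} to absorb the means into $2\tilde{\epsilon}_f T$, and union-bound into two separate martingale deviation probabilities each at level $(\epsilon_f-\tilde{\epsilon}_f+s)T$. The gap is in the deviation bound itself: you propose to apply the martingale Fuk--Nagaev inequality uniformly for all $\delta>0$, with a sub-Gaussian term whose variance proxy is $\Upsilon_f^{2/(1+\delta)}$ per increment. That variance proxy is obtained from the $(1+\delta)$-moment bound \eqref{eq3} via H\"older's inequality, $\mE[(e_k-\mE[e_k\mid\F_{k-1}])^2\mid\F_{k-1}]\leq \Upsilon_f^{2/(1+\delta)}$, and this direction of H\"older requires $1+\delta\geq 2$, i.e.\ $\delta\geq 1$. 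For $\delta\in(0,1)$ the conditional second moment is not controlled by \eqref{eq3} at all (it may be infinite), so the Gaussian part of the Fuk--Nagaev bound has no finite variance parameter to plug in, and your uniform application of the inequality breaks down precisely in the regime that is the point of the lemma (heavy tails with only a low-order moment).

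The paper avoids this by splitting into two cases. For $\delta\in(0,1]$ it uses only Markov's inequality on the $(1+\delta)$-th moment of the sum together with the Burkholder-type inequality (Lemma \ref{append:lemma1} with $p=1+\delta\in(1,2]$), which yields a purely polynomial bound $2^{1-\delta}\Upsilon_f(\epsilon_f-\tilde{\epsilon}_f+s)^{-(1+\delta)}T^{-\delta}$ per sequence and no exponential term; for $\delta>1$ it applies Fan et al.'s Corollary 2.5 as you describe, with $V$ bounded via H\"older. The two cases are then merged by taking the worse of the polynomial constants ($\max\{((3+\delta)/(1+\delta))^{1+\delta},2^{1-\delta}\}\cdot 2\leq 3^{2+\delta}$) and noting that adding the nonnegative exponential term to the $\delta\leq 1$ bound is harmless. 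So Lemma \ref{append:lemma1} is not merely an ingredient inside a re-derivation of Fuk--Nagaev, as you suggest; it is the entire argument in the low-moment case. To repair your proof you would need to either add this case split, or carry out your sketched truncation argument in full and verify it reproduces (or improves on) the stated constants for $\delta\in(0,1]$ without ever invoking a second moment.
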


\begin{lemma}\label{append:lemma3}
Under Assumptions \ref{assump:4-1}, \ref{assump:epsilon}, when $\hat{p}$ satisfies \eqref{thm:p} and $T$ satisfies \eqref{thm:T}, we have\vspace{-0.2cm}
\begin{equation}\label{eq:PB}
P(B) \leq \exp\left\{ -\frac{(p-\hat{p})^2}{2p^2}T\right\}.
\end{equation}
\end{lemma}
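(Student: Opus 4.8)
The plan is to show that the event $B$ deterministically forces ``too few accurate iterations'' and then to invoke Lemma \ref{lemma:I_k}. Split
\[
P(B)=P\Big(B\cap\Big\{\textstyle\sum_{k=0}^{T-1}I_k<\hat p\,T\Big\}\Big)+P\Big(B\cap\Big\{\textstyle\sum_{k=0}^{T-1}I_k\geq\hat p\,T\Big\}\Big).
\]
The first term is at most $P\big(\sum_{k=0}^{T-1}I_k<\hat p\,T\big)\leq\exp\{-(p-\hat p)^2T/(2p^2)\}$ by Lemma \ref{lemma:I_k}, which applies since \eqref{thm:p} gives $\hat p\in(1/2,p)\subseteq[0,p)$; this already matches the claimed bound \eqref{eq:PB}. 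Hence it remains to prove that the second event is \emph{empty}.

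For that I would derive two contradictory bounds on $\sum_{k=0}^{T-1}\Theta_kI_k\Lambda_k$ on $B\cap\{\sum_kI_k\geq\hat p\,T\}$. The first is a purely deterministic counting bound. The proof of Lemma \ref{lemma: prob=0} uses the hypothesis $\sum_kI_k\geq pT$ exactly once (in \eqref{nequ:23}, to bound the number of inaccurate iterations by $(1-p)T$), whereas Lemma \ref{lemma: count_iter}(b),(d) do not involve $p$; moreover, on $B$ we have $T_\epsilon>T-1$, so every index $k\in\{0,\dots,T-1\}$ precedes the stopping time and Lemmas \ref{lemma:guarantee_succ_step_KKT}, \ref{lemma:guarantee_succ_step_eigen} apply (as needed for Lemma \ref{lemma: count_iter}(d)). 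Running that same chain verbatim with $p$ replaced by $\hat p$ therefore gives, on $B\cap\{\sum_kI_k\geq\hat p\,T\}$,
\[
\sum_{k=0}^{T-1}\Theta_kI_k\Lambda_k\ \geq\ \Big(\hat p-\tfrac12\Big)T-\tfrac12\log_\gamma\!\big(\Delta_0/\hat{\Delta}\big)-1 .
\]

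The second bound is an energy/telescoping estimate that exploits the smallness of the accumulated noise on $B$. Telescoping the estimated $\ell_2$ merit function $\bar{\L}_{\barmu_k}$ along $k=0,\dots,T-1$: on a sufficient accurate iteration ($\Theta_kI_k=1$) Lemma \ref{lemma:suff_iter} gives a one-step decrease of at least $h_\alpha(\Delta_k)-\vartheta_\alpha$; on any other accepted iteration the acceptance test (\ref{nequ:3}a) gives $\text{Ared}_k\leq\eta\,\text{Pred}_k+\vartheta_\alpha\leq\vartheta_\alpha$; on a rejected iteration $\bx_{k+1}=\bx_k$. Using Assumption \ref{assump:4-1} ($f\geq f_{\inf}$ and boundedness of $\|c_k\|$), and absorbing the discrepancies caused by re-querying the zeroth-order oracle at repeated iterates and by the monotone growth of the merit parameter into the aggregate $\sum_{k=0}^{T-1}(e_k+e_{s_k})$ — which on $B$ is at most $(2\epsilon_f+2s)T$ — then discarding the non-large iterations and using $\Delta_k\geq\hat{\Delta}'>\gamma^{-2}\hat{\Delta}$ with monotonicity of $h_\alpha$ on the large ones, one obtains, on $B$,
\[
\sum_{k=0}^{T-1}\Theta_kI_k\Lambda_k\ \leq\ \frac{\barmu_{T-1}}{\barmu_0\,h_\alpha(\gamma^{-2}\hat{\Delta})}\Big(f(\bx_0)+\barmu_0\|c(\bx_0)\|-f_{\inf}+(2\epsilon_f+\vartheta_\alpha+2s)T\Big).
\]
Multiplying \eqref{thm:T} by $\barmu_{T-1}$ and rearranging shows this right-hand side is strictly less than $(\hat p-\tfrac12)T-\tfrac12\log_\gamma(\Delta_0/\hat{\Delta})-1$ — and this is exactly where \eqref{thm:p} is used, to make the coefficient $(\hat p-\tfrac12)/\barmu_{T-1}-(2\epsilon_f+\vartheta_\alpha+2s)/(\barmu_0h_\alpha(\gamma^{-2}\hat{\Delta}))$ positive so the rearrangement of \eqref{thm:T} is legitimate. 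The two displayed bounds contradict each other, so $B\cap\{\sum_kI_k\geq\hat p\,T\}=\emptyset$, which with the first term yields $P(B)\leq\exp\{-(p-\hat p)^2T/(2p^2)\}$, i.e.\ \eqref{eq:PB}.

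The main obstacle is the telescoping upper bound: one must account carefully for the freshly re-estimated objective values at a repeated iterate and for the (monotone, but not uniformly-in-$T$ bounded) merit parameter, so as to land exactly on the per-iteration term $2\epsilon_f+\vartheta_\alpha+2s$ and the factor $\barmu_{T-1}/\barmu_0$, and to handle the Case-2 iterations in which a second-order correction step replaces the trial point and the objective estimate at the trial point is recomputed. Everything else is bookkeeping with lemmas already established in the excerpt.
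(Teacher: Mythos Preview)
Your proposal is correct and follows essentially the same route as the paper: split on $\sum_k I_k\gtrless\hat p\,T$, bound the ``few accurate'' branch by Lemma~\ref{lemma:I_k}, and on the ``many accurate'' branch combine the counting argument of Lemma~\ref{lemma: prob=0} (rerun with $\hat p$ in place of $p$, which you correctly justify) with a merit-function telescoping bound that is incompatible with it under \eqref{thm:p}--\eqref{thm:T}. The paper merely swaps the order of the two splits, first partitioning $B$ by the value of $\sum_k\Theta_kI_k\Lambda_k$ and then, on the ``small'' side, by $\sum_kI_k$; the substance is identical.

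One point worth tightening: the paper telescopes the \emph{true} normalized merit value $(\L_{\barmu_k}^k-f_{\inf})/\barmu_k$ rather than the estimated $\bar{\L}_{\barmu_k}$. This sidesteps exactly the obstacle you flag---re-queried objective estimates at repeated iterates and the drift of $\barmu_k$---because on rejected steps $\L_{\barmu_k}^{k+1}=\L_{\barmu_k}^k$ exactly, and the monotonicity $\barmu_k\leq\barmu_{k+1}$ together with $\L_{\barmu_k}^k\geq f_{\inf}$ makes the normalized sequence telescope cleanly. Converting Lemma~\ref{lemma:suff_iter} from $\bar{\L}$ to $\L$ costs precisely $e_k+e_{s_k}$, yielding your displayed upper bound directly without any bookkeeping for SOC re-estimation.
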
    

Combining \eqref{eq:PAPB} with the conclusions of Lemmas \ref{append:lemma2} and \ref{append:lemma3}, we complete the proof.	
\end{proof}

\vspace{-0.6cm}
\subsection{Proof of Lemma \ref{append:lemma1}}\label{append:B2}

\begin{proof}

To prove the result, we fist show the following inequality: \begin{equation}\label{append:eq4}
|a + b|^p \leq |a|^p + p \cdot \text{sgn}(a)\cdot |a|^{p-1} b + 2^{2-p} |b|^p.
\end{equation}
When $a=0$, the inequality holds trivially as $2^{2-p}\geq 1$ for $p\in (1,2]$. Moreover, when $p=2$, the inequality also holds trivially by the observation that $\text{sgn}(a)|a|=a$. In what follows, we consider $a\neq 0$ and $p\in (1,2)$. We divide $|a|^p$ on both sides and get
\begin{equation}\label{append:eq5}
\left|1+ \frac{b}{a}\right|^p \leq 1 + p \frac{b}{a} + 2^{2-p}\left|\frac{b}{a}\right|^p,
\end{equation}
where we use the observation that $\text{sgn}(a)/|a| = 1/a$. Denoting $x=b/a$, it suffices to show that $f(x)= 1 + p x + 2^{2-p}\left|x\right|^p-\left|1+ x \right|^p \geq 0$. To this end, we consider three cases. \textbf{Case A:} $b/a>0$, \textbf{Case B:} $-1<b/a\leq 0$, and \textbf{Case C:} $b/a \leq -1$.

\vskip4pt
\noindent$\bullet$ \textbf{Case A:} $b/a>0$: We have $f(x) = 1+px+2^{2-p}x^p-(1+x)^p$ with $f(0) = 0, f'(0)=0$~and \begin{equation*}
f''(x) = p(p-1) \left[ \left( \frac{2}{x} \right)^{2-p} - \left( \frac{1}{1+x} \right)^{2-p} \right].
\end{equation*}
Since $1 + x > x/2 $ for all $x > 0$, when $p < 2$, we have \begin{equation*}
\left( \frac{2}{x} \right)^{2-p} > \left( \frac{1}{1+x} \right)^{2-p} \implies f''(x) > 0.
\end{equation*}
Thus, for all $x > 0$, we have $f'(x) > 0$, which then implies $f(x) >  0$, and thus \eqref{append:eq4} holds.

\vskip4pt
\noindent $\bullet$ \textbf{Case B:} $-1<b/a\leq 0$: We substitute $y=1+x$ so that $ 0 \leq y \leq 1 $. Then,
\begin{equation*}
f(x)=f(y-1)= 2^{2-p} (1 - y)^p + p (y - 1) + 1 - y^p \geq (1 - y)^p + p (y - 1) + 1 - y^p \coloneqq g(y).
\end{equation*}  
Then, we have $g(1)=0$, $g'(0)=g'(1)=0$, and 
\begin{equation*}
g''(y)= p (p-1) \left[ \left( \frac{1}{1 - y} \right)^{2-p} - \left( \frac{1}{y} \right)^{2-p} \right].
\end{equation*}
Note that $g''(y)$ has exactly one root at $y = 1/2$. Moreover, $ g''(y) < 0 $ for $y < 1/2 $ and $ g''(y) > 0 $ for $y >1/2$, so $ g'(y) $ has a minimum at $ y = \frac{1}{2} $. Since $ g'(0) = g'(1) = 0 $, we conclude that $ g'(y) < 0 $ for all $ 0 < y < 1 $. As $g(y)$ is monotonically decreasing on $(0,1)$ and $g(1)=0$, we have $ g(y) \geq 0 $ for all $0 \leq y \leq 1 $. Therefore, $f(x)\geq 0$.

\vskip4pt
\noindent$\bullet$ \textbf{Case C:} $b/a \leq -1$: We substitute $z = -1 - x $ so that $z\geq 0$. Then $f(x)=f(-z-1)\eqqcolon h(z)$ and \vspace{-0.2cm}
\begin{equation*}
h(z) = 2^{2-p} (1 + z)^p - p (1 + z) + 1 - z^p.
\end{equation*}
Then, we have $h(0) \geq 0, h'(1) = 0$, and  \vspace{-0.2cm}
\begin{equation*}
h'(z) = p (p-1) \left[ \left( \frac{2}{1 + z} \right)^{2-p} - \left( \frac{1}{z} \right)^{2-p} \right].
\end{equation*}
Note that $ h''(z) = 0 $ only at $ z = 1$. Moreover, $ h''(z) < 0 $ for $ z < 1 $ and $ h''(z) > 0 $ for $ z > 1 $, so $ h'(z) $ has a minimum at $z = 1 $. Since $ h'(1) = 0 $, we know $ h'(z) \geq 0 $ for all $z \geq 0 $, and hence $ h(z) \geq 0 $ for all $z \geq 0$. Combining the above three cases, we prove \eqref{append:eq4}.

Next, we prove \eqref{append:eq2} by induction. The inequality holds trivially for $T=1$, as $2^{2-p} \geq 1$ for $p \in (1,2] $. Suppose the inequality holds for $T=n$, we now consider $T=n+1$.~Applying \eqref{append:eq4} with $a =  \sum_{k=0}^{n-1} X_k $ and $b = X_{n}$, we have
\begin{equation*}
\left| \sum_{k=0}^{n} X_k \right| ^{p} \leq  \left| \sum_{k=0}^{n-1} X_k \right| ^{p} + p\cdot \text{sgn}(\sum_{k=0}^{n-1} X_k) \cdot \left| \sum_{k=0}^{n-1} X_k \right|^{p-1} \cdot X_{n} + 2^{2-p}  |X_{n}|^p .
\end{equation*}
Taking expectation on both sides, we have
\begin{equation*}
\mE\left[ \left| \sum_{k=0}^{n} X_k \right| ^{p} \right] \leq  \mE\left[  \left| \sum_{k=0}^{n-1} X_k \right| ^{p} \right] + p \mE\left[  \text{sgn}(\sum_{k=0}^{n-1} X_k) \left| \sum_{k=0}^{n-1} X_k \right|^{p-1} \cdot X_{n}  \right]  + 2^{2-p}  \mE\left[  |X_{n}|^p \right]  .
\end{equation*}
Using $\mE[X_{n}\mid X_{0:n-1}]=0$, we get
\begin{equation*}
\mE\left[ \left| \sum_{k=0}^{n} X_k \right| ^{p} \right] \leq  \mE\left[  \left| \sum_{k=0}^{n-1} X_k \right| ^{p} \right] + 2^{2-p}  \mE\left[  |X_{n}|^p \right] .
\end{equation*}
We complete the proof by induction.
\end{proof}

\subsection{Proof of Lemma \ref{append:lemma2}}\label{append:B3}

\begin{proof}
We observe that
\begin{align}\label{append:eq3}
P(A) & \leq P\left\{ \sum_{k=0}^{T-1} (e_k + e_{s_k}) > (2\epsilon_f + 2s)T\right\} \notag \\
& = P\left\{ \sum_{k=0}^{T-1}( e_k+e_{s_k}-\mE[e_k \mid\F_{k-1} ]-\mE[e_{s_k} \mid\F_{k-1/2}])\right. \notag  \\
&\hspace{3cm} \left. > (2\epsilon_f + 2s) T-\sum_{k=0}^{T-1} (\mE[e_k \mid\F_{k-1}]+\mE[e_{s_k} \mid\F_{k-1/2}])\right\} \notag \\
& \leq P\left\{ \sum_{k=0}^{T-1}( e_k -\mE[e_k \mid\F_{k-1} ]) > (\epsilon_f + s) T-\sum_{k=0}^{T-1}\mE[e_k \mid\F_{k-1}]\right\} \notag \\
& \quad + P\left\{ \sum_{k=0}^{T-1}( e_{s_k} -\mE[e_{s_k} \mid\F_{k-1/2} ]) > (\epsilon_f + s) T-\sum_{k=0}^{T-1}\mE[e_{s_k} \mid\F_{k-1/2}]\right\} \quad(\text{by union bound}) \notag \\
& \stackrel{\mathclap{\eqref{eq1}}}{\leq} P\left\{ \sum_{k=0}^{T-1}( e_k -\mE[e_k \mid\F_{k-1} ]) > (\epsilon_f -\tilde{\epsilon}_f + s) T\right\} \notag \\
& \quad + P\left\{ \sum_{k=0}^{T-1}( e_{s_k} -\mE[e_{s_k} \mid\F_{k-1/2} ]) > (\epsilon_f -\tilde{\epsilon}_f+ s) T\right\}.
\end{align}
Next we bound \eqref{append:eq3} by discussing two cases. \textbf{Case A:} $\delta \in (0,1]$ and \textbf{Case B:} $\delta \in (1,\infty)$.

\noindent$\bullet$ \textbf{Case A:} When $\delta \in (0,1]$. Since $e_k -\mE[e_k \mid\F_{k-1} ]$ is a martingale difference that satisfies \eqref{eq3}, it follows from the {\myred Markov} inequality and the conclusion of Lemma \ref{append:lemma1} that
\begin{multline*}
P\left\{ \sum_{k=0}^{T-1}( e_k -\mE[e_k \mid\F_{k-1} ]) > (\epsilon_f -\tilde{\epsilon}_f + s) T\right\} \leq \frac{\mE\left[\left|\sum_{k=0}^{T-1}( e_k -\mE[e_k \mid\F_{k-1} ])\right|^{1+\delta} \right]}{(\epsilon_f -\tilde{\epsilon}_f + s)^{1+\delta} T^{1+\delta}} \\
 \leq \frac{2^{1-\delta}\sum_{k=0}^{T-1}\mE\left[\left| e_k -\mE[e_k \mid\F_{k-1}]\right|^{1+\delta} \right]}{(\epsilon_f -\tilde{\epsilon}_f + s)^{1+\delta} T^{1+\delta}} 
 \leq \frac{2^{1-\delta}T\Upsilon_f}{(\epsilon_f -\tilde{\epsilon}_f + s)^{1+\delta} T^{1+\delta}} = \frac{2^{1-\delta}\Upsilon_f}{(\epsilon_f -\tilde{\epsilon}_f + s)^{1+\delta} } \cdot T^{-\delta}.    
\end{multline*}
We can similarly show 
\begin{align*}
P\left\{ \sum_{k=0}^{T-1}( e_{s_k} -\mE[e_{s_k} \mid\F_{k-1/2} ]) > (\epsilon_f -\tilde{\epsilon}_f + s) T\right\} \leq \frac{2^{1-\delta}\Upsilon_f}{(\epsilon_f -\tilde{\epsilon}_f + s)^{1+\delta} } \cdot T^{-\delta}.    
\end{align*}
Therefore, for $\delta \in (0,1]$ we have
\begin{equation*}
P(A) \leq \frac{2^{2-\delta}\Upsilon_f}{(\epsilon_f -\tilde{\epsilon}_f + s)^{1+\delta} } \cdot T^{-\delta}.
\end{equation*}
\noindent$\bullet$ \textbf{Case B:} When $\delta \in (1,\infty)$. We apply the martingale Fuk–Nagaev inequality \citep[Corollary 2.5]{Fan2017Deviation} {\myred (restated in Lemma \ref{Fuk–Nagaev})} and get
\begin{multline}\label{append:eq6}
P\left\{ \sum_{k=0}^{T-1}( e_k -\mE[e_k \mid\F_{k-1} ]) > (\epsilon_f -\tilde{\epsilon}_f + s) T\right\} \\
\leq \exp\left( -\frac{(\epsilon_f -\tilde{\epsilon}_f + s)^2 T^2}{2V}\right) + \frac{C}{(\epsilon_f -\tilde{\epsilon}_f + s)^{1+\delta} T^{1+\delta}},
\end{multline}
where
\begin{align*}
V & = \frac{1}{4}(3+\delta)^2e^{1+\delta} \left| \sum_{k=0}^{T-1} \mE \left[ \left(e_k-\mE\left[e_k\mid \F_{k-1}\right] \right)^{2}\mid \F_{k-1}\right] \right|, \\
C & = \left( 1+ \frac{2}{1 + \delta}\right)^{1+\delta} \left| \sum_{k=0}^{T-1} \mE \left[ \left| e_k-\mE\left[e_k\mid \F_{k-1}\right] \right|^{ 1+\delta }\mid \F_{k-1}\right] \right|. 
\end{align*}
By the Hölder's inequality and \eqref{eq3}, we have
$\mE \left[ \left(e_k-\mE\left[e_k\mid \F_{k-1}\right] \right)^{2}\mid \F_{k-1}\right] \leq \Upsilon_f^{\frac{2}{1+\delta}}$. Thus, 
\begin{equation}\label{append:eq7}
V \leq \frac{1}{4}(3+\delta)^2e^{1+\delta}\Upsilon_f^{\frac{2}{1+\delta}} T  \quad \text{and} \quad C \leq \left( 1+ \frac{2}{1 + \delta}\right)^{1+\delta}\Upsilon_f T.
\end{equation}
Combining \eqref{append:eq6} and \eqref{append:eq7}, we have
\begin{multline}\label{append:eq8}
P\left\{ \sum_{k=0}^{T-1}( e_k -\mE[e_k \mid\F_{k-1} ]) > (\epsilon_f -\tilde{\epsilon}_f + s) T\right\} \\
\leq \exp \left( - \frac{2(\epsilon_f-\tilde{\epsilon}_f+s)^2}{(3+\delta)^2 e^{1+\delta} \Upsilon_f^{\frac{2}{1+\delta}}} T \right)
+ \Upsilon_f \left( \frac{3+\delta}{(1+\delta)(\epsilon_f-\tilde{\epsilon}_f+s)}\right)^{ 1+\delta }\cdot T^{-\delta}.
\end{multline}
By similar argument, we have
\begin{multline}\label{append:eq9}
P\left\{ \sum_{k=0}^{T-1}( e_{s_k} -\mE[e_{s_k} \mid\F_{k-1/2} ]) > (\epsilon_f -\tilde{\epsilon}_f + s) T\right\} \\
\leq \exp \left( - \frac{2(\epsilon_f-\tilde{\epsilon}_f+s)^2}{(3+\delta)^2 e^{1+\delta} \Upsilon_f^{\frac{2}{1+\delta}}} T \right) 
+ \Upsilon_f \left( \frac{3+\delta}{(1+\delta)(\epsilon_f-\tilde{\epsilon}_f+s)}\right)^{ 1+\delta }\cdot T^{-\delta}.
\end{multline}
Combining \eqref{append:eq8} and \eqref{append:eq9}, we have 
\begin{equation*}
P(A) \leq 2 \exp \left( - \frac{2(\epsilon_f-\tilde{\epsilon}_f+s)^2}{(3+\delta)^2 e^{1+\delta} \Upsilon_f^{\frac{2}{1+\delta}}} T \right)
+ 2 \Upsilon_f \left( \frac{3+\delta}{(1+\delta)(\epsilon_f-\tilde{\epsilon}_f+s)}\right)^{ 1+\delta }\cdot T^{-\delta}.
\end{equation*}
Combining the conclusions in \textbf{Case A.} and \textbf{Case B.}, we have
\begin{align*}
P(A) & \leq 2 \exp \left( - \frac{2(\epsilon_f-\tilde{\epsilon}_f+s)^2}{(3+\delta)^2 e^{1+\delta} \Upsilon_f^{\frac{2}{1+\delta}}} T \right)
+ \frac{2 \Upsilon_f \cdot \max\left\{\left( \frac{3+\delta}{1+\delta}\right)^{ 1+\delta },2^{1-\delta}\right\}}{(\epsilon_f -\tilde{\epsilon}_f + s)^{1+\delta}} \cdot T^{-\delta} \\
& \leq 2 \exp \left( - \frac{2(\epsilon_f-\tilde{\epsilon}_f+s)^2}{(3+\delta)^2 e^{1+\delta} \Upsilon_f^{\frac{2}{1+\delta}}} T \right)
+ \frac{3^{2+\delta} \Upsilon_f}{(\epsilon_f -\tilde{\epsilon}_f + s)^{1+\delta}} \cdot T^{-\delta}, 
\end{align*}
as $(3+\delta)/(1+\delta)<3$ for $\delta>0$. 
We thus complete the proof.
\end{proof}

\subsection{Proof of Lemma \ref{append:lemma3}}\label{append:B4}

\begin{proof}
Denoting $d= \log_\gamma(\Delta_0/\hat{\Delta}) + 2$, by the law of total probability, we have
\begin{align*}
P(B) & = P\underbrace{\left\{T_{\epsilon}>T-1, \sum_{k=0}^{T-1}(\vartheta_\alpha +e_k+e_{s_k}) \leq (2\epsilon_f + \vartheta_\alpha + 2s) T, \sum_{k=0}^{T-1} \Lambda_kI_k\Theta_k \geq  \left(\hat{p}-\frac{1}{2}\right)T-\frac{d}{2} \right\}}_{B_1}\\
& + P\underbrace{\left\{T_{\epsilon}>T-1, \sum_{k=0}^{T-1}(\vartheta_\alpha + e_k+e_{s_k}) \leq (2\epsilon_f + \vartheta_\alpha +2s) T, \sum_{k=0}^{T-1} \Lambda_kI_k\Theta_k < \left(\hat{p}-\frac{1}{2}\right)T-\frac{d}{2}\right\} }_{B_2}.
\end{align*}
To prove $P(B_1)=0$, we will show that $T_{\epsilon}>T-1, \sum_{k=0}^{T-1}(\vartheta_\alpha +e_k+e_{s_k}) \leq (2\epsilon_f +\vartheta_\alpha + 2s) T$ together imply $\sum_{k=0}^{T-1} \Lambda_kI_k\Theta_k < (\hat{p}-1/2)T-d/2$. Suppose, on the contrary, $\sum_{k=0}^{T-1} \Lambda_kI_k\Theta_k \geq  (\hat{p}-1/2)T-d/2$ holds. Then unifying $\alpha=0$ and $\alpha=1$, we discuss~the following two cases. \textbf{Case A:} $\Lambda_kI_k\Theta_k=1$ and \textbf{Case B:} $\Lambda_kI_k\Theta_k=0$.

\vskip4pt
\noindent$\bullet$ \textbf{Case A:}  $\Lambda_kI_k\Theta_k=1$. We have
\begin{align*}
\L_{\barmu_k}^{k+1}-\L_{\barmu_k}^k & = \L_{\barmu_k}^{k+1}-\bar{\L}_{\barmu_k}^{k+1}+\bar{\L}_{\barmu_k}^{k+1}-\bar{\L}_{\barmu_k}^k+\bar{\L}_{\barmu_k}^k-\L_{\barmu_k}^k \\
& \leq | f_{ k+1 } - \barf_{k+1} | + | \barf_k - f_k | + \bar{\L}_{\barmu_k}^{k+1}-\bar{\L}_{\barmu_k}^k \\
& \leq -h_{\alpha}(\Delta_k)+ e_k + e_{s_k} +\vartheta_\alpha && (\text{by Lemma }\ref{lemma:suff_iter})\\
& \leq - h_{\alpha}(\hat{\Delta}') + e_k + e_{s_k} + \vartheta_\alpha . && (\text{since }\Delta_k \geq \hat{\Delta}'\text{ when }\Lambda_k=1)
\end{align*}
Dividing $\barmu_k$ on both sides and noticing that $\barmu_0\leq \barmu_k\leq \barmu_{T-1}$ and $\gamma^{-2}\hat{\Delta} < \hat{\Delta}'$, we have 
\begin{align*}
\frac{1}{\barmu_k}(\L_{\barmu_k}^{k+1}-f_{\inf})-\frac{1}{\barmu_k}(\L_{\barmu_k}^k-f_{\inf}) & \leq - \frac{1}{\barmu_k } h_{\alpha}(\hat{\Delta}')+\frac{1}{\barmu_k} (\vartheta_\alpha +e_k + e_{s_k} )\\
& < - \frac{1}{\barmu_{T-1} }  h_{\alpha}(\gamma^{-2}\hat{\Delta}) +\frac{1}{\barmu_0} (\vartheta_\alpha +e_k + e_{s_k}).
\end{align*}
\noindent$\bullet$ \textbf{Case B:} $\Lambda_kI_k\Theta_k=0$. We consider two subcases. \textbf{Case B.1:} (\ref{nequ:3}a) holds in the $k$-th iteration; and \textbf{Case B.2:} (\ref{nequ:3}a) does not hold the $k$-th iteration.

\vskip4pt
\noindent$\bullet$ \textbf{Case B.1:} In this case, we have
\begin{align}\label{nequ:30}
\L_{\barmu_k}^{k+1}-\L_{\barmu_k}^k & = \L_{\barmu_k}^{k+1}-\bar{\L}_{\barmu_k}^{k+1}+\bar{\L}_{\barmu_k}^{k+1}-\bar{\L}_{\barmu_k}^k+\bar{\L}_{\barmu_k}^k-\L_{\barmu_k}^k \notag \\
& \leq | f_{k+1 } - \barf_{k+1} | + |  \barf_k - f_k | + \eta \text{Pred}_k +\vartheta_\alpha \qquad (\text{by} (\ref{nequ:3}\text{a}))\notag \\
& \leq e_k + e_{s_k} +\vartheta_\alpha.
\end{align}
The last inequality is because $\text{Pred}_k< 0$ but we can no longer guarantee (\ref{nequ:3}b).

\vskip4pt
\noindent$\bullet$ \textbf{Case B.2:} In this case, since $\bx_{k+1}=\bx_k$, we have $\L_{\barmu_k}^{k+1}-\L_{\barmu_k}^k=0$. 

We combine \textbf{Case B.1} and \textbf{Case B.2} and divide $\barmu_k$ on both sides. Since $\barmu_0\leq \barmu_k$, we have
\begin{equation}\label{nequ:31}
\frac{1}{\barmu_k}(\L_{\barmu_k}^{k+1}-f_{\inf})-\frac{1}{\barmu_k}(\L_{\barmu_k}^k-f_{\inf})  \leq \frac{1}{\barmu_0} (\vartheta_\alpha + e_k + e_{s_k}).
\end{equation}
Using the results in \eqref{nequ:31} , we sum over $k=0,1,\cdots,T-1$. Since $\barmu_{k}\leq \barmu_{k+1}$, we have
\begin{equation*}
\frac{1}{\barmu_{T-1}}(\L_{\barmu_{T-1}}^{T}-f_{\inf})-\frac{1}{\barmu_0}(\L_{\barmu_0}^0 - f_{\inf})  \leq - \frac{h_\alpha(\gamma^{-2}\hat{\Delta})}{\barmu_{T-1} } \sum_{k=0}^{T-1}\Lambda_kI_k\Theta_k+ \sum_{k=0}^{T-1}\frac{ \vartheta_\alpha + e_k + e_{s_k} }{\barmu_0}.
\end{equation*}
Rearranging the terms, using the relations $\sum_{k=0}^{T-1}(\vartheta_\alpha +e_k + e_{s_k}) \leq (2\epsilon_f +\vartheta_\alpha+ 2s) T$ and $\sum_{k=0}^{T-1} \Lambda_kI_k\Theta_k \geq  (\hat{p}-1/2)T-d/2$, we have
\begin{equation}\label{nequ:32}
\frac{1}{\barmu_{T-1}}(\L_{\barmu_{T-1}}^{T}-f_{\inf}) \\
\leq \frac{1}{\barmu_0}(\L_{\barmu_0}^0 - f_{\inf})   -\frac{h_\alpha(\gamma^{-2}\hat{\Delta})}{\barmu_{T-1}}   \left(\hat{p}-\frac{1}{2}\right)T +  \frac{2\epsilon_f + \vartheta_\alpha+ 2s}{\barmu_0}T + \frac{h_\alpha(\gamma^{-2}\hat{\Delta}) d}{ 2\barmu_{T-1}}.
\end{equation}
However, the conditions of $\hat{p}$ and $T$ imply that the right-hand side of \eqref{nequ:32} is negative. This yields a contradiction, since the left-hand side is non-negative. We thus proved $P(B_1)=0$. Next, we bound $P(B_2)$. We observe that \begin{align*}
P(B_2) & \leq P\left\{T_{\epsilon}>T-1,  \sum_{k=0}^{T-1} \Lambda_kI_k\Theta_k < \left(\hat{p}-\frac{1}{2}\right)T-\frac{d}{2}\right\} \\
& = P\underbrace{\left\{T_{\epsilon}>T-1,  \sum_{k=0}^{T-1}I_k<\hat{p}T, \sum_{k=0}^{T-1} \Lambda_kI_k\Theta_k < \left(\hat{p}-\frac{1}{2}\right)T-\frac{d}{2} \right\}}_{B_{21}}\\
&\quad +  P\underbrace{\left\{T_{\epsilon}>T-1,  \sum_{k=0}^{T-1}I_k \geq \hat{p}T, \sum_{k=0}^{T-1} \Lambda_kI_k\Theta_k < \left(\hat{p}-\frac{1}{2}\right)T-\frac{d}{2} \right\}}_{B_{22}}.
\end{align*}
For $P(B_{21})$, we have
\begin{equation}
P(B_{21}) \leq P\left\{\sum_{k=0}^{T-1}I_k<\hat{p}T \right\}  \stackrel{\text{Lemma } \ref{lemma:I_k}}{\leq} \exp\left\{-\frac{(p-\hat{p})^2}{2p^2}T\right\}.
\end{equation}
For $P(B_{22})$, if follows from Lemma \ref{lemma: prob=0} that $P(B_{22})=0$. Combining the above results, we have 
\begin{equation*}
P(B) \leq \exp\left\{-\frac{(p-\hat{p})^2}{2p^2}T\right\}.
\end{equation*}
We thus complete the proof.
\end{proof}

\section{Proofs of Section \ref{sec:4_subexp}}\label{append:C}

\subsection{Proof of Theorem \ref{thm: 1st_subexp}}\label{thm:Append:C.2}

\begin{proof}
Same as the proof of Theorem \ref{thm: 1st} in Appendix \ref{append:B}, we also bound $P\{T_{\epsilon}>T-1\}$ via \eqref{eq:PAPB} and analyze $P(A)$ and $P(B)$ separately. To bound $P(A)$, we observe that
\begin{align*}
P(A) & \leq P\left\{ \sum_{k=0}^{T-1}( e_k+e_{s_k}) \geq (2\epsilon_f + 2s)T\right\} \\
& = P\left\{ \sum_{k=0}^{T-1}( e_k+e_{s_k}-\mE[e_k \mid\F_{k-1} ]-\mE[e_{s_k} \mid\F_{k-1/2}])\right. \\
&\hspace{3cm} \left.\geq ( 2\epsilon_f +2 s) T-\sum_{k=0}^{T-1} (\mE[e_k \mid\F_{k-1}]+\mE[e_{s_k} \mid\F_{k-1/2}])\right\} \\
& = P\left\{ \sum_{k=0}^{T-1} \lambda ( e_k+e_{s_k}-\mE[e_k \mid\F_{k-1}]-\mE[e_{s_k} \mid\F_{k-1/2}])\right. \\
&\hspace{1cm} \left. \geq \lambda( 2\epsilon_f + 2s) T-\sum_{k=0}^{T-1} \lambda(\mE[e_k \mid\F_{k-1}]+\mE[e_{s_k} \mid\F_{k-1/2}])\right\} \;\text{for any } \lambda\in\left[0,\frac{1}{b}\right]\\
& \stackrel{\mathclap{\eqref{eq1}}}{\leq} P\left\{ \sum_{k=0}^{T-1} \lambda ( e_k+e_{s_k}-\mE[e_k\mid\F_{k-1}]-\mE[e_{s_k}\mid\F_{k-1/2}]) \geq \lambda(2\epsilon_f - 2\tilde{\epsilon}_f+ 2s)T\right\}\\
& \leq \mE\left[e^{\sum_{k=0}^{T-1} \lambda ( e_k+e_{s_k}-\mE[e_k\mid\F_{k-1}]-\mE[e_{s_k}\mid\F_{k-1/2}])}\right]\cdot e^{ - \lambda(2\epsilon_f -2\tilde{\epsilon}_f + 2s) T} \quad\text{(by Markov inequality)}.
\end{align*}
Next, we prove
\begin{equation*}
\mE\left[e^{\sum_{k=0}^{T-1} \lambda ( e_k+e_{s_k}-\mE[e_k\mid\F_{k-1}]-\mE[e_{s_k}\mid\F_{k-1/2}])}\right] \leq  e^{\frac{(2v)^2\lambda^2}{2}T}.
\end{equation*}
To show this, we first note that
\begin{align}\label{nequ:19}
& \mE[e^{ \lambda (e_k + e_{s_k} -\mE[e_k\mid \F_{k-1}]- \mE[e_{s_k}\mid \F_{k-1/2}]) } \mid \F_{k-1}] \notag \\
& \leq \left( \mE[e^{ 2\lambda (e_k -\mE[e_k\mid \F_{k-1}])}\mid \F_{k-1}] \cdot \mE[e^{2\lambda (e_{s_k} - \mE[e_{s_k}\mid \F_{k-1/2}]) } \mid \F_{k-1}]\right)^{1/2} \;(\text{by Hölder's inequality}) \notag\\
& \leq \left( \mE[e^{ 2\lambda (e_k -\mE[e_k\mid \F_{k-1}])}\mid \F_{k-1}] \cdot \mE[\mE[e^{2\lambda (e_{s_k} - \mE[e_{s_k}\mid \F_{k-1/2}]) } \mid \F_{k-1/2}] \mid \F_{k-1}]\right)^{1/2} \;(\text{{\myred by tower property}}) \notag\\
& \stackrel{\mathclap{\eqref{eq4}}}{\leq} \left( e^{2\lambda^2v^2}\cdot e^{2\lambda^2v^2}  \right)^{1/2} = e^{\frac{\lambda^2(2v)^2}{2}}\quad \text{for all }\lambda\in\left[0,\frac{1}{2b}\right].
\end{align} 
Thus, given $\F_{k-1}$, $e_k + e_{s_k}$ follows a sub-exponential distribution with parameters $(2v,2b)$. Since
\begin{align*}
& \mE\left[e^{\sum_{k=0}^{T-1} \lambda ( e_k+e_{s_k}-\mE[e_k\mid\F_{k-1}]-\mE[e_{s_k}\mid\F_{k-1/2}])}\right] \\
& = \mE\left[\mE\left[e^{\sum_{k=0}^{T-1} \lambda ( e_k+e_{s_k}-\mE[e_k\mid\F_{k-1}]-\mE[e_{s_k}\mid\F_{k-1/2}])}\mid \F_{T-2}\right]\right] \notag \\
& = \mE\left[e^{\sum_{k=0}^{T-2} \lambda ( e_k+e_{s_k}-\mE[e_k\mid\F_{k-1}] -\mE[e_{s_k}\mid\F_{k-1/2}])}\cdot\mE\left[e^{\lambda ( e_{T-1}+e_{s_{T-1}}-\mE[e_{T-1}\mid\F_{T-2}]-\mE[e_{s_{T-1}}\mid\F_{T-3/2}])}\mid \F_{T-2}\right]\right] \notag\\
& \stackrel{\mathclap{\eqref{nequ:19}}}{\leq} \mE\left[e^{\sum_{k=0}^{T-2} \lambda ( e_k+e_{s_k}-\mE[e_k\mid\F_{k-1}]-\mE[e_{s_k}\mid\F_{k-1/2}])}\right]\cdot e^{\frac{\lambda^2(2v)^2}{2}},
\end{align*}
we can inductively show
\begin{equation*}
\mE\left[e^{\sum_{k=0}^{T-1} \lambda ( e_k+e_{s_k}-\mE[e_k\mid\F_{k-1}]-\mE[e_{s_k}\mid\F_{k-1/2}])}\right] \leq  e^{\frac{(2v)^2\lambda^2}{2}T} \quad \text{for any }\lambda\in \left[0,\frac{1}{2b}\right].
\end{equation*}
Therefore, we have $P(A) \leq  \exp\left\{(2v^2\lambda^2)T - 2 \lambda(\epsilon_f -\tilde{\epsilon}_f + s) T\right\}$. To find the optimal $\lambda$ to minimize the right-hand side, we define $l(\lambda) = (2v^2\lambda^2)T - 2 \lambda(\epsilon_f -\tilde{\epsilon}_f + s) T$. Since $e^{l(\lambda)}$ is monotonically increasing with respect to $l(\lambda)$, we only need to minimize $l(\lambda)$, which is achieved by $\lambda^* = (\epsilon_f-\tilde{\epsilon}_f + s)/(2v^2)$. Recalling that $\lambda\in [0,1/(2b)]$, when  $(\epsilon_f-\tilde{\epsilon}_f + s)/(2v^2) \leq 1/(2b)$, we have
\begin{equation*}
\min_{\lambda} e^{l(\lambda)} = e^{l(\lambda^*)} =e^{-\frac{(\epsilon_f-\tilde{\epsilon}_f+s)^2}{2v^2}T},
\end{equation*}
otherwise
\begin{equation*}
\min_{\lambda} e^{l(\lambda)} =e^{l( 1/(2b))} \leq e^{-\frac{\epsilon_f-\tilde{\epsilon}_f+s}{2b}T}.
\end{equation*}
Combining the above two displays, we have
\begin{align}\label{nequ:27}
P(A) \leq \exp\left\{-\frac{1}{2}\min\left(\frac{(\epsilon_f -\tilde{\epsilon}_f + s)^2}{v^2},\frac{\epsilon_f -\tilde{\epsilon}_f + s}{b}\right)T\right\}.
\end{align}    
Note that the proof of Lemma \ref{append:lemma3} is independent {\myred of} the probabilistic heavy-tailed zeroth-order oracle design, thus we can bound $P(B)$ in the same approach as in Lemma \ref{append:lemma3} and obtain \eqref{eq:PB}. Combining \eqref{eq:PB} and \eqref{nequ:27}, we complete the proof.
\end{proof}

\subsection{Proof of Lemma \ref{lemma:p_f}}\label{lemma:Append:C.1}

\begin{proof}
It follows from the definition of $\C_k$ and the union bound that
\begin{equation*}
P(\C_k \mid \F_{k-1/2}) \geq 1- P(e_k \geq \epsilon_f +\kappa_f\Delta_k^{\alpha+2} \mid  \F_{k-1}) - P(e_{s_k} \geq \epsilon_f +\kappa_f\Delta_k^{\alpha+2} \mid  \F_{k-1/2}),
\end{equation*}
where we also use the fact that $P(e_k \geq \epsilon_f +\kappa_f\Delta_k^{\alpha+2} \mid  \F_{k-1}) =P(e_k \geq \epsilon_f +\kappa_f\Delta_k^{\alpha+2} \mid  \F_{k-1/2}) $.
We consider the probability $P(e_k \geq \epsilon_f +\kappa_f\Delta_k^{\alpha+2} \mid  \F_{k-1})$ first. We have
\begin{align*}
P(e_k \geq & \epsilon_f +\kappa_f\Delta_k^{\alpha+2} \mid  \F_{k-1}) \notag\\
& = P(e_k -\mE[e_k\mid \F_{k-1}]\geq \epsilon_f +\kappa_f\Delta_k^{\alpha+2} -\mE[e_k\mid \F_{k-1}]\mid  \F_{k-1}) \notag\\
& = P(e^{ \lambda (e_k -\mE[e_k\mid \F_{k-1} ])} \geq e^{\lambda(\epsilon_f +\kappa_f\Delta_k^{\alpha+2} -\mE [e_k\mid \F_{k-1}])} \mid  \F_{k-1})\notag\\
& \stackrel{\mathclap{\eqref{eq1}}}{\leq} P(e^{ \lambda (e_k -\mE[e_k\mid \F_{k-1}]) } \geq e^{\lambda(\epsilon_f- \tilde{\epsilon}_f)} \mid  \F_{k-1})\notag\\
& \leq  \mE[e^{ \lambda (e_k -\mE[e_k\mid \F_{k-1}]) } \mid \F_{k-1}]\cdot  e^{-\lambda(\epsilon_f- \tilde{\epsilon}_f)}\quad\text{(by {\myred Markov} inequality)}\notag\\
& \stackrel{\mathclap{\eqref{eq4}}}{\leq} e^{\frac{\lambda^2v^2}{2}-\lambda(\epsilon_f- \tilde{\epsilon}_f)},	
\end{align*}
where the second equality is due to the monotonicity of $e^x$ and holds for any $\lambda\in[0,1/b]$. We apply the same approach as in Appendix \ref{thm:Append:C.2} to find the optimal $\lambda$ that minimizes the right-hand side and get
\begin{equation}\label{nequ:21}
P(e_k \geq \epsilon_f +\kappa_f\Delta_k^{\alpha+2} \mid  \F_{k-1}) \leq \exp\left\{-\frac{1}{2}\min\left(\frac{(\epsilon_f-\tilde{\epsilon}_f)^2}{v^2} ,\frac{\epsilon_f-\tilde{\epsilon}_f}{b} \right)\right\}.
\end{equation}
Applying the same analysis to $P(e_{s_k} \geq \epsilon_f +\kappa_f\Delta_k^{\alpha+2} \mid  \F_{k-1/2})$, we can still get \eqref{nequ:21}. Combining them together, we show \eqref{eq5} and complete the proof.	
\end{proof}

\section{Useful Inequalities}\label{append:F}

\begin{lemma}[Azuma-Hoeffding inequality, \cite{Hoeffding1963Probability, Azuma1967Weighted}]\label{Azuma-Hoeffding}
Suppose $X_k,k = 0,1, \cdots $ is a submartingale and $|X_k - X_{k-1}| < p$ for some constant $p >0$ almost surely. Then for all $T,c>0$, we have
\begin{equation*}
P(X_T-X_0 \leq - c) \leq \exp\left\{ -\frac{c^2}{2 T p^2}\right\}.
\end{equation*}
\end{lemma}

\begin{lemma}[Fuk–Nagaev inequality, {\cite[Corollary 2.5]{Fan2017Deviation}}]\label{Fuk–Nagaev}
Assume $X_k,k=0,1,\cdots$ is a sequence of random variables such that $\mE[X_k\mid \mathcal{F}_{k-1}]=0$. Denote by $S_T = \sum_{k=0}^{T-1}X_k$~and $\tilde{S}_T = \sum_{k=0}^{T-1}\mE[X_k^2\mid \mathcal{F}_{k-1}]$. Let $\delta\geq1$ and assume $\|\mE[|X_k|^{1+\delta} | \mathcal{F}_{k-1}]\|_{\infty} < \infty$ for all~$ k =0,1,\cdots, T-1$, where $\|\cdot\|_{\infty}$ denotes the essential supremum of a random variable. Then for all $x > 0$,
\begin{equation*}
P\left( S_T \ge x \right) \le \exp\left\{ -\frac{x^2}{2V} \right\} + \frac{C}{x^{1+\delta}}, 
\end{equation*}
where
\begin{equation*}
V = \frac{1}{4}(3+\delta)^2 e^{1+\delta} \, \|\tilde{S}_T \|_{\infty}
\quad \text{and} \quad
C = \left(1 + \frac{2}{1+\delta}\right)^{1+\delta} \left\| \sum_{i=0}^{T-1} \mE[|X_k|^{1+\delta} | \mathcal{F}_{k-1}] \right\|_{\infty}.
\end{equation*}
\end{lemma}

\end{document}